\newcommand{\RomanNumeralCaps}[1]
    {\MakeUppercase{\romannumeral #1}}
\newcommand{\bN}{\mathbb{N}}
\newcommand{\bR}{\mathbb{R}}
\newcommand\cS{\mathcal{S}}
\newcommand\dist{\operatorname{dist}}
\newcommand{\p}{\partial}
\newcommand{\epsi}{\varepsilon}
\newcommand{\ubar}[1]{\underaccent{\bar}{#1}}
\newcommand{\mg}{\mu_{\Gamma}^+}
\newcommand{\mgn}{\mu_{\Gamma}^-}
\newcommand{\av}{A[v]}
\newcommand{\bl}{\lambda^*}
\theoremstyle{plain}
\newtheorem{theorem}{Theorem}[section]
\newtheorem{lemma}{Lemma}[section]
\newtheorem{corollary}{Corollary}[section]
\newtheorem{proposition}{Proposition}[section]
\newenvironment{theoremprime}[1]
  {%
   \addtocounter{theorem}{-1}%
   \begin{theorem}}
  {\end{theorem}}
\theoremstyle{definition}
\newtheorem{definition}{Definition}[section]
\newtheorem{question}{Question}[section]
\theoremstyle{remark}
\newtheorem{remark}{Remark}[section]
\newtheorem{example}{Example}[section]
\def\thmhead@plain#1#2#3{%
  \thmname{#1}\thmnumber{\@ifnotempty{#1}{ }\@upn{#2}}%
  \thmnote{ {\the\thm@notefont#3}}}
\let\thmhead\thmhead@plain
\begin{document}

\title[Liouville theorems for conformally invariant equations]{Liouville theorems for conformally invariant fully nonlinear equations. \RomanNumeralCaps{1}}

\author[B.Z. Chu]{BaoZhi Chu}
\address[B.Z. Chu]{Department of Mathematics, Rutgers University, 110 Frelinghuysen Road, Piscataway, NJ 08854-8019, USA}
\email{bc698@math.rutgers.edu}

\author[Y.Y. Li]{YanYan Li}
\address[Y.Y. Li]{Department of Mathematics, Rutgers University, 110 Frelinghuysen Road, Piscataway, NJ 08854-8019, USA}
\email{yyli@math.rutgers.edu}

\author[Z. Li]{Zongyuan Li}
\address[Z. Li]{Department of Mathematics, City University of Hong Kong, 83 Tat Chee Avenue, Kowloon Tong, Hong Kong SAR}
\email{zongyuan.li@cityu.edu.hk}

\begin{abstract} 
  A fundamental  theorem of Liouville asserts that positive entire harmonic functions in Euclidean spaces must be  constant.
  A remarkable Liouville-type theorem of Caffarelli-Gidas-Spruck states that positive entire solutions of
  $-\Delta u=u^{ \frac {n+2}{n-2} }$, $n\ge 3$,  are unique modulo M\"obius transformations.  Far-reaching extensions were established for general fully nonlinear conformally invariant equations through the works of Chang-Gursky-Yang, Li-Li, Li, and Viaclovsky. 
  In this paper, we derive necessary and sufficient conditions for 
  the validity of such Liouville-type theorems.
  This leads to necessary and sufficient conditions for local gradient estimates of solutions to hold,
  assuming a one-sided bound on the solutions, 
  for a wide class of fully nonlinear elliptic equations involving Schouten tensors. 
  A pivotal advancement in proving these Liouville-type theorems is our enhanced understanding of solutions to such equations near isolated singularities. 
   In particular, we utilize earlier results of  Caffarelli-Li-Nirenberg 
  on lower- and upper-conical singularities. 
  For general conformally invariant fully nonlinear elliptic equations, we 
  prove that 
  a viscosity super- (sub-)solution can be extended across an isolated singularity
  if and only if it is a lower- 
  (upper-)conical singularity.  
  We also provide necessary and sufficient conditions for lower- (upper-)conical behavior of a function near isolated singularities in terms of its conformal Hessian.

  As an application of our Liouville theorems and local gradient estimates, we establish new existence and compactness results for conformal metrics on a closed Riemannian manifold with prescribed symmetric functions of the Schouten (Ricci) tensor, allowing the scalar curvature of the conformal metrics to have varying signs.
  \end{abstract}
\maketitle
\section{Introduction}
A fundamental theorem of Liouville asserts  that  positive harmonic functions in $\mathbb R^n$ must be  constant.
A remarkable  Liouville-type theorem of Caffarelli, Gidas and Spruck \cite{CGS} in $\mathbb R^n$, $n\ge 3$,
states that a positive $C^2$ solution of
\begin{equation}
    -\Delta u=n(n-2) u^{ \frac {n+2}{n-2} },
    \quad \mbox{in}\ \ \mathbb R^n,
    \label{A1}
\end{equation}
is of the form
\begin{equation}
u(x) = \left( \frac a {1+a^2 |x-\bar x|^2} \right)^{ 
\frac {n-2} 2 },
\label{A2}
\end{equation}
where $a>0$ and $ \bar x\in \bR^n$.
The result was proven by Obata \cite{MR0303464}
under an additional assumption that $|y|^{n-2} u(y/|y|^{2})$ is $C^2$ near $y=0$, and proven by 
Gidas, Ni and Nirenberg \cite{MR0544879} under 
a weaker hypothesis that $u(x)=O(|x|^{2-n})$ for large $|x|$.

Equation (\ref{A1}) is conformally invariant and the Liouville-type theorem gives the uniqueness of positive entire solutions modulo conformal transformations as explained below.

 A map $\varphi:~\bR^n\cup \{\infty\} \rightarrow \bR^n \cup\{\infty\}$ is called a M\"obius transformation if it is a finite composition of translations
 ($x\mapsto x+ \bar{x}~ \text{where}~\bar{x}\in \bR^n$), dilations
 ($x\mapsto ax~ \text{where}~a>0$),  and an inversion ($x\mapsto 
 x/|x|^2$).
 For a M\"obius transformation $\varphi$ and a positive function $u$ in $\bR^n$, $n\ge 3$, it is known that 
\begin{equation*}
   \lambda(A^{u_\varphi}) =\lambda(A^u)\circ \varphi,
\end{equation*}
where $ u_{\varphi} \coloneqq |J_\varphi|^{ \frac {n-2}{2n} } u \circ \varphi$, $J_\varphi$ is the Jacobian of $\varphi$,
\begin{equation}\label{au}
    A^u \coloneqq -\frac{2}{n-2}u^{-\frac{n+2}{n-2}}\nabla^2 u +\frac{2n}{(n-2)^2} u^{-\frac{2n}{n-2}} \nabla u \otimes \nabla u - \frac{2}{(n-2)^2} u^{-\frac{2n}{n-2}} \left| \nabla u\right|^2 I, 
\end{equation}
 $I$ is the $n\times n$ identity matrix, and $\lambda(M)$ denotes the eigenvalues of the symmetric matrix $M$ modulo permutations.
It follows that 
$
f(\lambda(A^{u_\varphi})) =f(\lambda(A^u))\circ \varphi
$    
for any symmetric function $f(\lambda)$.
Here by symmetric function, we mean $f$ is invariant under permutation of $\lambda_i$'s.  In particular, if $u$ is an entire solution of $f(\lambda(A^u))=1$, so is $u_\varphi$ for any M\"obius transformation $\varphi$.   Equation  (\ref{A1}) corresponds to $f(\lambda)=\lambda_1+\cdots+\lambda_n$ and its solutions,   given by (\ref{A2}),
can be generated from any one of them using M\"obius transformations.

The $n\times n$ symmetric matrix function $A^u$, sometimes referred to as the conformal Hessian of $u$, is a canonical object: It was proved in \cite{LiLiCPAM2003} that if a second order differential operator $H(\cdot,u,\nabla u,\nabla^2 u)$ has the property that
$
H(\cdot, u_\varphi, \nabla u_\varphi, 
\nabla^2 u_\varphi)=H(\cdot, u, \nabla u, \nabla^2u)\circ \varphi
$
for any positive function $u$ and any M\"obius transformation $\varphi$, then
$
H(\cdot, u, \nabla u, \nabla^2u)\equiv  f(\lambda(A^u))
$
for some symmetric function $f$.

The following extension of 
the Liouville-type theorem of Caffarelli, Gidas and Spruck
to general conformally invariant second order fully nonlinear elliptic equations was given by Li and Li in \cite{LiLiacta}.

Let 
\begin{equation}   
\Gamma\subset \bR^n \ \mbox{be an open convex symmetric cone with vertex at the origin}
\label{A3}
\end{equation}
satisfying
\begin{equation}
    \Gamma_n\subset \Gamma \subset \Gamma_1,
    \label{A4}
\end{equation}
where $\Gamma_1:= \{ \lambda\in \bR^n \mid \sum_{i=1}^n \lambda_i>0\}$ and 
$\Gamma_n:=\{\lambda\in \bR^n\ |\ \lambda_1, \cdots, \lambda_n>0\}$.  
Naturally, by symmetric set, we mean $\Gamma$ is invariant under interchange of any two $\lambda_i$.

\medskip
\noindent {\bf Theorem A.}\ (\cite{LiLiacta})\
For $n\geq 3$, let $(f,\Gamma)$ satisfy \eqref{A3},
\eqref{A4}, and
\begin{equation}\label{nincreasing}
    f\in C^1(\Gamma)~\text{is a symmetric function},~\frac{\p f}{\p \lambda_i}>0~\text{in}~\Gamma,~\forall 1\leq i\leq n.
\end{equation}
Assume that $u\in C^2(\bR^n)$ satisfies 
$f(\lambda(A^u))=1$ in $\bR^n$.
Then $u(x)\equiv a^{(n-2)/2}(1+b^2 |x-\bar{x}|^2)^{(2-n)/2}$,
for some $\bar{x}\in \bR^n$ and some $a,b>0$ satisfying $f(2b^2 a^{-2} \vec{e})=1$ with $\vec{e}=(1,\dots,1)$.

Equation $f(\lambda(A^u))=1$ is a fully nonlinear elliptic equation of $u$. Fully nonlinear elliptic
equations involving $f(\lambda(\nabla^2u)) $ have been much investigated since  the work of  Caffarelli, Nirenberg and Spruck \cite{MR806416}.
Important examples of $(f, \Gamma)$ satisfying 
\eqref{A3}--\eqref{nincreasing} include
$(f, \Gamma)=(\sigma_k^{1/k}, \Gamma_k)$, $k=1,2,\cdots, n$, where
\begin{equation*}
    \sigma_k(\lambda)\coloneqq \sum\limits_{1\leq i_1< \cdots < i_k\leq n} {\lambda_{i_1}\cdots \lambda_{i_k} },\quad \Gamma_k \coloneqq \{\lambda\in\bR^n \mid \sigma_{l}(\lambda)>0,~\forall 1\leq l \leq k\}.
\end{equation*}

Specific cases of Theorem A were previously established. 
In the case of $n \geq 3$ and  $(f, \Gamma)
= (\sigma_1, \Gamma_1)$,
 it is the aforementioned Liouville-type theorem of Caffarelli, Gidas, and Spruck;
 see also the previously mentioned results of Obata and Gidas-Ni-Nirenberg. 
When $n\ge 3$, $2\le k\le n$, and $(f, \Gamma)= (\sigma_k, \Gamma_k)$, it was proved by Viaclovsky \cite{MR1738176,MR1694380} 
under an additional assumption that $|y|^{n-2} u(y/|y|^{2})$ is $C^2$ near $y=0$. 
When  $n=4$ and $(f, \Gamma)=(\sigma_2, \Gamma_2)$, it was proved by
Chang, Gursky and Yang \cite{MR1945280}.
When $n\geq 3$, $2\leq k\leq n$, and $(f,\Gamma)=(\sigma_k,\Gamma_k)$, it was proved by Li and Li \cite{LiLiCPAM2003}.
Theorem A was strengthened by Li, Nguyen and Wang \cite{MR4181003}  to include 
entire viscosity solutions which are approximable by $C^{1,1}$
solutions on larger and larger balls, and, in particular, for entire $C^{1,1}_{loc}$
solutions. For other related works, see \cite{MR2055838}, \cite{EGM}, and the references therein.

The Liouville theorem for positive harmonic functions in $\bR^n$ was extended 
by Li \cite{Li2009} to positive locally Lipschitz  viscosity solutions of $\lambda(A^u)
\in \partial \Gamma $ in $ \bR^n$ for cone $\Gamma$ satisfying \eqref{A3} and \eqref{A4}; see also an earlier paper \cite{Liarma2007}. 
It was proved by Li, Nguyen and Wang \cite{MR3813247} that $C^0$ viscosity solutions of $\lambda(A^u) \in \p\Gamma$ in an open set is locally Lipschitz for such cones.
The notion of viscosity solutions, as well as viscosity sub- and super-solutions,
of $\lambda(A^u)\in \partial \Gamma $
was introduced by Li \cite[Definition 1.1]{Li2009} in 2006; see \cite{MR2487853} and \cite{MR4127162} for related concepts. 

\medskip

\noindent {\bf Theorem B.}\
  (\cite{Li2009}, \cite{MR3813247})\  For $n\ge 3$, 
 let $\Gamma$ satisfy (\ref{A3}) and (\ref{A4}).
 Assume that $u$ is a positive continuous viscosity solution of 
     $\lambda(A^u)\in \partial \Gamma$ in $\bR^n$.
 Then $u$ must be constant.
 
 \medskip

 A stronger result than Theorem B is

\medskip

\noindent {\bf Theorem C.}\  (\cite{Li2009}, \cite{MR3813247})\  For $n\ge 3$, 
 let $\Gamma$ satisfy (\ref{A3}) and (\ref{A4}).
 Assume that $u$ is a positive continuous viscosity solution of 
     $\lambda(A^u)\in \partial \Gamma$ in $\bR^n\setminus \{0\}$.
 Then $u$ is radially symmetric and non-increasing in the radial direction, i.e.
 $u(x)\ge u(y)$ for all $x, y\in \bR^n\setminus \{0\}$ satisfying 
 $|x|\le |y|$.
\medskip

Theorem B in the case $n\geq 3$ and $\Gamma=\Gamma_1$ is the Liouville theorem for positive harmonic functions in $\mathbb R^n$,
while the case $n=4$, $\Gamma=\Gamma_2$ and $u\in C^{1,1}_{loc}(\bR^4)$ was established by
Chang, Gursky and Yang \cite{MR1945280}. 
Results analogues to Theorem A--C in dimension $n=2$ was obtained 
by Li, Lu and Lu in \cite{Li2021ALT} and \cite{MR4458997}.

The above mentioned Liouville-type theorems have 
played significant  roles in
establishing a number of results in the study of conformal geometry including 
those on  the Yamabe problem, the Nirenberg problem,  and their fully nonlinear versions.
In this paper, we substantially extend Theorem A--C by discovering necessary and sufficient conditions for the conclusions of these theorems to hold, and give both proofs and counterexamples.
Additionally, we provide a  unified approach for the results 
in dimensions $n\ge 3$ and $n=2$.

\subsection{Main results}

In dimensions $n\geq 2$, let $v$ be a $C^2$ function on $\bR^n$. The M\"obius Hessian of $v$ is defined as the following $n\times n$ symmetric matrix: 
\begin{equation*} 
    \av = e^{-2v} \left( - \nabla^2 v + \nabla v \otimes \nabla v - \frac{1}{2} |\nabla v|^2 I \right),
\end{equation*}
where $I$ is the $n\times n$ identity matrix.
For a M\"obius transformation $\varphi$ and a function $v$, define
\begin{equation*}
    v^{\varphi} := v \circ \varphi + \frac{1}{n}\log |J_\varphi|,
\end{equation*}
where $J_\varphi$ is the Jacobian of $\varphi$. It is known that $\av$ has the following M\"obius invariance
\begin{equation*}
    \lambda(A[v^\varphi])=\lambda(\av)\circ \varphi.
\end{equation*}
In dimensions $n\ge 3$, $A^u= A[v]$
and $u_\varphi= e^{ \frac {n-2}{2} v^\varphi}$
for $u=e^{ \frac {n-2} {2} v}$.

In this paper, we consider cones $\Gamma$ more general than those given by (\ref{A3}) and \eqref{A4}:
\begin{equation} \label{eqn-230331-0110}
 \begin{cases}
   \Gamma \subsetneqq \bR^n \,\, \text{is a non-empty open symmetric cone with vertex at the origin},\\
   \Gamma+\Gamma_n\subset\Gamma.
    \end{cases}
\end{equation}

In condition \eqref{eqn-230331-0110}, neither $\Gamma\subset\Gamma_1$ nor the convexity of $\Gamma$ is assumed.
On the other hand, it is easy to see that \eqref{eqn-230331-0110} implies $\Gamma_n\subset \Gamma\subset \bR^n\setminus (-\overline{\Gamma_n})$. 

For constant $p\geq 0$,
consider the equation
\begin{equation}\label{nequation}
         f(\lambda(\av))= e^{-pv}~~\text{on}~\bR^n,
\end{equation}
where 
\begin{equation} \label{eqn-240223-0305}
\begin{cases}
f \in C^{0,1}_{loc}(\Gamma)\,\,\text{is a symmetric function and satisfies} ~~
    \frac{\p f}{\p \lambda_i} \geq c(K)>0,~\forall i\\
    \text{a.e. on compact subset}\,\,K ~~   \text{of}~\Gamma.
\end{cases}
\end{equation}
Clearly, condition \eqref{nincreasing} implies \eqref{eqn-240223-0305}.
Denote 
\begin{equation*}
    \bl\coloneqq(1,-1,\ldots,-1).
\end{equation*}
We have the following Liouville-type theorem.

\begin{theorem}\label{thm-230525-0321}
 For $n\geq 2$ and $p\geq 0$, let $(f,\Gamma)$ satisfy \eqref{eqn-230331-0110} and \eqref{eqn-240223-0305} with $\bl \notin \overline{\Gamma}$.
 Assume that $v\in C^{1,1}_{loc}(\bR^n)$ satisfies \eqref{nequation} almost everywhere. Then $p=0$ and
     \begin{equation}\label{nbubble}
         v(x)\equiv \log \big( \frac{a}{1+b^2 |x-\bar{x}|^2}\big),
     \end{equation}
     where $\bar{x}\in \bR^n$ and $a,b>0$ satisfy $f(2b^2 a^{-2} \vec{e})=1$ with $\vec{e} =(1,\dots,1)$.
\end{theorem}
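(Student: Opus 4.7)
The strategy is the method of moving spheres, in the tradition of \cite{LiLiacta}, with two new ingredients needed to dispense with the classical convexity hypothesis on $\Gamma$ and to accommodate the source $e^{-pv}$: sharp asymptotics for $v$ at infinity from the paper's new singular-extension theorems, and a careful use of the assumption $\bl \notin \overline{\Gamma}$. I would first invert at the origin, setting $\tilde v(x) := v(x/|x|^2) - 2\log|x|$, which by the M\"obius covariance $\lambda(A[v^\varphi]) = \lambda(\av)\circ\varphi$ applied to the inversion satisfies
\begin{equation*}
f(\lambda(A[\tilde v](x))) = e^{-p\tilde v(x)}\,|x|^{-2p} \qquad \text{on } \bR^n \setminus\{0\},
\end{equation*}
with $0$ as a potential isolated singularity.

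At this point the paper's new characterization of isolated singularities enters: for general $(f,\Gamma)$ satisfying \eqref{eqn-230331-0110}--\eqref{eqn-240223-0305}, a viscosity super- (sub-)solution extends across a point singularity if and only if it is lower- (upper-)conical in the sense of Caffarelli-Li-Nirenberg, and the conical behavior itself is characterized through the conformal Hessian. The hypothesis $\bl \notin \overline{\Gamma}$ is sharp at this step: when $\bl \in \overline{\Gamma}$ one expects conical entire solutions that violate the classification. Applied to $\tilde v$ at $0$, this dichotomy forces $\tilde v$ either to extend smoothly across $0$ or to be upper-conical there, which translates to sharp two-sided asymptotic control of $v$ at infinity consistent with a standard bubble.

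With this asymptotic input, for each $x_0 \in \bR^n$ and $\lambda > 0$ set
\begin{equation*}
v_{x_0,\lambda}(x) := v\!\left(x_0 + \tfrac{\lambda^2(x - x_0)}{|x - x_0|^2}\right) + 2\log\tfrac{\lambda}{|x - x_0|},
\end{equation*}
which by M\"obius covariance satisfies the transformed equation on $\bR^n \setminus \{x_0\}$. Local $C^{1,1}$ regularity and a Taylor expansion give $v_{x_0,\lambda} \le v$ outside $\overline{B_\lambda(x_0)}$ for all small $\lambda > 0$, so $\overline{\lambda}(x_0) := \sup\{\lambda : \text{this holds}\}$ is positive. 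The strong comparison principle—which uses the strict monotonicity in \eqref{eqn-240223-0305}—combined with the asymptotics above yields the standard dichotomy $\overline{\lambda}(x_0) = \infty$ or $v_{x_0,\overline{\lambda}(x_0)} \equiv v$, with the first alternative excluded by the asymptotic profile. A standard calculus lemma then identifies $v$ with the bubble \eqref{nbubble}, and substituting this form into \eqref{nequation} shows that $f(\lambda(\av))$ is constant while $e^{-pv}$ grows like $(1+b^2|x-\bar x|^2)^p$ at infinity, forcing $p=0$ and $f(2b^2 a^{-2}\vec{e}) = 1$.

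The main obstacle is the asymptotic step. In the classical setting of Theorem~A the inclusion $\Gamma \subset \Gamma_1$ and the convexity of $\Gamma$ allow asymptotic control via comparison with $\sigma_1$ or via concavity-based arguments; here neither is available, so the whole argument rests on making rigorous the singular-extension dichotomy at the generality of \eqref{eqn-230331-0110}--\eqref{eqn-240223-0305}, and on verifying that $\bl \notin \overline{\Gamma}$ exactly excludes the conical entire profiles that would otherwise block the moving-sphere classification.
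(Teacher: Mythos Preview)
Your overall strategy is the paper's: moving spheres plus the new lowerconical/upperconical machinery to handle the isolated singularity at infinity. But several details are confused in ways that would block the argument as written.

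First, the relevant notion at the singularity is \emph{lowerconical}, not upperconical. The Kelvin transform $\tilde v = v^{0,1}$ satisfies $\lambda(A[\tilde v])\in\Gamma\subset\overline\Gamma$ in $B\setminus\{0\}$, i.e.\ it is a \emph{supersolution}, and Proposition~\ref{removablesingularity} (which uses exactly the hypothesis $\bl\notin\overline\Gamma$) gives directly that $\liminf_{x\to 0}\tilde v>-\infty$ and that $\tilde v$ is lowerconical at $0$. There is no dichotomy here, and the upperconical criterion (Proposition~\ref{230805-2330}) requires the separate hypothesis $-\bl\in\Gamma$, which is not assumed. The conclusion is a one-sided bound $\liminf_{|x|\to\infty}(v+2\log|x|)>-\infty$, not the ``sharp two-sided asymptotic control'' you describe; one-sided is all that is needed to start the spheres.

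Second, you have not identified the crucial second use of the lowerconical machinery, which is inside Step~2. To show that the sphere cannot stop at a finite $\bar\lambda$ unless $v^{x,\bar\lambda}\equiv v$, one needs $\liminf_{|y|\to\infty}(v-v^{x,\bar\lambda})(y)>0$. The paper obtains this by working inside $B_{\bar\lambda}(x)$: there $v^{x,\bar\lambda}$ satisfies $f(\lambda(A[v^{x,\bar\lambda}]))\ge e^{-pv^{x,\bar\lambda}}$ on the punctured ball, Proposition~\ref{removablesingularity} shows it is lowerconical at $x$, Lemma~\ref{nremove} extends the supersolution inequality across the puncture, and then the strong comparison principle of \cite{CLN3,MR4181003} (for a $C^{1,1}$ subsolution against an $LSC$ supersolution) gives the strict inequality at the center. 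This interior step, not the initial asymptotics, is where the new machinery really replaces the superharmonicity used in Theorem~A.

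Third, the case $\bar\lambda(x_0)=\infty$ is not ruled out ``by the asymptotic profile.'' The paper first shows that either $\bar\lambda(x)=\infty$ for all $x$ or for none; the former forces $v$ constant via the calculus lemma, and a constant has $A[v]=0\notin\Gamma$, contradicting $\lambda(A[v])\in\Gamma$.
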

Note that for $v$ defined in \eqref{nbubble}, $\av = 2b^2a^{-2}I$.
\begin{remark} \label{rmk-230615-1129}
The condition $\bl \notin \overline{\Gamma}$ in Theorem \ref{thm-230525-0321} is optimal: Whenever $\Gamma$ satisfies \eqref{eqn-230331-0110} with $\bl \in \overline{\Gamma}$, there exist a smooth function $f$ satisfying \eqref{nincreasing} and a smooth solution $v$ of $f(\lambda(\av)) = 1$ in $\bR^n$ which is not of the form \eqref{nbubble}. For details, see Section \ref{sec-230622-2309}.
\end{remark}

The condition $\bl\notin\overline\Gamma$ is equivalent to stating that for some $\mu>1$, the inequality $\lambda_1+\mu\lambda_2>0$ holds for any $\lambda\in\Gamma$ satisfying $\lambda_1\geq \dots\geq \lambda_n$. See Lemma \ref{240901-1559}.

Theorem \ref{thm-230525-0321} in the case $\Gamma\subset\Gamma_1$, which includes $(f,\Gamma)=(\sigma_k,\Gamma_k)$ as particular cases, was previously known.
For $n\geq 3$, see Theorem A (with $v=\frac{2}{n-2}\log u$) and the discussions below it. For $n=2$, see \cite{Li2021ALT}. 

Next we state our Liouville-type theorems for viscosity solutions of the following equation:
\begin{equation} \label{eqn-221204-0213}
    \lambda(\av) \in \p \Gamma.
\end{equation}

For the definition of viscosity solutions, see Definition \ref{def-230607-0451}.
Equation \eqref{eqn-221204-0213} is in general a degenerate elliptic equation for which neither the strong maximum principle nor the Hopf Lemma holds, as shown by Li and Nirenberg \cite{li2009miscellany}.

\begin{theorem}\label{thm-230329-1327}
    For $n\geq 2$, let $\Gamma$ satisfy \eqref{eqn-230331-0110} with $\bl\notin\p\Gamma$. Then any continuous viscosity solution of \eqref{eqn-221204-0213} in $\bR^n$ must be constant. 
\end{theorem}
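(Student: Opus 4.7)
The plan is to prove Theorem \ref{thm-230329-1327} by the method of moving spheres applied to viscosity solutions. Since $\bl \notin \partial\Gamma$ and $\partial\Gamma$ is closed, either $\bl \in \Gamma$ or $\bl \notin \overline{\Gamma}$. In the latter case, by Lemma \ref{240901-1559}, there is $\mu > 1$ such that $\lambda_1 + \mu \lambda_2 > 0$ for every $\lambda = (\lambda_1, \ldots, \lambda_n) \in \Gamma$ ordered $\lambda_1 \geq \cdots \geq \lambda_n$, which supplies a quantitative one-sided ellipticity. The case $\bl \in \Gamma$, in which $\Gamma$ need not be contained in $\Gamma_1$, is the genuinely new regime; the two cases can largely be handled in parallel, with the sign condition $\bl \notin \partial\Gamma$ playing the role of a substitute for the classical ellipticity available in Theorem B.

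For each $p \in \bR^n$ and $\lambda > 0$, let $\varphi_{p,\lambda}$ denote the M\"obius inversion of radius $\lambda$ centered at $p$, and set $v_{p,\lambda} := v^{\varphi_{p,\lambda}}$. By the M\"obius invariance recalled in the introduction, $v_{p,\lambda}$ is again a continuous viscosity solution of $\lambda(\av) \in \partial\Gamma$, now on $\bR^n \setminus \{p\}$. One computes $v_{p,\lambda}(x) = v(\varphi_{p,\lambda}(x)) + 2\log(\lambda/|x-p|)$, so $v_{p,\lambda} = v$ on $\partial B_\lambda(p)$ and $v_{p,\lambda}(x) \to -\infty$ as $|x-p| \to \infty$. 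I would first show that $v \geq v_{p,\lambda}$ on $\bR^n \setminus \overline{B_\lambda(p)}$ for every sufficiently small $\lambda > 0$ via a local Taylor expansion near $\partial B_\lambda(p)$, and then extend the comparison to all $\lambda > 0$ by a continuity-plus-comparison-principle argument. The hypothesis $\bl \notin \partial\Gamma$ would be used at this step to produce the Hopf-type strict inequality that prevents the appearance of a finite critical radius.

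Granted the comparison $v(x) \geq v_{p,\lambda}(x)$ for all $p \in \bR^n$, all $\lambda > 0$, and all $x \in \bR^n \setminus \overline{B_\lambda(p)}$, a standard calculus lemma from the moving-sphere literature forces $v$ to be either constant or of the bubble form $v(x) = \log(a/(1+b^2|x-\bar x|^2))$. For such a bubble, $\av = 2b^2 a^{-2} I$ and hence $\lambda(\av) = 2b^2 a^{-2} \vec e \in \Gamma_n \subset \Gamma$, which is disjoint from $\partial\Gamma$ since $\Gamma$ is open. Thus the bubble is incompatible with the equation and $v$ must be constant.

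The principal obstacle is the viscosity comparison step underlying the moving-sphere argument, particularly in the case $\bl \in \Gamma$ where $\Gamma$ may contain directions in $-\Gamma_1$ and the equation $\lambda(\av) \in \partial\Gamma$ carries no direct elliptic second-order structure against which to compare linear operators. A careful geometric analysis of how $\bl$ sits relative to $\partial\Gamma$, combined with the monotonicity $\Gamma + \Gamma_n \subset \Gamma$, is required to produce the strict one-sided inequality needed to pass any would-be critical radius. Handling the behavior as $|x-p| \to \infty$, where $v_{p,\lambda}$ blows down to $-\infty$, within the viscosity framework and without any a priori growth control on $v$, is a further technical issue to address.
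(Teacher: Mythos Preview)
Your proposal has a genuine gap in each of the two cases, and in the case $\bl\in\Gamma$ the approach cannot be repaired along the lines you suggest.

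\medskip

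\textbf{Case $\bl\in\Gamma$.} You write that ``the two cases can largely be handled in parallel'' by moving spheres. The paper explains in the paragraph on ideas of proofs (and this is the crux) that the comparison step underlying moving spheres \emph{fails} here: for any such $\Gamma$ there exist smooth $w$ on $B$ and $u$ on $B\setminus\{0\}$ with $\lambda(A[w])\in\partial\Gamma$ in $B$, $\lambda(A[u])\in\partial\Gamma$ in $B\setminus\{0\}$, $w=u$ on $\partial B$, yet $w>u$ in $B\setminus\{0\}$. In the moving-sphere picture this is exactly the pair $(v,v^{x,\lambda})$, so the inequality $v\le v^{x,\lambda}$ in $B_\lambda(x)$ (equivalently your $v\ge v_{p,\lambda}$ outside) cannot be obtained by comparison alone. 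The paper therefore abandons moving spheres in this case and proves instead the stronger subsolution Liouville theorem (Theorem~\ref{thm-230528-0404}): if $\lambda(A[v])\in\bR^n\setminus\Gamma$ on $\bR^n$ and $\bl\in\Gamma$, then $v$ is constant. The argument is short and purely based on radial barriers: $v^x(r):=\max_{\partial B_r(x)}v$ is nondecreasing by comparison against constants, while Lemma~\ref{lem-220901-1050} (using $\bl\in\Gamma$ to build barriers $\bar v$ with $\lambda(A[\bar v])\in\overline\Gamma$) shows $v^x$ is nonincreasing; hence $v^x$ is constant for every center $x$, forcing $v\equiv\max v$.

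\medskip

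\textbf{Case $\bl\notin\overline\Gamma$.} Your plan to run the classical ``start small, push $\lambda$, use Hopf at the critical radius'' scheme is problematic because equation \eqref{eqn-221204-0213} is degenerate: neither the strong maximum principle nor the Hopf lemma holds for $\lambda(A[v])\in\partial\Gamma$ in general (this is stated right after \eqref{eqn-221204-0213}, with a reference to Li--Nirenberg). So the ``Hopf-type strict inequality'' you invoke to pass a finite critical radius is not available. The paper sidesteps this entirely: it does not run a continuity argument in $\lambda$ at all. Instead, for \emph{every} ball $B_\lambda(x)$ it observes that $v^{x,\lambda}$ solves the equation in $B_\lambda(x)\setminus\{x\}$, uses the lowerconical criterion Proposition~\ref{removablesingularity} (which needs $\bl\notin\overline\Gamma$) together with Lemma~\ref{remove} to extend $\lambda(A[v^{x,\lambda}])\in\overline\Gamma$ across the puncture, and then applies the comparison principle Theorem~\ref{thm-220721-0849} on the bounded domain $B_\lambda(x)$ to get $v\le v^{x,\lambda}$ directly. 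Having this for all $x$ and all $\lambda>0$ gives constancy at once.
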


\begin{remark}\label{remark-1.3}
    The assumption $\bl\notin\p\Gamma$ is optimal in the sense that whenever $\bl\in\p\Gamma$, the conclusion of the above theorem  fails. This can be seen from the fact that 
    $\lambda(\av) = \frac{1}{2} |\nabla v|^2 e^{-2v}  \bl\in\p\Gamma$ for every linear function $v$.
\end{remark}

Since $\Gamma$ satisfying \eqref{eqn-230331-0110} is equivalent to the cone $\bR^n\setminus(-\overline{\Gamma})$ satisfying \eqref{eqn-230331-0110}, and since $\lambda(\av)\in\p\Gamma$ is equivalent to $\lambda(-\av)\in\p(\bR^n\setminus(-\overline{\Gamma}))$, Theorem \ref{thm-230329-1327} is equivalent to:
 \begin{theoremprime}{thm-230329-1327}\label{1327-prime}
    For $n\geq 2$, let $\Gamma$ satisfy \eqref{eqn-230331-0110} with $-\bl\notin\p\Gamma$. Then any continuous viscosity solution of 
    $\lambda(-\av)\in\p \Gamma$ in $\bR^n$ must be constant. 
 \end{theoremprime}

\begin{remark}\label{remark-240701-1040}
    For $n\geq 2$, if $\Gamma$ satisfies \eqref{eqn-230331-0110} and $\bl\in\Gamma$ ($-\bl\notin \overline{\Gamma}$), then any viscosity solution of $\lambda(\av)\in\bR^n\setminus\Gamma$ ($\lambda(-\av)\in\overline{\Gamma}$) in $\bR^n$ must be constant. See Theorem \ref{thm-230528-0404}. On the other hand, if $\bl\notin\Gamma$ ($-\bl\in \overline{\Gamma}$), then every linear function is an entire solution of $\lambda(\av)\in\bR^n\setminus\Gamma$ ($\lambda(-\av)\in\overline{\Gamma}$).   
    \end{remark}

It follows from the above that any entire solution of $\lambda(-\av)\in\p\Gamma_k$ must be constant if and only if $k\neq n/2$, and any entire solution of $\lambda(-\av)\in\overline{\Gamma_k}$ must be constant if and only if $k>n/2$. The reason is that, for $n\geq 2$ and $1\leq k\leq n$, $-\bl\in \p\Gamma_k$ if and only if $k=n/2$ and $-\bl\notin\overline{\Gamma_k}$ if and only if $k>n/2$.

In fact, we have proved a stronger result than Theorem \ref{thm-230329-1327}, which is on the radial symmetry of solutions of $\lambda(\av)\in\p\Gamma$ in $\bR^n\setminus\{0\}$:
\begin{theorem}\label{230918-1857}
    For $n\geq 2$, let $\Gamma$ satisfy \eqref{eqn-230331-0110} with $\bl\notin\p\Gamma$. Then any continuous viscosity solution of \eqref{eqn-221204-0213} in $\bR^n\setminus\{0\}$ is radially symmetric and non-increasing in the radial direction, i.e.
 $v(x)\ge v(y)$ for all $x, y\in \bR^n\setminus \{0\}$ satisfying 
 $|x|\le |y|$.
 \end{theorem}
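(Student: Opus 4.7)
My plan is to prove Theorem \ref{230918-1857} by the method of moving spheres (and, for radial symmetry, also moving planes), adapting the approach of \cite{Li2009,MR3813247} to the broader class of cones allowed by \eqref{eqn-230331-0110} and treating $n\ge 2$ uniformly. The structural ingredient is the M\"obius invariance $\lambda(A[v^\varphi])=\lambda(A[v])\circ\varphi$, which applied to the inversion $\varphi_\lambda(x)=\lambda^2 x/|x|^2$ produces the Kelvin-type transform
\[
v^\lambda(x):=v\Big(\tfrac{\lambda^2 x}{|x|^2}\Big)+2\log\tfrac{\lambda}{|x|},\qquad x\in\bR^n\setminus\{0\},
\]
which is again a continuous viscosity solution of $\lambda(A[w])\in\p\Gamma$ on $\bR^n\setminus\{0\}$.

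For monotonicity along rays, I set
\[
\bar\lambda:=\sup\bigl\{\mu>0 : v^\lambda\le v\text{ on }\overline{B_\lambda}\setminus\{0\}\text{ for every }\lambda\in(0,\mu]\bigr\}.
\]
I first show $\bar\lambda>0$ by initiating the comparison for small $\lambda$, using continuity of $v$ together with an asymptotic analysis near $\p B_\lambda$ and near the puncture (the latter exploiting the paper's characterization of isolated-singularity behavior for viscosity sub-solutions). At the critical radius, a strong comparison principle for viscosity solutions of $\lambda(A[w])\in\p\Gamma$ yields the dichotomy: either $\bar\lambda=\infty$, or $v\equiv v^{\bar\lambda}$ on $\bR^n\setminus\{0\}$. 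Rewriting $v^\lambda\le v$ with $r=|x|$, $s=\lambda^2/r$, $\omega=x/|x|$ gives the strict decay $v(s\omega)\le v(r\omega)-\log(s/r)$ for $0<r<s$ with $rs\le\bar\lambda^2$, and when $\bar\lambda<\infty$ the M\"obius identity $v\equiv v^{\bar\lambda}$ extends monotonicity to $rs>\bar\lambda^2$, yielding the desired non-increasing behavior along every ray.

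For rotational symmetry, I fix $e\in S^{n-1}$, denote by $v^{t,e}$ the reflection of $v$ across the hyperplane $\{x\cdot e=t\}$, and run the moving plane method in the direction $e$. A standard sliding argument, again driven by the strong comparison, establishes that the critical value $\bar t(e):=\sup\{t:v^{s,e}\le v\text{ on }\{x\cdot e>s\}\text{ for all }s>t\}$ equals $0$: if $\bar t(e)>0$, the equality case $v\equiv v^{\bar t(e),e}$ would force a second singularity of $v$ at $2\bar t(e)\,e$, the reflection of the origin, contradicting continuity of $v$ on $\bR^n\setminus\{0\}$. Thus $v$ is symmetric about every hyperplane through the origin, hence radially symmetric; combined with the ray-monotonicity from the moving-spheres step, this completes the proof.

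The main obstacle is the strong comparison principle for continuous viscosity solutions of $\lambda(A[w])\in\p\Gamma$: the equation is genuinely degenerate elliptic, with neither the classical strong maximum principle nor the Hopf lemma available (cf. \cite{li2009miscellany}), and $\Gamma$ under \eqref{eqn-230331-0110} is allowed to be non-convex and not contained in $\Gamma_1$. The hypothesis $\bl\notin\p\Gamma$, reformulated by Lemma \ref{240901-1559} as $\lambda_1+\mu\lambda_2>0$ for some $\mu>1$ whenever $\lambda\in\Gamma$ is in non-increasing order, supplies precisely the ellipticity gain needed to propagate equality; its sharpness is confirmed by Remark \ref{remark-1.3}. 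The proof of the comparison will adapt the regularization and M\"obius-equivariant barrier arguments of \cite{Li2009,MR3813247} to this broader cone class and will treat $n=2$ and $n\ge 3$ uniformly.
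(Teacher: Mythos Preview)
Your proposal has a genuine gap at its core. The entire strategy hinges on a strong comparison principle for continuous viscosity solutions of $\lambda(A[w])\in\p\Gamma$, which you correctly flag as the ``main obstacle'' --- but this principle simply does not hold. The equation is degenerate, and as you yourself note via \cite{li2009miscellany}, both the strong maximum principle and the Hopf lemma fail. No regularization or barrier argument will recover it: the paper's introduction explicitly records that when $\bl\in\Gamma$ there exist smooth $w$ on $B$ and $u$ on $B\setminus\{0\}$ with $\lambda(A[w]),\lambda(A[u])\in\p\Gamma$ and $w=u$ on $\p B$, yet $w>u$ strictly inside. So the dichotomy ``either $\bar\lambda=\infty$ or $v\equiv v^{\bar\lambda}$'' cannot be obtained by the route you sketch, and your moving-plane step for radial symmetry breaks down for the same reason.

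The related error is your reading of the hypothesis. The condition $\bl\notin\p\Gamma$ is \emph{not} equivalent to ``$\lambda_1+\mu\lambda_2>0$ for some $\mu>1$''; Lemma \ref{240901-1559} gives that characterization for $\bl\notin\overline\Gamma$. The assumption $\bl\notin\p\Gamma$ splits into two disjoint regimes, $\bl\notin\overline\Gamma$ and $\bl\in\Gamma$, and the paper's proof treats them by entirely different mechanisms. In the first case one does not need strong comparison at all: the weak comparison principle (Theorem \ref{thm-220721-0849}) is applied directly, once the Kelvin transform $v^{x,\lambda}$ (for spheres centered at $x\neq 0$ with $0<\lambda<|x|$) is extended as a supersolution across its two isolated singularities $x$ and $\varphi_{x,\lambda}(0)$ via the lowerconical criterion (Proposition \ref{removablesingularity}, which requires precisely $\bl\notin\overline\Gamma$) and Lemma \ref{remove}. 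This yields $v\le v^{x,\lambda}$ for \emph{every} admissible $(x,\lambda)$ with no critical-parameter argument. In the second case $\bl\in\Gamma$ the comparison route provably fails, and the paper instead combines the Harnack inequality (Proposition \ref{prop-230614-1100}), the upperconical removal (Proposition \ref{230805-2330}), and the Liouville theorem for subsolutions (Theorem \ref{thm-230528-0404}) to force $v$ to be constant unless both $\liminf_{|x|\to 0}v=\infty$ and $\liminf_{|x|\to\infty}(v+2\log|x|)=\infty$, in which situation the singularities of $v^{x,\lambda}$ blow up to $+\infty$ and weak comparison again suffices. Your plan does not engage with this dichotomy and cannot be completed as stated.
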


 \begin{remark}\label{remark-240701-1041}
     The assumption $\bl\notin\p\Gamma$ in the above theorem is optimal since, as mentioned earlier, every linear function $v$ is a solution of $\lambda(\av)\in \p\Gamma$ when $\bl\in\p\Gamma$.
\end{remark}
 
Theorem \ref{230918-1857} is equivalent to
 \begin{theoremprime}{230918-1857}\label{1857-prime}
For $n\geq 2$, let $\Gamma$ satisfy \eqref{eqn-230331-0110} with $-\bl\notin\p\Gamma$. Then any continuous viscosity solution of $\lambda(-\av)\in\p\Gamma$ in $\bR^n\setminus\{0\}$ is radially symmetric and non-increasing in the radial direction, i.e.
 $v(x)\ge v(y)$ for all $x, y\in \bR^n\setminus \{0\}$ satisfying 
 $|x|\le |y|$.
 \end{theoremprime}

Theorem \ref{thm-230329-1327} and  \ref{230918-1857} 
in the case $\Gamma\subset\Gamma_1$ were previously known. For $n\geq 3$, see Theorem B and C (with $v=\frac{2}{n-2}\log u$) and discussions below them. For $n=2$, see \cite{MR4458997}.

Examples of Theorem \ref{thm-230525-0321}--\ref{230918-1857}, where $\Gamma$ is not necessarily contained in $\Gamma_1$, are provided in \S \ref{intro-example-sec}, with further examples given in \S \ref{examplesection}.

Theorem \ref{thm-230525-0321}--\ref{230918-1857} hold not only for cones $\Gamma$ but can be generalized to suitable sets that are not necessarily cones.
This and analogous results for Theorem \ref{thm-230525-0321}--\ref{230918-1857} in the half space are established in our forthcoming papers.

\subsection{Applications of Liouville theorems to local gradient estimates assuming a one-sided bound on solutions}

Let $(M,g)$ be a compact, smooth, connected Riemannian manifold (without boundary) of dimension $n\ge 3$.
The Yamabe conjecture was solved through the works of Yamabe \cite{Yamabe1960}, Trudinger \cite{Trudinger1968}, Aubin \cite{Aubin1976} and Schoen \cite{Schoen1984}:
There exist constant scalar curvature metrics on $M$ which are pointwise conformal to $g$.  The solution space was well understood when the total scalar curvature of $(M,g)$ is non-positive.  On the other hand,   the situation is strikingly delicate
when the total scalar curvature of $(M,g)$ is 
positive: 
The solution space is compact with respect to $C^m$ norms for any $m$ if $(M,g)$ is non-locally conformally flat and has dimension $n\le 24$. However, for $n\ge 25$, 
smooth non-locally conformally flat metrics exist on the $n$-dimensional sphere $\mathbb S^n$, 
for which the solution space is not compact in the $L^\infty$ norm.  
Another closely related phenomena is that the
solution space is compact with respect to $C^m$ norms for any $m$ and in any dimensions $n\ge 3$
if the Weyl tensor and its first covariant derivatives
do not vanish simultaneously at any point on $M$, while  
smooth metrics exist on $\mathbb S^n$ 
for which  the Weyl tensor and its up to third  covariant derivatives
do not vanish simultaneously at any point on $\mathbb S^n$,  but 
the solution space is not compact in the $L^\infty$ norm.  
The question of compactness of the solution space remains largely open when 
  the Weyl tensor and its up to second order covariant derivatives
do not vanish simultaneously at any point on $M$.
These are achieved through the works 
\cite{Brendle08JAMS, Brendle_Marques, Druet04, Khuri_Marques_Schoen, Li_Zhang_2, Li_Zhang_3, Li_Zhu_99, Marques05, Marques09, Monyicatilli_notes, Schoen_on_the_number}.
See also a more recent work \cite{GongLi}, 
where it is proved that in dimensions 
$n\ge 25$
there exist a smooth non-locally conformally flat 
metric $g$ on $\mathbb S^n$, and a sequence of 
smooth metrics $ g_i$ which are pointwise conformal to $g$,
with scalar curvature $R_{g_i}\equiv 1$ and $\text{volume}(g_i)\to \infty$.

On a Riemannian manifold $(M,g)$ of dimension $n\geq 3$,
consider the Schouten tensor
\begin{equation*}
    A_g=\frac{1}{n-2}\big( Ric_g-\frac{R_g}{2(n-1)}g\big),
\end{equation*}
where $Ric_g$ and $R_g$ denote, respectively, the Ricci tensor and the scalar curvature. We use $\lambda(A_g)=(\lambda_1(A_g),\dots,\lambda_n(A_g))$ to denote the eigenvalues of $A_g$ with respect to $g$. 

For a positive function $u$ on $M$, let $g_u\coloneqq u^{\frac{4}{n-2}}g$ be a conformal change of the metric $g$. Direct computation gives that 
\begin{equation*}
A_{g_u}=-\frac{2}{n-2} u^{-1} \nabla^2 u + \frac{2n}{(n-2)^2} u^{-2} d u\otimes d u -\frac{2}{(n-2)^2} u^{-2}|\nabla u|^2 g + A_g,
\end{equation*}
where all covariant derivatives and norms on the right hand side are with respect to $g$. In particular, when $\bar{g} = |dx|^2$ is the Euclidean metric on $\bR^n$, 
\begin{equation*}
    A_{\bar{g}_{ u} } =u^{\frac{4}{n-2}} A^u_{ij}dx^i dx^j,
\end{equation*}
where $A^u$ is the conformal Hessian of $u$ defined in \eqref{au}. In this case, $\lambda(A_{\bar{g}_u})=\lambda(A^u)$.

We expect that Theorem \ref{thm-230525-0321}--\ref{230918-1857} will play a significant role
in the study of the following
equations.
\begin{equation}\label{pequ}
        f(\lambda(A_{g_u}))=h, ~~ u>0,~~\text{on}~M, \tag{$P$}
\end{equation}
and
\begin{equation}\label{equ}
    f(\lambda(-A_{g_u}))=h, ~~ u>0,~~\text{on}~M, \tag{$N$}
\end{equation}
where $h$ is a positive function on $M$. 
When $(f, \Gamma)=(\sigma_1, \Gamma_1)$ and $h\equiv 1$, equations \eqref{pequ} and \eqref{equ} are the Yamabe equations in the positive and negative cases respectively.

We discuss local gradient estimates for equations \eqref{pequ} and \eqref{equ}.  The following conditions on $(f,\Gamma)$ will be assumed: $\Gamma$ satisfies \eqref{eqn-230331-0110} and $f$ satisfies 
\begin{equation}\label{basicf}
     \left\{
	       \begin{array}{lr}
                \text{$f\in C^0(\overline{\Gamma})\cap C^1 (\Gamma)$ is 
                symmetric in $\lambda_i$,}\\[2ex]
	       	\text{$f>0$, \ $ \p_{\lambda_i}f>0 $
         $~\forall 1\leq i\leq n,$ $\text{in}$ $\Gamma$, and~ $f=0$ ~on $\partial{\Gamma}$,
         }
	       	\end{array}
	\right.	
\end{equation}

\begin{equation} \label{eqn-230530-1131}
    \sum_i \p_{\lambda_i}f(\lambda) \geq \delta \big( 1+ \big|\lambda_i \cdot \p_{\lambda_i}f(\lambda)\big| \big)\,\,\text{on}\,\,\{ \lambda \in \Gamma\mid  f (\lambda) \in (0, C]\},~~ \forall C > 0,
\end{equation}
where $\delta=\delta(C) > 0$, and
\begin{equation} \label{eqn-230602-1040}
    \liminf_{s\rightarrow \infty} f(s\lambda) = +\infty \,\, \text{uniformly on}\,\, \{\lambda \in \Gamma\mid f(\lambda) \geq C\}, ~~ \forall C>0.
\end{equation}

For a homogeneous of degree $1$ function $f$ in $\Gamma$, conditions \eqref{eqn-230530-1131}
and \eqref{eqn-230602-1040} are the same as $\sum_i f_{\lambda_i} \geq  \delta>0$ in $\Gamma$. In particular, $(f,\Gamma)=(\sigma_k^{1/k},\Gamma_k)$ satisfies \eqref{basicf}--\eqref{eqn-230602-1040}.

Fully nonlinear elliptic equations involving the Schouten tensor have been investigated extensively since the works of Viaclovsky \cite{MR1738176, MR1694380, MR1925503} and Chang-Gursky-Yang
\cite{CGY_annals02,
MR1945280,MR2055838}.    
In particular, Equations \eqref{pequ} and \eqref{equ} have been much studied  when $\Gamma\subset\Gamma_1$.
For instance, the existence and compactness of smooth solutions
of  equation  \eqref{pequ}  have been obtained for general $(f,\Gamma)$ on locally conformally flat Riemannian manifolds, and when the problem has a variational structure (including
$(f, \Gamma)=(\sigma_2^{1/2}, \Gamma_2)$) on general Riemannian manifolds, or when 
$\Gamma$ satisfies $-\bl\notin\Gamma$ 
(including 
$(f, \Gamma)=(\sigma_k^{1/k}, \Gamma_k)$, $k\ge n/2$).
For equation  \eqref{equ},  the existence of Lipschitz solutions are known for general $(f, \Gamma)$.
See \cite{MR1945280, MR2290138, MR1978409, MR1976082, MR2078344, MR2373147, LiLiCPAM2003, LiLiacta, MR3165241, MR4210287, MR2362323, MR1925503},  and the references therein.  
On the other hand, there is little
 study of equations 
 \eqref{pequ} and \eqref{equ} when $\Gamma$ is not contained in $\Gamma_1$.  

In the study of equations
\eqref{pequ} and \eqref{equ}, local gradient estimates of solutions
assuming a one-sided bound of the solutions
are 
useful and delicate.  
An application of Theorem \ref{thm-230329-1327} and \ref{1327-prime} gives such estimates.

\begin{theorem}[(Local gradient estimates for equation  \eqref{pequ})] \label{localgradest}
    Let $(B_1,g)$ be a $C^3$ Riemannian geodesic ball and $(f,\Gamma)$ satisfy \eqref{eqn-230331-0110} and \eqref{basicf}--\eqref{eqn-230602-1040} with $\bl\notin\p\Gamma$. Let $h\in C^1(B_1)$ be positive
    and $u\in C^3(B_1)$ satisfy equation
    $\eqref{pequ}$ in $B_1$. Then
	\begin{equation}\label{estimatee}
		\left| \nabla_g \log u \right|_g \leq C \quad \text{in } ~ B_{1/2},
	\end{equation}
	where $C$ is a constant depending only on $(f,\Gamma)$, an upper bound of $\sup_{B_{1}} u$ and $\left\| h \right\|_{C^1(B_1)}$, and a bound of $g$ and the Riemann curvature
 tensor together with its first covariant derivative with respect to $g$. 
\end{theorem}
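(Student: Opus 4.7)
We argue by contradiction via a blow-up and rescaling analysis, with Theorem \ref{thm-230329-1327} providing the Liouville obstruction. Suppose the estimate fails: there exist sequences of geodesic balls $(B_1, g_k)$ with uniformly bounded metric and first covariant derivative of the Riemann tensor, $h_k \in C^1(B_1)$ with $\|h_k\|_{C^1}\le C_0$, and solutions $u_k\in C^3(B_1)$ of \eqref{pequ} with $\sup_{B_1}u_k\le C_0$ but $\sup_{B_{1/2}}|\nabla_{g_k}\log u_k|_{g_k}\to\infty$. Setting $v_k:=\tfrac{2}{n-2}\log u_k$, one has $\sup v_k\le C$ while $|\nabla v_k|_{g_k}$ blows up on $B_{1/2}$.

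A standard cutoff-maximum selection (applied to $\phi(x)|\nabla v_k(x)|_{g_k}$ for a fixed $\phi\ge 0$ supported in $B_1$ with $\phi\equiv 1$ on $\overline{B_{1/2}}$) picks $\bar x_k\in B_1$ and $L_k:=|\nabla v_k(\bar x_k)|_{g_k}\to\infty$ for which $|\nabla v_k|_{g_k}\le 2L_k$ on a $g_k$-geodesic ball $B_{g_k}(\bar x_k, r_k)$ satisfying $r_kL_k\to\infty$. Working in $g_k$-normal coordinates at $\bar x_k$, where $g_k = I + O(|x|^2)$ with a correspondingly bounded Christoffel symbol $\Gamma_{g_k} = O(|x|)$, set
\[
\tilde v_k(y) := v_k(\bar x_k + y/L_k) - v_k(\bar x_k).
\]
Then $\tilde v_k(0)=0$, $|\nabla\tilde v_k(0)|=1$, and $|\nabla\tilde v_k|\le 2$ on $B(0, r_k L_k)$. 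The scaling identity $A[w](y)=e^{-2c}L^{-2}A[v](\bar x+y/L)$ for $w(y):=v(\bar x+y/L)+c$ (applied with $L=L_k$, $c=-v_k(\bar x_k)$), combined with the normal-coordinate expansion, yields a rescaled equation of the form
\[
f\bigl(\lambda\bigl(C_k(A[\tilde v_k](y)+\mathcal E_k(y))\bigr)\bigr)=h_k(\bar x_k+y/L_k),
\]
with $C_k := L_k^2 e^{-2v_k(\bar x_k)}\ge e^{-2C}L_k^2\to\infty$, and an error $\mathcal E_k$ (collecting the Christoffel and background Schouten contributions) satisfying $\mathcal E_k\to 0$ locally uniformly on $\bR^n$.

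By Arzelà-Ascoli, up to extracting a subsequence, $\tilde v_k\to v_\infty$ in $C^{0,1}_{\rm loc}(\bR^n)$, with $v_\infty(0)=0$ and $v_\infty$ Lipschitz. Viscosity stability combined with the structural conditions \eqref{basicf}--\eqref{eqn-230602-1040} shows that $v_\infty$ is a continuous viscosity solution of $\lambda(A[v_\infty])\in\p\Gamma$ on $\bR^n$: if $\varphi\in C^2$ touched $v_\infty$ strictly from above at $y_0$ with $\lambda(A[\varphi](y_0))$ in the interior of $\Gamma$, then at nearby touching points for $\tilde v_k$ the spectrum of $C_k(A[\varphi]+\mathcal E_k)$ would lie in a fixed compact subset of $\Gamma\setminus\p\Gamma$, and \eqref{eqn-230602-1040} would force $f$ of this to diverge to $+\infty$, contradicting the boundedness of $h_k$; a dual argument with $\varphi$ touching from below yields the super-solution property. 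Since $\bl\notin\p\Gamma$, Theorem \ref{thm-230329-1327} then forces $v_\infty\equiv 0$.

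To derive a contradiction with $|\nabla\tilde v_k(0)|=1$, the $C^{0,1}_{\rm loc}$ convergence is upgraded to $C^1_{\rm loc}$ via interior $C^{1,\alpha}$ estimates for $\tilde v_k$, which follow from the uniform Lipschitz bound combined with the uniform ellipticity encoded in \eqref{eqn-230530-1131}; then $|\nabla v_\infty(0)|=1$, contradicting $v_\infty\equiv 0$. The \textbf{main obstacle} is the viscosity passage to $\lambda(A[v])\in\p\Gamma$ as the limit equation, which requires careful use of \eqref{eqn-230602-1040} (for the properness that rules out interior-$\Gamma$ spectra in the limit) and of \eqref{eqn-230530-1131} (for the $C^{1,\alpha}$ compactness that converts the Liouville-rigidity $v_\infty=\mathrm{const}$ into a pointwise gradient contradiction at $0$).
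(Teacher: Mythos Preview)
Your overall architecture---blow up, pass to a viscosity solution of $\lambda(A[v_\infty])\in\partial\Gamma$ on $\bR^n$, invoke Theorem~\ref{thm-230329-1327}---matches the paper's. The genuine gap is the last step: you need $C^1_{\rm loc}$ convergence to transport $|\nabla\tilde v_k(0)|=1$ to the limit, and you claim interior $C^{1,\alpha}$ estimates ``follow from the uniform Lipschitz bound combined with the uniform ellipticity encoded in \eqref{eqn-230530-1131}.'' They do not. Condition \eqref{eqn-230530-1131} is only a lower bound on $\sum_i f_{\lambda_i}$; it gives no upper bound on the individual $f_{\lambda_i}$, so you do not have two-sided uniform ellipticity, and no concavity/convexity of $f$ is assumed. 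Neither Krylov--Safonov nor Caffarelli's $C^{1,\alpha}$ theory applies to the rescaled equation (whose linearization has coefficients scaling with $C_k\to\infty$ anyway). Without this compactness, you cannot conclude $|\nabla v_\infty(0)|=1$, and the argument stalls.

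The paper avoids this by a two-ingredient scheme. First, it proves independently (Theorem~\ref{ahequlocalgradest}) a Bernstein-type gradient bound for solutions with a \emph{two-sided} $C^0$ bound, via the maximum principle applied to a test function $\rho\,e^{\phi(v)}|\nabla v|^2$; this step uses \eqref{eqn-230530-1131} directly and does not require $C^{1,\alpha}$ regularity theory. Second, in Theorem~\ref{thm-230603-1124} it blows up not the gradient but a H\"older seminorm, using the device $\delta(v_i,x;\Omega,\gamma)$: after rescaling one has $[\tilde v_i]_{\gamma,1}(0)=1$ and a two-sided $C^0$ bound, so the Bernstein estimate yields a uniform Lipschitz bound, and Arzel\`a--Ascoli gives convergence in $C^{\tilde\gamma}_{\rm loc}$ for every $\tilde\gamma<1$. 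Taking $\tilde\gamma>\gamma$ preserves $[\tilde v]_{\gamma,1}(0)=1$, so the limit is nonconstant---contradicting the Liouville theorem---without ever needing $C^1$ convergence. Your proof would work if you substituted this H\"older blow-up for the gradient blow-up and supplied the two-sided Bernstein bound as a separate lemma.
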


\begin{theorem}[(Local gradient estimates for equation  \eqref{equ})] \label{nlocalgradest}
 Let $(B_1,g)$ be a $C^3$ Riemannian geodesic ball and $(f,\Gamma)$ satisfy \eqref{eqn-230331-0110} and \eqref{basicf}--\eqref{eqn-230602-1040} with $-\bl\notin\p\Gamma$. Let $h\in C^1(B_1)$ be positive
    and $u\in C^3(B_1)$ satisfy equation
    $\eqref{equ}$ in $B_1$. Then \eqref{estimatee} holds with $C$ of the same dependence as that of Theorem \ref{localgradest}. Moreover, if in addition $\inf_{B_1}h\geq h_0>0$, then $C$ in \eqref{estimatee}, depending on $h_0$, is independent of $\sup_{B_1} u$.
\end{theorem}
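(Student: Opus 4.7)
The plan is a blow-up argument reducing everything to the Liouville theorem in Theorem~\ref{1327-prime}. Argue by contradiction: assume the estimate fails, so there exists a sequence $(g_k, h_k, u_k)$ of admissible data satisfying the hypotheses uniformly, with $\sup_{B_1} u_k \le K$, such that $\max_{\overline{B_{1/2}}} |\nabla_{g_k} \log u_k|_{g_k} \to \infty$. Set $v_k := \tfrac{2}{n-2} \log u_k$. A standard cutoff-maximum selection applied to $M_k(x) := d_{g_k}(x, \partial B_1)^{\beta} |\nabla_{g_k} v_k(x)|_{g_k}$, for a suitable $\beta > 0$, picks $x_k \in B_1$ maximizing $M_k$. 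Setting $L_k := |\nabla_{g_k} v_k(x_k)|_{g_k}$, the contradiction hypothesis yields $L_k \to \infty$ and $L_k \cdot d_{g_k}(x_k, \partial B_1) \to \infty$.

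In $g_k$-normal coordinates centered at $x_k$, perform the M\"obius rescaling
\[
\bar v_k(y) := v_k(x_k + y/L_k) - \log L_k,\qquad y\in B_{R_k}(0),\ R_k\to\infty.
\]
This preserves the conformal Hessian, $A[\bar v_k](y) = A[v_k](x_k + y/L_k)$ relative to the Euclidean background, and the pulled-back metric $\bar g_k(y) := L_k^{2} g_k(x_k + y/L_k)$ converges in $C^2_{\text{loc}}(\bR^n)$ to the Euclidean metric thanks to the uniform curvature bounds (with error of order $L_k^{-2}$). Consequently $\bar v_k$ satisfies $f(\lambda(-A[\bar v_k])) = h_k(x_k + y/L_k) + o(1)$ on $B_{R_k}$, with $|\nabla \bar v_k(0)| = 1$ and $|\nabla \bar v_k| \le 1 + o(1)$ on each fixed ball $B_R$ as $k\to\infty$, by the cutoff maximality of $M_k$.

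Define the vertically-normalized sequence $w_k(y) := \bar v_k(y) - \bar v_k(0)$, so $w_k(0) = 0$, $|\nabla w_k(0)| = 1$, and $w_k$ is uniformly Lipschitz on each $B_R$. The hypothesis $\sup_{B_1} u_k \le K$ forces $\bar v_k(0) = v_k(x_k) - \log L_k \to -\infty$; using the identity $A[v+c] = e^{-2c} A[v]$ (with $c = -\bar v_k(0)$), we obtain
\[
\lambda(-A[w_k]) = e^{2\bar v_k(0)} \lambda(-A[\bar v_k]) \to 0 \text{ locally uniformly,}
\]
since the boundedness of $\lambda(-A[\bar v_k])$ in $\Gamma$ is ensured by \eqref{eqn-230530-1131}--\eqref{eqn-230602-1040} together with the uniform $L^\infty$-bound on $h_k$. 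Continuity of $f$ at $0 \in \partial\Gamma$ with $f(0) = 0$ then gives $f(\lambda(-A[w_k])) \to 0$ locally uniformly. Arzel\`a--Ascoli extracts a subsequence $w_k \to w_\infty$ in $C^0_{\text{loc}}(\bR^n)$, and stability of viscosity sub- and super-solutions (combined with the $C^{1,1}$-approximation scheme of \cite{MR4181003}) produces $w_\infty \in C^{0,1}_{\text{loc}}(\bR^n)$ as a continuous viscosity solution of $\lambda(-A[w_\infty]) \in \partial\Gamma$ on all of $\bR^n$. Since $-\bl \notin \partial\Gamma$, Theorem~\ref{1327-prime} forces $w_\infty$ to be constant, while the interior Lipschitz regularity of \cite{MR3813247} lets us pass $|\nabla w_k(0)| = 1$ to the limit, giving $|\nabla w_\infty(0)| = 1$; this contradiction completes the proof of the main statement.

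For the moreover part, when $\inf_{B_1} h_k \ge h_0 > 0$, we use the scaling invariance of equation~\eqref{equ} under $u \mapsto cu$ (noting that $A_{g_{cu}} = A_{g_u}$, since constant rescaling of a metric preserves its Schouten tensor) to pre-normalize $u_k(x_k) = 1$. This forces $\bar v_k(0) = -\log L_k \to -\infty$ automatically, and the Lipschitz bound yields $|w_k(y)| \le (1+o(1))|y|$ on $B_R$ without any appeal to $\sup_{B_1} u_k$; the remaining steps proceed verbatim. The main technical obstacle throughout is justifying the passage $\nabla w_k(0) \to \nabla w_\infty(0)$ at the blow-up origin, since the limit equation is degenerate and $w_\infty$ is only $C^{0,1}_{\text{loc}}$ a priori; this is handled via the $C^{1,1}$-approximation machinery of \cite{MR4181003} and the interior Lipschitz estimates of \cite{MR3813247}, which together deliver the $C^{1,\alpha}$-type convergence needed to transfer the normalization $|\nabla w_k(0)| = 1$ to the limit.
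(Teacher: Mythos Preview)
Your blow-up strategy is the same as the paper's, but two steps do not go through as written.

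\textbf{Passing the gradient normalization to the limit.} You normalize $|\nabla w_k(0)|=1$ and want to conclude $|\nabla w_\infty(0)|=1$. The limiting equation $\lambda(-A[w_\infty])\in\partial\Gamma$ is degenerate elliptic, and its viscosity solutions are only $C^{0,1}_{loc}$ in general (indeed \cite{li2009miscellany} shows the strong maximum principle can fail). Neither \cite{MR3813247} nor \cite{MR4181003} supplies the $C^1$ convergence you invoke: the former gives Lipschitz regularity, the latter a strong comparison principle, not pointwise gradient control. The paper avoids this issue entirely by tracking a H\"older seminorm rather than the gradient: one blows up so that $[\widetilde v_i]_{\gamma,1}(0)=1$ (via the quantity $\delta(w,x;\Omega,\gamma)$ from \cite{Li2009}), and this normalization survives $C^{\widetilde\gamma}_{loc}$ convergence for any $\widetilde\gamma>\gamma$, giving $[\widetilde v]_{\gamma,1}(0)=1$ and hence $\widetilde v\not\equiv$ constant. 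To make this work one first needs a gradient estimate under a \emph{two-sided} bound on $v$ (Theorem~\ref{ahequlocalgradest}, proved by a direct Bernstein argument), which is what supplies the compactness along the blow-up sequence. A related point: your claim that $\lambda(-A[\bar v_k])$ is bounded because $f(\lambda(-A[\bar v_k]))$ is bounded is false---level sets of $f$ are typically unbounded (already for $f=\sigma_1$). The paper instead uses condition \eqref{eqn-230602-1040} to force $\lambda$ onto $\partial\Gamma$ in the limit.

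\textbf{The ``moreover'' part.} Your scaling argument is incorrect. While $A_{g_{cu}}=A_{g_u}$ as $(0,2)$-tensors, the eigenvalues $\lambda(-A_{g_u})$ are taken with respect to $g_u$, and since $g_{cu}=c^{4/(n-2)}g_u$ one has $\lambda(-A_{g_{cu}})=c^{-4/(n-2)}\lambda(-A_{g_u})$. Thus equation \eqref{equ} is \emph{not} invariant under $u\mapsto cu$, and pre-normalizing $u_k(x_k)=1$ changes $h_k$ by the (possibly unbounded) factor $u_k(x_k)^{4/(n-2)}$---which is exactly the $\sup u$ dependence you are trying to remove. The paper handles this by a separate barrier argument (Lemma~\ref{upperbound}): an explicit supersolution $u_{\alpha,r}(x)=(\alpha r)^{(n-2)/2}(r^2-|x|^2)^{-(n-2)/2}$ together with the comparison principle yields an a~priori upper bound $u\le C$ depending only on $h_0$, $(f,\Gamma)$, and the geometry, after which the main estimate applies.
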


\begin{remark}
     Note that in both Theorem \ref{localgradest} and \ref{nlocalgradest}, we assume neither concavity nor convexity on $f$, and $C$ is independent of $\inf\limits_{B_1}u$. 
\end{remark}

\begin{remark}\label{231006-1030}
         The condition 
         $\bl\notin\p\Gamma$ in Theorem \ref{localgradest} (resp. $-\bl\notin\p\Gamma$ in Theorem \ref{nlocalgradest}) is optimal in the sense that whenever the condition is violated, for any homogeneous of degree $1$ function $f$ satisfying the hypothesis of the theorem, there exists a sequence of positive functions $\{u_i\}\subset C^\infty(B_1)$ such that $\sup_{B_1}u_i\leq 1$ and $h_i\coloneqq f(\lambda(A^{u_i}))>0$ (resp. $h_i\coloneqq f(\lambda(-A^{u_i}))>0$) converges in $C^m(B_1)$ for any $m\in\bN$. However, $|\nabla \log u_i|\to\infty$ uniformly on $B_{1/2}$. See Example \ref{exp-230606-0509}--\ref{exp-230606-0459}.
\end{remark}

\begin{remark}\label{remark-1.8}
    When $f$ is homogeneous of degree  $1$, replacing the function $h$ in equations \eqref{pequ} and \eqref{equ} by $h(\cdot,u)$ with
    \begin{equation*}
        s^{4/(n-2)} h(x,s)\in C^1(B_1\times(0,\infty))\cap L^{\infty}(B_1\times (0,b)),
    \end{equation*}
    for any $b>1$, Theorem \ref{localgradest} and \ref{nlocalgradest} still hold, with $C$ depending also on the function $h$. This is easy to see from the proof of the theorems.
\end{remark}

\begin{remark}\label{tend-to-believe}
    We tend to believe that the assumption $f\in C^1(\Gamma)$ in Theorem \ref{localgradest} and \ref{nlocalgradest} can be weaken to $f\in C^{0,1}_{loc}(\Gamma)$.
\end{remark}

When $(f,\Gamma)=(\sigma_k^{1/k},\Gamma_k)$, $k=n/2$, local gradient estimates for equation \eqref{equ} assuming a one-sided bound on solutions fail (see Remark \ref{231006-1030}). This is an  unexpected phenomena. 
For $n\ge 3$ and all $2\le k\le n$, gradient estimates for equation \eqref{equ} with $(f,\Gamma)=(\sigma_k^{1/k},\Gamma_k)$ on closed manifolds were proved by Gursky and Viaclovsky \cite{MR1976082}. 
If solutions are bounded from both below and above,
local gradient estimates do hold (see Theorem \ref{ahequlocalgradest}).
For the equations studied in \cite{MR1976082}
on  closed manifolds, solutions are  bounded from both below and above.

A more general result
than Theorems \ref{localgradest} and \ref{nlocalgradest}, applicable to unified dimensions $n\geq 2$, is proved in Theorem \ref{thm-230603-1124}. 
Theorem  \ref{localgradest}
was proved by Guan and Wang \cite{MR1976045} for 
$n\geq 3$, $2\leq k\leq n$, and $(f,\Gamma)=(\sigma_k^{1/k},\Gamma_k)$, and was proved by Li \cite{Li2009} for
$\Gamma \subset \Gamma_1$ and homogeneous $f$.  The theorem in dimension $n=2$ 
was proved by Li, Lu and Lu \cite{MR4458997}  for $\Gamma\subset \Gamma_1$
and homogeneous $f$.  For other related works, see 
\cite{MR2204639, MR2306044, Liarma2007, Li2009, MR2243678}, and the references therein.

After establishing Theorems \ref{localgradest} and \ref{nlocalgradest} and demonstrating their optimality (as discussed in Remark \ref{231006-1030}), we came across the thesis of Khomrutai \cite{MR2827324}. In the thesis, Khomrutai provided a proof for Theorem \ref{nlocalgradest} with $(f,\Gamma)=(\sigma_k^{1/k},\Gamma_k)$ in the case of $n\ge 3$ and $2\le k<n/2$, as well as in the case where $k=n-1 $ or $n$, utilizing a completely different method.
More recently, Duncan and Nguyen \cite{Duncan_Nguyen_23} have proved Theorem \ref{nlocalgradest} for $\Gamma\subset\Gamma_1$ with $-\bl\in\Gamma$ and for homogeneous concave $f$ using methods in the spirit of that in \cite{MR2827324}.

\subsection{Applications of Liouville theorems to existence and compactness problems}
We expect that rigidity results Theorem \ref{thm-230525-0321}--\ref{230918-1857} will play crucial roles in answering the following question.

\begin{question}\label{quest-1.1-schouten}
Let $(M^n, g)$ be a closed, smooth Riemannian manifold of dimension $n\geq 3$, and $\Gamma$ satisfy \eqref{eqn-230331-0110}. Assume that $\lambda(A_g)\in\Gamma$ on $M^n$. For which symmetric function $f$ defined on $\Gamma$ does the equation 
\begin{equation}\label{q-schouten-critical}
  {f}(\lambda(A_{g_u}))=1\quad \text{on}~M^n 
\end{equation}
have a positive solution $u$?
\end{question}

 When $\Gamma\subset\Gamma_1$ and $f$ is a concave function in $\Gamma$, Question \ref{quest-1.1-schouten} has been studied extensively in the literature mentioned earlier. 
 For instance, when 
 $(f,\Gamma)=(\sigma_k^{1/k},\Gamma_k)$, Question \ref{quest-1.1-schouten} is the Yamabe problem $(k=1)$ and the $\sigma_k$--Yamabe problem in the positive case.
 On the other hand, when $\Gamma$ is not contained in $\Gamma_1$, there have been no existence or compactness results.
 Towards answering Question \ref{quest-1.1-schouten}, 
 we present the following new existence and compactness results based on our Liouville theorems, Theorem 
 \ref{thm-230525-0321} and 
 \ref{thm-230329-1327}.
 We introduce the following conditions on $(f,\Gamma)$.
\begin{equation}\label{lip-f-schouten} 
\begin{cases}
    f\in C^{0,1}_{loc}(\Gamma)\cap C^0(\overline\Gamma)~\text{is symmetric, homogeneous of degree $1$,} \\ f\big|_{\p\Gamma}=0,~\text{and}~\frac{\p f}{\p \lambda_i}\geq \delta~\text{a.e.}~\Gamma,~\forall i,~\text{for some constant}~\delta>0,
    \end{cases}
\end{equation}
and
\begin{equation}\label{fgamma-convex}
    f~\text{is locally convex in}~\Gamma~\text{and}~\bR^n\setminus\Gamma~\text{is convex}.
\end{equation}
The convexity of $\bR^n\setminus\Gamma$ implies that $\Gamma_1\subset\Gamma$, and $\Gamma$ is not contained in $\Gamma_1$ unless $\Gamma=\Gamma_1$.
\begin{theorem}\label{exist-lcf}
    Let $(M^n,g)$ be a closed, smooth, locally conformally flat $n$-dimensional Riemannian manifold with positive scalar curvature, $n\geq 3$, and $(f,\Gamma)$
    satisfy \eqref{eqn-230331-0110}, \eqref{lip-f-schouten} and \eqref{fgamma-convex}.
     Then, for some $\alpha\in(0,1)$ depending only on $(M^n,g)$ and $(f,\Gamma)$, there exists a positive function $u\in C^{2,\alpha}(M)$ satisfying \eqref{q-schouten-critical}.
     Moreover, if $(M^n,g)$ is not conformally diffeomorphic to the standard sphere, 
    all $C^2$ solutions $u$ of \eqref{q-schouten-critical} satisfy 
    \begin{equation}\label{compactness-1.6-schouten}
     \|u\|_{C^{2,\alpha}(M^n,g)}+ \|1/u\|_{C^{2,\alpha}(M^n,g)}\leq C,
    \end{equation}
    where $C>0$ is some constant depending only on $(M^n,g)$ and $(f,\Gamma)$.
\end{theorem}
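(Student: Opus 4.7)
The plan is to adapt the Schoen / Li-Li blow-up compactness scheme for locally conformally flat (LCF) manifolds to the present setting, using the new entire Liouville classification (Theorem \ref{thm-230525-0321}) to identify blow-up limits, and then to obtain existence from the resulting a priori estimate via a Leray-Schauder degree argument. Because \eqref{fgamma-convex} forces $\Gamma_1\subset\Gamma$, the positive scalar curvature hypothesis gives $\lambda(A_g)\in\Gamma$, so \eqref{q-schouten-critical} is elliptic at $g$. When $(M,g)$ is not conformally diffeomorphic to $\mathbb{S}^n$, the Schoen-Yau theorem provides a conformal embedding of the universal cover $\tilde M$ into $\mathbb{S}^n$ with complement a nonempty closed limit set $\Lambda$ of Hausdorff dimension at most $(n-2)/2$; after stereographic projection this realizes $\tilde M$ as a $\pi_1(M)$-invariant open set $\Omega\subset\mathbb{R}^n$, and a positive solution $u$ of \eqref{q-schouten-critical} lifts to a positive $\tilde u$ on $\Omega$ solving the Euclidean form $f(\lambda(A^{\tilde u}))=1$.

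To prove the a priori bound \eqref{compactness-1.6-schouten}, assume for contradiction a sequence $\{u_j\}$ of positive $C^{2,\alpha}$ solutions fails to be uniformly controlled. A standard blow-up selection on $\Omega$ produces points $\tilde x_j$ and scales $\mu_j\to 0^+$ so that the conformal rescalings of $\tilde u_j$ at $(\tilde x_j,\mu_j)$ remain uniformly bounded near the origin on expanding Euclidean balls. Theorem \ref{localgradest} applied on the Euclidean side provides uniform $C^1$ bounds on each compact; Evans-Krylov applied to the equivalent concave form $-f(\lambda(A^{\tilde u}))=-1$ (using the local convexity of $f$ in \eqref{fgamma-convex}) promotes these to $C^{2,\alpha}$ bounds. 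A diagonal subsequence converges in $C^2_{loc}$ to an entire $C^{1,1}_{loc}$ positive solution $u_\infty$ on $\mathbb{R}^n$ of $f(\lambda(A^{u_\infty}))=1$. Theorem \ref{thm-230525-0321} -- applicable under the condition $\bl\notin\overline\Gamma$, which has to be guaranteed by \eqref{fgamma-convex} or treated as a standing assumption -- classifies $u_\infty$ as a standard bubble, corresponding under inverse stereographic projection to a round conformal factor on $\mathbb{S}^n$. The equivariance of the lifts $\tilde u_j$ under the nontrivial $\pi_1(M)$-action, combined with the presence of $\Lambda\neq\emptyset$, is then incompatible with such a bubble limit, yielding the contradiction. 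Once the a priori upper bound is established, Theorem \ref{localgradest} yields the gradient bound, Harnack provides the positive lower bound, and Evans-Krylov supplies \eqref{compactness-1.6-schouten}.

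For existence, deform $(f,\Gamma)$ to the Yamabe pair $(\sigma_1,\Gamma_1)$ through a continuous family $(f_t,\Gamma_t)_{t\in[0,1]}$ with each $(f_t,\Gamma_t)$ satisfying the hypotheses of the compactness statement. The uniform $C^{2,\alpha}$ a priori bounds along the family make the Leray-Schauder degree of the associated nonlinear operator well-defined on a large ball in the space of positive conformal factors and invariant in $t$. At $t=0$ the degree is nonzero by the Yamabe theorem for non-spherical LCF manifolds, hence it is nonzero at $t=1$, producing the desired $u$.

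The main obstacle is the last step of the compactness argument -- converting the Liouville classification of the blow-up limit into a concrete contradiction with $\Lambda\neq\emptyset$. This is where the LCF structure and the conformal invariance of the equation interact most subtly: the new entire Liouville theorem pins down the blow-up profile as a round-sphere bubble, but one still has to rule out that such a profile emerges equivariantly under a deck transformation group with nontrivial limit set, in the spirit of Schoen's original argument in the Yamabe case. A secondary point is to construct the family $(f_t,\Gamma_t)$ so that the hypotheses, including $\bl\notin\overline{\Gamma_t}$, are preserved continuously.
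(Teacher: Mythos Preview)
Your approach has a genuine gap: both the blow-up argument and the local gradient estimate you invoke hinge on the condition $\bl\notin\overline{\Gamma}$ (respectively $\bl\notin\partial\Gamma$), which is \emph{not} assumed in Theorem~\ref{exist-lcf} and does \emph{not} follow from \eqref{fgamma-convex}. For instance, $\Gamma=\bR^n\setminus(-\overline{\Gamma_n})$ has $\bR^n\setminus\Gamma$ convex yet $\bl\in\Gamma$; and $\Gamma^{\mu(n-1)}$ from Example~\ref{240831-1248} has $\bl\in\partial\Gamma$. So Theorem~\ref{thm-230525-0321} and Theorem~\ref{localgradest} are simply unavailable here, and the blow-up scheme collapses. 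This is not a technicality you can patch: the whole point of Theorem~\ref{exist-lcf} is that in the locally conformally flat setting one can dispense with the $\bl$ condition, unlike in Theorems~\ref{exist-nlcf-compactness}--\ref{exist-nlcf}.

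The paper's route is different and worth noting. Rather than blow up, it lifts a solution $u$ to the developing-map image $\Omega\subset\mathbb{S}^n$ and runs the moving-sphere method \emph{directly on $\Omega$}, following \cite[Proof of Theorem~$1.1'$]{LiLiacta}. When $\Omega=\mathbb{S}^n$, the sphere Liouville theorem of \cite[Corollary~1.6]{LiLiCPAM2003} applies (no point at infinity, hence no $\bl$ issue); when $\Omega\subsetneq\mathbb{S}^n$, the Schoen--Yau result guarantees the conformal factor $\eta\to\infty$ at $\partial\Omega$, which is what drives the moving spheres. This yields the $C^0$ and $C^1$ bounds \eqref{240330-2332} directly. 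For $C^{2,\alpha}$, the paper does not use Evans--Krylov (which would require a $C^2$ bound first) but extends $f$ to a uniformly elliptic convex function on all of $\bR^n$ via Proposition~\ref{lem-240826-0153} (this is where \eqref{fgamma-convex} enters), and applies Caffarelli's interior $C^{1,\alpha}$ and $C^{2,\alpha}$ estimates \cite[Theorems~2,~3]{Caff89}. Existence is then by degree along the explicit homotopy $f^t(\lambda)=f(t\lambda+(1-t)\sigma_1(\lambda)\vec{e})$, which keeps $(f^t,\Gamma^t)$ inside the admissible class without any $\bl$ constraint.
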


One of the difficulties in solving equation \eqref{q-schouten-critical} is to obtain the compactness of the solution space along a homotopy from $(f,\Gamma)$ to $(\sigma_1,\Gamma_1)$. In the proof of Theorem \ref{exist-lcf}, the positive scalar curvature assumption on $(M^n,g)$ is used. 
If the condition of positive scalar curvature is weakened to $\lambda(A_g)\in\Gamma$, one possible strategy for solving \eqref{q-schouten-critical}
is to consider a subcritical approximation of the equation and analyze the behavior of solutions as $q\to 0+$:
 \begin{equation}\label{q-schouten-subcritical}
        {f}(\lambda(A_{g_u}))=u^{-q}\quad \text{on}~M^n.
    \end{equation}
We have the following compactness and existence results.

\begin{theorem}\label{exist-nlcf-compactness}
     Let
    $(M^n,g)$ be a closed, smooth $n$-dimensional Riemannian manifold, $n\geq 3$,   $(f,\Gamma)$
    satisfy \eqref{eqn-230331-0110}, \eqref{lip-f-schouten}, \eqref{fgamma-convex} and $\bl\notin\overline\Gamma$, and $0<q<4/(n-2)$ be a constant.
    Assume that $\lambda(A_g)\in\Gamma$ on $M^n$ and $f\in C^1(\Gamma)$.
    Then,  for any $\alpha\in(0,1)$, 
    all positive $C^2$ solutions of \eqref{q-schouten-subcritical} satisfy \eqref{compactness-1.6-schouten} with $C$ depending only on $(M^n,g)$, $(f,\Gamma)$, $q$ and $\alpha$.
\end{theorem}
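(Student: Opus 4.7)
The estimate \eqref{compactness-1.6-schouten} will be derived in three stages: (i) an $L^\infty$ upper bound $\sup_M u\leq K$ via blow-up and the Liouville Theorem \ref{thm-230525-0321}; (ii) a positive lower bound $\inf_M u\geq c>0$ from the local gradient estimate (Theorem \ref{localgradest}) combined with the maximum principle; and (iii) an Evans--Krylov upgrade to $C^{2,\alpha}$ using the convexity in \eqref{fgamma-convex}.

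For (i), I would argue by contradiction. Suppose $M_i:=\sup_M u_i\to\infty$ along a sequence of solutions, with maximum attained at $x_i\in M$. In geodesic normal coordinates at $x_i$, consider the rescaled functions
\[
\tilde u_i(y):=M_i^{-1}\,u_i(\exp_{x_i}(\sigma_i y)),\qquad \sigma_i:=M_i^{-2/(n-2)+q/2},
\]
where the subcritical assumption $q<4/(n-2)$ forces $\sigma_i\to 0$, so the domain of $\tilde u_i$ exhausts $\bR^n$. The conformal-Hessian scaling identity $A^{c\,u(\sigma\,\cdot)}(y)=c^{-4/(n-2)}\sigma^2 A^u(\sigma y)$ together with the degree-$1$ homogeneity of $f$ shows that, up to curvature error that vanishes on compacts as $\sigma_i\to 0$, the rescaled equation is the scale-invariant model
\[
f(\lambda(A^{\tilde u_i}))=\tilde u_i^{-q},\qquad \tilde u_i(0)=1,\ \tilde u_i\leq 1.
\]
Theorem \ref{localgradest} (via Remark \ref{remark-1.8}, where the condition $q<4/(n-2)$ is exactly what is needed to guarantee $s^{4/(n-2)-q}\in L^\infty$) gives $|\nabla\log\tilde u_i|\leq C$ uniformly on compacts. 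Homogeneity and $\partial_{\lambda_i}f\geq\delta>0$ then confine $\lambda(A^{\tilde u_i})$ to a compact subset of $\Gamma$; the local convexity in \eqref{fgamma-convex} lifts to matrix convexity by the standard principle for symmetric functions of eigenvalues, so Evans--Krylov produces uniform $C^{2,\alpha}_{loc}$ estimates. Extracting a diagonal subsequence, $\tilde u_i\to \tilde u_\infty\in C^{2,\alpha}_{loc}(\bR^n)$, a positive solution of $f(\lambda(A^{\tilde u_\infty}))=\tilde u_\infty^{-q}$ on $\bR^n$. Setting $\tilde v_\infty:=\tfrac{2}{n-2}\log\tilde u_\infty\in C^{1,1}_{loc}$, this is $f(\lambda(A[\tilde v_\infty]))=e^{-p\tilde v_\infty}$ with $p:=(n-2)q/2>0$. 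Since $\bl\notin\overline\Gamma$, Theorem \ref{thm-230525-0321} forces $p=0$, the desired contradiction.

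For (ii), the upper bound $\sup u_i\leq K$ and Theorem \ref{localgradest} applied chartwise yield $|\nabla_g\log u_i|\leq C$ on $M$; integrating along minimizing geodesics on the connected manifold gives the Harnack inequality $\sup u_i\leq e^{C\,\mathrm{diam}_g(M)}\inf u_i$. To obtain a positive lower bound on $\sup u_i$, I would evaluate the equation at a maximizer $x_M$: since $\nabla u_i(x_M)=0$ and $\nabla^2 u_i(x_M)\leq 0$, the transformation formula for the Schouten tensor gives $A_{g_{u_i}}(x_M)\geq A_g(x_M)$ as symmetric bilinear forms. Weyl's monotonicity for ordered eigenvalues together with $\overline\Gamma+\overline{\Gamma_n}\subset\overline\Gamma$ and the monotonicity and homogeneity of $f$ yield $u_i(x_M)^{4/(n-2)-q}\geq\min_M f(\lambda(A_g))=:c_0>0$, so $\sup u_i\geq c_0^{1/(4/(n-2)-q)}$ and hence $\inf u_i\geq c>0$ uniformly. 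For (iii), the uniform $L^\infty$ and $C^1$ bounds and the equation confine $\lambda(A_{g_{u_i}})$ to a fixed compact subset of $\Gamma$ where the equation is uniformly elliptic and $F(B):=f(\lambda(B))$ is matrix-convex; Evans--Krylov then gives $\|u_i\|_{C^{2,\alpha}(M)}\leq C$ for any $\alpha\in(0,1)$, and the lower bound on $u_i$ immediately produces $\|1/u_i\|_{C^{2,\alpha}(M)}\leq C$.

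The hard part is the blow-up in Step (i): one must verify rigorously that the rescaled metrics converge to the Euclidean background on compacts, that the rescaled equations converge to the Euclidean scale-invariant model, and that the constants in the local gradient and $C^{2,\alpha}_{loc}$ estimates are genuinely uniform in $i$, so that the limit $\tilde v_\infty$ really lies in $C^{1,1}_{loc}(\bR^n)$ and solves $f(\lambda(A[\tilde v_\infty]))=e^{-p\tilde v_\infty}$ in the regime to which Theorem \ref{thm-230525-0321} applies.
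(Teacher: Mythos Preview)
Your three-step strategy---blow-up plus Theorem \ref{thm-230525-0321} for the upper bound, the local gradient estimate and a maximum-principle argument for the lower bound, then convexity-based Schauder theory for $C^{2,\alpha}$---is exactly the paper's proof. The rescaling exponent $\sigma_i=M_i^{-2/(n-2)+q/2}$ matches, and your max-point computation $u_i(x_M)^{4/(n-2)-q}\geq \min_M f(\lambda(A_g))$ is precisely how the paper obtains $\max u_i\geq 1/C$.

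One genuine imprecision: the assertion that ``homogeneity and $\partial_{\lambda_i}f\geq\delta$ confine $\lambda(A^{\tilde u_i})$ to a compact subset of $\Gamma$'' is false in general---for $(f,\Gamma)=(\sigma_1,\Gamma_1)$ the level sets are hyperplanes. What you actually need (and have) is \emph{global} uniform ellipticity: the lower bound $\partial_{\lambda_i}f\geq\delta$ is assumed, and the upper bound $\partial_{\lambda_i}f\leq f(\vec{e})$ follows from convexity plus degree-$1$ homogeneity (Lemma \ref{240902-1319}). The paper also extends $f$ convexly to all of $\bR^n$ (Proposition \ref{lem-240826-0153}) so that the operator $F(M,x)$ is defined for every symmetric matrix, which is needed to invoke the viscosity-solution machinery. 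Correspondingly, rather than a direct ``Evans--Krylov'', the paper runs Caffarelli's two-step argument \cite{Caff89}: first $C^{1,\alpha}$ from the $W^{2,p}$-type estimate (which tolerates merely bounded $x$-dependence coming from $\nabla u$), then $C^{2,\alpha}$ once the lower-order coefficients are H\"older. Your outline glosses over this bootstrap, but the substance is the same.
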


\begin{remark}
    In Theorem \ref{exist-nlcf-compactness},
    the $C^1$ regularity of $f$ is only required for the application of Theorem \ref{localgradest}. We believe that $f\in C^{0,1}_{loc}$ would suffice.
\end{remark}

\begin{theorem}\label{exist-nlcf}
     Let
    $(M^n,g)$ be a closed, smooth $n$-dimensional Riemannian manifold with positive scalar curvature, $n\geq 3$,   $(f,\Gamma)$
    satisfy \eqref{eqn-230331-0110}, \eqref{lip-f-schouten}, \eqref{fgamma-convex} and $\bl\notin\overline\Gamma$, and $0<q<4/(n-2)$ be a constant.
    Then,  for some $\alpha\in(0,1)$ depending only on $(M^n,g)$, $(f,\Gamma)$ and $q$, 
    there exists some positive solution $u\in C^{2,\alpha}(M)$  of \eqref{q-schouten-subcritical}. 
\end{theorem}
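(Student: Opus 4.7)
The plan is to use a Leray--Schauder degree argument in $C^{2,\alpha}(M)$, with Theorem \ref{exist-nlcf-compactness} supplying the a priori estimates required for homotopy invariance. The starting point of the homotopy will be the Yamabe case $(\sigma_1,\Gamma_1)$, whose existence theory is classical, and the endpoint will be the given $(f,\Gamma)$.

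First, I would construct a continuous path $(f_t,\Gamma_t)_{t\in[0,1]}$ through admissible pairs, with $(f_0,\Gamma_0)=(\sigma_1,\Gamma_1)$ and $(f_1,\Gamma_1)=(f,\Gamma)$, such that each intermediate pair satisfies \eqref{eqn-230331-0110}, \eqref{lip-f-schouten}, \eqref{fgamma-convex}, $\bl\notin\overline{\Gamma_t}$, and $f_t\in C^1(\Gamma_t)$. Since the convexity of $\bR^n\setminus\Gamma$ forces $\Gamma_1\subset\Gamma$, the path can be arranged so that $\Gamma_1\subset\Gamma_t\subset\Gamma$ for every $t$; together with the positive scalar curvature hypothesis this guarantees $\lambda(A_g)\in\Gamma_1\subset\Gamma_t$ throughout the deformation, so each intermediate problem sits in the range of Theorem \ref{exist-nlcf-compactness}. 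Next, I would recast \eqref{q-schouten-subcritical} at parameter $t$ as a fixed-point equation $u=T_t(u)$ on an open subset $\mathcal O\subset C^{2,\alpha}(M)$ consisting of positive functions $u$ with $\lambda(A_{g_u})\in\Gamma_t$; here $T_t$ is obtained by inverting a fixed coercive linear operator (e.g.\ $-\Delta_g+1$) against an expression built from $f_t(\lambda(A_{g_u}))-u^{-q}$, so that $I-T_t$ is a compact perturbation of the identity on $C^{2,\alpha}(M)$ and its Leray--Schauder degree on balls $B_R\subset\mathcal O$ is well defined.

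The uniform a priori estimate comes from Theorem \ref{exist-nlcf-compactness}: its constant depends on $(f_t,\Gamma_t,q)$, which vary continuously on the compact parameter interval, yielding a single bound $R$ that contains all fixed points of $T_t$. The bound on $\|1/u\|_{C^{2,\alpha}}$ keeps $u$ away from zero, and combined with $f_t>0$ in $\Gamma_t$ it keeps $\lambda(A_{g_{u}})$ a positive distance from $\p\Gamma_t$, so fixed points stay in the interior of $\mathcal O\cap B_R$. At $t=0$, the equation reduces to the scalar subcritical Yamabe-type equation
\[
-\frac{4(n-1)}{n-2}\Delta_g u + R_g u = 2(n-1)\, u^{\frac{n+2}{n-2}-q},
\]
whose exponent lies in $(1,\tfrac{n+2}{n-2})$ since $q\in(0,4/(n-2))$, and whose principal operator is coercive by the positive scalar curvature assumption. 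Existence of a positive smooth solution follows from standard variational arguments (direct minimization in $H^1$ or Mountain Pass), and a standard index computation for this semilinear problem yields $\deg(I-T_0,B_R,0)\neq 0$. By homotopy invariance, $\deg(I-T_1,B_R,0)\neq 0$, producing a positive solution of \eqref{q-schouten-subcritical}; elliptic regularity for the fully nonlinear equation, together with the uniform ellipticity supplied by \eqref{lip-f-schouten}, then places $u$ in $C^{2,\alpha}(M)$ for some $\alpha\in(0,1)$.

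The main obstacle I anticipate is the construction in the first step: one needs to continuously deform $\Gamma$ into $\Gamma_1$ inside the class of open symmetric cones whose complement is convex while simultaneously keeping $\bl$ outside the closure, and one needs an accompanying deformation of $f$ that vanishes on each $\p\Gamma_t$, is $C^1$ in $\Gamma_t$, is homogeneous of degree one, and satisfies the lower bound on $\p_{\lambda_i}f_t$ uniformly in $t$. A closely related technical point is to verify that the constants in Theorem \ref{exist-nlcf-compactness} depend continuously on $(f,\Gamma)$, which requires reading off the dependence in the Liouville-theorem-based compactness proof; without such continuity, the uniform bound needed for homotopy invariance is not available.
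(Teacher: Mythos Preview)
Your strategy matches the paper's: degree theory along a homotopy from the Yamabe case, with Theorem \ref{exist-nlcf-compactness} supplying uniform a priori bounds. The obstacle you flag---constructing the deformation---is resolved in the paper by the explicit linear formula
\[
f^t(\lambda):=f\bigl(t\lambda+(1-t)\sigma_1(\lambda)\vec{e}\bigr),\qquad
\Gamma^t:=\{\lambda:\ t\lambda+(1-t)\sigma_1(\lambda)\vec{e}\in\Gamma\}.
\]
Because $L_t:\lambda\mapsto t\lambda+(1-t)\sigma_1(\lambda)\vec{e}$ is linear, $\bR^n\setminus\Gamma^t=L_t^{-1}(\bR^n\setminus\Gamma)$ is convex, $\Gamma_1\subset\Gamma^t$ (so positive scalar curvature gives $\lambda(A_g)\in\Gamma^t$), and $\bl\notin\overline{\Gamma^t}$ follows from the convexity of the interior of $\bR^n\setminus\Gamma$ together with $\bl,(2-n)\vec{e}\in\bR^n\setminus\overline{\Gamma}$. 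The ellipticity and homogeneity of $f^t$ are inherited from $f$, and the constants in Theorem \ref{exist-nlcf-compactness} are then uniform in $t\in[0,1]$.

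Two smaller points where the paper differs from your sketch: (i) it first approximates $f$ by smooth functions (the claim at the start of \S\ref{sec-exist-and-compactness}) so that $f\in C^\infty(\Gamma)$, which is needed both to invoke Theorem \ref{exist-nlcf-compactness} (whose hypothesis includes $f\in C^1$) and to run Schauder theory up to $C^{4,\alpha}$; (ii) rather than recasting the fully nonlinear equation as a fixed point of a compact map, the paper uses the degree theory for second-order fully nonlinear elliptic operators from \cite{Li_degree} directly on $u\mapsto f^t(\lambda(A_{g_u}))-u^{-q}$, with $d_0=-1$ taken from \cite[Proof of Theorem 1.5]{MR3165241}. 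Your Leray--Schauder reformulation is workable in principle but less clean for fully nonlinear equations.
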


In Theorem \ref{exist-lcf}--\ref{exist-nlcf}, if $ f$ is in $C^{m,\beta}_{loc}(\Gamma)$, $m=1,2,\dots$ and $\beta\in (0,1)$, then estimate \eqref{compactness-1.6-schouten} of solutions $u$ can be strengthened to 
$\|u\|_{C^{m+2,\beta}(M^n,g)}+ \|1/u\|_{C^{m+2,\beta}(M^n,g)}\leq C$.

It is easy to see that the positive scalar curvature assumption on $(M^n,g)$ in Theorem \ref{exist-lcf} and \ref{exist-nlcf} can be weakened to 
the positivity of the first eigenvalue of the conformal Laplacian $-\Delta_g +
\frac {n-2}{ 4(n-1) } R_g$.

\subsection{Examples}\label{intro-example-sec}
   In this subsection, we provide some examples of Theorem \ref{thm-230525-0321}--\ref{exist-nlcf} by taking appropriate $(f,\Gamma)$. Further examples can be found in \S \ref{examplesection}. Some of these examples are given in terms of Ricci tensor. They follow from the above theorems by a linear transformation, see Appendix \ref{riccisection} for details as well as equivalent reformulations of our theorems in terms of Ricci tensor.

For any symmetric subset $\Omega$ of $\bR^n$, a continuous function defined on $\{\lambda\in\Omega\mid\lambda_1\geq \dots\geq\lambda_n\}$ corresponds to a continuous symmetric function on $\Omega$. Therefore,
throughout the paper, we only specify the definition of a symmetric function in the region $\{\lambda_1\geq\dots\geq\lambda_n\}$. 
Recall that $\bar{g}=|dx|^2$ is the Euclidean metric on $\bR^n$ and $\bar{g}_u\coloneqq u^{\frac{4}{n-2}}\bar{g}$.

\begin{example} 
For $n\geq 3$ and $2\leq i\leq n$, 
\begin{equation*}
\lambda_i(Ric_{\bar{g}_u})=1,~ u>0,~ \text{and}~u\in C^{1,1}_{loc}~ \text{a.e. in}~ \bR^n~ \implies u\equiv e^{\frac{n-2}{2}v},
\end{equation*}
where $v$ is of the form \eqref{nbubble} with $\bar x\in\bR^n$, $a,b>0$ satisfying $4(n-1)b^2 a^{-2}=1$; 
\begin{equation*}
\lambda_i(Ric_{\bar{g}_u})=0,~ u>0,~\text{and}~ u\in C^0~ \text{in}~ \bR^n~ \implies u~\text{is constant};
\end{equation*}
\begin{equation*}
\lambda_i(Ric_{\bar{g}_u})=0,~ u>0,~\text{and}~ u\in C^0~ \text{in}~ \bR^n\setminus\{0\}~ \implies~  u~ \text{is radially symmetric}.
\end{equation*}
However, none of the above results hold when $i=1$, as shown by counterexamples given in Remark \ref{rmk-230615-1129} and \ref{remark-1.3}.
\end{example}
\begin{example}
 For $n\geq 3$ and $1\leq i< j \leq n$,
\begin{equation*}
(\lambda_i+\dots+\lambda_j)(Ric_{\bar{g}_u})=1,~ u>0,~\text{and}~u\in C^{1,1}_{loc}~ \text{a.e. in}~ \bR^n~ \implies u\equiv e^{\frac{n-2}{2}v},
\end{equation*}
where $v$ is of the form \eqref{nbubble} with $\bar x\in\bR^n$, $a,b>0$ satisfying $4(n-1)(j-i+1)b^2 a^{-2}=1$; 
\begin{equation*}
(\lambda_i+\dots+\lambda_j)(Ric_{\bar{g}_u})=0,~ u>0,~ \text{and}~u\in C^0~ \text{in}~ \bR^n~ \implies u~\text{is constant};
\end{equation*}
\begin{equation*}
(\lambda_i+\dots+\lambda_j)(Ric_{\bar{g}_u})=0,~ u>0,~\text{and}~ u\in C^0~ \text{in}~ \bR^n\setminus\{0\}~ \implies~  u~ \text{is radially symmetric}.
\end{equation*}
\end{example}

\medskip

    For $n\geq 3$ and $p=1,\dots,n-1$, let
\begin{equation*}
    G_p(\lambda)\coloneqq p \sum\limits_{i\leq n-p}\lambda_i + (n-p)\sum\limits_{i>n-p}\lambda_i.
\end{equation*} 
On a locally conformally flat Riemannian manifold $(M^n,g)$, the quantity $G_p(\lambda(A_g))$, sometimes referred to as the $p$-Weitzenb\"ock curvatures, arises naturally from the 
Weitzenb\"ock formula for $p$-forms $\omega$:
\begin{equation*}
    \bigtriangleup \omega=\nabla^*\nabla \omega + G_p(\lambda(A_g)) \omega,
\end{equation*}
where $\bigtriangleup=dd^*+d^*d$ is the Hodge-de Rham Laplacian and $\nabla^*\nabla$ is the connection Laplacian; see \cite{MR2306044} and the references therein.
\begin{example}\label{240831-1248}
    For $n\geq 3$ and $p=1,\dots,n-2$, \begin{equation*}
G_p(\lambda(A_{\bar{g}_u}))=1,~u>0,~\text{and}~ u\in C^{1,1}_{loc}~ \text{a.e. in}~ \bR^n~ \implies~u\equiv e^{\frac{n-2}{2}v},
\end{equation*}
where $v$ is of the form \eqref{nbubble} with $\bar x\in\bR^n$, $a,b>0$ satisfying $2p(n-p)b^2 a^{-2}=1$; 
\begin{equation*}
G_p(\lambda(A_{\bar{g}_u}))=0,~ u>0,~\text{and}~u\in C^0~ \text{in}~ \bR^n~ \implies u~\text{is constant};
\end{equation*}
\begin{equation*}
   G_p(\lambda(A_{\bar{g}_u}))=0,~u>0,~\text{and}~u\in C^0~ \text{in}~ \bR^n\setminus\{0\}~ \implies~  u~ \text{is radially symmetric}.
\end{equation*}
However, none of the above results hold when $p=n-1$ , as shown by counterexamples given in Remark \ref{rmk-230615-1129} and \ref{remark-1.3}.
\end{example}

\begin{example}
        Let $(M^n,g)$ be a closed, smooth, $n$-dimensional Riemannian manifold with positive scalar curvature, $n\geq 3$. For $0<q<4/(n-2)$ and $2\leq i\leq n$, there exists a positive function $u\in C^{2,\alpha}(M^n)$ such that
        \begin{equation*}
            (\lambda_1+\dots+\lambda_i)(Ric_{g_u})=u^{-q}\quad\text{on}~M^n.
        \end{equation*}
        Assume further that $(M^n,g)$ is locally conformally flat. For $1\leq i\leq n$, there exists a positive function $u\in C^{2,\alpha}(M^n)$ such that
        \begin{equation*}
            (\lambda_1+\dots+\lambda_i)(Ric_{g_u})=1 \quad\text{on}~M^n.
        \end{equation*}        
\end{example}

\begin{example}
        Let $(M^n,g)$ be a closed, smooth, $n$-dimensional Riemannian manifold with positive scalar curvature, $n\geq 3$. For $0<q<4/(n-2)$ and $n/2\leq p\leq n-2$, there exists a positive function $u\in C^{2,\alpha}(M^n)$ such that
        \begin{equation*}
            G_p(\lambda(A_{g_u}))=u^{-q} \quad\text{on}~M^n.
        \end{equation*}
        Assume further that $(M^n,g)$ is locally conformally flat. For $n/2\leq p\leq n-1$, there exists a positive function $u\in C^{2,\alpha}(M^n)$ such that
        \begin{equation*}
            G_p(\lambda(A_{g_u}))=1\quad\text{on}~M^n.
        \end{equation*}        
\end{example}

\subsection{Ideas of the proofs}
We now describe the strategies and challenges involved 
in establishing the Liouville-type theorems and local gradient estimates in dimensions $n\geq 3$.
Here and throughout the paper, we use $B_r(x)$ to denote the ball of radius $r$ centered at $x$ in $\bR^n$, $B_r$ means $B_r(0)$, and $B$ means $B_1(0)$ unless otherwise stated.

Theorem \ref{thm-230525-0321} is proved by the method of moving spheres, a variant of the method of moving planes,  as in the proof of Theorem A in \cite{LiLiacta}, see also
\cite{MR2001065} and \cite{Li-Zhu_Duke}.
The proof of Theorem A crucially relied on the superharmonicity of $u$ in 
addressing the singularity of $u$ at infinity.
The novelty in the proof of Theorem \ref{thm-230525-0321} 
is the treatment 
of the singularity of $u$ at infinity, where $u$ is not necessarily superharmonic since $\Gamma$ is not necessarily contained in $\Gamma_1$.  Our treatment makes use of earlier results by Caffarelli, Li and Nirenberg in \cite{CLN3} where they introduced the concept of lower- (upper-)conical to extend a supersolution (subsolution) of a fully nonlinear degenerate elliptic equation in a punctured ball across the puncture. 
We prove that if $u$ is lowerconical at $0$ then ``$f(\lambda(A^u))\geq u^{-p}$ in $B\setminus\{0\}$'' $\Rightarrow$ ``$f(\lambda(A^u))\geq u^{-p}$ in $B$''.  We also prove that when $\bl\notin\overline\Gamma$, ``$\lambda(A^u)\in\overline\Gamma$ in $B\setminus\{0\}$'' $\Rightarrow$ ``$\liminf_{x\to 0}u(x)>0$ and $u$ is lowerconical at $0$''. 
Moreover, we demonstrate that whenever $\bl\in\overline\Gamma$, there are counterexamples for both assertions.
The proof of Theorem \ref{thm-230525-0321} also makes use of the strong comparison principle for fully nonlinear elliptic equations for a $C^{1,1}_{loc}$ (lower semi-continuous) supersolution and an upper semi-continuous ($C^{1,1}_{loc}$) subsolution as developed in \cite{CLN3} and \cite{MR4181003}.

The proof of Theorem \ref{thm-230329-1327} is  divided into two cases: $\bl\notin\overline\Gamma$ and $\bl\in\Gamma$.  
 When $\bl\notin\overline\Gamma$, the proof follows the strategy in \cite{Liarma2007,Li2009}: 
 It is sufficient to prove that 
``$\lambda(A^w)\in\p\Gamma$ in $B$, $\lambda(A^u)\in\p\Gamma$ in $B\setminus \{0\}$,  and $w\leq u$ on $\p B$''
 $\Rightarrow$ ``$w\leq u$ in $B\setminus \{0\}$''.
 Once again, the proof in \cite{Liarma2007,Li2009} crucially relied on
 the superharmonicity of $u$
 stemming from the condition $\Gamma\subset \Gamma_1$ there. 
Our novelty in this case is a similar treatment of the isolated singularity as that of Theorem \ref{thm-230525-0321}. We prove that if $u$ is lowerconical at $0$ then ``$\lambda(A^u)\in\overline{\Gamma}$ in $B\setminus\{0\}$'' $\Rightarrow$ ``$\lambda(A^u)\in\overline\Gamma$ in $B$". We also make use of the above mentioned criteria of lowerconical behavior of $u$ near $0$ (see Proposition \ref{removablesingularity}), together with a comparison principle established by Li, Nguyen and Wang in \cite{MR3813247}. However
 the strategy does not work when $\bl\in\Gamma$, since
 for any such $\Gamma$, there exist smooth $u$ and $w$ on
 $B\setminus\{0\}$ and $B$ respectively satisfying 
 $\lambda(A^w)\in\p\Gamma$ in $B$, $\lambda(A^u)\in\p\Gamma$ in $B\setminus \{0\}$,  and $w=u$ on $\p B$, but $w> u$ in $B\setminus \{0\}$.  Our proof in this case uses a different strategy. 
 In fact, we have proved a more general result: Solutions of 
 $\lambda(A^u)\in \bR^n\setminus\Gamma$ in $\bR^n$ 
 must be constant (see Theorem \ref{thm-230528-0404}).

The proof of Theorem \ref{230918-1857} is also divided into two cases:   $\bl\notin\overline\Gamma$ and $\bl\in\Gamma$.  
When $\bl\notin\overline\Gamma$, the proof is in similar spirit to that of
Theorem \ref{thm-230329-1327} in the corresponding case.
The proof in the case 
 $\bl\in\Gamma$ is  intricate and relies on the utilization of several key results. 
 First, there is a criterion
 for the upperconical 
 behavior of $u$ (see Proposition \ref{230805-2330}); 
 Second, if $u$ is upperconical at $0$ then 
 ``$\lambda(A^u)\in \mathbb R^n\setminus \Gamma$ in 
 $B\setminus\{0\}$'' $\Rightarrow $ ``$\lambda(A^u)\in \mathbb R^n\setminus \Gamma$ in 
 $B$"; Thirdly,  a Harnack inequality holds: 
 ``$\lambda(A^u)\in\p\Gamma$ in $B_2$''  $\Rightarrow$ ``$|\nabla \log u|\leq C(\Gamma)$ in $B_1$" (see Proposition \ref{prop-230614-1100}); 
 The last is Theorem \ref{thm-230528-0404}.

Theorem \ref{localgradest} and \ref{nlocalgradest} are proved using a method developed in \cite{Li2009} to establish local gradient estimates 
of solutions assuming a one-sided bound of the solutions:  It suffices to 
establish a Liouville-type theorem and to derive
local gradient estimates assuming a
two-sided bound of solutions. We prove local gradient estimates
assuming a two-sided bound of solutions in Theorem \ref{ahequlocalgradest}, which holds for more general equations on more general cones $\Gamma$. Using Theorem \ref{thm-230329-1327} and \ref{1327-prime}, we prove
the local gradient estimates assuming a one-sided  bound (see Theorem \ref{thm-230603-1124}). 
It is important to note that
 whenever the required 
 Liouville-type theorem necessary to implement 
the above strategy 
fails (i.e., when $\bl\in\p\Gamma$ for equation  \eqref{pequ}
and $-\bl\in\p\Gamma$
for equation \eqref{equ} respectively), the 
 local gradient estimates assuming a one-sided bound of solutions actually do not hold, even though
the estimates assuming a two-sided bound remain valid.

The proof of Theorem \ref{exist-lcf} utilizes the theorem of Schoen and Yau in \cite{Schoen-Yau} on the existence of developing maps, where the assumptions of local conformal flatness and positive scalar curvature are used.
The $C^0$ and $C^1$ estimates for solutions are obtained as in \cite[Proof of Theorem 1.1]{LiLiacta}.
These estimates enable us to apply Caffarelli's theorems \cite[Theorem 2,3]{Caff89} to obtain $C^{1,\alpha}$ and $C^{2,\alpha}$ estimates. The compactness part of Theorem \ref{exist-lcf} is established.
For the existence part of Theorem \ref{exist-lcf}, we
make a homotopy connecting $(f,\Gamma)$ to $(\sigma_1,\Gamma_1)$ and apply the degree theory for fully nonlinear elliptic equations in \cite{Li_degree}. 
The total degree of solutions of $(\sigma_1,\Gamma_1)$ is $-1$
since the scalar curvature is positive. 
The existence part of Theorem \ref{exist-lcf} then follows from the homotopy invariance of the degree.
To prove Theorem \ref{exist-nlcf-compactness},
we first establish an upper bound of solutions by applying Theorem \ref{thm-230525-0321} and local derivative estimates. 
Theorem \ref{localgradest} enables us to obtain the $C^0$ and $C^1$ estimates of solutions. The remaining parts of Theorem \ref{exist-nlcf-compactness} are proved similarly to Theorem \ref{exist-lcf}. Theorem \ref{exist-nlcf} is proved by Theorem \ref{exist-nlcf-compactness} and a degree argument as in the proof of Theorem \ref{exist-lcf}.

\smallskip

As a byproduct of our studies on isolated singularities, we obtain the following
\begin{theorem}\label{230805-2045}
    For $n\geq 3$, let $\Gamma$ satisfy \eqref{eqn-230331-0110} with $\bl\notin\overline{\Gamma}$ and $-\bl\in\Gamma$. Assume that $u$ is a positive continuous viscosity solution of
    $\lambda(A^u)\in\p\Gamma$ in $B\setminus\{0\}$ satisfying $u(x)=o(|x|^{2-n})~\text{as}~x\rightarrow 0$. Then $u$ can be extended as a positive function in
     $C^{0,1}_{loc}(B)$ which  satisfies $\lambda(A^u)\in\p\Gamma$ in $B$.
\end{theorem}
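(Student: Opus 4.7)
The plan is to decompose the conclusion into two parts: extending the subsolution property ``$\lambda(A^u)\in\overline\Gamma$'' and the supersolution property ``$\lambda(A^u)\in\bR^n\setminus\Gamma$'' across the puncture separately, since together these give $\lambda(A^u)\in\p\Gamma$ on all of $B$. The subsolution extension will exploit lower-conical behavior of $u$ at the origin, while the supersolution extension will exploit upper-conical behavior. Once both are in hand, the claimed $C^{0,1}_{loc}$ regularity is immediate from the local Lipschitz estimate of Li, Nguyen and Wang \cite{MR3813247} applied on $B$.

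For the subsolution extension, I would invoke directly the results summarized in the introduction. The hypothesis $\bl\notin\overline{\Gamma}$ together with the fact that a viscosity solution of $\lambda(A^u)\in\p\Gamma$ is in particular a viscosity subsolution of ``$\lambda(A^u)\in\overline{\Gamma}$'' on $B\setminus\{0\}$ implies, via the criterion referenced as Proposition \ref{removablesingularity}, that $\liminf_{x\to 0}u(x)>0$ and that $u$ is lower-conical at $0$. The removable-singularity lemma of the paper (``$u$ lower-conical at $0$ plus $\lambda(A^u)\in\overline{\Gamma}$ on $B\setminus\{0\}$ implies $\lambda(A^u)\in\overline{\Gamma}$ on $B$'') then promotes the subsolution property to the full ball.

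For the supersolution extension, this is where the extra hypotheses $-\bl\in\Gamma$ and $u(x)=o(|x|^{2-n})$ come into play. The plan is to apply the upper-conical criterion referenced as Proposition \ref{230805-2330}: since $u$ is a viscosity supersolution of ``$\lambda(A^u)\in\bR^n\setminus\Gamma$'' on $B\setminus\{0\}$, the growth bound $u=o(|x|^{2-n})$ together with $-\bl\in\Gamma$ should force $u$ to be upper-conical at $0$. The structural reason is that the conformal Hessian of the fundamental-solution profile $|x|^{2-n}$ vanishes identically, so its image in $\lambda$-space sits on every $\p\Gamma$; the condition $-\bl\in\Gamma$ keeps the cone strictly away from the ray $\mathbb{R}_+(-\bl)$ that governs such singularities, while $u(x)=o(|x|^{2-n})$ excludes the borderline blow-up quantitatively. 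Once upper-conicity is established, the companion extension lemma (``$u$ upper-conical at $0$ plus $\lambda(A^u)\in\bR^n\setminus\Gamma$ on $B\setminus\{0\}$ implies the same on $B$'') completes the supersolution extension. Combining both extensions, $u$ is a viscosity solution of $\lambda(A^u)\in\p\Gamma$ on $B$, locally bounded from above and below away from zero and infinity, and \cite{MR3813247} upgrades it to $C^{0,1}_{loc}(B)$.

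I expect the main obstacle to be verifying the upper-conical behavior, i.e.\ the invocation of Proposition \ref{230805-2330}. Extracting a one-sided pointwise bound near the puncture from a degenerate Pucci-type supersolution condition is the subtle step, and both assumptions $-\bl\in\Gamma$ and the growth rate $u(x)=o(|x|^{2-n})$ are sharp: dropping $-\bl\in\Gamma$ (for instance, allowing $-\bl\in\p\Gamma$) makes $u\equiv|x|^{2-n}$ a counter-example since its conformal Hessian vanishes and hence $\lambda(A^u)\in\p\Gamma$ on $B\setminus\{0\}$ with no continuous positive extension, while dropping the growth assumption allows arbitrarily large multiples of the fundamental profile. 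The lower-conical step and the two extension lemmas, by contrast, are already packaged as earlier results in the paper, so that once the upper-conical criterion is applied the whole argument reduces to a clean assembly of previously-established isolated-singularity tools.
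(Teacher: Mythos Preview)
Your approach is correct and is exactly the argument the paper gives: the proof there reads in full ``This theorem is a direct corollary of Lemma \ref{remove}, Proposition \ref{removablesingularity}, Proposition \ref{230805-2330} and Proposition \ref{prop-230614-1100},'' which is precisely your assembly of lower-conical extension, upper-conical extension, and the local Lipschitz estimate.

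Two small corrections. First, you have the super/sub terminology reversed relative to Definition \ref{def-230607-0451}: in this paper $\lambda(A^u)\in\overline\Gamma$ is the \emph{supersolution} condition (tested from below, LSC) and $\lambda(A^u)\in\bR^n\setminus\Gamma$ is the \emph{subsolution} condition (tested from above, USC). Second, your counter-example $u=|x|^{2-n}$ does not satisfy the hypothesis $u=o(|x|^{2-n})$, so it illustrates the sharpness of the growth assumption, not the sharpness of $-\bl\in\Gamma$; the paper's counter-examples when $-\bl\notin\Gamma$ (Examples \ref{reex3}--\ref{reex4}) are different functions that do satisfy the growth bound but fail to be upperconical.
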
 

For $1\leq k<n/2$, $\Gamma=\Gamma_k$ and $\Gamma=\bR^n\setminus(-\overline{\Gamma_k})$ satisfy the conditions of above theorem. 
When $\Gamma=\Gamma_1$, equation
$\lambda(A^u)\in\p\Gamma$  is $\Delta u=0$. When $\Gamma=\Gamma_k$, $2\leq k<n/2$, Li and Nguyen \cite{Bocherpaper} proved that $u$ can be extended to a positive function in $C^{0,\beta}_{loc}(B)$ for any $\beta\in(0,1)$, though it was not known that $\lambda(A^u)\in\p\Gamma_k$ in $B$. 

The condition ``$\bl\notin\overline{\Gamma}$ and $-\bl\in\Gamma$" is equivalent to stating that $\{\lambda\in\bR^n\mid \lambda_n+\widetilde{\mu}\lambda_{n-1}>0\}\subset\Gamma\subset\{\lambda\in\bR^n\mid\lambda_1+\mu\lambda_2>0\}$ for some $\mu,\widetilde{\mu}\in[0,\infty)$. See Lemma \ref{240901-1559}.

  The assumptions $\bl\notin\overline{\Gamma}$ and $-\bl\in\Gamma$ 
  in Theorem \ref{230805-2045} are optimal in the sense that if one of them is not satisfied then 
   there exists a positive smooth solution of $\lambda(A^u)\in\p\Gamma$ in $B\setminus\{0\}$ satisfying $u(x)=o(|x|^{2-n})$ as $x\to 0$ which can not be extended as
   a solution in $B$.  Indeed a solution in $B$ must be both lowerconical and upperconical at $0$
   (see Remark 
    \ref{necessityremark}), while solutions in $B\setminus\{0\}$ given by
    Example \ref{reex1}--\ref{reex4} do not have such property. Moreover, the assumption $u(x)=o(|x|^{2-n})$ is also optimal since $A^u$ vanishes if we take $u(x)=|x|^{2-n}$.
    
\subsection{Organization of the paper}
In $\S \ref{sec-2}$ we collect some preliminaries
which include  the definition of viscosity solutions, a comparison principle, and two natural numbers associated with cone $\Gamma$. In $\S \ref{sec-3}$ we classify all radial viscosity solutions of equation \eqref{eqn-221204-0213}, some of which serve as comparison functions in our studies of isolated singularities. In particular, we obtain in $\S\ref{sec-3.3}$ necessary and sufficient conditions of solvability of the Dirichlet problem for equation \eqref{eqn-221204-0213} on any annulus with constant boundary conditions. In $\S \ref{sec-4}$ we discuss the removability of isolated singularities for equations  \eqref{nequation} and \eqref{eqn-221204-0213}, where the concepts of lower- and upper-conical singularities are given. In \S \ref{lowerconicalcrit} and \S \ref{upperconicalcrit},
we derive criteria for lower- and upper-conical behaviors and discuss their optimality.
Theorem \ref{thm-230525-0321} is proved in \S \ref{sec-5}. Theorem \ref{thm-230329-1327} is proved in \S \ref{sec-6.1}. Theorem \ref{230918-1857} and \ref{230805-2045} are proved in \S \ref{sec-6.2}. Theorem \ref{localgradest} and \ref{nlocalgradest} are proved in \S \ref{sec-7}.
Theorem \ref{exist-lcf}--\ref{exist-nlcf} are proved in \S \ref{sec-exist-and-compactness}. In \S \ref{examplesection} we give further examples of $(f,\Gamma)$ where the $\Gamma$'s are not necessarily contained in $\Gamma_1$.

\section{Preliminaries}\label{sec-2}

\subsection{Viscosity solutions and comparison principles}
Let $\Omega\subset\bR^n$ be an open set. We denote $LSC(\Omega)$ as the set of lower semi-continuous functions on $\Omega$, that is, $v\in LSC(\Omega)$ if and only if $v:\Omega\longrightarrow (-\infty, +\infty]$, $v \not\equiv +\infty$ in $\Omega$, satisfies
\[\liminf\limits_{x\rightarrow x_0} v(x) \geq v(x_0)~~\text{for every}~x_0\in \Omega.\] 
We say $v\in USC(\Omega)$ whenever $-v\in LSC(\Omega)$. 

\begin{definition} \label{def-230607-0451}
For $\Gamma\subset\bR^n$ satisfying \eqref{eqn-230331-0110}
, $n\geq 2$, a function $v: \Omega \rightarrow (-\infty,\infty]$ (respectively, $v: \Omega \rightarrow [-\infty, \infty)$) is said to satisfy
		\begin{equation*}
		    \lambda (A[v]) \in \overline{\Gamma} \,\,  \text{in}\,\,\Omega \quad (\text{respectively}, \lambda (A[v]) \in\,\,\bR^n\setminus \Gamma \,\,  \text{in}\,\,\Omega)
      \end{equation*}
		in the viscosity sense, if $v \in LSC(\Omega)$ (respectively, $v \in USC(\Omega)$)
  and for any $x_0\in \Omega$  and any $\varphi \in C^2$ satisfying $\varphi (x_0) = v (x_0)$ and $\varphi (x) \leq v(x)$ near $x_0$ (respectively, $\varphi(x_0) = v(x_0)$, $\varphi (x) \geq v(x)$ near $x_0$),  there holds
		\begin{equation*}
		    \lambda(A[v]) (x_0) \in \overline{\Gamma} \,\, (\text{respectively,} \,\, \bR^n\setminus \Gamma).
		\end{equation*}
  We also call such $v$ a viscosity supersolution (respectively, subsolution) of $\lambda(\av) \in \p \Gamma$ in $\Omega$.
        We say that $v$ is a viscosity solution of $\lambda(\av)\in \p \Gamma$ in $\Omega$ if $v$ is both a viscosity subsolution and a viscosity supersolution in $\Omega$. In particular, a viscosity solution $v$ is continuous in $\Omega$. 
\end{definition}

It is a standard fact that, due to $\Gamma+\Gamma_n\subset \Gamma$, any $C^{1,1}$ solution $v$
(in almost everywhere sense) is also a viscosity solution; see e.g. \cite{Caffarelli1995FullyNE}.

\begin{theorem}[(\cite{MR3813247})]\label{thm-220721-0849}
For $n\ge 2$, let 
    $\Omega\subset\bR^n$ be a bounded open set and $\Gamma$ satisfy \eqref{eqn-230331-0110}. Assume that $v_1\in LSC(\overline{\Omega})$,   $ v_2\in USC(\overline{\Omega})$, 
    $v_1$ $\geq$ ($>$, respectively) $v_2$ on $\p\Omega$, 
    $\lambda(A[v_1])\in \overline{\Gamma}$ and 
    $ \lambda (A[v_2])\in \mathbb{R}^n \setminus \Gamma$ 
    in $\Omega$.  Then
   $v_1$ $\geq$ ($>$, respectively) $v_2$ in $\Omega$.
\end{theorem}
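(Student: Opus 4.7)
The plan is to follow the viscosity comparison strategy of \cite{MR3813247}: combine a sup/inf-convolution regularization with Alexandrov's second-order differentiability to extract a classical touching point, and then derive a contradiction from the degenerate ellipticity encoded by $\Gamma+\Gamma_n\subset\Gamma$.

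I would first reduce to the strict case. The non-strict version follows from the strict one because $v_1+\epsi$ is still a viscosity supersolution (since $A[v+\epsi]=e^{-2\epsi}A[v]$ and $\overline{\Gamma}$ is a cone); applying the strict case to $(v_1+\epsi,v_2)$ and letting $\epsi\to 0^+$ gives the non-strict conclusion. For the strict case, I would argue by contradiction: $w:=v_2-v_1\in USC(\overline{\Omega})$ is strictly negative on $\partial\Omega$, so if the conclusion fails then its maximum $M:=\max_{\overline{\Omega}}w>0$ is attained at an interior point $x_0$.

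To access a second-order contact condition at $x_0$, I would regularize by the standard
\begin{equation*}
  v_{1,\delta}(x):=\inf_{y}\bigl\{v_1(y)+\tfrac{1}{\delta}|x-y|^2\bigr\},\qquad v_2^\delta(x):=\sup_{y}\bigl\{v_2(y)-\tfrac{1}{\delta}|x-y|^2\bigr\},
\end{equation*}
which are semi-concave and semi-convex respectively (hence twice differentiable almost everywhere by Alexandrov), and which inherit the viscosity inequalities on the relevant subdomains via a routine calculation. For $\delta$ small the supremum of $v_2^\delta-v_{1,\delta}$ is still attained in $\Omega$. A Jensen-type perturbation---adding a small affine term to locate an Alexandrov-regular maximizer---yields a point $\tilde x_\delta\in\Omega$ at which both $v_{1,\delta}$ and $v_2^\delta$ are twice differentiable and $(v_2^\delta-v_{1,\delta})(\tilde x_\delta)>0$; at $\tilde x_\delta$ classical calculus gives $\nabla v_{1,\delta}=\nabla v_2^\delta$ and $\nabla^2 v_{1,\delta}\ge\nabla^2 v_2^\delta$.

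The contradiction then comes from the cone structure. Shifting $v_2^\delta$ by the positive constant $c:=(v_2^\delta-v_{1,\delta})(\tilde x_\delta)$ equates the values at $\tilde x_\delta$ and only multiplies $A[v_2^\delta]$ by the positive scalar $e^{2c}$, preserving the subsolution condition since $\Gamma$ is a cone. With values, gradients, and lower-order terms now matching, the difference satisfies
\begin{equation*}
  A[v_2^\delta-c](\tilde x_\delta)-A[v_{1,\delta}](\tilde x_\delta)=e^{-2v_{1,\delta}(\tilde x_\delta)}\bigl(\nabla^2 v_{1,\delta}-\nabla^2 v_2^\delta\bigr)(\tilde x_\delta)\in\overline{\Gamma_n}.
\end{equation*}
To upgrade this semi-definiteness to strict positive definiteness---needed to invoke the cone inclusion---I would pre-perturb $v_2$ by $+\eta|x|^2$ for small $\eta>0$; a direct computation shows this adds $-2\eta e^{-2v_2}I+O(\eta)$ to $A[v_2]$, making $v_2+\eta|x|^2$ a strict subsolution and forcing a $+2\eta I$ gap in the Hessian comparison. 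The cone property $\overline{\Gamma}+\Gamma_n\subset\Gamma$, which follows from $\Gamma+\Gamma_n\subset\Gamma$ together with the openness and cone character of $\Gamma$, then places $\lambda(A[v_2^\delta-c])(\tilde x_\delta)$ in $\Gamma$, contradicting the strict subsolution condition. The principal technical obstacle I anticipate is the rigorous verification that the inf/sup-convolutions transfer the viscosity inequalities through the nonlinear conformal factor $e^{-2v}$; this is a nontrivial but standard viscosity computation, and the verification that the Möbius operator $F(v,p,X):=-e^{-2v}X+e^{-2v}(p\otimes p-\tfrac12|p|^2I)$ is proper degenerate elliptic in the sense needed for the regularization is exactly what makes the cone condition $\Gamma+\Gamma_n\subset\Gamma$ indispensable.
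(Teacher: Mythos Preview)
Your proposal takes a very different route from the paper. The paper does not argue from scratch: it substitutes $\psi_i=\tfrac12 e^{-2v_i}$, under which $A[v_i]=F[\psi_i]:=\nabla^2\psi_i-(2\psi_i)^{-1}\nabla\psi_i\otimes\nabla\psi_i-(4\psi_i)^{-1}|\nabla\psi_i|^2 I$, observes the exact homogeneity $F[c\psi]=cF[\psi]$, and then runs a continuation in $c\in(0,1)$, invoking \cite[Theorem~3.2]{MR3813247} as a black box to exclude interior touching of $c\psi_1$ with $\psi_2$. What you outline is, in effect, a self-contained attempt at the content of that black box.

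The obstacle you flag at the end---transferring the viscosity inequalities through the factor $e^{-2v}$---is not the real one. Since $\Gamma$ is a cone, the conditions $\lambda(A[v])\in\overline\Gamma$ and $\lambda(A[v])\in\bR^n\setminus\Gamma$ are equivalent to the same conditions on $e^{2v}A[v]=-\nabla^2 v+\nabla v\otimes\nabla v-\tfrac12|\nabla v|^2 I$, which has no zeroth-order dependence on $v$; the inf/sup-convolutions then inherit the inequalities by the standard translation argument with no complication from the conformal factor.

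The genuine gap is in your strictification step. The $+\eta|x|^2$ perturbation does add $-2\eta I$ to the Hessian part of $e^{2v}A[v]$, but it also shifts the gradient by $2\eta x$; since the first-order term $\nabla v\otimes\nabla v-\tfrac12|\nabla v|^2 I$ is quadratic in $\nabla v$, this produces an error of size roughly $\eta\cdot\operatorname{diam}(\Omega)\cdot|\nabla v_{1,\delta}|$ in the matrix difference at the Jensen maximum. That error is of the \emph{same order in $\eta$} as the desired $2\eta I$ gap, and its coefficient $|\nabla v_{1,\delta}|\sim \delta^{-1/2}$ (the Lipschitz constant of the inf-convolution) is not under your control. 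For the same reason, the claim that $v_2+\eta|x|^2$ is a ``strict subsolution'' is unjustified in the viscosity sense: a test function $\varphi$ touching from above may have arbitrarily large gradient, so the first-order correction $O(\eta|\nabla\varphi|)$ need not be dominated by the $-2\eta I$ shift. Without a different strictification device (such as the multiplicative scaling in the $\psi$-variable that the paper's substitution affords), the argument stalls at $\lambda\in\partial\Gamma$ and yields no contradiction.
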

The above theorem is a consequence of \cite[Theorem~3.2]{MR3813247}.  Indeed, denote
$
    U := \{M\in S^{n \times n}: \lambda( M) \in \Gamma \}$ and $
\psi_i=2^{-1}e^{-2 v_i}$. Then 
$
   A[v_i] = F[\psi_i]:= 
   \nabla^2 \psi_i - (2\psi_i)^{-1} \nabla \psi_i \otimes \nabla\psi_i - (4\psi_i)^{-1} |\nabla \psi_i|^2 I$.  
   We know that $F[\psi_1]\in \overline U$ and $F[\psi_2] \notin U$ in $\Omega$.  For 
   small  $c>0$, $c\psi_1<\psi_2$ on $\overline \Omega$.
    By \cite[Theorem~3.2]{MR3813247} and using $\psi_1\le \psi_2$ on
   $\partial \Omega$,
   we may increase $c$ to $1$ and obtain $c\psi_1<\psi_2$ in $\Omega$ for all $c<1$.

\subsection{Two useful numbers related to $\Gamma$}
For a cone $\Gamma$ satisfying \eqref{eqn-230331-0110}, the following two numbers are useful:
\begin{equation} \label{eqn-220910-0831}
      \mgn\coloneqq \inf \{c\mid  (c,-1,\dots,-1)\in \overline{\Gamma}\}, 
    \ 
    \ 
      \mg \coloneqq \sup \{c\mid  (-c,1,\dots,1)\in \overline{\Gamma}\}.
\end{equation}
Here we follow the convention: $\inf \emptyset = +\infty$. 
By \eqref{eqn-230331-0110}, it holds that $0\leq \mgn,\mg\leq +\infty$.

Actually, we have $\mgn = \beta ( \in [0,\infty) )$ if $(\beta,-1,\ldots,-1) \in \p\Gamma$ and $\mgn = +\infty$ if $(1,0,\ldots,0) \in \p\Gamma$. Similarly, $\mg = \beta ( \in [0,\infty) )$ if $(-\beta,1,\ldots,1) \in \p\Gamma$ and $\mg = +\infty$ if $(-1,0,\ldots,0) \in \p\Gamma$.
In particular, $\mgn=1$ (respectively, $\mgn>1$, $\mgn<1$) if and only if $\bl\in\p\Gamma$ (respectively, $\bl\notin\overline{\Gamma}$, $\bl\in\Gamma$).   Similarly, 
   $\mg=1$ (respectively, $\mg>1$, $\mg<1$) if and only if $-\bl\in\p\Gamma$ (respectively, $-\bl\in\Gamma$, $-\bl\notin\overline\Gamma$).

The two numbers $\mu_\Gamma^+$ and $\mu_\Gamma^-$ are related by a bijection from  the set of 
cones satisfying \eqref{eqn-230331-0110} to itself.
Let  $\Phi : \Gamma \mapsto \bR^n\setminus (-\overline{\Gamma})$. It is easy to check that 
$\Phi^2 = \operatorname{id}$, and
\begin{equation*}
    \mg=\mu^-_{\Phi(\Gamma)}.
\end{equation*}

Clearly when $\Gamma \subset \Gamma'$, we have $\mu_{\Gamma}^+ \leq \mu_{\Gamma'}^+$ and $\mu_{\Gamma}^- \geq \mu_{\Gamma'}^-$.
    It is easy to see that 
     \begin{equation*}
        \{\mg\mid \Gamma \,\,\text{satisfies} \,\,\eqref{eqn-230331-0110} \}
        =
        \{\mgn\mid \Gamma \,\,\text{satisfies} \,\,\eqref{eqn-230331-0110} \}
        =
        [0,+\infty].
    \end{equation*}
    The definitions of $\mg$ and $\mgn$ are consistent with \cite{Bocherpaper} where $\Gamma \subset \Gamma_1$ was assumed. 
    Note that 
    \begin{equation*}
        \{\mg \mid  \Gamma \subset \Gamma_1\,\,\text{satisfying}\,\,\eqref{eqn-230331-0110} \} = [0,n-1],
        \,\,
        \{\mgn \mid  \Gamma \subset \Gamma_1\,\,\text{satisfying}\,\,\eqref{eqn-230331-0110} \} = [n-1,+\infty].
    \end{equation*} 

    For $\Gamma_k$ defined in the introduction, $\mu_{\Gamma_k}^+ = \mu_{\bR^n\setminus(-\overline{\Gamma_k})}^- = (n-k)/k$ and  $\mu_{\Gamma_k}^- = \mu_{\bR^n\setminus(-\overline{\Gamma_k})}^+ = +\infty$.

    As in \S \ref{intro-example-sec}, we order $\lambda=(\lambda_1,\dots,\lambda_n)$ as $\lambda_1\geq\dots\geq\lambda_n$.
     \begin{lemma}\label{240901-1559}
         For $n\geq 2$, let $\mu\in[0,\infty)$ be a constant.
         \begin{enumerate} [label=(\alph*)]   
    \item The largest cone $\Gamma$ satisfying \eqref{eqn-230331-0110} and $(\mu,-1,\dots,-1)\in\p\Gamma$ is $\{\lambda\in\bR^n\mid \lambda_1+\mu\lambda_2>0\}$.\\
    \item The smallest cone $\Gamma$ satisfying \eqref{eqn-230331-0110} and $(-\mu,1,\dots,1)\in\p\Gamma$ is $\{\lambda\in\bR^n\mid \lambda_n+\mu\lambda_{n-1}>0\}$.    \end{enumerate}
     \end{lemma}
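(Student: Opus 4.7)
The plan is to use two structural consequences of \eqref{eqn-230331-0110} available for every such cone $\Gamma$: $\Gamma_n\subset\Gamma$ and $\p\Gamma+\Gamma_n\subset\Gamma$. The latter is proved by approximating $x\in\p\Gamma$ by $x_k\in\Gamma$ and writing $x+\eta=x_k+(\eta+(x-x_k))$, where $\eta+(x-x_k)\in\Gamma_n$ for $k$ large by openness of $\Gamma_n$. A second ingredient used throughout is that $\eta\in\Gamma_n$ strictly increases every order statistic, $(\lambda+\eta)_{(k)}>\lambda_{(k)}$, which follows from the identity $\lambda_{(k)}=\max_{|S|=k}\min_{i\in S}\lambda_i$ (or its counterpart for the smallest entries).

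For (a), set $\widetilde\Gamma:=\{\lambda\in\bR^n:\lambda_1+\mu\lambda_2>0\text{ when }\lambda_1\geq\cdots\geq\lambda_n\}$ and $p_0:=(\mu,-1,\dots,-1)$. Openness, symmetry, conicity, the inclusion $\widetilde\Gamma+\Gamma_n\subset\widetilde\Gamma$ (from the order-statistic monotonicity), and $p_0\in\p\widetilde\Gamma$ are routine, so $\widetilde\Gamma$ does satisfy \eqref{eqn-230331-0110} with $p_0\in\p\widetilde\Gamma$. The substantive step is to show that every $\Gamma$ satisfying \eqref{eqn-230331-0110} with $p_0\in\p\Gamma$ lies inside $\widetilde\Gamma$. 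Assume for contradiction that some sorted $\lambda\in\Gamma$ has $\lambda_1+\mu\lambda_2\leq 0$. When $\mu=0$, this forces every $\lambda_i\leq 0$, hence $\lambda\in -\overline{\Gamma_n}$, contradicting $\Gamma\cap(-\overline{\Gamma_n})=\emptyset$. When $\mu>0$, using openness of $\Gamma$ one passes to the strict inequality $\lambda_1+\mu\lambda_2<0$ by replacing $\lambda$ with $\lambda-\delta(1,\ldots,1)$ for small $\delta>0$; this forces $\lambda_2<0$ and makes the interval $(\lambda_1/\mu,-\lambda_2)\cap(0,\infty)$ nonempty. For any such $s$, the vector $sp_0-\lambda=(s\mu-\lambda_1,\,-s-\lambda_2,\dots,-s-\lambda_n)$ has all strictly positive entries, so it belongs to $\Gamma_n$; since $sp_0\in\p\Gamma$ by conicity, the identity $sp_0=\lambda+(sp_0-\lambda)\in\Gamma+\Gamma_n\subset\Gamma$ contradicts $sp_0\in\p\Gamma$.

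Part (b) is dual. Let $\widehat\Gamma:=\{\lambda\in\bR^n:\lambda_n+\mu\lambda_{n-1}>0\text{ when }\lambda_1\geq\cdots\geq\lambda_n\}$ and $q_0:=(-\mu,1,\dots,1)$. The order-statistic monotonicity (applied to the two smallest entries) shows $\widehat\Gamma$ satisfies \eqref{eqn-230331-0110}, and $q_0\in\p\widehat\Gamma$ is direct. For the converse inclusion, fix $\Gamma$ with $q_0\in\p\Gamma$, and note $\tilde q:=(1,\dots,1,-\mu)\in\p\Gamma$ by symmetry. The case $\mu=0$ gives $\widehat\Gamma=\Gamma_n\subset\Gamma$. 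For $\mu>0$, any sorted $\lambda\in\widehat\Gamma$ must have $\lambda_{n-1}>0$, and every $s\in(\max\{0,-\lambda_n/\mu\},\lambda_{n-1})$ yields $\lambda-s\tilde q=(\lambda_1-s,\dots,\lambda_{n-1}-s,\lambda_n+s\mu)\in\Gamma_n$; since $s\tilde q\in\p\Gamma$, the identity $\lambda=s\tilde q+(\lambda-s\tilde q)\in\p\Gamma+\Gamma_n\subset\Gamma$ finishes the argument. The subtle point throughout is reducing the borderline case $\lambda_1+\mu\lambda_2=0$ in (a) to a strict inequality via a perturbation within the open set $\Gamma$, along with verifying that the auxiliary boundary point $sp_0$ (resp.\ $s\tilde q$) really lies on $\p\Gamma$, which comes from conicity and the symmetry of $\p\Gamma$.
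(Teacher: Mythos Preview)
Your proof is correct and, for part (a), follows essentially the same contradiction strategy as the paper: starting from a sorted $\lambda\in\Gamma$ with $\lambda_1+\mu\lambda_2\le 0$, both arguments exploit $\Gamma+\Gamma_n\subset\Gamma$ to force a positive multiple of $p_0=(\mu,-1,\dots,-1)$ into $\Gamma$, contradicting $p_0\in\p\Gamma$. The paper reaches $p_0\in\Gamma$ by the sequence ``push $\lambda$ to $(\lambda_1,\lambda_2,\dots,\lambda_2)$, rescale by $-1/\lambda_2$, then push the first coordinate up to $\mu$'', whereas you choose a scalar $s$ so that $sp_0-\lambda\in\Gamma_n$ directly; these are two packagings of the same move. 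The only genuine organizational difference is in part (b): the paper dispatches it in one line via the involution $\Gamma\mapsto\bR^n\setminus\overline{-\Gamma}$, which swaps the two statements, while you give a direct argument using the auxiliary fact $\p\Gamma+\Gamma_n\subset\Gamma$. Your route is slightly longer but self-contained; the paper's route is shorter but requires checking that the involution exchanges the boundary conditions $(\mu,-1,\dots,-1)\in\p\Gamma$ and $(-\mu,1,\dots,1)\in\p\Gamma$ and reverses the lattice order on cones.
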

     \begin{proof}
         It suffices to prove one of the two statements since the other one follows by taking $\bR^n\setminus\overline{-\Gamma}$. We provide a proof for part (a).
         It is clear that $\Gamma_\mu\coloneqq \{\lambda\in\bR^n\mid\lambda_1+\mu\lambda_2>0\}$ satisfies \eqref{eqn-230331-0110} and $(\mu,-1\dots,-1)\in\p\Gamma_\mu$. 
         Let $\Gamma$ satisfy \eqref{eqn-230331-0110} with $(\mu,-1,\dots,-1)\in\p\Gamma$. We will prove that for any 
         $\lambda\in\Gamma$, it holds
         $\lambda_1+\mu\lambda_2>0$. Suppose the contrary that  $\lambda_1+\mu\lambda_2\leq 0$ for some $\lambda\in\Gamma$. Since $-\Gamma_n\cap\Gamma=\emptyset$ and $\lambda_1 \geq \lambda_2$, we have $\lambda_1>0>\lambda_2$.
         By $\Gamma+\Gamma_n\subset\Gamma$ and $\lambda_2 \geq \lambda_i$ for any $2\leq i\leq n$, we have $(\lambda_1,\lambda_2,\dots,\lambda_2)\in\Gamma$. By the cone property of $\Gamma$, $(-{\lambda_1}/{\lambda_2},-1,\dots,-1)\in\Gamma$. By $-{\lambda_1}/{\lambda_2}\leq \mu$ and $\Gamma+\Gamma_n\subset\Gamma$, we have $(\mu,-1,\dots,-1)\in\Gamma$. A contradiction. 
     \end{proof}

\section{Classification of radial viscosity solutions}\label{sec-3}

In this section, we study the solvability of viscosity solution $v \in C^0(\{ a \leq |x| \leq b\} )$ of the Dirichlet problem 
\begin{equation}\label{annulusDirichletp}
\begin{cases}
    \lambda(\av)\in \p\Gamma,~~\text{in}~~\{a<|x|<b\},\\
    v \vert_{\p B_a(0)}=\alpha,~~v \vert_{\p B_b(0)}=\beta,
\end{cases}
\end{equation}
where $0 < a < b < \infty$ and $\alpha, \beta \in \bR$. We also give explicitly all solutions of \eqref{annulusDirichletp} when it is solvable. 

By the comparison principle and the  rotation invariance, viscosity solutions of \eqref{annulusDirichletp} are unique and radially symmetric, i.e., $v = v(|x|)$. Throughout this section and the whole article, we always write $r=|x|$ and do not distinguish $v(r)$ with $v(x)$ when it is radially symmetric.

\subsection{Smooth radial solutions} \label{sec-230521-1141}

Recall the definitions of $\mg, \mgn \in [0,\infty]$ in \eqref{eqn-220910-0831}.
\begin{lemma}\label{lem-220510-0218}
For $n\geq 2$, let $\Gamma$ satisfy \eqref{eqn-230331-0110}. Any possible $C^2$ radial solution $v$ of $\lambda(\av) \in \p\Gamma$ is one of the followings (or a restriction on a subinterval on their domains of definitions):
\begin{enumerate} [label=(\alph*)]   
    \item $v = C_1 + C_2 \log r,~r\in (0,\infty)$ with $C_1 \in \bR, C_2 \in (-2,0)$, when $\mg = 1$;
    \item $v = \frac{2}{\mg - 1} \log (C_3 r^{-\mg + 1} + C_4)$, $r\in (0,\infty)$ with $C_3,C_4>0$, when $\mg \in [0,1) \cup (1,\infty)$;
    \item $v = C_5 + C_6 \log r$, $r\in(0,\infty)$ with $C_5 \in \bR, C_6 \in (-\infty, -2) \cup (0,\infty)$, when $\mgn = 1$;  
    \item $v = \frac{2}{\mgn - 1} \log(C_7 r^{-\mgn +1} + C_8)$, $r\in \{s: C_7s^{-\mgn +1} + C_8>0\}$ with $C_7 \cdot C_8 < 0$, when $\mgn \in [0,1) \cup (1,\infty)$;
    \item $v = C$ or $v = C - 2\log r$, $r\in (0,\infty)$ with $C \in \bR$.
\end{enumerate}
\end{lemma}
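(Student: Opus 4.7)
The plan is to exploit rotational symmetry to diagonalize $\av$ explicitly, reduce the condition $\lambda(\av)\in\p\Gamma$ to an ODE for each admissible two-valued boundary ray of $\Gamma$, and then integrate case by case.

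First I would compute, for $v=v(r)$ of class $C^2$, the spectrum of $\av$ in the radial/tangential frame. Writing $\hat r=x/|x|$ and using $\nabla^2 v=v''\,\hat r\otimes\hat r+(v'/r)(I-\hat r\otimes\hat r)$, a direct calculation yields a radial eigenvalue $\lambda_{\mathrm{rad}}=e^{-2v}\bigl(-v''+(v')^2/2\bigr)$ of multiplicity $1$ and a tangential eigenvalue $\lambda_{\mathrm{tan}}=e^{-2v}\bigl(-v'/r-(v')^2/2\bigr)$ of multiplicity $n-1$. Thus $\lambda(\av)$ is always a vector of the form $(\lambda_{\mathrm{rad}},\lambda_{\mathrm{tan}},\ldots,\lambda_{\mathrm{tan}})$ modulo permutation.

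Second, I would enumerate the points of $\p\Gamma$ of this two-value shape. Using $\Gamma_n\subset\Gamma\subset\bR^n\setminus(-\overline{\Gamma_n})$ together with $\Gamma+\Gamma_n\subset\Gamma$, the only candidates are: the origin (both eigenvalues zero); points proportional to $(\mgn,-1,\dots,-1)$, occurring when $\mgn<\infty$, $\lambda_{\mathrm{rad}}>\lambda_{\mathrm{tan}}$ and $\lambda_{\mathrm{tan}}<0$; and points proportional to $(-\mg,1,\dots,1)$, occurring when $\mg<\infty$, $\lambda_{\mathrm{rad}}<\lambda_{\mathrm{tan}}$ and $\lambda_{\mathrm{tan}}>0$. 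In each of the last two cases, membership in $\p\Gamma$ is equivalent to the single scalar identity $\lambda_{\mathrm{rad}}+\mu\lambda_{\mathrm{tan}}=0$ with $\mu=\mgn$ or $\mu=\mg$, supplemented by the sign of $\lambda_{\mathrm{tan}}$.

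Third, I would integrate each resulting ODE. For $\mu\neq 1$, the ansatz $v=\tfrac{2}{\mu-1}\log(Cr^{1-\mu}+D)$ yields, after a short computation, $\lambda_{\mathrm{rad}}=-\mu\lambda_{\mathrm{tan}}$ together with $\lambda_{\mathrm{tan}}=e^{-2v}\cdot 2CD\,r^{-\mu-1}/(Cr^{1-\mu}+D)^2$. Setting $w=v'$ and reading $\lambda_{\mathrm{rad}}+\mu\lambda_{\mathrm{tan}}=0$ as a first-order Riccati equation in $w$, I would check that this ansatz captures its general solution; the prescribed sign of $\lambda_{\mathrm{tan}}$ then forces $CD>0$ with maximal domain $(0,\infty)$ (case (b), $\mu=\mg$) or $CD<0$ with maximal domain $\{r>0:Cr^{1-\mu}+D>0\}$ (case (d), $\mu=\mgn$). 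For $\mu=1$ the ansatz degenerates and the ODE becomes separable, giving $v=C_1+C_2\log r$; substituting yields $\lambda_{\mathrm{rad}}=-\lambda_{\mathrm{tan}}=e^{-2v}C_2(2+C_2)/(2r^2)$, so the sign of $C_2(2+C_2)$ selects case (a) ($-2<C_2<0$, when $\mg=1$) or case (c) ($C_2\in(-\infty,-2)\cup(0,\infty)$, when $\mgn=1$). Finally $\lambda_{\mathrm{tan}}=0$ forces $v'\in\{0,-2/r\}$, both of which automatically make $\lambda_{\mathrm{rad}}=0$, yielding case (e); this also shows that the limiting rays $(\pm 1,0,\ldots,0)$, present in $\p\Gamma$ when $\mgn=\infty$ or $\mg=\infty$, produce no new $C^2$ radial solutions.

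The main obstacle is not any single calculation but the bookkeeping: matching sign choices for $(C,D)$ to the sign constraints on $\lambda_{\mathrm{tan}}$, identifying the maximal interval on which the logarithm's argument stays positive, correctly pairing each ordering of $(\lambda_{\mathrm{rad}},\lambda_{\mathrm{tan}})$ with the relevant boundary ray (hence with $\mg$ versus $\mgn$), and verifying that the Riccati equation admits no solutions outside the explicit families. Once these are assembled, the five cases (a)--(e) exhaust all $C^2$ radial $v$ with $\lambda(\av)\in\p\Gamma$.
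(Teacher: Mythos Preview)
Your proposal is correct and essentially parallels the paper's proof: both compute the two-eigenvalue form $\lambda(\av)=e^{-2v}(V,\nu,\ldots,\nu)$, reduce $\lambda(\av)\in\p\Gamma$ to the scalar ODE $V+\mu\nu=0$ with $\mu=\mg$ or $\mgn$ according to the sign of $\nu$, solve, and then verify that $\nu$ cannot change sign so the cases are disjoint. The one notable technical difference is in the integration step: where you guess the ansatz and propose to confirm completeness via a Riccati argument in $w=v'$, the paper makes the substitution $w=e^{(\mu-1)v/2}$ (or $w=v$ when $\mu=1$), which linearizes the second-order ODE directly to $w''+\tfrac{\mu}{r}w'=0$ and yields the general solution in one stroke.
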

When $\Gamma \subset \Gamma_1$ and $n \geq 3$, viscosity solutions are classified in \cite[Theorem~2.2]{Bocherpaper}, and they are all smooth.

\begin{proof}[Proof of Lemma \ref{lem-220510-0218}]
First, a calculation gives
\begin{equation*}
    \av = e^{-2v} \left( \nu I_n + (V-\nu) \frac{x}{r}\otimes\frac{x}{r} \right),
\end{equation*}
where
	\begin{equation*}
		V = -v'' + \frac{1}{2}(v')^2
  \,\,\text{and}\,\,
  \nu = -\frac{1}{r}v' - \frac{1}{2}(v')^2 = - \frac{1}{2} v' (v + 2\log r)'.
	\end{equation*}
It follows that
 \begin{equation}\label{230930-1027}
     \lambda(\av) = e^{-2v} \left( V, \nu, \ldots, \nu \right).
 \end{equation}
\textbf{Claim}: if $\nu \neq 0$ on an interval $(a,b) \subset (0,\infty)$, then for some real numbers $C_1, C_2$, and $\mu \neq 1$, either $v = C_1 + C_2 \log r$ or $v = \frac{2}{\mu - 1}\log(C_1 r^{1-\mu} + C_2)$.

To prove the claim, note that since $\nu$ does not change sign, the equation $\lambda(\av) \in \p\Gamma$ takes the form
\begin{equation} \label{eqn-230521-0223}
    0 = V + \mu \nu \left( =  -v'' + \frac{1-\mu}{2}(v')^2 - \frac{\mu}{r}v' \right) \quad \text{on}\,\,(a,b),
\end{equation}
where $\mu =  \mg$ when $\nu > 0$ and $\mu =  \mgn$ when $\nu < 0$.
Taking the substitution $w = e^{\frac{\mu - 1}{2}v}$ when $\mu \neq 1$ or $w= v$ when $\mu = 1$, we can rewrite \eqref{eqn-230521-0223} as
	\begin{equation*}
		w'' + \frac{\mu}{r}w'=0,
	\end{equation*}
	for which the solutions are given by
	\begin{equation*}
		w = \begin{cases}
			C_1\log r + C_2\quad\text{if } \mu = 1,\\
			C_3 r^{1-\mu} + C_4\quad \text{if } \mu \neq 1,\\
		\end{cases}
	\end{equation*}
	Transforming back, we have
	\begin{equation} \label{eqn-230521-0201}
		v =
  \begin{cases}
			C_1 + C_2 \log r \quad \text{if }\mu = 1,\\
			\frac{2}{\mu - 1} \log(C_3 r^{1-\mu} + C_4)  \quad \text{if }\mu \neq 1.
		\end{cases}
	\end{equation}
Hence, the claim is proved. Now suppose that $v$ is a solution defined on $(a',b')$. If on $(a',b')$, $\nu(r) \equiv 0$, i.e., $v'(v + 2\log r)' \equiv 0$, it is easy to see
	\begin{equation*}
		\text{either} \,\, v = C \quad \text{or} \quad v = C - 2\log r.
	\end{equation*}
Direct computation shows for such solutions, $V\equiv 0$. This gives the solutions in (e). 
Otherwise, there exists some $\overline{r} \in (a',b')$, such that $\nu(\overline{r}) \neq 0$. Let $(a,b)$ be the maximal interval containing $\overline{r}$, such that $\nu \neq 0$.

\textbf{Case 1}: $\nu>0$ on $(a,b)$. In this case, on $(a,b)$, $v$ is given by \eqref{eqn-230521-0201} with $\mu = \mg$. From $\nu >0$, we have $v'(v + 2\log r)' < 0$, which gives the conditions on the $C_i$'s. Moreover, from the expression one could simply check that $\nu$ never touches zero. Hence, we must have $(a,b) = (a',b')$. This gives us solutions in (a) and (b).

\textbf{Case 2}: $\nu < 0$ on $(a,b)$. Similar discussion shows that $(a,b) = (a',b')$ and $\nu$ never reaches zero, and $v$ is given by the expressions in (c) and (d).
\end{proof}

\subsection{Non-smooth viscosity solutions}\label{viscosityradialsolusection}
For a class of cones involving $\Gamma = \bR^n\setminus (-\overline{\Gamma_k})$, $k \geq 2$, we prove that $v := \max \{ -2\log r , 0\}$
is a viscosity solution. Clearly, $v$ is not $C^1$.

\begin{lemma}\label{verifygradeintjumpsolu}
     For $n\geq 2$, let $\Gamma$ satisfy \eqref{eqn-230331-0110} with $\mg = +\infty$.
     Then for any $C_1, C_2 \in \bR$, $v := \max \{C_1 -2\log r , C_2\}$
 satisfies $\lambda(\av)\in\p\Gamma$ on $\bR^n\setminus\{0\}$ in the viscosity sense.
\end{lemma}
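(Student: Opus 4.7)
The plan is to verify the viscosity sub- and super-solution conditions pointwise on $\bR^n\setminus\{0\}$. Set $r_0 := e^{(C_1-C_2)/2}$, $\psi(x):=C_1-2\log|x|$, and $e_r:=x/|x|$, so that $v(x)=\psi(x)$ on $\{|x|\le r_0\}$ and $v(x)=C_2$ on $\{|x|\ge r_0\}$. On the open sets $\{|x|<r_0\}$ and $\{|x|>r_0\}$, $v$ is smooth and a direct computation gives $A[\psi]\equiv 0$ (the three terms defining $A[\cdot]$ cancel) as well as $A[C_2]\equiv 0$, so $\lambda(A[v])\equiv 0\in\p\Gamma$ there in the classical sense; the standard argument then gives both viscosity conditions at such $x_0$. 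The substantive work is at $|x_0|=r_0$, which I handle in two steps.

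First, the subsolution condition is vacuous at $|x_0|=r_0$. Indeed, a $C^2$ function $\varphi$ with $\varphi(x_0)=v(x_0)=C_2$ and $\varphi\ge v$ near $x_0$ would satisfy $\varphi\ge C_2$ with equality at $x_0$, forcing $\nabla\varphi(x_0)=0$; simultaneously, $\varphi\ge v\ge\psi$ on $\{|x|\le r_0\}$ with equality at $x_0$, and the one-sided first-order comparison along $-e_r$ would then yield $\nabla\varphi(x_0)\cdot e_r\le\nabla\psi(x_0)\cdot e_r=-2/r_0<0$, contradicting $\nabla\varphi(x_0)=0$. Hence no admissible test function from above exists.

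For the supersolution condition at $|x_0|=r_0$, take $\varphi\in C^2$ with $\varphi\le v$ near $x_0$ and $\varphi(x_0)=C_2$. Directional first-order comparisons (outward along $e_r$ using $\varphi\le C_2$, inward along $-e_r$ using $\varphi\le\psi$, and tangentially using the outer region) give $\nabla\varphi(x_0)=ae_r$ with $a\in[-2/r_0,0]$. The crucial tangential Hessian refinement comes from curved outer paths: for a unit tangent vector $\tau$ and any $c<1/(2r_0)$, the path $h(t):=t\tau-ct^2 e_r$ satisfies $|x_0+h(t)|^2=r_0^2+t^2(1-2cr_0)+O(t^4)\ge r_0^2$ for small $t>0$, hence $\varphi(x_0+h(t))\le C_2$; a Taylor expansion to order $t^2$ yields $\tau^T\nabla^2\varphi(x_0)\tau\le 2ac$, and letting $c\to (2r_0)^{-1}$ gives the sharp bound $\tau^T\nabla^2\varphi(x_0)\tau\le a/r_0$ for every unit tangent $\tau$. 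Setting $M:=-\nabla^2\varphi(x_0)+a^2 e_r\otimes e_r-(a^2/2)I$ (so $A[\varphi](x_0)=e^{-2C_2}M$) and restricting to the tangent hyperplane $T$ to $\{|x|=r_0\}$ at $x_0$, one obtains $\tau^T M\tau\ge -a/r_0-a^2/2=(-a/2)(a+2/r_0)\ge 0$ for $a\in[-2/r_0,0]$, so the compression $M|_T$ is positive semidefinite. By the Cauchy interlacing inequality, at least $n-1$ eigenvalues of $M$ are then non-negative, so $\lambda(A[\varphi](x_0))$ has at most one negative coordinate. Finally, $\mg=+\infty$ forces $(-1,0,\dots,0)\in\overline{\Gamma}$, which combined with $\overline{\Gamma}+\overline{\Gamma_n}\subset\overline{\Gamma}$ and symmetry of $\Gamma$ implies every vector in $\bR^n$ with at most one negative coordinate lies in $\overline{\Gamma}$; hence $\lambda(A[\varphi](x_0))\in\overline{\Gamma}$, as required.

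The main obstacle is the curved-path argument yielding the refined tangential bound $\tau^T\nabla^2\varphi(x_0)\tau\le a/r_0$: the naive one-dimensional tangential comparison $\varphi(x_0+t\tau)\le C_2$ only gives $\tau^T\nabla^2\varphi\tau\le 0$, under which $M|_T\ge -(a^2/2)I_T$ can fail to be positive semidefinite when $a\neq 0$, and the use of $\mg=+\infty$ would be unavailable. The parabolic correction $-ct^2 e_r$ in the test curve is designed precisely to track the second-order bulge of $\{|x|=r_0\}$ away from its tangent plane at $x_0$, extracting the missing $a/r_0$ and closing the argument.
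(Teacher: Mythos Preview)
Your proof is correct and takes a genuinely different route from the paper. The paper first observes that $C_1-2\log r$ and $C_2$ are both classical solutions of $\lambda(A[\cdot])\in\partial\Gamma$, so their maximum is automatically a viscosity subsolution; for the supersolution inequality it builds, for each $\mu\in(1,\infty)$, smooth radial functions $v_\mu(r)=\frac{2}{\mu-1}\log(C_3 r^{1-\mu}+C_4)$ satisfying $\lambda(A[v_\mu])=C(r)(-1,1/\mu,\dots,1/\mu)\in\overline{\Gamma}$ (using $\mg=+\infty$) and matching the boundary values of $v$ on a fixed annulus, checks that $v_\mu\to v$ uniformly as $\mu\to\infty$, and then invokes stability of viscosity supersolutions under uniform limits.

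Your approach is a direct pointwise verification on the interface $\{|x|=r_0\}$: you show the subsolution condition is vacuous, and for the supersolution condition you extract the sharp tangential Hessian bound $\tau^T\nabla^2\varphi\le a/r_0$ via curved test curves, then use Cauchy interlacing to conclude that $A[\varphi](x_0)$ has at most one negative eigenvalue, which lies in $\overline{\Gamma}$ precisely because $\mg=+\infty$ forces $(-1,0,\dots,0)\in\overline{\Gamma}$. The paper's proof is softer, avoids any second-order test-function analysis, and produces explicit smooth approximants that fit the broader classification in \S\ref{sec-230521-1141}. Your proof is self-contained (no approximation or limit lemma needed), pinpoints exactly how $\mg=+\infty$ enters at the level of eigenvalue signs, and would work verbatim for any cone containing all vectors with at most one negative coordinate.
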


\begin{proof}
     Without loss of generality we only need to prove for $v$ with $C_1=C_2 = 0$. By Lemma \ref{lem-220510-0218}, $-2\log|x|$ and $0$ are both solutions of $\lambda(A[w])\in\p\Gamma$ in $\bR^n\setminus\{0\}$. Therefore, $v$, as their maximum, satisfies $\lambda(\av)\in \bR^n\setminus\Gamma$ in $\bR^n\setminus\{0\}$. In the rest of the proof, it suffices to show $\lambda(\av) \in \overline{\Gamma}$ in $B_2 \setminus B_{1/2}$.
     
     For each $\mu \in (1,\infty)$, set $v_\mu(r) := 2 (\mu - 1)^{-1} \log (C_3 r^{-\mu +1} + C_4)$ in $(0,\infty)$, where $C_3 = C_4 = 2^{\mu - 1} (2^{\mu -1} +1)^{-1}$. In particular, $v_\mu (1/2) = 2\log 2 = v(1/2)$ and $v_\mu (2) = 0 = v(2)$. A calculation as in Lemma \ref{lem-220510-0218} yields $\lambda(A[v_\mu]) = C(r) (-1, 1/\mu,\ldots, 1/\mu)$, $C(r)=-2^{-1}\mu e^{-2v}v' (v+2\log r)'>0$. Since $\mg=\infty$, $\lambda(A[v_\mu])\in \overline{\Gamma}$ in $\bR^n\setminus\{0\}$. It is not hard to check that
    \begin{align*}
         v_\mu 
         = 
         \frac{2}{\mu - 1} \left( \log \frac{2^{\mu-1}}{2^{\mu - 1} + 1} + \log (r^{1-\mu} + 1) \right)
         \rightarrow v
         \,\,\text{uniformly in}\,\,[1/2,2],
     \end{align*}
     as $\mu \rightarrow \infty$. In fact, one only need to check that $v_\mu$ is non-increasing in $\mu$ and pointwisely converges to $v$. Hence, $v$ satisfies $\lambda (A[v]) \in \overline{\Gamma}$ in $B_2\setminus B_{1/2}$. The lemma is proved.     
\end{proof}

\subsection{Solvability of Dirichlet problems}\label{sec-3.3}
In this subsection, we study \eqref{annulusDirichletp}.
\begin{proposition} \label{prop-230521-0510}
For $n\geq 2$, let $\Gamma$ satisfy \eqref{eqn-230331-0110} with $\mg < +\infty$. Then there exists a unique viscosity solution of \eqref{annulusDirichletp} if and only if one of the following conditions holds:
    \begin{enumerate}[label=(\alph*)]
    \item $\beta - \alpha \in \left[ -2(\log b - \log a ) , 0 \right]$.
    \item $\beta - \alpha \in \left( -\infty, -2(\log b - \log a ) \right) \cup \left( 0, +\infty \right)$ and $\mgn < +\infty$.
    \end{enumerate}
    Moreover, all solutions are smooth. In case (a), the solutions are given by Lemma \ref{lem-220510-0218} (a,b,e). In case (b), the solutions are given by Lemma \ref{lem-220510-0218} (c,d).
\end{proposition}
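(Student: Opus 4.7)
\emph{Uniqueness and radial symmetry.} Two applications of the comparison principle (Theorem~\ref{thm-220721-0849}) show that any two continuous viscosity solutions of \eqref{annulusDirichletp} coincide. Since the equation is rotation invariant and the data is constant on each component of $\p(B_b\setminus\overline{B_a})$, $v\circ R$ is also a solution with the same data for every rotation $R$, hence $v=v\circ R$ is radial. Write $L:=\log(b/a)>0$.

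\emph{Existence.} I would realize the boundary data by explicit smooth radial solutions taken from Lemma~\ref{lem-220510-0218}. In case (a), the values $\beta-\alpha=0$ and $\beta-\alpha=-2L$ are covered by $v\equiv\alpha$ and $v=\alpha+2\log(a/r)$ from family (e); for $\beta-\alpha\in(-2L,0)$, take $v=C_1+C_2\log r$ from family (a) with $C_2=(\beta-\alpha)/L\in(-2,0)$ when $\mg=1$, or $v=\frac{2}{\mg-1}\log(C_3 r^{1-\mg}+C_4)$ from family (b) otherwise, the two boundary equations determining $C_3,C_4$ uniquely. A direct algebraic check (subtracting the two equations and inspecting signs) shows $C_3,C_4>0$ precisely when $\beta-\alpha\in(-2L,0)$. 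For case (b) with $\mgn<\infty$, the same procedure applied to families (c),(d) yields smooth solutions; the sign constraint $C_7C_8<0$ in (d), together with the requirement that $\{C_7 r^{1-\mgn}+C_8>0\}$ contain $[a,b]$, gives solvability exactly when $\beta-\alpha\in(-\infty,-2L)\cup(0,\infty)$.

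\emph{Non-existence when $\mgn=\infty$, preliminary reduction.} This is the main obstacle. Via the M\"obius inversion $\Psi(x)=x/|x|^2$, any viscosity solution $v$ on $\{a<|x|<b\}$ pulls back to a viscosity solution $v^\Psi$ for the same $(f,\Gamma)$, with radial profile $w(s)=v(1/s)-2\log s$ on $\{1/b<|x|<1/a\}$ and new boundary difference $-(\beta-\alpha)-2L$. Applying this when $\beta-\alpha<-2L$ produces boundary difference $>0$, so it suffices to rule out a continuous radial viscosity solution $v$ with $\beta>\alpha$ under $\mg<\infty$ and $\mgn=\infty$.

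\emph{Barrier argument.} The key observation is that $\mgn=\infty$ forbids any tuple $(V,\nu,\dots,\nu)$ with $\nu<0$ from lying in $\overline\Gamma$, since rescaling by $|\nu|^{-1}>0$ yields $(V/|\nu|,-1,\dots,-1)$, which belongs to $\overline\Gamma$ only when $V/|\nu|\ge\mgn=\infty$. Consequently any $C^2$ radial $\varphi$ with $\varphi'>0$ on $[a,b]$ satisfies $\nu_\varphi=-\tfrac12\varphi'(\varphi'+2/r)<0$, so $\lambda(A[\varphi])\notin\overline\Gamma$ pointwise; combined with $\Gamma+\overline{\Gamma_n}\subset\overline\Gamma$ and the standard inequality $\lambda(A[\tilde\varphi])\le\lambda(A[\varphi])$ for any $C^2$ test $\tilde\varphi\ge\varphi$ touching at a point, this forces $\lambda(A[\tilde\varphi])\notin\Gamma$ there, making $\varphi$ a viscosity subsolution of $\lambda(A[v])\in\p\Gamma$. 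Choose $\varphi_k\in C^\infty([a,b])$ strictly increasing with $\varphi_k(a)=\alpha$, $\varphi_k(b)=\beta$ and $\varphi_k\to\beta$ uniformly on each $[a+\varepsilon,b]$, e.g.\ $\varphi_k(r)=\alpha+(\beta-\alpha)(1-e^{-k(r-a)})/(1-e^{-kL})$. Theorem~\ref{thm-220721-0849} gives $v\ge\varphi_k$ in the annulus, so $v\ge\beta$ on $(a,b]$ in the limit. Comparing $v$ against the constant solution $\beta$ gives $v\le\beta$ as well, whence $v\equiv\beta$ on $(a,b]$, contradicting continuity of $v$ at $r=a$, where $v(a)=\alpha<\beta$.
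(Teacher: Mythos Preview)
Your proof is correct and follows essentially the same architecture as the paper: uniqueness via the comparison principle, existence by matching the explicit radial solutions of Lemma~\ref{lem-220510-0218}, and non-existence (when $\mgn=\infty$ and $\beta-\alpha\notin[-2L,0]$) via a barrier argument. Two small remarks. First, your explicit formula for $\varphi_k$ has a typo: the denominator should be $1-e^{-k(b-a)}$ rather than $1-e^{-kL}$, since $L=\log(b/a)\neq b-a$ in general; this is inconsequential because any strictly increasing smooth interpolant works. Second, since your barriers $\varphi_k$ are $C^2$, the verification that $\lambda(A[\varphi_k])\notin\overline\Gamma$ pointwise already makes them classical (hence viscosity) subsolutions, so the test-function argument is not needed.

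The one genuine difference from the paper is the choice of barriers in the non-existence step. The paper proves the slightly stronger Lemma~\ref{lem-230525-1216} (any radial viscosity supersolution is non-increasing when $\mgn=\infty$) by taking as barriers the specific solutions $v_\mu=\frac{2}{\mu-1}\log(C_7 r^{1-\mu}+C_8)$ from Lemma~\ref{lem-220510-0218}(d) and sending $\mu\to\infty$; this uses only the supersolution property of $v$. Your observation that \emph{every} strictly increasing $C^2$ radial function is a strict subsolution when $\mgn=\infty$ is cleaner and more flexible, but your final step (comparing $v$ against the constant $\beta$ to get $v\le\beta$) additionally requires $v$ to be a subsolution. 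For the proposition this is fine, since $v$ is a solution; the paper's route yields the monotonicity lemma for supersolutions as a byproduct.
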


\begin{proposition} \label{prop-230521-0515}
    For $n\geq 2$, let $\Gamma$ satisfy \eqref{eqn-230331-0110} with $\mg = +\infty$. Then for every $0<a<b<\infty$ and $\alpha,\beta \in \bR$, there exists a unique viscosity solution of \eqref{annulusDirichletp}.
    \begin{enumerate}[label=(\alph*)]
        \item When $\beta - \alpha \in \left( -2(\log b - \log a ) , 0 \right)$, the solution is in $C^{0,1}$ but not $C^1$, given by
            $v = \max\{ \alpha - 2 (\log r - \log a), \beta \}.$
        \item When $\beta - \alpha \in \left( -\infty, -2(\log b - \log a ) \right] \cup \left[ 0, +\infty \right)$, the solution is smooth, given by Lemma \ref{lem-220510-0218} (c,d,e).
    \end{enumerate}
\end{proposition}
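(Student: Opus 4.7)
My plan is to establish uniqueness first, and then construct the solution explicitly in each sub-regime of $\beta - \alpha$.

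For uniqueness, I would apply the comparison principle (Theorem \ref{thm-220721-0849}) to two viscosity solutions of \eqref{annulusDirichletp} in both directions, which gives $v_1 \equiv v_2$ on the closed annulus. Since composition with any rotation $R$ produces another viscosity solution of $\lambda(\av)\in\partial\Gamma$ with the same (radially constant) boundary data, uniqueness also forces the solution to be radial.

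For existence in case (a), when $\beta - \alpha \in (-2(\log b - \log a),\, 0)$, I would take
\[
v(r) \;=\; \max\bigl\{\alpha - 2(\log r - \log a),\; \beta\bigr\}.
\]
Because $\alpha > \beta$ we have $v(a) = \alpha$, and because $\alpha - 2(\log b - \log a) < \beta$ we have $v(b) = \beta$. Lemma \ref{verifygradeintjumpsolu}, applied with $C_1 = \alpha + 2\log a$ and $C_2 = \beta$, says exactly that such a maximum is a viscosity solution of $\lambda(\av)\in\partial\Gamma$ on $\bR^n\setminus\{0\}$, using crucially $\mg = +\infty$. The transition radius $r_\ast = a\,e^{(\alpha - \beta)/2}$ lies strictly in $(a, b)$ by the two strict inequalities on $\beta - \alpha$, so the gradient of $v$ has a genuine jump there and $v$ is Lipschitz but not $C^1$.

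For existence in case (b), I would assemble smooth radial candidates from Lemma \ref{lem-220510-0218}(c,d,e). The two threshold values of $\beta - \alpha$ are handled by family (e): take $v \equiv \alpha$ when $\beta = \alpha$, and $v = \alpha - 2(\log r - \log a)$ when $\beta - \alpha = -2(\log b - \log a)$. For strict $\beta > \alpha$ or strict $\beta < \alpha - 2(\log b - \log a)$, I would use (c) if $\mgn = 1$, in which case $v = C_5 + C_6\log r$ and the two boundary conditions reduce to a $2\times 2$ linear system whose solution automatically gives $C_6$ with the required sign (positive for the increasing branch, less than $-2$ for the very-decreasing branch); and (d) if $\mgn\in [0,1)\cup (1,\infty)$, where in the variable $w = e^{(\mgn-1)v/2}$ the boundary conditions determine $C_7, C_8$ through a $2\times 2$ linear system. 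The prescribed ranges on $\beta - \alpha$ are exactly what force the sign pattern $C_7\cdot C_8 < 0$ and keep $C_7 r^{1-\mgn} + C_8$ positive throughout $[a,b]$, so the resulting $v$ is smooth and monotone as required.

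The step I expect to be most delicate is case (b) when $\mgn = +\infty$ as well. In that regime Lemma \ref{lem-220510-0218} supplies no non-constant smooth radial profile from families (c) or (d), and indeed cone examples such as $\Gamma = \{\lambda\in\bR^n : \text{at least two } \lambda_i \text{ are positive}\}$ for $n\geq 3$ have $\mg = \mgn = +\infty$. Here one would need either to approximate $\Gamma$ from inside by cones $\Gamma^{(j)}$ with $\mg^{(j)} = +\infty$ but finite $\mgn^{(j)}$, solve on each by the construction of the previous paragraph, and pass to the limit via stability of viscosity solutions, or to exploit the fact that $(\pm 1, 0, \ldots, 0) \in \partial\Gamma$ in order to realize the boundary data with a profile patched together from the (e) family. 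Outside this borderline situation, everything reduces to a routine sign and domain analysis of the explicit formulas furnished by Lemma \ref{lem-220510-0218}.
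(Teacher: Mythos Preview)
Your approach is the same as the paper's: uniqueness via Theorem \ref{thm-220721-0849}, case (a) via Lemma \ref{verifygradeintjumpsolu}, and case (b) by matching the families of Lemma \ref{lem-220510-0218}(c,d,e) to the boundary data after normalizing to $a=1$, $\alpha=0$. The paper's proof is literally this and does not add anything beyond what you wrote.

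You are right to flag the sub-case $\mgn=+\infty$ in case (b), and your example $\Gamma=\{\lambda:\text{at least two }\lambda_i>0\}$ for $n\ge 3$ indeed satisfies \eqref{eqn-230331-0110} with $\mg=\mgn=+\infty$. However, your proposed workarounds cannot succeed, for a decisive reason already in the paper: by Lemma \ref{lem-230525-1216}, applied to a viscosity solution (which is in particular a supersolution), when $\mgn=+\infty$ every radial $v$ with $\lambda(A[v])\in\overline\Gamma$ is non-increasing and $v+2\log r$ is non-decreasing, forcing $\beta-\alpha\in[-2(\log b-\log a),0]$. So for $\beta-\alpha$ in the open part of case (b), \emph{no} viscosity solution exists. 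Concretely, in your approximation scheme with $\mgn^{(j)}\to+\infty$ the family-(d) profiles $\frac{2}{\mgn^{(j)}-1}\log(C_7^{(j)}r^{1-\mgn^{(j)}}+C_8^{(j)})$ converge pointwise on $(a,b]$ to the constant $\beta$ while keeping value $\alpha$ at $r=a$, i.e.\ they degenerate to a discontinuous step; and patching the non-increasing profiles of family (e) cannot manufacture an increasing one.

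The paper's own proof writes ``$\mgn\neq 1$'' when invoking family (d), tacitly taking $\mgn<+\infty$; it does not address $\mgn=+\infty$ either. So what you have spotted is not a gap in your argument relative to the paper's---you cover exactly the same cases---but a small inaccuracy in the proposition as literally stated: when $\mg=\mgn=+\infty$, solvability holds precisely for $\beta-\alpha\in[-2(\log b-\log a),0]$ (case (a) together with the two endpoints of case (b)), by the same mechanism as in the ``only if'' part of Proposition \ref{prop-230521-0510}.
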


\begin{remark}
Propositions \ref{prop-230521-0510} and \ref{prop-230521-0515} give  necessary and sufficient conditions on $a,b,\alpha,\beta$ for the solvability of \eqref{annulusDirichletp}.
\end{remark}
To prove Proposition \ref{prop-230521-0510}, we make use of the following lemma.
\begin{lemma} \label{lem-230525-1216}
    For $n\geq 2$, let $\Gamma$ satisfy \eqref{eqn-230331-0110} with $\mgn = \infty$. Suppose $v = v(r) \in C^0$ satisfies $\lambda(\av) \in \overline{\Gamma}$ on an annulus $B_b\setminus \overline{B_a}$ in the viscosity sense. Then $v$ is non-increasing and $v + 2\log r$ is non-decreasing.
\end{lemma}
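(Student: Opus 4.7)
The plan is to reduce the lemma to a standard one‑dimensional viscosity–monotonicity statement by exploiting the structure of $\lambda(A[\varphi])$ for radial test functions.

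First, I recall, exactly as in the proof of Lemma~\ref{lem-220510-0218}, that for any $C^2$ radial function $\varphi$ on $(0,\infty)$,
\[
\lambda(A[\varphi]) \;=\; e^{-2\varphi}(V,\nu,\ldots,\nu),\qquad V = -\varphi'' + \tfrac{1}{2}(\varphi')^2,\qquad \nu = -\tfrac{1}{2}\varphi'\bigl(\varphi' + 2/r\bigr).
\]
The hypothesis $\mgn = +\infty$ is precisely the statement that $(c,-1,\ldots,-1)\notin\overline{\Gamma}$ for every $c\in\mathbb R$. Hence whenever $\nu(\varphi)(r_0)<0$, rescaling by $1/|\nu(\varphi)(r_0)|$ and invoking the cone property of $\Gamma$ gives $\lambda(A[\varphi])(r_0)\notin\overline{\Gamma}$. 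Therefore, if $\varphi$ is any $C^2$ radial function touching $v$ from below at a point with $|x_0|=r_0 \in (a,b)$, the viscosity supersolution property of $v$ forces $\nu(\varphi)(r_0)\ge 0$, that is $\varphi'(r_0)(\varphi'(r_0)+2/r_0)\le 0$, and hence
\[
\varphi'(r_0) \in \bigl[-2/r_0,\,0\bigr].
\]

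Next, I upgrade this pointwise constraint to a subdifferential bound on $v$ itself: at every $r_0\in(a,b)$ the one-dimensional lower subdifferential $D^-v(r_0)$ of the radial profile satisfies $D^-v(r_0)\subset[-2/r_0,\,0]$. Given any $p\in D^-v(r_0)$, the standard viscosity-theory construction produces a $C^2$ radial test function (for example $\varphi_C(r):=v(r_0)+p(r-r_0)-C(r-r_0)^2$, with $C$ chosen large enough in terms of the two-sided neighborhood of $r_0$ on which touching is required) satisfying $\varphi_C(r_0)=v(r_0)$, $\varphi_C'(r_0)=p$, and $\varphi_C\le v$ in that neighborhood. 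Applying the preceding paragraph to $\varphi_C$ and noting that $\nu(\varphi_C)(r_0)=-\tfrac{1}{2}p(p+2/r_0)$ is independent of $C$ forces $p\in[-2/r_0,0]$.

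Finally, the bound $D^-v(r_0)\subset[-2/r_0,0]$ asserts that $v$ is simultaneously a one-dimensional viscosity supersolution of $u'(r)\le 0$ and of $(u+2\log r)'(r)\ge 0$ on $(a,b)$. By the classical fact that continuous viscosity supersolutions of first-order Hamilton–Jacobi inequalities of this (affine) form coincide with the corresponding monotone continuous functions (proved, for instance, by inf-convolution to produce semi-concave, a.e.\ differentiable regularizations on which the pointwise slope bounds give classical monotonicity, then passing to the limit), it follows that $v$ is non-increasing and $v+2\log r$ is non-decreasing, completing the proof. The main point requiring care is the construction of the test functions in the second step: because $v$ is only continuous, one must allow the second-derivative coefficient $C$ to depend on the neighborhood size, but this is exactly the standard flexibility in the viscosity-subdifferential formalism and does not affect the value $\nu(\varphi_C)(r_0)$ on which everything hinges.
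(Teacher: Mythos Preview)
Your overall strategy is sound and genuinely different from the paper's. The paper proves the monotonicity of $v$ by contradiction: assuming $v(r_1)<v(r_2)$, it builds explicit barriers $v_\mu(r)=\frac{2}{\mu-1}\log(C_7 r^{1-\mu}+C_8)$ (the radial solutions of Lemma~\ref{lem-220510-0218}(d)) with $\lambda(A[v_\mu])=C(r)(\mu,-1,\ldots,-1)\in\bR^n\setminus\overline\Gamma$, applies the comparison principle Theorem~\ref{thm-220721-0849} to get $v\ge v_\mu$, and sends $\mu\to\infty$ so that $v_\mu\to v(r_2)$ on $(r_1,r_2)$, contradicting $v(r_1)<v(r_2)$; the second monotonicity then follows from the first by Kelvin transform. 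Your route---reading off the first-order constraint $\varphi'(r_0)\in[-2/r_0,0]$ on every $C^2$ radial test function and then invoking one-dimensional viscosity monotonicity---is more elementary in that it avoids both the explicit barriers and Theorem~\ref{thm-220721-0849}, at the cost of appealing to a standard HJ fact.

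There is, however, a genuine glitch in your second step. You claim that every $p\in D^-v(r_0)$ (the first-order subdifferential) is realized by a $C^2$ test function of the form $\varphi_C(r)=v(r_0)+p(r-r_0)-C(r-r_0)^2$ for $C$ large. This is false in general: take $v(r)=-|r-r_0|^{3/2}$, for which $0\in D^-v(r_0)$ but \emph{no} $C^2$ function touches from below at $r_0$, since any such $\varphi$ would satisfy $\varphi(r)=O((r-r_0)^2)$, which cannot lie below $-|r-r_0|^{3/2}$ near $r_0$. The passage from $p\in D^-v$ to a $C^2$ touching function is not ``standard''---it simply does not hold without extra second-order information.

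Fortunately this step is unnecessary. What you \emph{have} established in your first paragraph is exactly that for every $C^2$ radial $\varphi$ touching $v$ from below at $r_0$ one has $\varphi'(r_0)\in[-2/r_0,0]$. Your inf-convolution argument in the last paragraph uses only this: writing $v_\epsi(r)=\inf_s\bigl[v(s)+(r-s)^2/(2\epsi)\bigr]$, at each point of differentiability of $v_\epsi$ the minimizing parabola $\phi(s)=v_\epsi(r)-(r-s)^2/(2\epsi)$ is a $C^\infty$ function touching $v$ from below at the minimizer $s^*$, so $v_\epsi'(r)=(r-s^*)/\epsi=\phi'(s^*)\in[-2/s^*,0]$. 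Since $v_\epsi'(r)\le 0$ forces $s^*\ge r$, we also get $v_\epsi'(r)\ge -2/s^*\ge -2/r$; hence $v_\epsi$ is non-increasing and $v_\epsi+2\log r$ is non-decreasing, and the limit $\epsi\to 0$ finishes. So the fix is simply to delete the $D^-v$ paragraph and pass directly from the test-function bound to the inf-convolution argument.
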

 
\begin{remark}
    The condition $\mgn = \infty$ is optimal in the sense that for any cone $\Gamma$ with $\mgn < \infty$, there are solutions which either are strictly increasing themselves or have $v + 2\log r$ strictly decreasing. See Lemma \ref{lem-220510-0218} (c,d).
    \end{remark}
The above Lemma was proved in \cite{Bocherpaper} when $\Gamma \subset \Gamma_1$, $\Gamma$ is convex, and $n\geq 3$.
\begin{proof}[Proof of Lemma \ref{lem-230525-1216}]
    We only need to prove the monotonicity of $v$ since the monotonicity of $v + 2\log r$ follows by the monotonicity of $\hat{v} = -2\log|x| + v (x/|x|^2)$ which satisfies the hypothesis of the lemma on $B_{a^{-1}} \setminus \overline{B_{b^{-1}}}$. We only need to prove $v(b) \leq v(a)$ under additional assumptions $v\in C^0(\{a\leq |x| \leq b\})$, $a = 1$, and $v(a) = 0$, which can be achieved by restricting on subintervals and considering $v(a \cdot) - v(a)$.
    
    We prove it by contradiction. Suppose not, then $\beta := v(b) > v(a) = 0$. 
    For $\mu \in (1,\infty)$, let $v_\mu:= \frac{2}{\mu - 1} \log(C_7 r^{1-\mu} + C_8)$, where $C_7 = (1 - e^{\frac{\mu-1}{2}\beta})/(1-b^{1-\mu}) < 0$ and $C_8 = (e^{\frac{\mu-1}{2}\beta} - b^{1-\mu})/(1-b^{1-\mu}) > 0$. Clearly, $v_\mu(1) = v(1) = 0$, $v_\mu(b) = v(b) = \beta$. By \eqref{230930-1027} with $v=v_\mu$,
    \begin{equation*}
        \lambda(A[v_\mu]) = C(r) (\mu,-1,\ldots,-1),
        \,\,
        \text{where}\,\,
        C(r) = \frac{1}{2}e^{-2v_\mu} v_\mu' (v_\mu + 2\log r)' >0.
    \end{equation*}
    Since $\mgn = \infty$ i.e. $(1,0,\dots,0)\in\p\Gamma$, by     
    $\Gamma+\Gamma_n\subset\Gamma$ and the cone property of $\Gamma$, $\lambda(A[v_\mu]) \in \bR^n\setminus \overline{\Gamma}$. Hence, by the comparison principle Theorem \ref{thm-220721-0849}, $v \geq v_\mu$ on $(1,b)$.
    Since $v_\mu (r) \rightarrow \beta$ for every $r>1$ as $\mu \rightarrow \infty$ and $v\in C^0(\{1 \leq |x| \leq b\})$, we have $v(1) \geq \beta$, which is a contradiction. The lemma is proved.
\end{proof}

\begin{proof}[Proof of Proposition \ref{prop-230521-0510}]
    We first prove the ``if'' part, by matching the $C^2$ solutions in Lemma \ref{lem-220510-0218} with the boundary condition. 
    By the invariance $v (\cdot) \mapsto v (\cdot/a) - \alpha$, we may assume $a= 1$ and $\alpha = 0$. In case (a), if $\beta = 0$, then $v \equiv 0$. If $\beta = -2\log b$, then $v \equiv -2 \log r$. If $\beta \in (-2\log b, 0)$, we treat $\mg = 1$ and $\mg \neq 1$ separately. When $\mg = 1$, $v = (\beta/\log b) \log r$. When $\mg \neq 1$, $v = \frac{2}{\mg -1} \log(C_3 r^{-\mg+1} + C_4)$, where $C_3, C_4$ are determined by $C_3 + C_4 = 1$ and $C_3 b^{-\mg + 1} + C_4 = e^{\frac{\mg - 1}{2}\beta}$.
In case (b), we treat $\mgn =1$ and $\mgn\neq 1$ separately. When $\mgn =1$, $v = (\beta/\log b) \log r$. When $\mgn \neq 1$, $v = \frac{2}{\mgn -1} \log(C_7 r^{-\mgn+1} + C_8)$, where $C_7, C_8$ are determined by $C_7 + C_8 = 1$ and $C_7 b^{-\mgn + 1} + C_8 = e^{\frac{\mgn - 1}{2}\beta}$. 
As mentioned earlier, the uniqueness follows from the comparison principle Theorem \ref{thm-220721-0849}. The ``if'' part is proved.
    
    For the ``only if'' part, one only needs to prove the non-existence of viscosity solutions when $\beta - \alpha \in \left( -\infty, -2 (\log b - \log a) \right) \cup \left( 0, +\infty \right)$ and $\mgn = +\infty$. This follows from Lemma \ref{lem-230525-1216}.
\end{proof}

\begin{proof}[Proof of Proposition \ref{prop-230521-0515}]
    Again, the uniqueness follows from the comparison principle in Theorem \ref{thm-220721-0849}. 
    The solutions in case (a) come from Lemma \ref{verifygradeintjumpsolu}. For solutions in case (b), again without loss of generality we may assume $a=1$ and $\alpha = 0$. When $\beta = 0$, $v \equiv 0$; when $\beta = -2 \log b$, $v \equiv -2\log r$; when $\beta \in (-\infty,-2\log b) \cup (0,\infty)$ and $\mgn = 1$, $v = (\beta / \log b) \log r$; when $\beta \in (-\infty,-2\log b) \cup (0,\infty)$ and $\mgn \neq 1$, $v = \frac{2}{\mgn -1} \log (C_7 r^{-\mgn +1} + C_8)$. Here, $C_7$ and $C_8$ are the same with those in the proof of Proposition \ref{prop-230521-0510}.    
\end{proof}

\section{On isolated singularities}\label{sec-4}
In the current section, we study the removability of isolated singularities. The following concept was introduced by Caffarelli, Li and Nirenberg in \cite{CLN3}. Let $\Omega\subset\bR^n$ be an open set.
\begin{definition}\label{conical}
	A function $\varphi \in LSC(\Omega)$ is \textbf{lowerconical} at $x_0\in\Omega$ if either $\varphi(x_0) = +\infty$ or $\varphi(x_0) < + \infty$ and for $\forall \epsi >0$, $\forall \eta\in C^{\infty}(\Omega)$, we have
	\[\inf\limits_{x\in\Omega}((\varphi - \eta)(x)-(\varphi - \eta)(x_0)-\epsi |x-x_0|)<0.\]
	We say $\varphi$ is \textbf{upperconical} at $x_0\in\Omega$ if $-\varphi$ is lowerconical at $x_0$.
\end{definition}

\begin{lemma}\label{remove}
	For $n\geq 2$, let $\Gamma$ satisfy \eqref{eqn-230331-0110}. Suppose that $v \in LSC(B)$ (respectively, $v\in USC(B)$) is 
    lowerconical (respectively, upperconical) at $0$ and satisfies 
    $\lambda(\av)\in \overline{\Gamma}$ (respectively, $\lambda(\av)\in \bR^n\setminus\Gamma$) in $B\setminus\{0\}$ in the viscosity sense.	
    Then $\lambda(\av)\in \overline{\Gamma}$ (respectively, $\lambda(\av)\in \bR^n\setminus\Gamma$) in $B$ in the viscosity sense.
\end{lemma}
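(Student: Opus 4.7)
The plan is to verify the viscosity supersolution property at $0$; the subsolution case is analogous. Let $\varphi \in C^2$ touch $v$ from below at $0$ with $\varphi(0) = v(0)$ and $\varphi \leq v$ on some $B_r \subset B$. We aim to show $\lambda(A[\varphi](0)) \in \overline{\Gamma}$. First, replacing $\varphi$ by $\varphi(x) - |x|^4$ preserves the two-jet, and hence $A[\varphi](0)$, and makes the touching strict: $v > \varphi$ on $B_r \setminus \{0\}$. Second, replacing $v$ by $\max(v,-M)$ for $M \gg -v(0)$ preserves lower semicontinuity, the lowerconicality at $0$ (since $v$ is unchanged near $0$), and the viscosity supersolution property on $B \setminus \{0\}$ (this last uses $\overline{\Gamma_n} \subset \overline{\Gamma}$ to handle constant test functions at points where the max equals $-M$). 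Thus we may further assume $v \geq -M$ on all of $B$.

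Next I extract a touching point near $0$ by invoking lowerconicality. Choose a cutoff $\chi \in C_c^\infty(B_{2r/3})$ with $\chi \equiv 1$ on $\overline{B_{r/2}}$ and set $\eta_K := \chi\varphi - K(1-\chi) \in C^\infty(B)$, which agrees with $\varphi$ on $B_{r/2}$ and equals $-K$ outside $B_{2r/3}$. Since $\eta_K(0) = v(0)$, the lowerconical hypothesis yields, for every $\epsilon > 0$,
\[
\inf_{x \in B}\bigl((v - \eta_K)(x) - \epsilon |x|\bigr) < 0.
\]
Using $v \geq -M$, choosing $K$ large enough forces this infimum to be attained strictly inside $\overline{B_{r/2}}$, where $\eta_K = \varphi$; strict touching and lower semicontinuity of $v - \varphi$ along $\partial B_{r/2}$ then push the minimizer $y_\epsilon$ into $B_{r/2} \setminus \{0\}$ for all small $\epsilon$. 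Setting $m_\epsilon := (v - \varphi - \epsilon|\cdot|)(y_\epsilon) < 0$, the function $\psi_\epsilon(x) := \varphi(x) + \epsilon|x| + m_\epsilon$ satisfies $\psi_\epsilon \le v$ on $\overline{B_{r/2}}$ with equality at $y_\epsilon$, and is $C^2$ in a neighborhood of $y_\epsilon$. The supersolution hypothesis on $B \setminus \{0\}$ therefore gives $\lambda(A[\psi_\epsilon](y_\epsilon)) \in \overline{\Gamma}$.

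Now I compare eigenvalues. Set $\widetilde{A}[w] := -\nabla^2 w + \nabla w \otimes \nabla w - \tfrac12 |\nabla w|^2 I$, so that $A[w] = e^{-2w}\widetilde{A}[w]$, and, since $\overline\Gamma$ is a cone, $\lambda(A[w]) \in \overline\Gamma \Leftrightarrow \lambda(\widetilde{A}[w]) \in \overline\Gamma$. A direct computation gives
\[
\widetilde{A}[\psi_\epsilon](y_\epsilon) = \widetilde{A}[\varphi](y_\epsilon) - \epsilon |y_\epsilon|^{-1}\bigl(I - \hat y_\epsilon \otimes \hat y_\epsilon\bigr) + E_\epsilon, \qquad \|E_\epsilon\| \leq C\epsilon,
\]
where $\hat y_\epsilon = y_\epsilon/|y_\epsilon|$ and the (possibly large) middle term is positive semidefinite. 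Thus $\widetilde{A}[\varphi](y_\epsilon) + C\epsilon I \geq \widetilde{A}[\psi_\epsilon](y_\epsilon)$ in the Loewner order, and Weyl's monotonicity inequality yields, for eigenvalues in the common decreasing ordering, $\lambda(\widetilde{A}[\varphi](y_\epsilon)) + C\epsilon\,\vec{e} - \lambda(\widetilde{A}[\psi_\epsilon](y_\epsilon)) \in \overline{\Gamma_n}$. Combined with $\lambda(\widetilde{A}[\psi_\epsilon](y_\epsilon)) \in \overline\Gamma$ and the closed cone axiom $\overline\Gamma + \overline{\Gamma_n} \subset \overline\Gamma$, this gives $\lambda(\widetilde{A}[\varphi](y_\epsilon)) + C\epsilon\,\vec{e} \in \overline\Gamma$. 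Since $0 \leq (v - \varphi)(y_\epsilon) < \epsilon |y_\epsilon| \leq \epsilon r/2$, strict touching and lower semicontinuity force $y_\epsilon \to 0$ as $\epsilon \to 0$; passing to the limit using continuity of $\widetilde{A}[\varphi]$ and closedness of $\overline\Gamma$ gives $\lambda(A[\varphi](0)) \in \overline\Gamma$.

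The only substantive delicacy is that the perturbation $\epsilon|x|$ (forced on us because lowerconicality provides only linear, conical control) produces a Hessian correction of size $\epsilon/|y_\epsilon|$, which a priori blows up as $y_\epsilon \to 0$. The crucial insight is that this singular contribution enters $\widetilde{A}[\varphi] - \widetilde{A}[\psi_\epsilon]$ with a favorable positive-semidefinite sign, and is therefore absorbed by the cone axiom $\Gamma + \Gamma_n \subset \Gamma$, leaving only the $C\epsilon$ error that vanishes in the limit. The subsolution/upperconical case is handled symmetrically using $\psi_\epsilon = \varphi - \epsilon|x| + m_\epsilon$ and the complementary property $(\bR^n \setminus \Gamma) - \overline{\Gamma_n} \subset \bR^n \setminus \Gamma$, which is itself an immediate consequence of $\Gamma + \Gamma_n \subset \Gamma$.
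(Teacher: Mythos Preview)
Your argument is correct and takes a genuinely different route from the paper's. One small repair is needed: the claim that $\max(v,-M)$ remains a viscosity supersolution on all of $B\setminus\{0\}$ is not true in general --- the maximum of two supersolutions need not be a supersolution (for $n=2$, $\Gamma=\Gamma_1$, the function $(x_1)_+=\max(x_1,0)$ admits the lower test function $\tfrac12 x_1+2x_1^2-x_2^2$ at the origin, whose $\widetilde A$ has negative trace). The fix is immediate: choose $M>\|\varphi\|_{L^\infty(\overline{B_r})}$, so that $v\ge\varphi>-M$ on $B_r$ and hence the truncation leaves $v$ unchanged there; since you only invoke the supersolution property at $y_\epsilon\in B_{r/2}$, nothing more is required.

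The paper, following Caffarelli--Li--Nirenberg, perturbs $\varphi$ by a quadratic $-\tfrac{\delta}{2}|x|^2$ together with a small linear tilt $\tfrac{1}{\sqrt i}\,\hat x_i\cdot x$ to manufacture a globally $C^2$ comparison function whose second-order data converge, and then passes a double limit $i\to\infty$, $\delta\to 0$. You instead perturb by the cone $\epsilon|x|$ directly; its Hessian $\epsilon|x|^{-1}(I-\hat x\otimes\hat x)$ is unbounded but positive semidefinite, and because $\widetilde A$ carries $-\nabla^2$ this singular piece enters $\widetilde A[\varphi]-\widetilde A[\psi_\epsilon]$ with the favorable sign and is absorbed by $\overline\Gamma+\overline{\Gamma_n}\subset\overline\Gamma$, leaving only an $O(\epsilon)$ error from the gradient terms. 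Your route is shorter (a single limit) and makes the role of the cone axiom more transparent; the paper's route is more robust in that it never confronts a singular term and so would transfer unchanged to operators lacking this sign structure.
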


\begin{remark}\label{necessityremark}
    In fact, the above statement is reversible. Namely, if $\lambda(\av)\in \overline\Gamma$ (respectively, $\lambda(\av)\in \bR^n\setminus\Gamma$) in $B$ in the viscosity sense, then $v$ is lowerconical (respectively, upperconical) at $0$. We include a proof at the end of this section.
\end{remark}

\begin{proof}
	 We only provide a proof for the lowerconical statement since the other one can be proved similarly. The proof is an adaptation of \cite[Proof of Theorem~1.1]{CLN3}. Let $\varphi\in C^2(B)$ satisfy,
  for some $x_0\in B$, $(v-\varphi)(x_0)=0$, $v-\varphi\geq 0$ near $x_0$.  If $x_0\ne 0$, then,
  since $\lambda(\av)\in \overline \Gamma$ in $B\setminus\{0\}$,  $\lambda(A[\varphi](x_0))\in \overline \Gamma$. 
  Now we assume  $x_0=0$.  
  
  Consider, for small $\delta>0$, 
    \begin{equation*}
        \varphi_\delta(x)\coloneqq \varphi(x)-\frac{\delta}{2}|x|^2.
    \end{equation*}
    Since $v$ is lowerconical at $0$, there exists $\{x_i\}\subset B\setminus\{0\}$ such that
   $$
   (v-\varphi_\delta)(x_i)-\frac{1}{i}|x_i|<0,
   $$
   and, as shown in 
   \cite{CLN3},
   $x_i\to 0$ as $i\to \infty$.

Let 
\begin{equation*}
    \varphi^{(i)}_\delta(x)\coloneqq \varphi_\delta (x)+\frac{1}{\sqrt{i}} \frac{x_i}{|x_i|}\cdot x.
\end{equation*}
We have
\begin{equation*}
    (v-\varphi^{(i)}_\delta)(x_i)=(v-\varphi_\delta)(x_i)-\frac{1}{\sqrt{i}}|x_i|<(\frac{1}{i}-\frac{1}{\sqrt{i}})|x_i|<0.
\end{equation*}
Since
\begin{equation*}
    (v-\varphi^{(i)}_\delta)(0)=0,
\end{equation*}
and, using $v(x)\ge \varphi(x)$ for $|x|\le \delta$, 
\begin{equation*}
    (v-\varphi^{(i)}_\delta)(x)\geq \frac{1}{2}\delta^3 + O(\frac{1}{\sqrt{i}})>0,~\forall |x|= \delta ~\text{for large}~i,
\end{equation*}
there exists $\widetilde{x_i}$, $0<|\widetilde{x_i}|<\delta$, such that
\begin{equation*}
   \beta_i:=  (v-\varphi^{(i)}_\delta)(\widetilde{x_i})=\min\limits_{0<|x|<\delta}(v-\varphi^{(i)}_\delta)(x)<0.
\end{equation*}
Namely,
\begin{equation*}
    \psi^{(i)}_\delta (x)\coloneqq \varphi^{(i)}_\delta (x)+\beta_i
\end{equation*}
satisfies $v(\widetilde{x_i})=\psi^{(i)}_\delta(\widetilde{x_i})$ and $v\geq \psi^{(i)}_\delta$ near $\widetilde{x_i}$.
As shown in \cite{CLN3}, 
$\widetilde{x_i}\to 0$.
Thus, using  $\lambda(\av)\in\overline{\Gamma}$ in $B\setminus\{0\}$,  we have $
    \lambda(A[\psi_\delta^{(i)}])(\widetilde{x_i})\in\overline{\Gamma}.
    $
Sending $i$ to infinity and then $\delta $ to $0$ lead to 
$
    \lambda(A[\varphi])(0)\in\overline{\Gamma}.$
\end{proof}

Similarly, we use the notion of lowerconical to study removability of isolated singularities for $f(\lambda(\av))\geq \psi (x,v,\nabla v)$. We first give the following 
\begin{definition}
For $n\geq 2$, let $(f,\Gamma)$ satisfy \eqref{nincreasing} and \eqref{eqn-230331-0110}, and $\psi\in C^0(\Omega\times \bR \times \bR^n)$ be a positive function.  We say a function $v: \Omega \rightarrow (-\infty,\infty]$ (respectively, $v: \Omega \rightarrow [-\infty,\infty)$) satisfies
		\begin{equation*}
		    f(\lambda(\av))\geq \psi(x,v, \nabla v) \,\,  \text{in}\,\,\Omega \quad (\text{respectively},	f(\lambda(\av))\leq \psi(x,v, \nabla v)  \,\,  \text{in}\,\,\Omega)
      \end{equation*} 
		in the viscosity sense, if $v \in LSC(\Omega)$ (respectively, $v \in USC(\Omega)$)
  and for any $x_0\in \Omega$ and any $\varphi \in C^2$ satisfying $\varphi (x_0) = v (x_0)$ and $\varphi \leq v$ near $x_0$ (respectively, $\varphi(x_0) = v(x_0)$ and $\varphi \geq v$ near $x_0$),  then
		\begin{gather*}
		    \lambda(A[\varphi]) \in \Gamma~\text{and}~f(\lambda(A[\varphi]))\geq \psi(x,\varphi, \nabla \varphi)~\text{at}~x_0.\\(\text{respectively,} \,\,\text{either}~\lambda(A[\varphi])\notin \Gamma~\text{or}~\lambda(A[\varphi])\in \Gamma~\text{and}~f(\lambda(A[\varphi]))\leq \psi(x,\varphi, \nabla \varphi)~\text{at}~x_0).
		\end{gather*} 
\end{definition}

The following lemma can be proved similarly as Lemma \ref{remove}.
\begin{lemma}\label{nremove}
    For $n\geq 2$, let $(f,\Gamma)$ satisfy \eqref{nincreasing} and \eqref{eqn-230331-0110}, $\psi\in C^0( B\times\bR\times\bR^n)$ be a positive function. Suppose that $v\in LSC(B)$ is lowerconical at $0$ and satisfies 
    $f(\lambda(\av))\geq \psi (x,v,\nabla v)~\text{in}~B \setminus \{0\}~\text{in the viscosity sense}$. Then,
    $f(\lambda(\av))\geq \psi (x,v,\nabla v)~\text{in}~B~\text{in the viscosity sense}.$
\end{lemma}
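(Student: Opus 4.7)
The plan is to mirror the proof of Lemma \ref{remove} essentially verbatim, adding one step to pass the nonlinear inequality $f(\cdot)\geq\psi$ to the limit via the monotonicity and continuity of $f$. Fix any $x_0\in B$ and any $\varphi\in C^2$ with $\varphi(x_0)=v(x_0)$ and $\varphi\leq v$ near $x_0$. We must verify that $\lambda(A[\varphi])(x_0)\in\Gamma$ and $f(\lambda(A[\varphi])(x_0))\geq\psi(x_0,\varphi(x_0),\nabla\varphi(x_0))$. When $x_0\neq 0$, this follows directly from the hypothesis on $B\setminus\{0\}$, so the nontrivial case is $x_0=0$.

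For each small $\delta>0$, define $\varphi_\delta(x):=\varphi(x)-\tfrac{\delta}{2}|x|^2$. The lowerconical property of $v$ at $0$ produces a sequence $x_i\to 0$, $x_i\neq 0$, with $(v-\varphi_\delta)(x_i)<\tfrac{1}{i}|x_i|$. Exactly as in Lemma \ref{remove}, set $\varphi_\delta^{(i)}(x):=\varphi_\delta(x)+\tfrac{1}{\sqrt{i}}\tfrac{x_i}{|x_i|}\cdot x$; then $(v-\varphi_\delta^{(i)})(x_i)<0$, $(v-\varphi_\delta^{(i)})(0)=0$, and $(v-\varphi_\delta^{(i)})>0$ on $\{|x|=\delta\}$ for large $i$, so the infimum $\beta_i:=\min_{0<|x|<\delta}(v-\varphi_\delta^{(i)})<0$ is attained at some interior point $\widetilde{x}_i$, with $\widetilde{x}_i\to 0$. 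The function $\psi_\delta^{(i)}:=\varphi_\delta^{(i)}+\beta_i$ then touches $v$ from below at $\widetilde{x}_i\in B\setminus\{0\}$.

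Applying the hypothesis at $\widetilde{x}_i$ gives $\lambda(A[\psi_\delta^{(i)}])(\widetilde{x}_i)\in\Gamma$ together with
\[
f\bigl(\lambda(A[\psi_\delta^{(i)}])(\widetilde{x}_i)\bigr)\geq \psi\bigl(\widetilde{x}_i,\psi_\delta^{(i)}(\widetilde{x}_i),\nabla\psi_\delta^{(i)}(\widetilde{x}_i)\bigr).
\]
Sending $i\to\infty$ with $\delta$ fixed, and using $\beta_i\to 0$ (which follows from $v\in LSC$ and $\widetilde{x}_i\to 0$), a direct calculation yields $\lambda(A[\psi_\delta^{(i)}])(\widetilde{x}_i)\to\lambda(A[\varphi])(0)+e^{-2\varphi(0)}\delta\,\vec{e}$. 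This limit is in $\overline{\Gamma}$ by closedness, and in fact in $\Gamma$ thanks to the inclusion $\overline{\Gamma}+\Gamma_n\subset\Gamma$ (which follows from $\Gamma+\Gamma_n\subset\Gamma$ and the openness of $\Gamma$: approximate $\overline{\lambda}\in\overline{\Gamma}$ by $\lambda_j\in\Gamma$ and observe that $\mu+\overline{\lambda}-\lambda_j\in\Gamma_n$ for large $j$). Continuity of $f$ on $\Gamma$ and of $\psi$ then gives
\[
f\bigl(\lambda(A[\varphi])(0)+e^{-2\varphi(0)}\delta\,\vec{e}\bigr)\geq \psi\bigl(0,\varphi(0),\nabla\varphi(0)\bigr)>0 \quad\text{for every }\delta>0.
\]
Letting $\delta\to 0$, the monotonicity $\partial f/\partial\lambda_i>0$ makes the left side non-increasing with a limit bounded below by the positive quantity $\psi(0,\varphi(0),\nabla\varphi(0))$; this forces $\lambda(A[\varphi])(0)\in\Gamma$ (otherwise the inequality would fail under the monotone continuous extension of $f$ across $\partial\Gamma$), and the desired inequality then passes to the limit by continuity.

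The main obstacle lies in the final passage $\delta\to 0$: while the $i\to\infty$ step produces the inequality at the interior point $\lambda(A[\varphi])(0)+e^{-2\varphi(0)}\delta\,\vec{e}\in\Gamma$ for each $\delta>0$, ensuring that the limit point $\lambda(A[\varphi])(0)$ itself lies in the open cone $\Gamma$ (and not on $\partial\Gamma$, where $f$ need not be defined) requires combining the strict positivity of $\psi$ with the strict monotonicity $\partial f/\partial\lambda_i>0$ and the key inclusion $\overline{\Gamma}+\Gamma_n\subset\Gamma$. The rest of the argument is a faithful transcription of Lemma \ref{remove}.
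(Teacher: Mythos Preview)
Your argument faithfully mirrors the paper's intended approach (the paper just says the proof is ``similar to Lemma~\ref{remove}''), and the construction of the perturbed test functions $\psi_\delta^{(i)}$ together with the passage $i\to\infty$ is correctly carried out. One minor point: your claim that the limit $\mu_\delta:=\lambda(A[\varphi])(0)+e^{-2\varphi(0)}\delta\,\vec{e}$ lies in the \emph{open} cone $\Gamma$ is not justified by the inclusion $\overline\Gamma+\Gamma_n\subset\Gamma$ applied to $\mu_\delta$ alone; the clean fix is to note that the same limit argument run with $\delta/2$ yields $\mu_{\delta/2}\in\overline\Gamma$, whence $\mu_\delta=\mu_{\delta/2}+e^{-2\varphi(0)}(\delta/2)\vec{e}\in\overline\Gamma+\Gamma_n\subset\Gamma$.

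The substantive gap is in the final step $\delta\to 0$. You assert that the uniform lower bound $f(\mu_\delta)\geq\psi(0,\varphi(0),\nabla\varphi(0))>0$ ``forces $\lambda(A[\varphi])(0)\in\Gamma$'' via a ``monotone continuous extension of $f$ across $\partial\Gamma$'', but no such extension is guaranteed by \eqref{nincreasing}: nothing prevents $f$ from remaining bounded away from zero as one approaches $\partial\Gamma$. Concretely, take $n=2$, $\Gamma=\Gamma_1$, $f(\lambda)=e^{\lambda_1+\lambda_2}$ and $\psi\equiv 1/2$; the smooth function $v(x)=-|x|^4$ is lowerconical at $0$ and is a viscosity supersolution of $f(\lambda(\av))\geq\psi$ on $B\setminus\{0\}$ (since $\sigma_1(A[v])=16e^{2|x|^4}|x|^2>0$ there), yet $\lambda(A[v])(0)=0\in\partial\Gamma_1$, so testing with $\varphi=v$ violates the requirement $\lambda(A[\varphi])(0)\in\Gamma$ at the origin. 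Thus the open-cone conclusion genuinely fails under the stated hypotheses; one needs either an additional assumption such as $f|_{\partial\Gamma}=0$ (as in \eqref{basicf}), or the supersolution definition read with $\overline\Gamma$ in place of $\Gamma$. Either adjustment makes your argument go through and suffices for the application in the proof of Theorem~\ref{thm-230525-0321}.
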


In view of Lemma \ref{remove} and \ref{nremove}, we derive the criteria of $v$ being lowerconical and upperconical at a point in the following subsections.

\subsection{Criteria for lowerconical behavior}\label{lowerconicalcrit} 
\begin{proposition} \label{removablesingularity}
        For $n\geq 2$, let $\Gamma$ satisfy \eqref{eqn-230331-0110} with $\bl\notin\overline{\Gamma}$, and $v$ satisfy  
	\begin{equation*}
	    \lambda(A[v])\in \overline{\Gamma},~~\text{in}~B\setminus\{0\}~~\text{in the viscosity sense}.
	\end{equation*}
 Then $\liminf\limits_{x\rightarrow 0} v \in ( -\infty, \infty]$. Moreover, if we define $v(0) = \liminf\limits_{x\rightarrow 0} v$, then $v \in LSC (B)$ and is lowerconical at $0$.
\end{proposition}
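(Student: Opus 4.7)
The plan is to prove the two assertions in sequence, leveraging the hypothesis $\bl\notin\overline\Gamma$, equivalently $\mgn>1$ (with the convention $\mgn=+\infty$ allowed).

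First, for the lower bound $\liminf_{x\to 0}v>-\infty$, set $N:=|\min_{\partial B_{1/2}}v|+1$, finite by lower semicontinuity of $v$ on $B\setminus\{0\}$. Fix any $\mu_0\in(1,\mgn)$ and, for each $\rho\in(0,1/4)$, use the radial barrier obtained from Lemma \ref{lem-220510-0218}(d) with $\mu_0$ in place of $\mgn$:
\[
 w_\rho(r):=\frac{2}{\mu_0-1}\log\!\bigl[C_8^\rho\bigl(1-(\rho/r)^{\mu_0-1}\bigr)\bigr],\qquad r\in(\rho,1/2],
\]
where $C_8^\rho>0$ is chosen so that $w_\rho(1/2)=-N$. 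The radial computation in the proof of Lemma \ref{lem-220510-0218} gives $\lambda(A[w_\rho])\propto(\mu_0,-1,\ldots,-1)\notin\overline\Gamma$ (since $\mu_0<\mgn$), so $w_\rho$ is a strict viscosity subsolution, and $w_\rho(r)\to-\infty$ as $r\to\rho^+$. Extending $w_\rho\equiv-\infty$ on $\partial B_\rho$, the comparison principle (Theorem \ref{thm-220721-0849}) applied on $B_{1/2}\setminus\overline{B_\rho}$ yields $v\ge w_\rho$ there. Fixing $x\in B_{1/2}\setminus\{0\}$ and sending $\rho\to 0^+$, one computes $w_\rho(|x|)\to -N$, so $v(x)\ge -N$. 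Setting $v(0):=\liminf_{x\to 0}v\in(-N,+\infty]$, the semicontinuity $v\in LSC(B)$ is then immediate from the definition of $\liminf$.

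For lowerconicality at $0$, the claim is automatic if $v(0)=+\infty$; assume $v(0)<+\infty$ and argue by contradiction. If $v$ is not lowerconical at $0$, there exist $\epsi>0$ and $\eta\in C^\infty(B)$ with $v(x)\ge v(0)+(\eta(x)-\eta(0))+\epsi|x|$ on $B$. Absorbing the $O(|x|^2)$ Taylor remainder of $\eta$ at $0$ into half of the cone term, it suffices to assume $v(x)\ge h(x)+\epsi|x|$ near $0$, where $h(x):=v(0)+\nabla\eta(0)\cdot x$ is linear. Smooth the corner via $\tilde h_\delta(x):=h(x)+\epsi(\sqrt{|x|^2+\delta^2}-\delta)$; then $\tilde h_\delta\in C^\infty$ satisfies $\tilde h_\delta\le v$ near $0$ with equality at $0$. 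A direct computation gives $\nabla^2\tilde h_\delta(0)=(\epsi/\delta)I$, so every eigenvalue of $A[\tilde h_\delta](0)$ tends to $-\infty$ as $\delta\to 0^+$. Since $\overline\Gamma\cap(-\Gamma_n)=\emptyset$ (from $\Gamma\subset\bR^n\setminus(-\overline{\Gamma_n})$), fixing $\delta>0$ small yields $\lambda(A[\tilde h_\delta])(x)\notin\overline\Gamma$ for all $x$ in some ball $B_{r_0}$.

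The main obstacle is now to slide $\tilde h_\delta$ off-center and produce an interior touching point with $v$. For parameters $\ell\in\bR^n$, $\beta>0$, consider $\Phi_{\ell,\beta}(x):=\tilde h_\delta(x)+\ell\cdot x-\beta|x|^2$. With scales chosen so that $\delta\ll|\ell|/(2\beta)\ll r_0$, one verifies that $\Phi_{\ell,\beta}-v$ is USC on $\overline{B_{r_0}}$, strictly negative on $\partial B_{r_0}$ (thanks to the cone bound $v\ge h+\epsi|x|$), and attains a strictly positive maximum $\alpha$ at some $x^*\in B_{r_0}\setminus\{0\}$ near $\ell/(2\beta)$, since the concave bump $\ell\cdot x-\beta|x|^2$ pulls the maximum off the origin. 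Then $\Phi_{\ell,\beta}-\alpha$ is a smooth test function touching $v$ from below at $x^*\ne 0$, so the viscosity supersolution property of $v$ on $B\setminus\{0\}$ forces $\lambda(A[\Phi_{\ell,\beta}])(x^*)\in\overline\Gamma$. On the other hand, for $|x^*|$ small, the dominant contribution to $A[\Phi_{\ell,\beta}](x^*)$ comes from $-\nabla^2\tilde h_\delta(x^*)$, whose eigenvalues are $O(1)$ in the radial direction and $-\epsi/|x^*|+O(1)$ in each of the $n-1$ tangential directions; after rescaling, the eigenvalue vector approaches $(0,-1,\ldots,-1)\notin\overline\Gamma$, a contradiction. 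The delicate point is that the three scales $\delta\ll|\ell|/\beta\ll r_0$ must be balanced against the strict margin coming from $\mgn>1$ and the openness of $\bR^n\setminus\overline\Gamma$.
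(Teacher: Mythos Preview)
Your barrier argument for $\liminf_{x\to 0}v>-\infty$ is correct and essentially coincides with the paper's: both compare with the radial family $r\mapsto\frac{2}{\mu_0-1}\log(C_7 r^{1-\mu_0}+C_8)$, $\mu_0\in(1,\mgn)$, which lies strictly outside $\overline\Gamma$.

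The lowerconicality argument, however, has a genuine gap.  The step ``$\Phi_{\ell,\beta}-v$ attains a strictly positive maximum at some $x^*\ne 0$ near $\ell/(2\beta)$'' is not justified.  All you know is the \emph{lower} bound $v\ge h+\epsi|x|$; you have no upper control on $v-h$.  Since the bump $\ell\cdot x-\beta|x|^2$ is bounded by $|\ell|^2/(4\beta)$, nothing prevents $v(x)-\tilde h_\delta(x)$ from exceeding this bound everywhere off $0$: the only information from $v(0)=\liminf_{x\to 0}v(x)$ is the existence of a sequence $y_k\to 0$ with $v(y_k)-v(0)\to 0$, and along that sequence $v(y_k)-\tilde h_\delta(y_k)$ need only be $o(1)$, not $o(|y_k|)$, so a tilt of size $|\ell|\,|y_k|$ need not win.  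The direction of $y_k$ is also uncontrolled, so you cannot simply choose $\ell\parallel y_k$.  (This is exactly the obstruction that the lowerconical \emph{hypothesis} is designed to remove in Lemma~\ref{remove}; here you are trying to \emph{prove} lowerconicality, so you cannot appeal to it.)  The concluding eigenvalue computation is fine in spirit, but it is vacuous without the touching point.

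The paper circumvents this as follows.  First it upgrades your barrier estimate to a monotonicity statement: the radial infimum $\underline v(r):=\inf_{\partial B_r}v$ is itself a viscosity supersolution, and the same comparison (Lemma~\ref{infdecrease}) shows $\underline v$ is \emph{non-increasing} on $(0,1)$.  If $\nabla\eta(0)=0$, the failure of lowerconicality gives $\underline v(r)\ge v(0)+\tfrac\epsi2 r>v(0)$ for small $r$, contradicting monotonicity since $\lim_{r\to 0}\underline v(r)=v(0)$.  The case $\nabla\eta(0)\ne 0$ is reduced to the previous one by composing with a M\"obius transformation $\psi$ fixing $0$ with $\nabla\eta^\psi(0)=0$ (Lemma~\ref{vanishgrad}); M\"obius invariance of $A[\cdot]$ carries the supersolution property over to $v^\psi$.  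This M\"obius reduction is the idea your argument is missing---it replaces the uncontrolled linear part $\nabla\eta(0)\cdot x$ by zero, after which the contradiction is purely one-dimensional.
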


In terms of $\lambda(\nabla^2 v)$ instead of $\lambda(\av)$, a related result is \cite[Theorem~1.4]{CLN3}, which also includes discussions on higher-dimensional singular sets. Our results also hold for higher dimensional singular sets; see our forthcoming papers, where we discuss general operator $\nabla^2 v+\alpha\nabla v\otimes\nabla v+\beta |\nabla^2 v|^2 I$, not just $\av$.
 
\begin{remark}
Proposition \ref{removablesingularity} is optimal in the sense that the conclusion fails for any cone $\Gamma$ with $\bl\in\overline{\Gamma}$, i.e. $\mgn \leq 1$, as shown below.
\end{remark}

\begin{example}\label{reex1}
    Let $v = \alpha \log r$, $\alpha \in (0,\infty)$. By \eqref{230930-1027},
    \begin{equation*}
        \lambda(A[v]) = C(r) (1,-1,\ldots,-1)\quad \text{and}\quad
        C(r) = 2^{-1} e^{-2v}  v' (v+ 2 \log r )' >0.
    \end{equation*}
    Clearly, for any cone $\Gamma$ with $\bl\notin\overline{\Gamma}$, i.e. $\mgn \leq 1$, $\lambda(A[v]) \in \overline{\Gamma}$ in $\bR^n\setminus \{0\}$. However, $\liminf_{r\rightarrow 0} v = -\infty$. In particular, $v$ is not lowerconical at $0$.
\end{example}

\begin{example}\label{reex2}
    Let $v = \frac{2}{\mu -1} \log (1- r^{1-\mu})$, $\mu \in [0,1)$. By \eqref{230930-1027},
    \begin{equation*}
        \lambda(A[v]) = C(r) (\mu,-1,\ldots,-1)
        \quad \text{and}\quad
        C(r) = 2^{-1} e^{-2v}  v' (v+ 2\log r)'  >0,
    \end{equation*}
    hence for any cone $\Gamma$ with $\mgn \leq \mu (<1)$, $\lambda(A[v]) \in \overline{\Gamma}$ in $B_1\setminus \{0\}$. However, $v \in C^0(B_1)$  fails to be lowerconical at $0$.
\end{example}

The following lemmas will be used in the proof of Proposition \ref{removablesingularity}.

\begin{lemma}\label{vanishgrad}
Let $n\geq 2$ and $v$ be a $C^1$ function defined in a neighborhood of $0\in \bR^n$ with $p\coloneqq \nabla v(0) \neq 0$.
Then there exists a M\"obius transformation $\psi$ such that 
\begin{equation*}
    \psi(0) = 0 \quad \text{and} \quad \nabla v^\psi (0) = 0.
\end{equation*}
Moreover, all such $\psi$ can be written as
\begin{equation} \label{eqn-230120-0802}
    \psi(x) = O \left( \frac{\lambda^2 (x-\bar{x}) }{|x - \bar{x}|^2} + \frac{\lambda^2 \bar{x}}{|\bar{x}|^2} \right),
\end{equation}
where $\lambda \in \bR\setminus\{0\}$, $O \in O(n)$, and $\bar{x} = \frac{\lambda^2}{2} O^T p$.
\end{lemma}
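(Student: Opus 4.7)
The plan is to turn the condition $\nabla v^\psi(0)=0$ into an algebraic equation on the parameters of $\psi$, and then classify all M\"obius maps $\psi$ with $\psi(0)=0$ and solve it. Since $v^\psi = v\circ\psi + \frac{1}{n}\log|J_\psi|$ and every M\"obius map is conformal, we may write $D\psi(x) = \rho(x)\,O(x)$ with $\rho(x) = |J_\psi(x)|^{1/n} > 0$ and $O(x)\in O(n)$; the chain rule then gives $\nabla v^\psi(0) = D\psi(0)^\top p + \nabla\log\rho(0)$, and the goal is to choose $\psi$ so this vector vanishes.

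I would next rule out the case $\psi(\infty)=\infty$: in that case $\psi$ is affine and $\psi(0)=0$ forces $\psi(x) = aOx$ for some $a>0$ and $O\in O(n)$. Then $\rho\equiv a$ is constant, so $\nabla v^\psi(0) = aO^\top p\neq 0$, contradicting $p\neq 0$. Hence any admissible $\psi$ must send some finite $\bar x\neq 0$ to $\infty$. By composing the standard inversion $I_{\bar x}(x) = \bar x+(x-\bar x)/|x-\bar x|^2$ with an arbitrary similarity on the left and imposing $\psi(0)=0$, every such $\psi$ can be written in the form \eqref{eqn-230120-0802}, with the parameters $(\bar x,\lambda,O)\in(\bR^n\setminus\{0\})\times(\bR\setminus\{0\})\times O(n)$ free.

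A direct calculation for $\psi$ of this form yields $\rho(x) = \lambda^2/|x-\bar x|^2$, $D\psi(0) = (\lambda^2/|\bar x|^2)\,O(I-2\hat{\bar x}\hat{\bar x}^\top)$, and $\nabla\log\rho(0) = 2\bar x/|\bar x|^2$, where $\hat{\bar x}=\bar x/|\bar x|$. Substituting into $\nabla v^\psi(0)=0$ reduces the problem to the linear equation $(I-2\hat{\bar x}\hat{\bar x}^\top)O^\top p = -2\bar x/\lambda^2$. Decomposing $O^\top p$ into components parallel and perpendicular to $\hat{\bar x}$, the perpendicular part forces $O^\top p$ to be parallel to $\bar x$, while the parallel part yields $\hat{\bar x}\cdot O^\top p = 2|\bar x|/\lambda^2$; together these give $\bar x = \frac{\lambda^2}{2}O^\top p$. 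This both establishes existence (choose any $\lambda,O$ and define $\bar x$ by this formula, which is nonzero since $p\neq 0$) and exhausts all solutions, proving the ``moreover'' part. The main technical hurdle is the accurate computation of $D\psi$ and $|J_\psi|^{1/n}$ for the inversion about $\bar x$; the remainder is elementary linear algebra.
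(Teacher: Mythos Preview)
Your proposal is correct and follows essentially the same approach as the paper: classify M\"obius maps fixing the origin into the affine case $\psi(x)=\lambda^2 O x$ (which you and the paper both rule out since $\nabla v^\psi(0)=\lambda^2 O^T p\neq 0$) and the inversion-type form \eqref{eqn-230120-0802}, then compute $\nabla v^\psi(0)$ directly and solve for $\bar x$. The only cosmetic difference is that the paper cites Beardon's classification theorem for the dichotomy while you sketch it via the image of $\infty$; your explicit decomposition of $O^T p$ into components parallel and perpendicular to $\hat{\bar x}$ spells out what the paper summarizes as ``matching $\nabla v^\psi(0)=0$, we obtain the desired formula.''
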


\begin{proof}
First, from \cite[Theorem~3.5.1]{MR1393195}, all M\"obius transforms mapping zero to zero can be written as either \eqref{eqn-230120-0802} or $\psi(x) = \lambda^2 O x$. Clearly, the second possibility can be ruled out as $\nabla v^\psi(0) = \lambda^2 O^T p \neq 0$. Now, for $\psi$ in the form of \eqref{eqn-230120-0802}, direct computation shows
\begin{equation*}
    \nabla v^\psi (0) 
    = 
    2 \frac{\bar{x}}{|\bar{x}|^2} 
    + 
    \frac{\lambda^2}{|\bar{x}|^2} \left( I_n - 2 \frac{\bar{x}}{|\bar{x}|} \otimes \frac{\bar{x}}{|\bar{x}|} \right) O^T p.
\end{equation*}
Matching $\nabla v^\psi (0) = 0$, we obtain the desired formula.
\end{proof}

\begin{lemma}\label{infdecrease}
 Let $v$ be a radial function satisfying the assumptions of Proposition \ref{removablesingularity}. Then, $v(r)$ is non-increasing in $r$. In particular, $\lim\limits_{r\rightarrow 0} v(r) >-\infty$.
\end{lemma}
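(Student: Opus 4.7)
The plan is a proof by contradiction via comparison with a tailored family of strictly increasing radial subsolutions. Suppose $v(r_1)<v(r_2)$ for some $0<r_1<r_2<1$; I will construct, for each small $r_0\in(0,r_1)$, a strict subsolution $w_{r_0}$ on the annulus $\{r_0<|x|<r_2\}$ which blows down to $-\infty$ at $|x|=r_0$ and takes the value $v(r_2)$ at $|x|=r_2$, and then send $r_0\to 0^+$. The ``in particular'' assertion follows immediately from monotonicity: since $v\not\equiv +\infty$, there is some $r_*\in(0,1)$ with $v(r_*)<+\infty$, and then $v(r)\geq v(r_*)>-\infty$ for every $r<r_*$.

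The building block comes from Lemma \ref{lem-220510-0218}(d). Fix any $\mu\in(1,\mgn)$, which is possible because $\bl\notin\overline\Gamma$ is equivalent to $\mgn>1$, and set
\[
w_{r_0}(r)\;:=\;\frac{2}{\mu-1}\log\!\Bigl(C_8\bigl(1-(r_0/r)^{\mu-1}\bigr)\Bigr),\qquad r>r_0,
\]
with $C_8>0$ chosen so that $w_{r_0}(r_2)=\alpha_2:=v(r_2)$. (In the notation of Lemma \ref{lem-220510-0218}(d) this corresponds to $C_7=-C_8 r_0^{\mu-1}<0$.) The function $w_{r_0}$ is smooth and strictly increasing on $(r_0,\infty)$ with $w_{r_0}(r_0^+)=-\infty$. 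A direct computation, as in the proof of Lemma \ref{lem-220510-0218}, gives $\lambda(A[w_{r_0}])=C(r)(\mu,-1,\ldots,-1)$ with $C(r)>0$; since $\mu<\mgn$, the vector $(\mu,-1,\ldots,-1)\notin\overline\Gamma$, so $\lambda(A[w_{r_0}])\in\bR^n\setminus\overline\Gamma$. In particular $w_{r_0}$ is a strict subsolution fitting the hypothesis of Theorem \ref{thm-220721-0849}.

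Assume first $\alpha_2<+\infty$. On $\Omega=\{r_0<|x|<r_2\}\subset B\setminus\{0\}$ we have $w_{r_0}\leq v$ on $\p\Omega$ (at $|x|=r_0$ since $w_{r_0}=-\infty$, and at $|x|=r_2$ by our choice of $C_8$), so Theorem \ref{thm-220721-0849} yields $w_{r_0}\leq v$ throughout $\Omega$. A short calculation shows
\[
w_{r_0}(r_1)\;=\;\alpha_2+\frac{2}{\mu-1}\log\frac{1-(r_0/r_1)^{\mu-1}}{1-(r_0/r_2)^{\mu-1}}\;\longrightarrow\;\alpha_2\quad\text{as }r_0\to 0^+,
\]
so $v(r_1)\geq \alpha_2=v(r_2)$, contradicting $v(r_1)<v(r_2)$. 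The remaining case $v(r_2)=+\infty$ reduces to the previous one by lower semi-continuity: for any $N>v(r_1)$ one can pick $r_2'\in(r_1,r_2)$ with $v(r_2')>N$, and rerunning the argument with $(r_2,\alpha_2)$ replaced by $(r_2',N)$ gives $v(r_1)\geq N$ for all $N$, which is absurd.

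The main obstacle is choosing the correct limiting procedure. In Lemma \ref{lem-230525-1216} one has $\mgn=+\infty$ and can let $\mu\to\infty$, which flattens the comparison family to the constant $v(r_2)$; here $\mgn$ may be finite, so $\mu$ must be fixed in $(1,\mgn)$ and the limit must be taken instead in the singular radius $r_0\to 0^+$. The two design choices that make this succeed are (i) pinning $w_{r_0}(r_2)$ exactly to $\alpha_2$ via $C_8$, so that equality holds on the outer boundary, and (ii) forcing an interior blow-up of $w_{r_0}$ to $-\infty$ at $r=r_0$ (by taking $C_7<0<C_8$ in Lemma \ref{lem-220510-0218}(d)), so that the boundary inequality on $\{|x|=r_0\}$ is satisfied for free by any lower semi-continuous $v$.
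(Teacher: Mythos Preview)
Your proof is correct and uses essentially the same approach as the paper: both argue by contradiction via comparison (Theorem~\ref{thm-220721-0849}) against the radial barriers of Lemma~\ref{lem-220510-0218}(d), whose $A$-tensor eigenvalues lie along the ray $(\mu,-1,\dots,-1)$ for some $\mu\in(1,\mgn)$. The only difference is the limiting procedure: the paper pins the barrier at $r_1$ and $r_2$ and derives a blow-down at an interior radius $a\in(0,r_1)$, contradicting lower semi-continuity of $v$ there, whereas you parameterize directly by the blow-down radius $r_0$ and send $r_0\to 0^+$ to obtain $v(r_1)\geq v(r_2)$ --- a slightly more direct route to the same contradiction.
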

\begin{proof}
Clearly, $\bl\notin \overline\Gamma$ i.e. $\mgn>1$ implies that 
\begin{equation}\label{nlowerconicalray}
   \{c(1+\epsi,-1,\dots,-1)\mid c\geq 0\}\cap \Gamma = \emptyset, \quad \text{for small} \  \epsi>0.
\end{equation}
\sloppy
Suppose the contrary that there exist $0<r_1<r_2$ such that $ v(r_1)< v(r_2)$. For a fixed $\delta \in (0, v(r_2) - v(r_1))$ and $\epsi$ given in \eqref{nlowerconicalray}, let
\begin{equation*}
    \underline{v} (r) := 2 \epsi^{-1} \log (C_7 r^{-\epsi} + C_8),
\end{equation*}
where $C_7 = (r_1^{-\epsi} - r_2^{-\epsi})^{-1} (e^{\epsi(v(r_1) + \delta)/2} - e^{\epsi v(r_2) /2}) < 0$ and $C_8 = (r_2^{-\epsi} - r_1^{-\epsi})^{-1} ( e^{\epsi(v(r_1) + \delta) /2} r_2^{-\epsi} - e^{\epsi v(r_2) / 2} r_1^{-\epsi} ) >0$ are chosen such that $\underline{v}(r_1) = v (r_1) + \delta$ and $\underline{v}(r_2) = v (r_2)$.
Since $(C_7 r^{-\epsi} + C_8)|_{r=r_1}>0$ and $(C_7 r^{-\epsi} + C_8)|_{r=0+} = -\infty$, there exists some $a \in (0, r_1)$, such that $\underline{v} \in C^\infty (a,r_2]$ and $\lim_{r\rightarrow a+}\underline{v}(r) = -\infty$.
By \eqref{230930-1027} with $v=\ubar{v}$,
\begin{equation*}
    \lambda(A[\underline{v}]) = C(r) (1+\epsi, -1,\ldots,-1),
    \,\, \text{where} \,\,
    C(r) = 2^{-1} e^{-2\underline{v}} \underline{v}' (\underline{v} + 2\log r)' > 0.
\end{equation*}
By \eqref{nlowerconicalray}, $\lambda(A[\underline{v}]) \in \bR^n\setminus \overline{\Gamma}$.
Since $\underline{v}(r_1) > v(r_1)$ and $\underline{v}(r_2) = v (r_2)$, an application of the comparison principle Theorem \ref{thm-220721-0849} gives $\underline{v} > v$ in $(a,r_1)$. Indeed, if $\underline{v}(r_3) \leq v(r_3)$ for some $r_3 \in (a,r_1)$, then, according to the comparison principle, we have $\underline{v} \leq v$ in $(r_3, r_2)$, which contradicts to $\underline{v}(r_1) > v(r_1)$.
It follows that $\lim_{r\rightarrow a+} v(r) = -\infty$, a contradiction.
\end{proof}
\begin{proof}[Proof of Proposition \ref{removablesingularity}]
Since for every $O\in O(n)$, $\lambda(A[v(O\cdot)])\in\overline\Gamma$ and
$\ubar{v}(r) : = \inf_{\p B_r} v(=\inf_{O\in O(n)}v(Ox))$ is lower semi-continuous in $B\setminus\{0\}$, we know $\lambda(A[\ubar{v}]) \in \overline\Gamma$ in $B\setminus \{0\}$.
By Lemma \ref{infdecrease}, $\ubar v(r)$ is non-increasing in $r$ and, after setting 
$v(0) := \liminf_{x\rightarrow 0} v(x) = \lim_{r\rightarrow 0} \ubar{v} (r)>- \infty$, $v$ is in $LSC(B)$. 

Next, we prove that $v$ is lowerconical at $0$ by contradiction. Suppose not, then $v(0)<\infty$ and there exist some $\epsi >0$, $\eta \in C^\infty (B)$, and $r_0\in(0,1)$, such that
	\begin{equation}\label{nlowerconical}
		( v - \eta)(x) - ( v -  \eta ) (0) \geq \epsi |x|,~~ \forall |x|\leq r_0.
	\end{equation}
There are two possibilities.

\noindent\textbf{Case 1:}  $\nabla\eta(0)=0$.
Inequality
\eqref{nlowerconical} implies that $\ubar{v}(r)  \geq \frac{\epsi}{2}r +v(0)>v(0)$ for small $r>0$, violating the monotonicity of $\ubar{v}$. 

\noindent\textbf{Case 2:}  $\nabla\eta(0)\neq 0$.
By Lemma \ref{vanishgrad}, there exists a M\"obius transformation $\psi$, such that $\psi (0) = 0$ and $\nabla \eta^\psi (0) = 0$.
By \eqref{nlowerconical} and the invertibility of $\nabla \psi(0)$,
\begin{equation*}
   v^\psi (x) - \eta^\psi (x) - v^\psi(0) + \eta^\psi (0) \geq \epsi |\psi(x)| \geq \epsi' |x|, \quad \forall |x|\leq r_0'
\end{equation*}
for some $\epsi' \in (0, \epsi)$ and $r_0' \in (0, r_0)$.
Since $v^\psi$ still satisfies $\lambda(A[v^\psi])\in \overline{\Gamma}$ near $0$ and $\nabla \eta^\psi(0) = 0$, by Case 1, we reach a contradiction.
\end{proof}

\subsection{Criteria for upperconical behavior}\label{upperconicalcrit}
\begin{proposition}\label{230805-2330}
           For $n\geq 2$, let $\Gamma$ satisfy \eqref{eqn-230331-0110} with $-\bl\in\Gamma$, and $v$ satisfy
	\begin{equation} \label{230806-1123}
	    \lambda(A[v])\in \bR^n\setminus\Gamma,~~\text{in}~B\setminus\{0\}~~\text{in the viscosity sense}.
	\end{equation}
 Additionally, we assume 
 \begin{equation}\label{230806-1010}
     \lim\limits_{|x|\rightarrow 0}\big( v(x)+2\log|x|\big)=-\infty.
 \end{equation}
 Then $\limsup\limits_{x\rightarrow 0} v \in [-\infty, \infty)$. Moreover, if we define $v(0) = \limsup\limits_{x\rightarrow 0} v$, then $v \in USC (B)$ and is upperconical at $0$.\end{proposition}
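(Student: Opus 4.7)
The plan is to adapt the four-step strategy of Proposition~\ref{removablesingularity}, dualized for the subsolution setting.

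\textbf{Step 1 (Radial symmetrization).} Set $\bar v(x):=\sup_{O\in O(n)}v(Ox)$, which is radial and USC on $B\setminus\{0\}$. Since the equation is $O(n)$-invariant, each $v(O\cdot)$ is a viscosity subsolution of $\lambda(\av)\in\bR^n\setminus\Gamma$, and the pointwise supremum over the compact family $O(n)$ preserves both USC and the subsolution property, so $\bar v$ is again a subsolution on $B\setminus\{0\}$. The hypothesis \eqref{230806-1010} transfers directly: $\bar v(r)+2\log r\to-\infty$ as $r\to 0^+$.

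\textbf{Step 2 (Monotonicity of $\bar v$).} I will prove $\bar v$ is non-decreasing on $(0,1)$. Suppose for contradiction $\bar v(r_1)>\bar v(r_2)$ for some $0<r_1<r_2<1$. Since $-\bl\in\Gamma$ and $\Gamma$ is open, $\mg>1$, and I may pick $\mu\in(1,\mg)$ so that $(-\mu,1,\ldots,1)\in\Gamma$. Define
\[
\overline w(r):=\tfrac{2}{\mu-1}\log(C_3 r^{1-\mu}+C_4),
\]
choosing $C_3,C_4>0$ so that $\overline w(r_1)=\bar v(r_1)-\delta$ and $\overline w(r_2)=\bar v(r_2)$ for some $\delta\in(\max\{0,\bar v(r_1)-\bar v(r_2)-2\log(r_2/r_1)\},\bar v(r_1)-\bar v(r_2))$; this interval is non-empty since $r_1<r_2$. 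A direct computation (in the spirit of Lemma~\ref{lem-220510-0218}) shows $\lambda(A[\overline w])=e^{-2\overline w}\nu(-\mu,1,\ldots,1)$ with $\nu>0$, hence in $\Gamma$, so $\overline w$ is a classical strict supersolution on $(0,\infty)$. As $r\to 0^+$, $\overline w(r)=-2\log r+O(1)$, so $\overline w+2\log r$ stays bounded while $\bar v+2\log r\to-\infty$; consequently $\overline w-\bar v\to+\infty$ at $0$. Picking $a\in(0,r_1)$ small enough that $\overline w(a)>\bar v(a)$, Theorem~\ref{thm-220721-0849} applied on $\{a<|x|<r_2\}$ yields $\overline w\geq\bar v$ throughout, contradicting $\overline w(r_1)=\bar v(r_1)-\delta<\bar v(r_1)$.

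\textbf{Steps 3 and 4 (limit, USC, and upperconicality).} By Step 2 and USC of $\bar v$, $\lim_{r\to 0^+}\bar v(r)$ exists in $[-\infty,\infty)$ and equals $\limsup_{x\to 0}v(x)$; setting $v(0)$ to this value yields $v\in USC(B)$. For upperconicality, suppose it fails; then $v(0)>-\infty$ and there exist $\epsi>0$, $\eta\in C^\infty(B)$, $r_0\in(0,1)$ with $(v-\eta)(x)-(v-\eta)(0)\leq-\epsi|x|$ for $|x|\leq r_0$. When $\nabla\eta(0)=0$, writing $\eta(x)=\eta(0)+O(|x|^2)$ forces $\bar v(r)\leq v(0)-\tfrac{\epsi}{2}r$ for small $r$, contradicting $\bar v(r)\geq v(0)$ from Step 2. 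When $\nabla\eta(0)\neq 0$, Lemma~\ref{vanishgrad} produces a M\"obius $\psi$ with $\psi(0)=0$ and $\nabla\eta^\psi(0)=0$; the identity $(v^\psi-\eta^\psi)(x)=(v-\eta)(\psi(x))$ together with $|\psi(x)|\asymp|x|$ near $0$, the M\"obius invariance of the equation, and the preservation of \eqref{230806-1010} under $\psi$ reduces to the previous case.

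\textbf{Main obstacle.} The crux is Step 2: finding a classical strict radial supersolution whose asymptotic $\overline w(r)=-2\log r+O(1)$ precisely matches the growth implicit in \eqref{230806-1010}, so that $\overline w-\bar v\to+\infty$ forces the comparison argument to close. The hypothesis $\mg>1$ is essential, since it permits $\mu\in(1,\mg)$ with the correct sign configuration for $(V,\nu,\ldots,\nu)$; without it, the supersolution construction collapses, consistent with the optimality pattern already displayed by the counterexamples associated with Proposition~\ref{removablesingularity}.
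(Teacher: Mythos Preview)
Your proof is correct and follows essentially the same approach as the paper: radial symmetrization via $\sup$, a comparison argument against a function from the family $\frac{2}{\mu-1}\log(C_3 r^{1-\mu}+C_4)$ with $\mu>1$ to force monotonicity of $\bar v$ (the paper takes $\mu=1+\epsi$ and organizes the contradiction by first comparing on $(0,r_1)$ and then invoking the asymptotic, whereas you first use the asymptotic to secure a boundary point $a$ and then compare on $(a,r_2)$), and the same reduction of the upperconical claim to the case $\nabla\eta(0)=0$ via Lemma~\ref{vanishgrad}. The arguments are interchangeable and equally valid.
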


\begin{remark}
The assumption $-\bl\in\Gamma$ in Proposition \ref{230805-2330} is optimal in the sense that the conclusion fails for any cone $\Gamma$ with $-\bl\notin\Gamma$, i.e. $\mg \leq 1$, as shown below.
\end{remark}

\begin{example}\label{reex3}
    Let $v = \alpha \log r$, $\alpha \in (-2,0)$. By \eqref{230930-1027},
    \begin{equation*}
        \lambda(A[v]) = C(r) (-1,1,\ldots,1)\quad \text{and}\quad
        C(r) = -2^{-1} e^{-2v}  v' (v+ 2 \log r )' >0.
    \end{equation*}
    Clearly, $v$ satisfies \eqref{230806-1010}, and for any cone $\Gamma$ with $\mg \leq 1$, $\lambda(A[v]) \in \bR^n\setminus\Gamma$ in $\bR^n\setminus \{0\}$. However, $\limsup_{r\rightarrow 0} v = \infty$.
\end{example}

\begin{example}\label{reex4}
    Let $v = \frac{2}{\mu -1} \log (1+ r^{1-\mu})$, $\mu \in [0,1)$. By \eqref{230930-1027},
    \begin{equation*}
        \lambda(A[v]) = C(r) (-\mu,1,\ldots,1)
        \quad \text{and}\quad
        C(r) = -2^{-1} e^{-2v}  v' (v+ 2\log r)'  >0.
    \end{equation*}
        Clearly, $v$ satisfies \eqref{230806-1010}, and for any cone $\Gamma$ with $\mg \leq \mu (<1)$, $\lambda(A[v]) \in \bR^n\setminus\Gamma$ in $\bR^n\setminus \{0\}$. However, $v\in C^0(B)$ fails to be upperconical at $x=0$.
\end{example}

\begin{remark}
    The assumption \eqref{230806-1010} in Proposition \ref{230805-2330} is also optimal in the sense that for any cone $\Gamma$ with $-\bl\in\Gamma$ i.e. $\mg>1$ and any $a\in\bR$, there exists some $v$ satisfying $\lambda(\av)\in\bR^n\setminus\Gamma$ in $B\setminus\{0\}$ and $\lim_{|x|\to 0}(v(x)+2\log|x|)=a$. In particular, $\limsup_{x\to 0}v(x)=\infty$. See the example below.
\end{remark}
\begin{example}
    Let $v = \frac{2}{\mu -1} \log (1+ e^{\frac{\mu-1}{2}a}r^{1-\mu})$, $\mu \in (1,\infty)$. By \eqref{230930-1027},
    \begin{equation*}
        \lambda(A[v]) = C(r) (-\mu,1,\ldots,1)
        \quad \text{and}\quad
        C(r) = -2^{-1} e^{-2v}  v' (v+ 2\log r)'  >0.
    \end{equation*}
        Clearly, for any cone $\Gamma$ with $\mg \geq \mu (>1)$, $\lambda(A[v]) \in \bR^n\setminus\Gamma$ in $\bR^n\setminus \{0\}$. However, $\lim_{r\to 0}(v(r)+2\log r)=a$ and in particular, $\lim_{r\to 0}v=\infty$.
\end{example}

 To prove Proposition \ref{230805-2330}, we need following 
 \begin{lemma}\label{230806-1116}
      Let $v$ be a radial function satisfying the assumptions of Proposition \ref{230805-2330}. Then, $v(r)$ is non-decreasing in $r$. In particular, $\lim\limits_{r\rightarrow 0} v(r) <\infty$.
\end{lemma}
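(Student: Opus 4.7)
The plan is to dualize the proof of Lemma \ref{infdecrease}: here $v$ is a viscosity \emph{sub}solution, so the comparison function will be a classical \emph{super}solution $\bar v$ with $\lambda(A[\bar v])\in\Gamma$, and the extra hypothesis \eqref{230806-1010} will supply the boundary inequality at the puncture $0$.

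Arguing by contradiction, suppose $0<r_1<r_2$ satisfy $v(r_1)>v(r_2)$. Since $-\bl\in\Gamma$ and $\Gamma$ is open we have $\mg>1$, so I will pick $\epsi>0$ small enough that $\mu:=1+\epsi<\mg$, ensuring $(-\mu,1,\dots,1)\in\Gamma$. Fix $\delta\in(0,v(r_1)-v(r_2))$ and define
\[\bar v(r):=\tfrac{2}{\mu-1}\log\bigl(\tilde C_3 r^{1-\mu}+\tilde C_4\bigr),\]
with $\tilde C_3,\tilde C_4$ chosen so that $\bar v(r_1)=v(r_1)$ and $\bar v(r_2)=v(r_2)+\delta$. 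A direct computation gives $\tilde C_3>0$, using $v(r_1)>v(r_2)+\delta$ together with $r_1^{1-\mu}>r_2^{1-\mu}$ (as $1-\mu<0$); the constant $\tilde C_4$ may have either sign, but in every case $\bar v$ is $C^\infty$ on an interval $(0,b)$ containing $[r_1,r_2]$, with $b=+\infty$ if $\tilde C_4\ge 0$ and $b>r_2$ if $\tilde C_4<0$ (the latter because $\bar v(r_2)$ is finite). Exactly as in the proof of Lemma \ref{lem-220510-0218}(b), $\bar v$ satisfies $V+\mu\nu=0$ with $\bar v'<0$ and $(\bar v+2\log r)'>0$, so $\nu>0$ and $\lambda(A[\bar v])=e^{-2\bar v}\nu(-\mu,1,\dots,1)\in\Gamma$ throughout its domain.

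The key observation is that $\bar v(r)+2\log r=\tfrac{2}{\mu-1}\log(\tilde C_3+\tilde C_4 r^{\mu-1})\to\tfrac{2}{\mu-1}\log\tilde C_3$ as $r\to 0^+$, a finite limit. Since \eqref{230806-1010} forces $v(r)+2\log r\to-\infty$, for all sufficiently small $\rho>0$ we have $v(\rho)<\bar v(\rho)$. Fixing such $\rho\in(0,r_1)$ and applying the strict form of the comparison principle Theorem \ref{thm-220721-0849} on the annulus $B_{r_2}\setminus\overline{B_\rho}$ (with $\bar v>v$ on $\p B_\rho$ just shown and on $\p B_{r_2}$ because $\bar v(r_2)=v(r_2)+\delta>v(r_2)$, plus $\lambda(A[\bar v])\in\overline{\Gamma}$ and $\lambda(A[v])\in\bR^n\setminus\Gamma$ inside) will yield $\bar v>v$ throughout the annulus. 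Since $r_1\in(\rho,r_2)$ this contradicts $\bar v(r_1)=v(r_1)$, establishing that $v$ is non-decreasing; then $\lim_{r\to 0^+}v(r)\le v(r_0)<\infty$ for any fixed $r_0$ in the domain is immediate from the upper semicontinuity of $v$ at $r_0$.

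The main subtlety to watch is ensuring that $\bar v$ simultaneously (i) is a genuine classical supersolution of $\lambda(A[\bar v])\in\Gamma$ on an interval covering $[r_1,r_2]$, which is exactly what forces $\mu>1$ and $\tilde C_3>0$, and (ii) has $\bar v+2\log r$ bounded as $r\to 0^+$, which is the mechanism that converts hypothesis \eqref{230806-1010} into a usable boundary inequality. Since $\epsi$ is chosen purely from the opening of $\Gamma$ at $-\bl$ (i.e.\ from $\mg-1$) and not from the size of $v(r_1)-v(r_2)$, the asymptotic behavior of $\bar v+2\log r$ at $r=0$ is the same in both sign regimes of $\tilde C_4$, so the comparison argument is uniform and no additional cases arise.
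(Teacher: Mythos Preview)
Your overall strategy is sound and parallels the paper's, but there is a genuine gap: the claim that $(\bar v+2\log r)'>0$ (hence $\nu>0$ and $\lambda(A[\bar v])\in\Gamma$) is not justified when $\tilde C_4\le 0$. Writing $\bar v+2\log r=\tfrac{2}{\mu-1}\log(\tilde C_3+\tilde C_4 r^{\mu-1})$, one computes $(\bar v+2\log r)'=\tfrac{2\tilde C_4 r^{\mu-2}}{\tilde C_3+\tilde C_4 r^{\mu-1}}$, which has the sign of $\tilde C_4$. If $\tilde C_4<0$ then $\nu<0$, and by \eqref{230930-1027} one gets $\lambda(A[\bar v])=C(r)(\mu,-1,\dots,-1)$ with $C(r)>0$; this need \emph{not} lie in $\overline\Gamma$ under your hypotheses (e.g.\ take $\Gamma=\Gamma_1$, $n\ge 3$: then $-\bl\in\Gamma$ but $\mu-(n-1)<0$ for $\mu$ near $1$). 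Your two boundary conditions $\bar v(r_1)=v(r_1)$, $\bar v(r_2)=v(r_2)+\delta$ do not by themselves force $\tilde C_4>0$; for instance when $r_1$ is close to $r_2$ the denominator $r_1^{1-\mu}-r_2^{1-\mu}$ is small and $\tilde C_3$ is large, driving $\tilde C_4=e^{\frac{\mu-1}{2}(v(r_2)+\delta)}-\tilde C_3 r_2^{1-\mu}$ negative.

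The paper avoids this by choosing $C_3,C_4>0$ directly (taking $C_3$ small enough that $C_4=e^{\epsi v(r_2)/2}-r_2^{-\epsi}C_3>0$), at the price of having only $\bar v(r_1)<v(r_1)$ rather than equality. With both constants positive, $\bar v$ is a genuine supersolution on all of $(0,\infty)$; the paper then shows $v\ge\bar v$ on $(0,r_1)$ by an indirect comparison argument on annuli $(r_3,r_2)$, and reaches a contradiction via $\limsup_{r\to0}(v+2\log r)\ge\tfrac{2}{\epsi}\log C_3>-\infty$ against \eqref{230806-1010}. Your variant---first using \eqref{230806-1010} to get a boundary inequality at a small radius $\rho$, then comparing on $(\rho,r_2)$---also works, but only once you have arranged $\tilde C_4>0$; the simplest fix is to relax your condition at $r_1$ to $\bar v(r_1)\le v(r_1)$ and pick $\tilde C_3$ small enough to make $\tilde C_4>0$, exactly as the paper does.
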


\begin{proof}
    Clearly, $-\bl\in\Gamma$ i.e $\mg>1$ implies that 
    \begin{equation}\label{230805-2322}
    \{c(-1-\epsi,1,\ldots,1)\mid c\geq 0\}\subset \overline{\Gamma}, \quad\text{for small}~ 
    \epsi>0.
\end{equation}
Suppose the contrary that there exist $0<r_1<r_2$ such that $v(r_1)>v(r_2)$. Let 
    \begin{equation*}
       \bar{v}(r)=2\epsi^{-1}\log(C_3 r^{-\epsi}+C_4),
    \end{equation*}
    where $C_3=2^{-1}\min\{r_2^{\epsi}e^{2^{-1}\epsi v(r_2)}, (r_1^{-\epsi}-r_2^{-\epsi})^{-1}(e^{2^{-1}\epsi v(r_1)}-e^{2^{-1}\epsi v(r_2)})\}>0$ and $C_4=e^{2^{-1}\epsi v(r_2)}-r_2^{-\epsi}C_3>0$ are chosen such that $\bar{v}(r_1)<v(r_1)$ and $\bar{v}(r_2)=v(r_2)$. By \eqref{230930-1027} with $v=\bar{v}$,
    \begin{equation*}
        \lambda(A[\bar{v}])=C(r)(-1-\epsi,1,\ldots,1),~\text{where}~C(r)=-2^{-1}e^{-2\bar{v}}\bar{v}'(\bar{v}+2\log r)'>0.
    \end{equation*}
    By \eqref{230805-2322}, $\lambda(A[\bar{v}])\in\overline{\Gamma}$ in $B\setminus\{0\}$. Since $\bar{v}(r_1)<v(r_1)$ and $\bar{v}(r_2)=v(r_2)$, an application of the comparison principle Theorem \ref{thm-220721-0849} yields $v(r)\geq \bar{v}(r),~0<r<r_1$. Hence,
    \begin{equation*}
        \limsup\limits_{r\rightarrow 0}(v(r)+2\log r)\geq \limsup\limits_{r\rightarrow 0}(\bar{v}(r)+2\log r)=2\epsi^{-1}\log C_3>-\infty.    
    \end{equation*}
    This is a contradiction with condition \eqref{230806-1010}.
\end{proof}

\begin{proof}[Proof of Proposition \ref{230805-2330}]
 The proof is similar to that of  Proposition \ref{removablesingularity}, though there is some difference
 (e.g. the need of assumption (\ref{230806-1010})). For reader's convenience, we include the details.
 Let $\bar{v}(r) : = \sup_{\p B_r} v$. Then $\bar{v}$ satisfies $\lambda(A[\bar{v}]) \in \bR^n\setminus\Gamma$ in $B\setminus \{0\}$.
By Lemma \ref{230806-1116}, $\bar v(r)$ is non-decreasing in $r$ and, after setting 
$v(0) := \limsup_{x\rightarrow 0} v(x) = \lim_{r\rightarrow 0} \bar{v} (r)< \infty$, $v$ is in $USC(B)$. Next we prove that $v$ is upperconical at $0$ by contradiction. Suppose not, then  $v(0) > -\infty$ and there exist some $\epsi >0$, $\eta \in C^\infty (B)$, and $r_0\in(0,1)$, 
	such that 
	\begin{equation}\label{230806-1112}
		( v - \eta)(x) - ( v -  \eta ) (0) \leq -\epsi |x|,~~ \forall |x|\leq r_0.
	\end{equation}

\noindent\textbf{Case 1:}  $\nabla\eta(0)=0$.
Inequality  \eqref{230806-1112} implies that 
$\bar{v}(r) \le v(0)-\frac{\epsi}{2}r<v(0)$ for small $r>0$, violating 
 the monotonicity of $\bar v$.

\noindent\textbf{Case 2:}  $\nabla\eta(0)\neq 0$.
As the Case 2 in proof of Proposition \ref{removablesingularity}, 
 there exists a M\"obius transformation $\psi$, such that $\psi (0) = 0$,  $\nabla \eta^\psi (0) = 0$, and 
\begin{equation*}
   v^\psi (x) - \eta^\psi (x) - v^\psi(0) + \eta^\psi (0) \leq -\epsi |\psi(x)| \leq -\epsi' |x|, ~ \forall |x|\leq r_0'
\end{equation*}
for some $\epsi' \in (0, \epsi)$ and $r_0' \in (0, r_0)$.
Since $v^\psi$ still satisfies \eqref{230806-1123}, \eqref{230806-1010} and $\nabla \eta^\psi(0) = 0$,
we reach a contradiction by Case 1.
\end{proof}
\subsection{Necessity of lowerconical and upperconical behavior}\label{sec-4.3}
For $n\geq 2$, let $\Gamma$ satisfy \eqref{eqn-230331-0110}. We will prove $\lambda(\av)\in\overline\Gamma$ in $B$ implies $v$ is lowerconical at $0$. Similarly, one can also prove $\lambda(\av)\in\bR^n\setminus\Gamma$ in $B$ implies $v$ is upperconical at $0$.

Clearly, $v\in LSC(B)$ and we may assume $v(0)<\infty$. Suppose the contrary that there exist some $\epsi>0$, $\eta\in C^\infty(B)$, and $r_0\in(0,1)$, satisfying \eqref{nlowerconical}. There are two possibilities.

\noindent \textbf{Case 1:} $\nabla \eta(0)=0$. By \eqref{nlowerconical},
\begin{equation}\label{230930-1426}
    v(x)>v(0),~\forall |x|>0~\text{small}.
\end{equation}
On the other hand, since any constant is a solution of \eqref{eqn-221204-0213}, we have, by applying Theorem \ref{thm-220721-0849} in $B_r\subset B$, that
\begin{equation*}
    v\geq \min\limits_{\p B_r} v~\text{in}~B_r.
\end{equation*}
In particular, $v(0)\geq \min\limits_{\p B_r}v$. A contradiction to \eqref{230930-1426}.\\
\noindent \textbf{Case 2:} $\nabla \eta(0)\neq 0$. This case can be reduced to Case 1 as the Case 2 in the proof of Proposition \ref{removablesingularity}.

\section{Liouville-type theorem for $f(\lambda(\av))=1$}\label{sec-5}

\subsection{Proof of Theorem \ref{thm-230525-0321}}\label{sec-5.1}
\begin{proof}[Proof of Theorem \ref{thm-230525-0321}]
    Our proof is by the method of moving spheres, a variant of the method of moving planes. We first show that
    \begin{equation}\label{movingconditioninfty}
        \liminf\limits_{|x|\rightarrow \infty} \big(v(x)+2\log|x|\big)>-\infty.
    \end{equation}
    Since $\lambda(\av)\in \Gamma$ in $\bR^n$, we know that, by the M\"obius invariance, $\lambda(A[v^{0,1}])\in \Gamma$ in $B\setminus\{0\}$.
Applying Proposition \ref{removablesingularity} to $v^{0,1}$ yields $\liminf\limits_{|x|\rightarrow 0} v^{0,1}(x)>-\infty$, i.e. \eqref{movingconditioninfty} holds.

    \textbf{Step 1:} Starting the moving spheres. We establish the following:
            For any $x\in \bR^n$, there exists $\lambda_{0}(x)>0$, such that
    \begin{equation*}
        v^{x,\lambda}(y)\coloneqq 2\log \left(\frac{\lambda}{|y-x|}\right) + v \left(x+\frac{\lambda^2 (y-x)}{|y-x|^2} \right) \leq v(y),
        ~
        \forall |y-x|\geq \lambda,~0<\lambda<\lambda_0(x).
    \end{equation*}

This follows from \eqref{movingconditioninfty} and $ v \in C^{0,1}_{loc}(\bR^n)$, as in the proof of \cite[Lemma 3.2]{Li2021ALT} and \cite[Lemma 2.1]{MR2001065}. 

For any $x\in \bR^n$, let
    \begin{equation*}
        \bar{\lambda}(x)\coloneqq \sup\{\mu>0 \mid \forall 0<\lambda<\mu,~v^{x,\lambda}(y)\leq v(y),~\forall |y-x|\geq \lambda\} \in (0,\infty].
    \end{equation*}
    
\textbf{Step 2:} 
In this step, we prove the following:
    If $\bar{\lambda}(x)< \infty$ for some $x\in \bR^n$, then $v^{x,\bar{\lambda}(x)}\equiv v$ in $\bR^n\setminus\{x\}$. 

    Without loss of generality, we take $x=0$, and let $\bar{\lambda}=\bar{\lambda}(0)$ and $v^{\bar{\lambda}}=v^{0,\bar{\lambda}}$. By the definition of $\bar{\lambda}$ and the continuity of $v$,
    \begin{equation*}
        v^{\bar{\lambda}}\leq v ~~\text{in}~\bR^n\setminus B_{\bar{\lambda}}.
    \end{equation*}  
A calculation gives, using \eqref{nequation} and the M\"obius invariance,
    \begin{equation} \label{eqn-230528-1202}
    f(\lambda(A[v^{\bar{\lambda}}]))=\big(\frac{\bar{\lambda}}{|y|}\big)^{2p} e^{-p v^{\bar{\lambda}}}
        \leq 
        e^{-p v^{\bar{\lambda}}}
        ~\text{in}\,\, \bR^n \setminus \overline{B_{\bar{\lambda}}}.
    \end{equation}
   Note also from \eqref{nequation}
   \begin{equation*}
       f(\lambda(\av))= e^{-pv}~~\text{in}~\bR^n \setminus \overline{B_{\bar{\lambda}}}.  \end{equation*}
Hence, by the strong maximum principle, we may assume 
\begin{equation} \label{eqn-230528-1207}
    v > v^{\bar{\lambda}} \,\,\text{in}\,\,\bR^n \setminus \overline{B_{\bar{\lambda}}}
\end{equation}
since otherwise $v^{\bar{\lambda}} \equiv v$ on $\bR^n \setminus \overline{B_{\bar{\lambda}}}$, which gives the desired conclusion of Step 2.
Moreover, by the Hopf Lemma, 
\begin{equation} \label{eqn-230528-1212}
    \frac{\p}{\p \nu} (v - v^{\bar{\lambda}}) > 0
    \quad
    \text{on}\,\,
    \p B_{\bar{\lambda}}.
\end{equation}
Next we prove
\begin{equation} \label{eqn-230528-1209}
    \liminf_{|y|\rightarrow \infty} (v-v^{\bar{\lambda}})(y) > 0.
\end{equation}
Similar to \eqref{eqn-230528-1202}, we have
    \begin{equation*}
        f(\lambda(A[v^{\bar{\lambda}}]))\geq e^{-p v^{\bar{\lambda}}}~\text{in}~B_{\bar{\lambda}} \setminus \{0\}.
    \end{equation*}
    In particular, $\lambda(A[v^{\bar{\lambda}}])\in \Gamma$ in $B_{\bar{\lambda}}\setminus\{0\}$. Therefore,
    by Proposition \ref{removablesingularity}, we obtain
    \begin{equation*}
        v^{\bar{\lambda}}~~\text{is lowerconical at }x=0~~\text{with}~~v^{\bar{\lambda}}(0)\coloneqq \liminf\limits_{|x|\rightarrow 0} v^{\bar{\lambda}}>-\infty,~~v^{\bar{\lambda}}\in LSC(B_{\bar{\lambda}}).
    \end{equation*}
By Lemma \ref{nremove}, we have 
    \begin{equation*}
        f(\lambda(A[v^{\bar{\lambda}}]))\geq e^{-p v^{\bar{\lambda}}}~\text{in}~B_{\bar{\lambda}}~~\text{in the viscosity sense}.
    \end{equation*}
Since
   \begin{equation*}
       f(\lambda(\av))= e^{-pv}~\text{in}~B_{\bar{\lambda}}.  \end{equation*}
       and by \eqref{eqn-230528-1207}, $v^{\bar{\lambda}} > v~ \text{in} ~
           B_{\bar{\lambda}} \setminus \{0\}$, we apply the strong comparison principle 
           (\cite[Theorem 3.1]{CLN3} and \cite[Theorem 2.3]{MR4181003})
           to obtain
        $\liminf\limits_{|x|\rightarrow 0}(v^{\bar{\lambda}}-v)(x)>0$,
    i.e. \eqref{eqn-230528-1209}.
    It follows from \eqref{eqn-230528-1207}, \eqref{eqn-230528-1212}, and \eqref{eqn-230528-1209} (see the proof of \cite[Lemma 2.2]{MR2001065} and 
    \cite[Lemma 3.2]{Li2021ALT}) that for some $\epsi>0$, $v^{0,\lambda}(y)\leq v(y)$, for every $|y|\geq \lambda$ and $\bar{\lambda} \leq \lambda < \bar{\lambda} + \epsi$, violating the definition of $\bar{\lambda}$.

\textbf{Step 3:} We prove that either $\bar{\lambda}(x)<\infty$ for all $x\in \bR^n$ or $\bar{\lambda}(x)=\infty$ for all $x\in\bR^n$.

Suppose for some $x\in \bR^n$, $\bar{\lambda}(x)=\infty$. By definition, $v^{x,\lambda}(y) \leq v(y)$ on $\bR^n\setminus B_{\lambda}$ for all $\lambda>0$. Adding both sides by $2\log |y|$ and sending $|y|\rightarrow \infty$, we have $v(x)+2\log \lambda \leq \liminf\limits_{|y|\rightarrow \infty} (v(y)+2 \log|y|) $ for all $\lambda>0$. Hence, $\lim\limits_{|y|\rightarrow \infty} (v(y)+2\log|y|) = \infty$.

On the other hand, if $\bar{\lambda}(x) < \infty$ for some $x$, we have, by Step 2, $v^{x,\bar{\lambda}(x)}\equiv v$ in $\bR^n\setminus\{x\}$. Therefore, $\lim\limits_{|y|\rightarrow \infty} (v(y)+2\log|y|) = v(x)+2\log \bar{\lambda}(x) < \infty$. Step 3 is proved.

\textbf{Step 4:} We prove $\bar{\lambda} (x) < \infty$ for all $x\in \bR^n$. 

If not, we have, by Step 3, $\bar{\lambda} (x) = \infty$ for all $x\in \bR^n$. Consequently, $v^{x,\lambda} (y) \leq v(y)$ for all $|y-x| \geq \lambda>0$. This implies that $v$ is a constant. See e.g. \cite[Lemma 11.1]{MR2001065}
and \cite[Lemma A.1]{Li2021ALT}. Therefore $\lambda(\av) = 0 \notin \Gamma$, which is impossible.

Now, by Step 2, 3, and 4, $\bar{\lambda}(x) < \infty$ for all $x\in \bR^n$, and $v^{x,\bar{\lambda}(x)} \equiv v$. 
Therefore, 
\begin{equation*}
             v(x)=  \log \big(\frac{a}{1+b^2|x-\bar{x}|^2}\big),
\end{equation*}
for some $a,b>0$ and $\bar{x}\in\bR^n$.
See e.g. \cite[Lemma 5.8]{Li_integral} and \cite[Lemma A.2]{Li2021ALT}.
It follows that $\lambda(\av)\equiv 2b^2 a^{-2} \vec{e}$ in $\bR^n$. Plugging this back to the equation, we must have $p=0$, $f(2 b^2 a^{-2}\vec{e})=1$, and $ 2 b^2 a^{-2}\vec{e}\in \Gamma$. The theorem is proved.
\end{proof}

\subsection{Counterexamples in Remark \ref{rmk-230615-1129}}\label{sec-230622-2309}

In this subsection, we work with functions of one variable. The following formula is useful: for any $v=v(x_1)$,
\begin{equation} \label{eqn-230606-0409}
    \lambda(A[v]) = \frac{1}{2}(v')^2 e^{-2v} \left( - 2 (v')^{-2} v'' + 1, - 1, \ldots, - 1\right).
\end{equation}
\begin{example} \label{exp-230622-1021}
    Let $n\geq 2$, for any $s \in (0,1]$, there exists a cone $\Gamma$ satisfying \eqref{eqn-230331-0110} with $\mgn = s$, a symmetric homogeneous of degree $1$ function $f \in C^0(\overline{\Gamma}) \cap C^\infty(\Gamma)$ satisfying $f|_{\p \Gamma} = 0$ and $C^{-1} \leq \p f/\p\lambda_i \leq C$ for all $i$ in $\Gamma$, and a function $v \in C^\infty (\bR^n)$, such that $f(\lambda(\av)) = 1$, $\lambda(\av) \in \Gamma$ in $\bR^n$, but $v$ is not of the form \eqref{nbubble}.
\end{example}

We construct $v=v(x_1)$. Recall \eqref{eqn-230606-0409}, $\lambda(A[v])$ takes the form $(\lambda_1,\lambda_2,\ldots,\lambda_2)$. We solve for $v$ such that $\lambda_1 + \gamma \lambda_2 = 1$ for $\gamma\in \bR$
, which is equivalent to
\begin{equation} \label{eqn-230622-1018}
    v'' + \frac{\gamma - 1}{2} (v')^2 + e^{2v} = 0.
\end{equation}
\begin{lemma}\label{lem-230622-1020}
    For  $\gamma, v_0$, $w_0\in \bR$, there exists a unique smooth solution $v$ of \eqref{eqn-230622-1018} with $v(0) = v_0, v'(0)=w_0$ in $(-\infty,\infty)$ if and only if  $-1\leq \gamma\leq 1$
    or $-1-2w_0^{-2} e^{2 v_0}<\gamma< -1$.     
\end{lemma}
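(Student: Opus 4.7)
The plan is to reduce the second-order autonomous ODE \eqref{eqn-230622-1018} to a first-order linear equation by means of a phase-plane analysis, integrate it explicitly, and then read off the characterization of global existence from the resulting trajectory. Local existence and uniqueness of a smooth solution on some open interval around $0$ follow from standard Picard--Lindel\"of theory since the right-hand side of \eqref{eqn-230622-1018} is smooth in $(v,v')$. The content of the lemma is therefore the dichotomy between global extension and finite-time blow-up.

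First, I would set $w = v'$ and, on intervals where $v$ is strictly monotone, regard $W = w^2$ as a function of $v$. Differentiating and using \eqref{eqn-230622-1018} yields the \emph{linear} first-order ODE
\begin{equation*}
\frac{dW}{dv} = (1-\gamma) W - 2 e^{2v}.
\end{equation*}
When $\gamma \neq -1$ this integrates to $W(v) = C e^{(1-\gamma)v} - \frac{2}{1+\gamma} e^{2v}$ with $C = e^{-(1-\gamma)v_0}\bigl(w_0^2 + \frac{2}{1+\gamma} e^{2v_0}\bigr)$; when $\gamma = -1$, $W(v) = e^{2v}\bigl(w_0^2 e^{-2v_0} + 2 v_0 - 2v\bigr)$. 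Observe that $C > 0$, $C = 0$, $C < 0$ correspond respectively to $\gamma < -1 - 2 w_0^{-2} e^{2v_0}$, $\gamma = -1 - 2 w_0^{-2} e^{2v_0}$, and $-1 - 2 w_0^{-2} e^{2v_0} < \gamma < -1$ (with the convention that $w_0 = 0$ pushes the threshold to $-\infty$).

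Next I would translate ``global existence'' into a convergence/divergence statement for the integral $\int dv/\sqrt{W(v)}$, which measures the $x_1$-time spent along the trajectory. Case analysis on the sign of $W$ and on the leading exponential terms gives:
\begin{enumerate}[label=(\roman*)]
\item For $\gamma \in [-1,1]$, $W$ either becomes nonpositive at some finite $v_{\max}$ (providing a reflecting point with $w=0$) or is bounded; as $v \to -\infty$, $W(v)$ vanishes like $e^{(1-\gamma)v}$ (or like $e^{2v}$ when $\gamma = 1$ or $-1$) slowly enough that $\int^{v_0}_{-\infty} dv/\sqrt{W}$ diverges. Thus the trajectory is a symmetric curve in the $(v,w)$ plane going off to $v = -\infty$ in infinite time on both branches, proving global existence.
\item For $\gamma \in (-1 - 2 w_0^{-2} e^{2v_0}, -1)$ (so $C < 0$), the term $\frac{2}{|1+\gamma|} e^{2v}$ dominates as $v \to -\infty$ and $C e^{(1-\gamma)v}$ dominates with negative sign for large $v$, so $W > 0$ only on a half-line bounded above by some $v_{\max}$; near $v_{\max}$, $W$ vanishes linearly so the reflecting time is finite, while the $v \to -\infty$ end again costs infinite time. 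Global existence follows.
\item For $\gamma > 1$ or $\gamma < -1 - 2 w_0^{-2} e^{2v_0}$, the dominant exponential of $W$ has growth rate $> 2$ at one end, so $\sqrt{W} \gtrsim e^{\alpha v}$ with $\alpha > 1$, forcing $\int dv/\sqrt{W}$ to converge and the trajectory to reach $v = \pm\infty$ in finite $x_1$-time. The borderline case $C = 0$, $\gamma = -1 - 2 w_0^{-2} e^{2v_0}$, reduces to the separable equation $v' = \pm\sqrt{2/|1+\gamma|}\, e^v$ whose solutions are $v = -\log(\text{linear in }x_1)$ and blow up in finite time.
\end{enumerate}

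Smoothness on the maximal interval follows from bootstrapping in \eqref{eqn-230622-1018}, and uniqueness is the standard Picard--Lindel\"of conclusion. The main technical obstacle I expect is the careful book-keeping at the ``reflecting'' endpoints where $w = 0$ but $w' \neq 0$ (so $v$ is at a local max but not an equilibrium): one must show the solution can be continued past such points and concatenates to a single smooth solution on all of $\mathbb{R}$, which I would handle by flipping the sign of $w$ and reusing the explicit formula for $W(v)$, since the explicit trajectory in the $(v,w)$-plane is symmetric in $w$. The separation at $\gamma = -1 - 2 w_0^{-2} e^{2v_0}$ also requires care to verify that the threshold is sharp in both directions.
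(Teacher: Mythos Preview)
Your approach is essentially the same as the paper's: both reduce to a first integral and test global existence via convergence of the time integral $\int dv/\sqrt{W(v)}$; the paper simply substitutes $\varphi = e^v$ (so your $W(v)$ is their $F(\varphi)$ and your $C$ is their $I_\delta(\varphi_0,w_0)$) and organizes the case split by the sign of $w_0$ rather than by ranges of $\gamma$. A couple of your asymptotic statements are slightly off (for $\gamma=1$, $W\to C>0$ rather than vanishing like $e^{2v}$, and in case~(iii) for $\gamma>1$ the relevant exponent is $(\gamma-1)/2>0$, not necessarily $>1$), but these do not affect the convergence conclusions and your argument goes through.
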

In the above, if $w_0=0$, the inequality is understood as
$-\infty<\gamma<-1$.
The proof of Lemma \ref{lem-230622-1020} is postponed till the end of \S \ref{sec-230622-2309}. Now assuming Lemma \ref{lem-230622-1020}, we finish the construction of Example \ref{exp-230622-1021}.
We start with
\begin{equation*}
    f^{(0)}(\lambda) := \max_k \{\lambda_k + s (n-1)^{-1}\sum_{j\neq k} \lambda_j\}, \,\,\lambda \in \bR^n
\end{equation*}
and $\Gamma^{(0)} := (f^{(0)})^{-1}(0,\infty)$. It is easy to see that $s (n-1)^{-1} \leq \p f^{(0)}/\p\lambda_i \leq 1$ a.e. $\bR^n$ for all $i$, $\Gamma^{(0)}$ satisfies \eqref{eqn-230331-0110}, and $(s,-1,\ldots,-1) \in \p\Gamma^{(0)}$, from which $\mu_{\Gamma^{(0)}}^- = s$. 
Take $v$ as an entire solution of \eqref{eqn-230622-1018} with $\gamma = s$. As explained before, for any $x_1 \in \bR$,
$\lambda(A[v]) (x_1)$ takes the form $(1 + s \theta(x_1), -\theta(x_1), \ldots, -\theta(x_1))$ with $\theta(x_1) \geq 0$. Hence, $f^{(0)}(\lambda(A[v] )) \equiv 1$ in $\bR^n$.

Note that $\Gamma^{(0)}$, $f^{(0)}$ and $v$ satisfy all the properties in Example \ref{exp-230622-1021} except that $f^{(0)}$ is in $C^{0,1}$, not yet $C^\infty$.  These examples  show that condition $\bl\notin\overline\Gamma$ cannot be removed in Theorem \ref{thm-230525-0321}.
To achieve smooth $f$, we take a mollification $f^{(\epsi)} : = f^{(0)} \ast \eta_\epsi$, where $\epsi$ is constant chosen to be less than $(2\sqrt{n})^{-1}$ and $\eta_\epsi \in C^\infty_c(B_\epsi)$ is a non-negative radial function with $\int \eta_\epsi = 1$.

We first check that we still have $f^{(\epsi)} (\lambda(A[v])) \equiv 1$ for $v$ defined above. Due to the aforementioned form of $\lambda(A[v])$, we have $\lambda(A[v]) \in O_1$, where $O_1 = \{\mu \in \bR^n, \mu_1 > \mu_j,\forall j > 1\}$. Actually, we have $B_\epsi (\lambda(A[v])) \subset O_1$ since for any $x_1 \in \bR$ and $\mu \in \p O_1$,
\begin{equation*}
    | \lambda(A[v])(x_1) - \mu | \geq \sqrt{ |1 + s\theta(x_1) - \mu_1|^2 + |-\theta(x_1) - \mu_1|^2} \geq 2^{-1/2} > 0.
\end{equation*}
Hence, we can deduce that $f^{(\epsi)} (\lambda(A[v])) = f^{(0)} (\lambda(A[v])) \equiv 1$ as $f^{(0)}(\lambda) = \lambda_1 + s (n-1)^{-1} \sum_{j=2}^n \lambda_j$ in $O_1$, which is linear, and $\eta_\epsi$ is radial.

To make $f$ homogeneous of degree $1$ and $f|_{\p\Gamma} = 0$ as promised in Example \ref{exp-230622-1021}, some modification on $f^{(\epsilon)} $ and $\Gamma^{(0)}$  is needed.

First, by choosing smaller $\epsi$, we are able to define $f$ to be the homogeneous of degree $1$ extension of $f^{(\epsi)}$ from $\Sigma := (f^{(\epsi)})^{-1} (1)$ to $\Gamma = \{c\lambda\mid  \lambda\in\Sigma, c>0\}$ and $f=0$ on $\p\Gamma$. Such extension can be done since $\nabla f^{(\epsi)} \in \Gamma_n$ and $\lambda \cdot \nabla f^{(\epsi)} > 0$ on $\Sigma$ (c.f. Lemma \ref{lem-230615-1015}), which we check as follows:
\begin{align*}
    \lambda \cdot \nabla f^{(\epsi)} 
    = 
    \int \eta_\epsi (\mu) (\lambda - \mu) \cdot \nabla f^{(0)} (\lambda - \mu) \,d\mu + \int \eta_\epsi (\mu) \mu \cdot \nabla  f^{(0)} (\lambda - \mu) \,d\mu
    =:
    I + II.
\end{align*}
By the $1$-homogeneity of $f^{(0)}$, $I = \eta_\epsi \ast f^{(0)}(\lambda) = f^{(\epsi)} (\lambda) = 1$ on $\Sigma$.
For $II$, we simply use $|\nabla f^{(0)}| \leq \sqrt{n}$ to obtain $|II| \leq \epsi \sqrt{n} < 1/2$. Hence $\lambda \cdot \nabla f^{(\epsi)} \geq 1/2$ on $\Sigma$. 

Since $f (s,-1,\ldots,-1) = 0$,  we have  $\mgn = s$ for  $\Gamma := \{f>0\}$. 
It is not difficult to see that $f$, $\Gamma$, and $v$ verify all the desired properties in 
Example \ref{exp-230622-1021}. 
\medskip

\noindent{\it Counterexamples in Remark \ref{rmk-230615-1129}.}\ 
Let $\Gamma$ be a  cone satisfying \eqref{eqn-230331-0110} with $\lambda^*\in \overline \Gamma$.  
Take $s=1$ and $f=f^{(\epsilon)}$ for small $\epsilon>0$ in the above construction for Example \ref{exp-230622-1021}. Then
$f$ satisfies \eqref{nincreasing}.   Let $v$ be an entire solution of (\ref{eqn-230622-1018}) with $\gamma=1$. 
Clearly, $f(\lambda(A[v]))=1$ in $\bR^n$, and $v$ is not of the form \eqref{nbubble}.

\begin{proof}[Proof of Lemma \ref{lem-230622-1020}]
Let $(T_-, T_+)$ be the maximal existence interval
of solution $v$ of  \eqref{eqn-230622-1018} with $v(0) = v_0$ and $v'(0)=w_0$, and let
    $\varphi = e^v$, $w = v'$, and $\delta = -\frac{\gamma-1}{2}$. Then $\varphi>0$ and $w$  satisfy
    \begin{equation} \label{eqn-231027-0452}
        \begin{cases}
            \varphi' = \varphi w,\\
            w' = -\varphi^2 + \delta w^2,\\
            \varphi(0) = \varphi_0,\quad w(0) = w_0,
        \end{cases}
    \end{equation}
    where $\varphi_0 = e^{v_0} > 0$. A calculation shows that \eqref{eqn-231027-0452} has a first integral
    \begin{equation*}
    I_\delta(\varphi,w)
    :=
    \begin{cases}
        \frac{1}{1-\delta}\varphi^{2-2\delta} + \varphi^{-2\delta}w^2 \,\,&\text{when $\delta \neq 1$},\\
        2\log \varphi + \varphi^{-2}w^2 \,\,&\text{when
        $\delta = 1$}, 
    \end{cases}
    \end{equation*}
    i.e.
    \begin{equation}\label{I}
    I_\delta(\varphi, w)\equiv I_\delta(\varphi_0, w_0)\ \ \ \text{on} \ (T_-, T_+). 
    \end{equation} 
   Thus we have, on $(T_-, T_+)$, that
   \begin{equation} \label{F}
          0\le w^2= F(\varphi)\equiv F_{\delta, \varphi_0, w_0}(\varphi)
         :=
         \begin{cases}
             I_\delta(\varphi_0,w_0) \varphi^{2\delta} - \frac{1}{1-\delta}\varphi^2
         \,\,&\text{when}\,\,\delta \neq 1,\\
         I_\delta(\varphi_0,w_0) \varphi^{2} - 2\varphi^2\log \varphi
         \,\,&\text{when}\,\,\delta = 1.
         \end{cases}
   \end{equation}
    We only need to consider $x_1 > 0$ since for $x_1 < 0$ we work with $v(-x_1)$ and $-w(-x_1)$ instead of $v(x_1)$ and $w(x_1)$.
    
    \noindent \textbf{Case 1}: $w(0)=w_0 \leq 0$.
    
    In this case, we prove that $T_+<\infty$ when $\delta <0$, and $T_+=\infty$ when $\delta\ge 0$.
    
    Since $w'|_{w=0} = -\varphi^2 <  0$, we have $w < 0$ on   $(0, T_+)$. 
    Hence we have, using $\varphi'=\varphi w$ and (\ref{F}),  that 
    \begin{equation}\label{equ-varphi}
        \varphi'= -\varphi \sqrt{F(\varphi)}<0\ ~\text{on}~(0,T_+).
    \end{equation} 
Thus $0<\varphi<\varphi_0$ in $(0,T_+)$.
Let $a\coloneqq \lim_{x_1\to (T_+)-}\varphi(x_1)\in [0,\varphi_0)$.
Integrating \eqref{equ-varphi} gives
    \begin{equation}\label{T}
        T_+= \int^{\varphi_0}_{a} \frac{ds}{s \sqrt{F(s)}} .
    \end{equation}
When $\delta<1$, we have $I_\delta(\varphi_0,w_0)>0$ and
$s\sqrt{F(s)} = (1+o(1))\sqrt{I_\delta(\varphi_0,w_0)}s^{1+\delta}$ for $s>0$ small. Clearly, when $\delta<0$, we have $T_+<\infty$. We next treat the case $\delta\geq 0$. We divide the discussion into two subcases: $a=0$ and $a>0$. When $a=0$, we also need the following asymptotic behaviors for small $s>0$: $s\sqrt{F(s)}=(1+o(1))\sqrt{\frac{1}{\delta-1}}s^2$ when $\delta>1$,  and $s\sqrt{F(s)}=(1+o(1))\sqrt{2|\log s|} s^2$ when $\delta=1$.    We see from (\ref{T}), using the asymptotics of $s\sqrt{F(s)}$, that $T_+=\infty$ when $\delta\ge 0$ and $a=0$. When $a>0$, by \eqref{T}, we have $T_+<\infty$.
On the other hand, since $\delta\ge 0$, we have, using \eqref{eqn-231027-0452}, that  $w'\geq -\varphi^2 \ge -\varphi_0^2$.  It follows that  $w\geq w_0-\varphi_0^2 T_+$ on $(0, T_+)$, so $|v'|=|w|$ is bounded in the interval.   We also know that $v=\log \varphi\in
[\log a, \log\varphi_0]$ on the interval, violating the maximality of $T_+<\infty$.
We have proved that when $\delta\ge 0$, we have $a=0$ and $T_+<\infty$.

    \noindent\textbf{Case 2}: $w(0) =w_0> 0$. 
    
    In this case, we prove that $T_+<\infty$ when $\delta<0$,
    $T_+=\infty$ when $0\le \delta\le 1$, 
    $T_+<\infty$ when $\delta>1$  and $I_\delta(\varphi_0, w_0)\ge 0$,
    and $T_+=\infty$, when $\delta>1$ and $I_\delta(\varphi_0, w_0)<0$.
    
    When $\delta>1$ and $I_\delta(\varphi_0,w_0) \geq 0$, we see from (\ref{F}) that 
    $w^2=F(\varphi)\ge \frac 1{\delta-1} \varphi^2>0$ in    $[0, T_+)$.  
    It follows, using $w(0)>0$,
    that $\varphi' = \varphi w = \varphi \sqrt{F(\varphi)}
    \ge (\delta-1)^{-1/2} \varphi^2$ in $[0, T^+)$, and therefore $T_+<\infty$.
    
 Next, we discuss the remaining cases: $\delta \leq 1$ or $\delta > 1$ and $I_\delta(\varphi_0,w_0) < 0$. 
 We only need to show that $w<0$ somewhere in $(0, T_+)$, since our desired results
 follow from Case 1 after a translation in $x_1$. 
 Suppose the contrary, $w\ge 0$ in $(0, T_+)$. Then
 $\varphi'=\varphi w\ge 0$ and 
 $\bar \varphi:= \lim_{ t\to (T_+)^-}  \varphi(t)\in [\varphi_0, \infty]$.  
By (\ref{I}),
$$
I_\delta(\varphi_0, w_0)\ge 
\begin{cases}
     \frac 1{1-\delta} \varphi^{ 2-2\delta} &\text{when}~\delta\neq 1\\
     2\log \varphi &\text{when}~\delta=1
    \end{cases}
 \ \ \text{in}\ 
(0, T_+).
$$
It follows that $\bar \varphi<\infty$.
Back to (\ref{I}), we have $\bar w := \lim_{ t\to (T_+)^-}  w(t) \in [0, \infty)$.
These imply  that $\sup _{ (0, T_+) } (|v|+|v'|)<\infty$, and thus $T_+=\infty$.
Back to equation (\ref{eqn-231027-0452}), we have 
$0=\lim_{ x_1\to \infty} \varphi'(x_1)= \bar \varphi \bar w$  and
$0= \lim_{ x_1\to \infty} w'(x_1)=-\bar \varphi^2 +\delta\bar w^2$, violating $\bar \varphi\ge \varphi_0>0$.
\end{proof}

\section{Liouville-type theorems for $f(\lambda(\av))=0$} \label{sec-230528-0351}

In this section, we study Liouville-type theorems for solutions or subsolutions of $\lambda(\av) \in \p\Gamma$.

\subsection{Proof of Theorem \ref{thm-230329-1327}}\label{sec-6.1} We divide our proof into two cases: $\bl\notin\overline\Gamma$ and $\bl\in\Gamma$.
Denote 
$v^{x,\lambda} := v^{\varphi_{x,\lambda}}$, where $\varphi_{x,\lambda} (y) := y + \lambda^2 (y-x)/|y-x|^2$.
\begin{proof}[Proof of Theorem \ref{thm-230329-1327} when $\Gamma$ satisfies $\bl\notin\overline\Gamma$]
     For any ball $B_\lambda(x)\subset\bR^n$, consider $v$ and its Kelvin transformation $v^{x,\lambda}$ in the ball $B_\lambda(x)$. By the M\"obius invariance, we know that 
	\begin{equation*}
	   \lambda(A[v])\in \p \Gamma \  \text{in} ~~B_\lambda(x), 
	\ 	\ \lambda(A [v^{x,\lambda}])\in \p \Gamma\ \text{in}  ~~B_\lambda(x)\setminus\{x\},
	\  \	v= v^{x,\lambda}\ \text{in} ~~\p B_{\lambda}(x).
	\end{equation*}
 Using Proposition \ref{removablesingularity}, $v^{x,\lambda}$ is lowerconical at $x$. By Lemma \ref{remove}, $\lambda(A[v^{x,\lambda}]) \in \overline{\Gamma}$ in $B_\lambda(x)$. Hence, by the comparison principle Theorem \ref{thm-220721-0849}, we obtain $v\leq v^{x,\lambda}$ in $B_\lambda(x)\setminus\{x\}$. This implies that $v$ must be a constant. See e.g. \cite[Lemma 11.1]{MR2001065}
and \cite[Lemma A.1]{Li2021ALT}. 
 \end{proof}
In the other case: $\bl\in\Gamma$, we actually have following Liouville-type theorem for subsolutions:

\begin{theorem} \label{thm-230528-0404}
        For $n\geq 2$, let $\Gamma$ satisfy \eqref{eqn-230331-0110} with $\bl\in\Gamma$.        
Then any $v$ satisfying $\lambda(A[v]) \in \bR^n\setminus \Gamma$ on $\bR^n$, in the viscosity sense, must be constant.
\end{theorem}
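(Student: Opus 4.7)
The plan is to exploit the hypothesis $\bl\in\Gamma$ by constructing an explicit one-parameter family of classical radial supersolutions that are finite at a point and blow up on a surrounding sphere, and then force $v$ to be constant using the comparison principle Theorem \ref{thm-220721-0849} together with a scaling argument.

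Since $\bl\in\Gamma$ gives $\mgn<1$, I fix any $\mu\in[\mgn,1)$. By Example \ref{reex2}, the radial function
\[
    v_\mu(r):=\frac{2}{\mu-1}\log(1-r^{1-\mu}),\qquad r\in[0,1),
\]
is $C^\infty$ on $(0,1)$, extends continuously to $r=0$ with $v_\mu(0)=0$, blows up to $+\infty$ as $r\to 1^{-}$, and satisfies $\lambda(A[v_\mu])=C(r)(\mu,-1,\ldots,-1)$ with $C(r)>0$ and $(\mu,-1,\ldots,-1)\in\overline\Gamma$. In particular, $\lambda(A[v_\mu])\in\overline\Gamma$ classically on $B\setminus\{0\}$. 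By translation, dilation, invariance under addition of a constant, and the cone invariance of $\overline\Gamma$, for any $y\in\bR^n$, $\rho>0$, and $c\in\bR$ the function $x\mapsto v_\mu(|x-y|/\rho)+c$ is a classical supersolution of $\lambda(\av)\in\overline\Gamma$ on $B_\rho(y)\setminus\{y\}$.

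For fixed $y\in\bR^n$ with $v(y)\in\bR$, $\rho>0$, and $\eta>0$, I will set $w(x):=v_\mu(|x-y|/\rho)+v(y)+\eta$. On $\p B_{\rho-\delta}(y)$, $w\to+\infty$ as $\delta\to 0^{+}$ while $v$ is bounded above there by upper semicontinuity on $\overline{B_\rho(y)}$; on $\p B_r(y)$, the upper semicontinuity of $v$ at $y$ gives $\sup_{|x-y|=r}v(x)\le v(y)+\eta/2<v(y)+\eta\le w(x)$ for all sufficiently small $r>0$. Applying Theorem \ref{thm-220721-0849} to the pair $(w,v)$ on the annulus $B_{\rho-\delta}(y)\setminus\overline{B_r(y)}$ and then letting $\delta,r\to 0^{+}$ and $\eta\to 0^{+}$ will yield
\[
    v(x)\le v(y)+v_\mu\!\left(\frac{|x-y|}{\rho}\right)\qquad\forall\,x\in B_\rho(y)\setminus\{y\}.
\]

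Fixing $x\ne y$ and sending $\rho\to\infty$, continuity of $v_\mu$ at $0$ with $v_\mu(0)=0$ yields $v(x)\le v(y)$; swapping $x$ and $y$ forces equality, so $v$ is constant wherever it is finite. The degenerate case $v(y_0)=-\infty$ somewhere would, by the same bound, force $v\equiv-\infty$, contradicting the standing convention $v\not\equiv-\infty$ in the definition of USC subsolutions, so $v$ is in fact a finite constant on $\bR^n$. The key technical point is the construction of a classical supersolution that is finite at the center of a ball and blows up at the boundary sphere; this is precisely where the strict inclusion $\bl\in\Gamma$, equivalently $\mgn<1$, is indispensable, since it guarantees that the admissible parameter range $[\mgn,1)$ for $\mu$ is nonempty.
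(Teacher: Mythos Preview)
Your proof is correct. The handling of the degenerate case $v(y_0)=-\infty$ is also fine: by upper semicontinuity, for every $M\in\bR$ there is $r_M>0$ with $v<M$ on $B_{r_M}(y_0)$, so comparing with $w(x)=v_\mu(|x-y_0|/\rho)+M$ on annuli as before yields $v(x)\le M$ for all $x\neq y_0$ after sending $\rho\to\infty$, and then $M\to-\infty$ forces $v\equiv-\infty$, a contradiction.

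Your route differs from the paper's. The paper first passes to the radial maximum $v^{x}(r):=\max_{\partial B_r(x)}v$, shows it is non-decreasing in $r$ by comparison with constants, and then proves a separate monotonicity lemma (Lemma~\ref{lem-220901-1050}) to show $v^{x}$ is also non-increasing; hence $v^{x}\equiv C_x$ and $v(x)=C_x=\max v$. You bypass the radial-maximum reduction and Lemma~\ref{lem-220901-1050} entirely: a single comparison with the explicit barrier from Example~\ref{reex2}, rescaled to radius $\rho$ and then sent to infinity, gives $v(x)\le v(y)$ directly. Both arguments draw barriers from the same family in Lemma~\ref{lem-220510-0218}~(d), and both hinge on $\mgn<1$, but yours is more self-contained. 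The paper's decomposition, on the other hand, isolates a monotonicity statement (Lemma~\ref{lem-220901-1050}) of independent interest.
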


The following lemma is used in the proof of Theorem \ref{thm-230528-0404}:
\begin{lemma}\label{lem-220901-1050}
    Let $n$, $\Gamma$ satisfy the same assumptions as Theorem \ref{thm-230528-0404}, and $R>0$ be a constant.
\begin{enumerate}[label=(\alph*)]
    \item If $v\in USC(\bR^n\setminus B_R)$ is a radially symmetric function satisfying $\lambda(A[v]) \in \bR^n\setminus \Gamma$ in $\bR^n\setminus B_R$,	
	then $v(r)$ is non-increasing in $(R,\infty)$. 
    \item If $v\in USC(B_R\setminus\{0\})$ is a radially symmetric function satisfying $\lambda(A[v]) \in \bR^n\setminus \Gamma$ in $B_R\setminus \{0\}$,
	then $v(r) + 2 \log r$ is non-decreasing in $(0,R)$.
\end{enumerate}
\end{lemma}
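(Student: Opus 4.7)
The plan is to establish part (a) directly by comparison with an explicit radial supersolution that blows up at a finite radius, and then to deduce part (b) from part (a) via the Kelvin transform. Indeed, taking $\varphi(x) = x/|x|^2$, the transformed function $v^{\varphi}(x) = v(x/|x|^2) - 2\log|x|$ preserves the subsolution property by the M\"obius invariance $\lambda(A[v^\varphi])(x) = \lambda(\av)(x/|x|^2)$, while exchanging the domains $B_R \setminus \{0\}$ and $\bR^n \setminus B_{1/R}$. For radial $v$, the identity $v^\varphi(s) + 2\log s = v(1/s)$ converts the monotonicity conclusion of (b) for $v^\varphi$ into that of (a) for $v$ under the substitution $s \leftrightarrow 1/r$, and vice versa.

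For part (a), I argue by contradiction: suppose $v(r_1) < v(r_2)$ for some $R < r_1 < r_2$. Since $\mgn < 1$, I fix $\mu \in (\mgn, 1)$, which by the cone structure guarantees $(\mu, -1, \dots, -1) \in \Gamma$. For a small $\delta \in (0, v(r_2)-v(r_1))$, I construct a radial function of the ansatz in Lemma \ref{lem-220510-0218}(d),
\begin{equation*}
    \bar v(r) = \tfrac{2}{\mu-1}\log\bigl(C_7 r^{1-\mu} + C_8\bigr),
\end{equation*}
whose parameters $C_7, C_8$ are prescribed by the two matching conditions $\bar v(r_1) = v(r_1)$ and $\bar v(r_2) = v(r_2) - \delta$. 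A short algebraic check (using $\mu-1<0$ together with $v(r_1) < v(r_2) - \delta$) shows $C_7 < 0 < C_8$; the natural blow-up radius $r_* = (C_8/|C_7|)^{1/(1-\mu)}$ then strictly exceeds $r_2$, and on the domain $(0, r_*)$ the function $\bar v$ is smooth, strictly increasing, and tends to $+\infty$ as $r \to r_*^-$. Furthermore, the computation in Lemma \ref{lem-220510-0218} shows that $\bar v$ satisfies $V + \mu\nu = 0$ with $\nu < 0$, so $\lambda(A[\bar v]) = -e^{-2\bar v}\nu \cdot (\mu, -1, \dots, -1) \in \Gamma$, confirming that $\bar v$ is a (strict) radial supersolution.

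To extract the contradiction, I invoke the upper semi-continuity of $v$: since $v$ is bounded above by some $M$ on the compact annulus $\{r_2 \le |x| \le (r_2 + r_*)/2\}$ and $\bar v(r)\to +\infty$ as $r \to r_*^-$, I can pick $r_3 \in (r_2, r_*)$ so that $\bar v(r_3) > M \ge v(r_3)$. Applying the comparison principle (Theorem \ref{thm-220721-0849}) on the annulus $\{r_1 < |x| < r_3\}$ with $\bar v$ as the supersolution, $v$ as the subsolution, and $\bar v \ge v$ on the boundary (equality at $r_1$, strict inequality at $r_3$), I conclude $\bar v \ge v$ throughout $[r_1, r_3]$. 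Specializing to $r = r_2$ yields $\bar v(r_2) \ge v(r_2)$, which contradicts $\bar v(r_2) = v(r_2) - \delta < v(r_2)$. The main obstacle is the careful design of the supersolution: the hypothesis $\mgn < 1$ both supplies the admissible direction $(\mu, -1, \dots, -1) \in \Gamma$ and forces $C_7 < 0 < C_8$ with $r_* > r_2$ in the matching conditions, ensuring that the blow-up of $\bar v$ occurs on the correct side of $r_2$ to drive the contradiction.
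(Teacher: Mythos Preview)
Your proof is correct and follows essentially the same approach as the paper's: both reduce (b) to (a) via the Kelvin transform and prove (a) by contradiction using the same radial supersolution $\bar v(r)=\frac{2}{\mu-1}\log(C_7 r^{1-\mu}+C_8)$ (your $\mu$ is the paper's $1-\epsi$), then invoke the comparison principle on an annulus $(r_1,r_3)$. The only cosmetic difference is in extracting the contradiction: the paper argues indirectly that $v>\bar v$ on $(r_2,r_*)$ forces $v\to+\infty$ at $r_*$, whereas you pick $r_3$ explicitly from the upper bound of $v$ on a compact annulus; the two arguments are interchangeable.
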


\begin{remark} \label{rmk-230111-0943}
    Statement (a) and (b) in Lemma \ref{lem-220901-1050} are equivalent by taking a Kelvin transformation $v \mapsto v(R^2/r) - 2 \log (r/R)$.
\end{remark}

\begin{proof}[Proof of Lemma \ref{lem-220901-1050}]
	As explained in Remark \ref{rmk-230111-0943}, we only need to prove $(a)$. Clearly, $\bl\in\Gamma$ implies
 \begin{equation}\label{230917-1347}
    \exists \epsi>0,~\{c(1-\epsi,-1,\ldots,-1)\mid c\geq 0\}\subset \overline{\Gamma}.
\end{equation}

\sloppy 
Suppose the contrary that there exist $R<r_1<r_2<\infty$ such that $ v(r_1)< v(r_2)$. Fix a $\delta \in (0, v(r_2) - v(r_1))$ and let $\epsi$ be given by \eqref{230917-1347}. Consider
\begin{equation*}
    \overline{v} (r) := - 2\epsi^{-1} \log (C_7 r^{\epsi} + C_8),
\end{equation*}
where $C_7 = ({r_1^{\epsi} - r_2^{\epsi}})^{-1} [{ e^{-\frac{\epsi}{2}v(r_1)} - e^{-\frac{\epsi}{2} (v(r_2) - \delta) } }] < 0$ and 
$C_8 =({r_2^{\epsi} - r_1^{\epsi}})^{-1}[ { e^{-\frac{\epsi}{2}v(r_1)}r_2^{\epsi} - e^{ -\frac{\epsi}{2} ( v(r_2) - \delta) }r_1^{\epsi} } ] >0$ are chosen so that $\overline{v}(r_1)=v(r_1)~~\text{and}~~\overline{v}(r_2)=v(r_2)-\delta$.
Since $(C_7 r^{\epsi} + C_8)|_{r=r_2} > 0$ and $(C_7 r^{\epsi} + C_8)|_{r=+\infty} =-\infty$, there exists some $a\in (r_2 , \infty)$, such that $\overline{v}\in C^\infty [r_1,a)$ and  $\lim\limits_{r\rightarrow a-}\overline{v}(r) = +\infty$. By \eqref{230930-1027} with $v=\bar{v}$,
\begin{equation*}
    \lambda(A[\overline{v}]) = C(r) (1-\epsi,-1,\ldots,-1),\,\,\text{where}\,\,C(r) = 2^{-1} e^{-2 \overline{v}} \left( \overline{v}' (\overline{v} + 2\log r )' \right) >0.
\end{equation*}
By \eqref{230917-1347}, $\lambda(A[\overline{v}]) \in \overline{\Gamma}$ in $B_a \setminus \{0\}$. Since $\overline{v}(r_1) = v(r_1)$ and $\overline{v}(r_2) < v(r_2)$, according to the comparison principle Theorem \ref{thm-220721-0849}, we have $v > \overline{v}$ in $(r_2, a)$. Indeed, if $v(r_3) \leq \overline{v}(r_3)$ for some $r_3 \in (r_2, a)$, then, by the compairson principle, we have $v \leq \overline{v}$ in $(r_1,r_3)$, which contradicts to $\overline{v}(r_2) < v(r_2)$. It follows that $\lim\limits_{r\rightarrow a-} v (r)  = +\infty$, which is a contradiction.
\end{proof}

\begin{proof}[Proof of Theorem \ref{thm-230528-0404}]
By the comparison principle Theorem \ref{thm-220721-0849}, $v \leq \max_{\p B_r(x)} v$ on $B_r$, for $r>0$. This implies that $v^{x}(r)\coloneqq \max_{\partial B_r(x)} v$ is non-decreasing in $(0,\infty)$. On the other hand, as in the proof of Proposition \ref{removablesingularity}, $v^x(\cdot)$ is upper semi-continuous and $\lambda(A[v^x])\in \bR^n\setminus \Gamma$ in $\bR^n\setminus\{x\}$. By Lemma \ref{lem-220901-1050} (a), $v^x$ is also non-increasing in $(0,\infty)$. Therefore, 
for some constant $C_x$, $v^x (r) = C_x$ for all $r>0$. It follows that $v \leq C_x$ in $\bR^n\setminus\{x\}$, and, 
using the upper semi-continuity of $v$, $C_x = \limsup_{r \rightarrow 0} v^x(r) \leq v(x).$ Hence, $v(x) = \max_{\bR^n} v$ in $\bR^n$ for any $x$. The theorem is proved.
\end{proof}

\subsection{A Harnack inequality and Proofs of Theorem \ref{230918-1857} and \ref{230805-2045}}\label{sec-6.2}
\begin{proposition} \label{prop-230614-1100}
For $n\ge 2$, let $B_2\subset \bR^n$ and $\Gamma$ satisfy \eqref{eqn-230331-0110}. Suppose that $v$ satisfies $\lambda(A[v])\in \p \Gamma$ and $-\infty<a \leq v \leq b < \infty$ in $B_2$. Then, $v\in C^{0,1}_{loc}(B_2)$ and
\begin{equation}\label{lipregularityest}
	    |\nabla v|\leq C ~~ \text{in}~~ B_{1/2}.
	\end{equation}
where $C$ depends on $n$ and an upper bound of $b-a$. Moreover, if $\Gamma$ satisfies in addition $\bl\notin\p\Gamma$, then $C$ only depends on $n$.
\end{proposition}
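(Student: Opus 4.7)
The plan is to combine two ingredients: comparison with explicit radial barriers to obtain the gradient bound with constant depending on $b-a$, and a blow-up-and-compactness argument invoking Theorem \ref{thm-230329-1327} to upgrade to a constant depending only on $n$ under the additional hypothesis $\bl\notin\p\Gamma$.

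For the general bound, it suffices by translation and a covering argument to estimate $v(y)-v(x_0)$ for $y$ close to a fixed $x_0\in B_{1}$. Given a unit direction $e$, set $y_0:=x_0-\sigma e$ for small $\sigma>0$. On the annulus $\{\sigma\leq|y-y_0|\leq R\}\subset B_2$, Propositions \ref{prop-230521-0510} and \ref{prop-230521-0515} produce a radial viscosity solution $\bar\psi$ of $\lambda(A[\bar\psi])\in\p\Gamma$ with $\bar\psi(\sigma)=\max_{\p B_\sigma(y_0)}v$ and $\bar\psi(R)=b$. The comparison principle Theorem \ref{thm-220721-0849} then gives $v\leq\bar\psi$ on the annulus, and the explicit form of $\bar\psi$ from Lemma \ref{lem-220510-0218} allows us to compute $\bar\psi'(\sigma)$ in closed form, yielding a one-sided directional derivative estimate at points of $\p B_\sigma(y_0)$ realizing the maximum of $v$. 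A symmetric construction with a radial subsolution (boundary values $a$ and $\min v$) gives the opposite one-sided bound. Varying the barrier center $y_0$ and iterating these estimates over a covering of $B_{1/2}$ promote them to a pointwise Lipschitz bound, with $C$ depending on $n$ and $b-a$ through the derivative formulas in Lemma \ref{lem-220510-0218}.

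For the sharper bound under $\bl\notin\p\Gamma$, we argue by contradiction. Suppose there exist solutions $v_k$ satisfying the hypotheses on $B_2$ with $a\leq v_k\leq b$ and Lipschitz constants $L_k\to\infty$ on $B_{1/2}$. Using a concentration-compactness selection (maximizing a quantity of the form $|\nabla v_k(x)|\,\dist(x,\p B_{3/4})$) together with the general case applied on slightly larger balls, we choose blow-up points $x_k\in B_{3/4}$ and scales $\lambda_k\to 0$ so that the rescalings
\begin{equation*}
\hat v_k(z):=v_k(x_k+\lambda_k z)-v_k(x_k)
\end{equation*}
satisfy $\lambda(A[\hat v_k])\in\p\Gamma$ on balls $B_{r_k}(0)$ with $r_k\to\infty$ (the equation being invariant under spatial dilation and additive constants), enjoy a uniform Lipschitz bound on compact subsets, and retain nontrivial variation near $0$ (arranged by tracking an oscillation quantity at scale $\lambda_k$). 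Arzel\`a--Ascoli combined with the stability of viscosity solutions yields a subsequential limit $\hat v\in C^0(\bR^n)$ satisfying $\lambda(A[\hat v])\in\p\Gamma$ on $\bR^n$, bounded and non-constant by construction. Theorem \ref{thm-230329-1327}, applicable precisely because $\bl\notin\p\Gamma$, forces $\hat v$ to be constant, a contradiction.

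The main technical subtlety in the general case is that the barrier $\bar\psi$ matches $v$ only on a sphere rather than pointwise at $x_0$; promoting the sphere-level comparison to a pointwise Lipschitz bound requires combining barriers with varying center and an oscillation-decay iteration. In the sharper case, the delicate point is to arrange the blow-up so that the nontrivial variation survives passage to the limit---this is the purpose of the concentration-compactness selection of $x_k$ together with the scale $\lambda_k$. The hypothesis $\bl\notin\p\Gamma$ is sharp: when $\bl\in\p\Gamma$, linear functions solve $\lambda(A[v])\in\p\Gamma$ (see Remark \ref{remark-1.3}), so any blow-up limit could be a non-constant linear function, obstructing the compactness argument and consistent with the necessity of $b-a$ dependence in that case.
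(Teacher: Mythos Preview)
Your overall architecture is the same as the paper's: deduce the estimate with $C=C(n,b-a)$ from comparison with radial barriers (the paper simply cites Theorem~\ref{thm-220721-0849} and \cite[Proof of Theorem~1.1]{MR3813247} for this step), then run a blow-up argument and invoke Theorem~\ref{thm-230329-1327} to remove the dependence on $b-a$ when $\bl\notin\p\Gamma$.

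There is, however, a genuine gap in your blow-up step. You select $x_k$ by maximizing $|\nabla v_k(x)|\,\dist(x,\p B_{3/4})$ and rescale by $\lambda_k=|\nabla v_k(x_k)|^{-1}$. This selection does yield $|\nabla\hat v_k|\le 2$ on growing balls, but the normalization it provides, namely $|\nabla\hat v_k(0)|=1$, is a \emph{pointwise} gradient condition and does not pass to the limit under locally uniform (or even $C^\alpha$) convergence: nothing prevents $\hat v_k$ from carrying all its variation at scales $\ll 1$ and converging to a constant. Your phrase ``tracking an oscillation quantity at scale $\lambda_k$'' does not repair this, because with a gradient-based selection you have no control on the oscillation of $\hat v_k$ at unit scale. (There is also the minor issue that $|\nabla v_k|$ is only defined a.e.\ for a Lipschitz function, so the maximum in your selection need not be attained.)

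The paper handles this by proving first a $C^\gamma$ bound for a fixed $\gamma\in(0,1)$, using the quantity $\delta(v,x;B_2,\gamma)$ from \cite{Li2009}. The point selection produces $x_i,\epsi_i$ so that the rescaling $\widetilde v_i(y)=v_i(x_i+\epsi_i y)-v_i(x_i)$ satisfies both $[\widetilde v_i]_{\gamma,1}(0)=1$ and $[\widetilde v_i]_{\gamma,1}(\cdot)\le 2$ on a growing ball. The second bound, together with $\widetilde v_i(0)=0$, gives the two-sided bound $|\widetilde v_i|\le C(K)$ on each $B_K$; \emph{now} the first part of the proposition applies and yields $|\nabla\widetilde v_i|\le C(K)$. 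Arzel\`a--Ascoli then gives convergence in $C^{\widetilde\gamma}_{loc}$ for every $\widetilde\gamma\in(0,1)$, and choosing $\widetilde\gamma>\gamma$ forces $[\widetilde v]_{\gamma,1}(0)=1$, so the limit is non-constant and Theorem~\ref{thm-230329-1327} yields the contradiction. The gradient estimate with $C=C(n)$ is then obtained by applying the first part to $\widehat v:=v-v(0)$, whose two-sided bound on $B_{1/2}$ now follows from the $C^\gamma$ estimate just proved.
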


\begin{remark}    
    The condition $\bl\notin\p\Gamma$ in the above proposition is optimal in the sense that for any cone $\Gamma$ with $\bl\in\p\Gamma$, $v_j = j x_1$ satisfies $\lambda(A[v_j]) \in \p\Gamma$ in $\bR^n$ and $|\nabla v_j (0)| \rightarrow \infty$ as $j\rightarrow \infty$.
\end{remark}

The proof is an adaptation of \cite[Proof of Theorem~1.10]{Li2009}. A key step is by proving a local H\"older estimate via blowup. The following quantity was introduced in \cite{Li2009}: For $w \in C^{0,1}_{loc}(\Omega)$ defined on an open subset $\Omega$, $\gamma \in (0,1)$, and $x\in \Omega$, define
\begin{equation*}
    \delta(w,x;\Omega, \gamma)\coloneqq 
\begin{cases}
	\infty &\text{if}~~\dist(x,\p\Omega)^\gamma [w]_{\gamma, dist(x,\partial\Omega)}(x) <1,\\
	\mu &\text{where}~~0<\mu \leq dist(x,\partial\Omega)~\text{and} ~\mu^\gamma [w]_{\gamma,\mu}(x)=1,
        \\&
        \text{if} ~~ 
	\dist(x,\p\Omega)^\gamma  [w]_{\gamma, dist(x,\partial\Omega)} (x) \geq 1,
 \end{cases}
\end{equation*}
where $[w]_{\gamma,s}(x) \coloneqq \sup\limits_{0<|y-x|< s} |x-y|^{-\gamma}|w(y)-w(x)|$.

\begin{proof}
    The Lipschitz regularity and estimate \eqref{lipregularityest} can be deduced from the comparison principle Theorem \ref{thm-220721-0849}, see \cite[Proof of Theorem 1.1]{MR3813247}.

   Now, we further assume that $\Gamma$ satisfies $\bl\notin\p\Gamma$ and prove \eqref{lipregularityest} with $C$ independent of $b-a$.
   Let $\gamma \in (0,1)$ be a fixed number. We first prove $[v]_{C^\gamma(B_{1/2})} \leq C$ for a constant $C$ depending only on $n$ and $\gamma$.
   Suppose the contrary that there exist a sequence of functions $v_i \in C^{0,1}_{loc}(B_2)$ satisfying $\lambda(A[v_i])\in\p\Gamma$ in $B_2$, but $[v_i]_{C^\gamma(B_{1/2})} \rightarrow \infty$. This implies
    \begin{equation}\label{contra-d}
    	\inf\limits_{x\in B_{1/2}} \delta \left(v_i, x\right) \longrightarrow 0,
    \end{equation}
    where for convenience, we denote $\delta\left(v_i, x\right)\coloneqq \delta\left(v_i, x ; B_2, \gamma \right)$.
    From $\eqref{contra-d}$, there exists $ |x_i|<1$, such that 
  \begin{equation*}
    \frac{1-|x_i|}{\delta\left(v_i, x_i\right)}= \sup\limits_{|x|\leq 1} \frac{1-|x|}{\delta\left(v_i, x\right)}\longrightarrow \infty.
  \end{equation*}
  Let
  \begin{equation*}
  	\sigma_i \coloneqq \frac{1-|x_i|}{2},~~\epsilon_i\coloneqq \delta\left(v_i, x_i\right).
  \end{equation*}
Then we have 
\begin{equation}\label{scale-d}
	\frac{\sigma_i}{\epsilon_i}\rightarrow \infty,~~ \epsilon_i \rightarrow 0, 
\end{equation}
and 
\begin{equation}\label{scale2-d}
	\epsilon_i \leq 2 \delta\left(v_i, x\right)~, \forall\, x\in B_{\sigma_i}(x_i).
\end{equation}
Let
\begin{equation}\label{231007-1140}
\widetilde{v}_i(y)\coloneqq v_i(x_i +\epsilon_i y) - v_i (x_i),~~|y|<\frac{\sigma_i}{\epsilon_i}.
\end{equation}
Then by definition we immediately see that 
$\widetilde{v}_i(0)=0$ and $[\widetilde{v}_i]_{\gamma, 1}(0)= \epsilon_i^\gamma [v_i]_{\gamma, \epsilon_i}(x_i) =1$, and by the triangle inequality, 
\begin{equation*}
[\widetilde{v}_i]_{\gamma, 1}(x) \leq 2^{-\gamma} \epsilon_i^\gamma \left(\sup\limits_{|z-(x_i +\epsilon_i x)|<\epsilon_i} [v_i]_{\gamma, \epsilon_i /2}(z) + [v_i]_{\gamma, \epsilon_i /2}(x_i +\epsilon_i x) \right),\quad \forall |x| < \frac{\sigma_i}{2\epsi_i}.
\end{equation*}
Combining with $\eqref{scale-d}$, $\eqref{scale2-d}$, 
we obtain that for large $i$,
\begin{equation*}
	\begin{array}{llll}
	[\widetilde{v}_i]_{\gamma, 1}(x) &\leq& \sup\limits_{|z-(x_i +\epsilon_i x)|<\epsilon_i} \delta(v_i, z)^\gamma [v_i]_{\gamma, \delta(v_i, z)}(z) \\
	&+& \delta(v_i, x_i +\epsilon_i x)^\gamma [v_i]_{\gamma, \delta(v_i, x_i +\epsilon_i x)}(x_i +\epsilon_i x) =2,
 \quad \forall |x| < \frac{\sigma_i}{2\epsi_i}.
	\end{array}
\end{equation*}
Therefore, since $\widetilde{v}_i (0)=0$, we obtain
\begin{equation*}
    |\widetilde{v}_i|\leq C(K)~~\text{in}~ B_K (0),~~\forall\, K > 1.
\end{equation*}
It is easy to see that $\lambda(A[\widetilde{v}_i])\in\p\Gamma$ in $B_{\sigma_i/\epsi_i}$.
Thus, by estimate \eqref{lipregularityest} we just proved, 
\begin{equation*}
	|\nabla \widetilde{v}_i | \leq C(K) ~~\text{in}~B_K (0), ~~\forall\, K>1.
\end{equation*}
Hence, there exists some $\widetilde{v}\in C^{0,1}_{loc}(\mathbb{R}^n)$ such that passing to a subsequence,
\begin{equation}\label{limit-d}
	\widetilde{v}_i \longrightarrow \widetilde{v} ~~\text{in} ~C^{\widetilde{\gamma}}_{loc}(\mathbb{R}^n),~~\forall\, \widetilde{\gamma} \in (0,1).
\end{equation}
Since $[\widetilde{v}_i]_{\gamma, 1}(0)=1$, for every $i$, by \eqref{limit-d} with $\widetilde{\gamma}>\gamma$, we also have $[\widetilde{v}]_{\gamma, 1}(0)=1$.
In particular, $\widetilde{v}$ cannot be a constant.

On the other hand, by the limit property of viscosity solutions, \eqref{limit-d} implies $\lambda(A[\widetilde{v}])\in\p\Gamma$ on $\bR^n$. By Theorem \ref{thm-230329-1327}, $\widetilde{v}\equiv \text{constant}$. We reach a contradiction, and hence, have proved $[v]_{C^\gamma(B_{1/2})} \leq C$.

Now we prove the gradient estimate. Let $\widehat{v}(x) := v(x) - v(0)$. From $[\widehat{v}]_{C^\gamma(B_{1/2})} = [v]_{C^\gamma(B_{1/2})} \leq C$ and $\widehat{v}(0) = 0$, we obtain a two-sided bound $\sup_{B_{1/2}}|\widehat{v}(x)| \leq C$. Since $\lambda(A[\hat{v}])\in \p\Gamma$ in $B_1$, we can apply estimate \eqref{lipregularityest} to obtain the desired gradient bound. 
\end{proof}

\begin{corollary}
    \label{realharnack}
For $n\geq 2$, let $B_R\subset\bR^n$ be a ball centered at the origin and $\Gamma$ satisfy \eqref{eqn-230331-0110} with $\bl\notin\p\Gamma$.
    Suppose that $v$ is a viscosity solution of \eqref{eqn-221204-0213} in $B_R\setminus\{0\}$. Then, for any $r\in (0,R/4)$, 
\begin{equation*}
	  \max\limits_{ B_{2r}\setminus B_r} v \leq   	 \min\limits_{ B_{2r}\setminus B_r} v +C,
\end{equation*}
where $C$ depends only on $n$. Moreover, $\limsup\limits_{x\rightarrow 0}v(x)\leq \liminf\limits_{x\rightarrow 0}v(x)+C$.
\end{corollary}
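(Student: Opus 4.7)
The proof reduces the Harnack estimate on the annulus $B_{2r}\setminus B_r$ to the interior gradient bound of Proposition \ref{prop-230614-1100} via M\"obius invariance, and then extracts the pointwise statement by integrating along a short path. For the ``moreover'' clause, one combines the induced spherical oscillation bound with a one‑sided angular monotonicity (Lemma \ref{infdecrease} or Lemma \ref{lem-220901-1050}(b), according to whether $\bl\notin\overline\Gamma$ or $\bl\in\Gamma$).

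For the main inequality, for each $x_0$ with $|x_0|\in[r,2r]$ I would consider the M\"obius transformation $\varphi(y)=x_0+(r/8)y$ (a composition of a dilation and a translation), which sends $B_2\subset\bR^n$ into $B_{r/4}(x_0)\subset B_{5r/2}\setminus B_{r/2}\subset B_R\setminus\{0\}$ (using $r<R/4$). The rescaled function $\tilde v:=v^\varphi=v\circ\varphi+\tfrac1n\log|J_\varphi|$ is a viscosity solution of $\lambda(A[\tilde v])\in\partial\Gamma$ on $B_2$ by the M\"obius invariance recalled in the introduction, and Proposition \ref{prop-230614-1100} (which uses the hypothesis $\bl\notin\partial\Gamma$) yields $|\nabla\tilde v|\le C(n)$ on $B_{1/2}$. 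Since $\nabla\tilde v(y)=(r/8)\nabla v(\varphi(y))$, this translates to $|\nabla v(x)|\le (8/r)C(n)$ for every $x\in B_{r/16}(x_0)$. Letting $x_0$ range over $\overline{B_{2r}\setminus B_r}$ gives $|\nabla v|\le C(n)/r$ throughout that closed annulus, and the same argument applied at scale $r=|x|/\sqrt2$ gives $|\nabla v(x)|\le C(n)/|x|$ on $B_{R/2}\setminus\{0\}$. For any $y_1,y_2\in\overline{B_{2r}\setminus B_r}$ I would then connect them inside the closed annulus by a curve consisting of two radial segments to the sphere $\{|x|=3r/2\}$ and a geodesic arc on that sphere; such a path has length at most $4\pi r$, and integrating the gradient bound along it yields $|v(y_1)-v(y_2)|\le 4\pi C(n)$, which is the claimed inequality.

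For the ``moreover'' statement, set $\bar v(r):=\max_{|x|=r}v$ and $\ubar v(r):=\min_{|x|=r}v$; integrating the pointwise bound $|\nabla v|\le C/|x|$ along a geodesic on the sphere $\{|x|=s\}$ gives the spherical oscillation estimate $\bar v(s)-\ubar v(s)\le 2\pi C$ for every $s\in(0,R/2)$. A short verification, using that $\bar v(r)$ and $\ubar v(r)$ are attained at points on $\partial B_r$ tending to $0$, identifies $\limsup_{x\to 0}v=\limsup_{r\to 0+}\bar v(r)$ and $\liminf_{x\to 0}v=\liminf_{r\to 0+}\ubar v(r)$. If $\bl\notin\overline\Gamma$, Lemma \ref{infdecrease} applied to the radial LSC supersolution $\ubar v$ shows that it is non-increasing in $r$, so $\lim_{r\to 0+}\ubar v(r)$ exists in $(-\infty,+\infty]$ and equals $\liminf_{x\to 0}v$; the spherical bound $\bar v(r)\le\ubar v(r)+2\pi C$ then gives $\limsup_{x\to 0}v\le\liminf_{x\to 0}v+2\pi C$. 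If instead $\bl\in\Gamma$, Lemma \ref{lem-220901-1050}(b) applied to the radial USC subsolution $\bar v$ shows that $\bar v(r)+2\log r$ is non-decreasing in $r$, hence has a limit $\alpha\in[-\infty,+\infty)$ as $r\to 0+$; if $\alpha$ is finite then $\bar v(r)\to+\infty$ and the spherical bound forces $\ubar v(r)\to+\infty$ as well, making the inequality trivial, while the case $\alpha=-\infty$ is handled by combining the resulting upper bound on $\bar v$ with the spherical bound and a blow-up argument analogous to Step 1, whose limit is ruled out by Theorem \ref{thm-230329-1327}.

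The two rescaling steps are routine once the correct dilation factor $r/8$ is chosen so that $B_2$ fits inside $B_R\setminus\{0\}$; the real difficulty lies in the ``moreover'' clause when $\bl\in\Gamma$, since Lemma \ref{infdecrease} is unavailable and one has no direct monotonicity of the angular infimum $\ubar v$. Routing through the subsolution monotonicity of $\bar v+2\log r$ and distinguishing sub-cases by the finiteness of its limit at the puncture is the main obstacle.
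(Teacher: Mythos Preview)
Your argument for the annular Harnack inequality is correct and essentially the same as the paper's: both rescale and apply the gradient bound of Proposition~\ref{prop-230614-1100}. The paper rescales by $r$ centered at the origin rather than by $r/8$ centered at each $x_0$, but this is a cosmetic difference.

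For the ``moreover'' clause, however, your case analysis is more complicated than necessary and has a genuine gap. In the sub-case $\lambda^*\in\Gamma$ with $\alpha=\lim_{r\to 0^+}(\bar v(r)+2\log r)=-\infty$, the monotonicity of $\bar v(r)+2\log r$ together with the spherical oscillation bound $\bar v(r)\le \ubar v(r)+C$ does \emph{not} force either $\bar v$ or $\ubar v$ to have a limit at $0$; a priori both could oscillate across dyadic scales. Your appeal to ``a blow-up argument analogous to Step~1, whose limit is ruled out by Theorem~\ref{thm-230329-1327}'' is too vague to be a proof: you have not specified the rescaling, why a nontrivial limit exists, or what contradiction would arise. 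As you yourself note, this is the main obstacle, and it is not resolved.

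The paper avoids any case split. Since $v$ is a viscosity \emph{solution}, it is simultaneously a super- and a sub-solution, and constants are solutions. Comparing $v$ with the constant $\min\{\ubar v(r_1),\ubar v(r_2)\}$ on the annulus $B_{r_2}\setminus\overline{B_{r_1}}$ via Theorem~\ref{thm-220721-0849} gives
\[
\ubar v(r)\ \ge\ \min\{\ubar v(r_1),\ubar v(r_2)\}\qquad\text{for all }r_1<r<r_2,
\]
and dually $\bar v(r)\le\max\{\bar v(r_1),\bar v(r_2)\}$. These one-dimensional quasi-convexity/quasi-concavity properties force $\ubar v$ and $\bar v$ to be monotone on some $(0,r_0)$, hence to possess limits as $r\to 0^+$, which are then identified with $\liminf_{x\to 0}v$ and $\limsup_{x\to 0}v$. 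The annular Harnack inequality finishes the proof. This argument works uniformly for all cones $\Gamma$ with $\lambda^*\notin\partial\Gamma$ and requires neither Lemma~\ref{infdecrease} nor Lemma~\ref{lem-220901-1050}.
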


\begin{proof}
    The inequality follows from Proposition \ref{prop-230614-1100} and a standard rescaling argument. Namely, for every $r\in (0,R/4)$, let $v_r(y)\coloneqq v(r y)$, $1/4<|y|<4$. Then $v_r$ satisfies $\lambda(A[v_r])\in \p\Gamma$ in $\{1/4<|y|<4\}$. Applying Proposition \ref{prop-230614-1100}, $\max_{B_2\setminus B_1} v_r \leq   	 \min_{B_2\setminus B_1} v_r +C$, where $C$ depends only on $n$. Equivalently, $\max_{ B_{2r}\setminus B_r} v \leq \min_{ B_{2r}\setminus B_r} v +C$. 
    
    Next, we prove the ``Moreover" part. For any $0<r_1<r_2<R/4$, since $v$ is a supersolution of \eqref{eqn-221204-0213} in $B_{r_2}\setminus \overline{B_{r_1}}$, $\inf_{\p B_{r_1}\cup \p B_{r_2}}v$ is a supersolution of \eqref{eqn-221204-0213} in $B_{r_2}\setminus\overline{B_{r_1}}$, and $v\geq \inf_{\p B_{r_1}\cup \p B_{r_2}}v$ on $\p B_{r_1}\cup \p B_{r_2}$, by the comparison principle Theorem \ref{thm-220721-0849}, $\inf_{\p B_r}v\geq \min\{\inf_{\p B_{r_1}}v,\inf_{\p B_{r_2}}v\}$ for any $r\in (r_1,r_2)$. It follows that there exists some $r_0>0$ (may be very small) such that $\inf_{\p B_r}v$ is monotone in $r\in (0,r_0)$. In particular, $\lim_{r\rightarrow 0}\inf_{\p B_r}v$ exists and is equal to $\liminf_{x\rightarrow 0} v(x)$. By a similar argument, $\lim_{r\rightarrow 0}\sup_{\p B_r}v$ exists and is equal to $\limsup_{x\rightarrow 0} v(x)$. Hence, combining with the Harnack inequality we just proved, the desired conclusion is obtained.     
    \end{proof}

\begin{proof}[Proof of Theorem \ref{230805-2045}] This theorem is a direct corollary of Lemma \ref{remove}, Proposition \ref{removablesingularity}, Proposition \ref{230805-2330} and Proposition \ref{prop-230614-1100}.    
\end{proof}

\begin{proof}[Proof of Theorem \ref{230918-1857}] 
Recall the notation: $v^{x,\lambda} := v^{\varphi_{x,\lambda}}$ where $\varphi_{x,\lambda} (y) := y + \lambda^2 (y-x)/|y-x|^2$.
We divide the proof of the theorem into two cases.

\noindent
\textbf{Case 1:} $\bl\notin\overline\Gamma$.

For any $0<\lambda<|x|$, consider $v$ and its Kelvin transformation $v^{x,\lambda}$ in the ball $B_\lambda(x)$. By the M\"obius invariance, we know that 
	\begin{equation}\label{230918-2230}
	   \lambda(A[v])\in \p \Gamma \  \text{in} ~~B_\lambda(x), 
	\ 	\ \lambda(A [v^{x,\lambda}])\in \p \Gamma\ \text{in}  ~~B_\lambda(x)\setminus\{x,P\},
	\  \	v= v^{x,\lambda}\ \text{in} ~~\p B_{\lambda}(x),
	\end{equation}
	where $P = \varphi_{x, \lambda}(0)$. 
 Using Proposition \ref{removablesingularity}, $v^{x,\lambda}$ is lowerconical at $x$ and $P$. By Lemma \ref{remove}, $\lambda(A[v^{x,\lambda}]) \in \overline{\Gamma}$ in $B_\lambda(x)$. Hence, by the comparison principle Theorem \ref{thm-220721-0849}, we obtain $v\leq v^{x,\lambda}$ in $B_\lambda(x)\setminus\{x,P\}$. By the continuity of $v$, $v\leq v^{x,|x|}$ in $B_{|x|}(x) \setminus \{x\}$ for any $x\in \bR^n\setminus \{0\}$. This implies the radial symmetry of $v$ and the
 monotonicity. See \cite[Proof of Theorem 1.2]{LiJFA06}  and \cite[Proof of Theorem 1.9]{MR4458997}.

\noindent \textbf{Case 2:} $\bl\in\Gamma$.
 
 The proof is divided into three subcases.
 
\textbf{Case 2.1:}  $\liminf_{|x|\rightarrow 0} v(x)=\liminf_{|x|\rightarrow\infty} (v(x)+2\log|x|)=\infty$.

For any $0<\lambda<|x|$, we know that $v$ and $v^{x,\lambda}$ satisfy \eqref{230918-2230}, and 
$\liminf_{y\rightarrow x}v^{x,\lambda}(y)=\liminf_{y\rightarrow P} v^{x,\lambda}(y)=\infty$. Therefore, for  small $\delta>0$, we have
    \begin{equation*}
        v\leq v^{x,\lambda}~\text{on}~\p\Omega_\delta,~\text{where}~\Omega_\delta\coloneqq 
        B_\lambda(x)\setminus \overline{ B_\delta(x) \cup B_\delta(P)}.
    \end{equation*}
    By Theorem \ref{thm-220721-0849},
     $v\leq v^{x,\lambda}$ in $\Omega_\delta$. Sending $\delta\rightarrow 0$ gives  $v\leq v^{x,\lambda}$ in $B_\lambda(x)\setminus\{x,P\}$. This gives the  radial symmetry and the monotonicity of $v$ as in Case 1.

\textbf{Case 2.2:} $\liminf_{|x|\rightarrow 0} v(x)<\infty$. 

By Corollary \ref{realharnack},  $\limsup_{|x|\rightarrow 0}v(x)<\infty$. Property $\Gamma+\Gamma_n\subset\Gamma$ and $\bl\in\Gamma$ imply $-\bl\in\Gamma$. By Proposition \ref{230805-2330}, $v$ is upperconical at $x=0$. Hence, by Lemma \ref{remove}, we know that  $\lambda(\av)\in \bR^n\setminus\Gamma$ in $\bR^n$. Then it follows from Theorem \ref{thm-230528-0404} that $v\equiv \text{constant}$.

\textbf{Case 2.3:} $\liminf_{|x|\rightarrow\infty} (v(x)+2\log|x|)<\infty$.

Let $\hat{v}(x)\coloneqq v^{0,1}(x)= v(x/|x|^2)-2\log |x|$.  Then $\liminf_{|x|\rightarrow 0}\hat{v}(x)<\infty$. 
By the M\"obius invariance, $\lambda(A[\hat{v}])\in\p\Gamma$ in $\bR^n\setminus\{0\}$. 
By Case 2.2, $\hat{v}\equiv C$ which gives the desired properties of $v$.
\end{proof}

\section{Local gradient estimates}\label{sec-7}
\subsection{Local gradient estimates assuming a two-sided bound on solutions}\label{augmentedhessianest}

Let $(B_1,g)$ be a Riemannian geodesic ball of dimension $n\geq 2$, $v$ is a real-valued function defined on $B_1$. Let 
\begin{equation} \label{eqn-230602-1030}
    W^{\alpha,\beta}_{g,S}[v] \coloneqq \nabla^2 v +\alpha d v\otimes d v +\beta |\nabla v|^2 g +S
\end{equation}
be an augmented Hessian tensor, where $\alpha,\beta$ are two real numbers and $S=S_{ij}dx^i dx^j$ is a symmetric $2-$tensor. Here all differentiations and norms are taken with respect to $g$. In the following, for convenience we simply write $W$ when there is no ambiguity. 

We first derive an interior gradient estimate for 
\begin{equation}\label{ahequ}
    f( e^{\theta (x,v)} \lambda(W))=h(x,v),~\lambda(W)\in \Gamma, ~\text{in }B_1, 
\end{equation}
assuming a two-sided bound on $v$. 
\begin{theorem} \label{ahequlocalgradest}
Let $(B_1,g)$ be a $C^2$ Riemannian geodesic ball of dimension $n\geq 2$, and $(f,\Gamma)$ satisfy \eqref{eqn-230331-0110}, \eqref{basicf}, and \eqref{eqn-230530-1131}. Let $v\in C^3(B_1)$ solve \eqref{ahequ} in $B_1$. Assume that for some constants $a$ and $b$, $a \leq v \leq b$ on $B_1$.
     Assume also $h\in C^1 (B_1\times\bR)$ is positive, $\theta \in C^1(B_1 \times \bR)$, $S_{ij}\in C^1(B_1)$, and $\beta\neq 0$. Then
	\begin{equation*} 
		\left| \nabla_g v \right|_g \leq C \quad \text{in} ~B_{1/2},
	\end{equation*}
	where $C$ depends on $n$, $(f,\Gamma)$,
 upper bounds of $h, |h_x|, |\theta_x|$, $|\theta_v|$, $|\alpha|$, $|a|$, and $|b|$, lower bounds of 
 $\theta$ and $h_v$, positive upper and lower bounds of $|\beta|$, and bounds of $g$, $R_{ijkl}$ and $S$ together with its first order covariant derivatives with respect to $g$.
\end{theorem}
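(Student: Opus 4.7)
The plan is a Bernstein-type maximum principle argument applied to the auxiliary quantity
$$\Phi(x) = \eta(x)^2 \, e^{A v(x)} \, |\nabla v(x)|^2,$$
where $\eta \in C_c^\infty(B_1)$ is a cutoff equal to $1$ on $B_{1/2}$ and $A \in \bR$ is a constant (whose sign will be chosen opposite to that of $\beta$) to be fixed large in absolute value in terms of the allowed data. Suppose $\Phi$ attains its maximum at an interior point $x_0 \in B_1$ (the boundary case being trivial). The target is to show $|\nabla v(x_0)| \leq C$, which, since $\eta \equiv 1$ on $B_{1/2}$, delivers the estimate on $B_{1/2}$.

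At $x_0$, the first-order identity $\nabla_i \Phi = 0$ will be used to eliminate the mixed terms $F^{ij} v_j v_{ki} v_k$, and the second-order inequality $F^{ij} \Phi_{ij}(x_0) \leq 0$ supplies the fundamental inequality; here $F^{ij} = \partial F/\partial W_{ij}$ are the coefficients of the linearized operator of $F(x, v, W) := f(e^{\theta(x,v)}\lambda(W)) - h(x,v) = 0$. Differentiating the equation once in $x_k$ and applying the chain rule yields
$$F^{ij}\nabla_k W_{ij} = (h_v - F_v)\, v_k + (h_{x_k} - F_{x_k}),$$
which will be substituted to replace the third-derivative terms that appear after commuting covariant derivatives inside $\Phi_{ij}$. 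Commuting derivatives also introduces a curvature error of Riemann-tensor type, absorbable by the curvature bound in the hypothesis.

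The decisive step is to substitute the defining identity
$$v_{ij} = W_{ij} - \alpha v_i v_j - \beta |\nabla v|^2 g_{ij} - S_{ij}$$
into the Hessian terms arising in $F^{ij} \Phi_{ij}/\Phi$ and $F^{ij}(|\nabla v|^2)_{ij}$. The contribution produced by the $\beta |\nabla v|^2 g$ factor of $W$ yields a leading positive term
$$-2A\beta\, |\nabla v|^4 \sum_i F^{ii} + (\text{lower order}),$$
which is indeed positive thanks to the sign choice on $A$ together with $\beta \neq 0$. The dangerous remainder splits into three groups: expressions of the form $F^{ij} W_{ij} = \sum_i \lambda_i F^{ii}$, controlled by the structural hypothesis \eqref{eqn-230530-1131}, which bounds $|\lambda_i F^{ii}|$ by $\delta^{-1} \sum_i F^{ii}$ on the level set $\{f \leq h_{\max}\}$; the quadratic-in-gradient contributions from $\alpha v_i v_j$, handled by Cauchy--Schwarz combined with the first-order critical point identity above; and the terms involving $S$ and the derivatives of $\theta$, $h$, $\eta$, which are all of strictly lower order in $|\nabla v|$ than $|\nabla v|^4 \sum_i F^{ii}$.

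Putting these estimates together at $x_0$ yields a schematic inequality
$$c_1 |\beta|\, |\nabla v|^4 \sum_i F^{ii} \leq C \bigl(|\nabla v|^3 + |\nabla v|^2 + 1\bigr)\Bigl(\sum_i F^{ii} + 1\Bigr),$$
with constants depending only on the quantities listed in the statement. Since $\sum_i F^{ii} > 0$ (by \eqref{eqn-230530-1131}) and $|\beta|$ has a positive lower bound, this forces $|\nabla v(x_0)| \leq C$. The chief obstacle is the bookkeeping needed to verify that the unique term of leading order $|\nabla v|^4 \sum_i F^{ii}$ carries the favorable sign; this is exactly what the hypothesis $\beta \neq 0$ guarantees, and it is the reason the two-sided estimate closes without any Liouville-type input.
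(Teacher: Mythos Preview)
Your overall strategy is correct and matches the paper's Bernstein-type argument, but the specific weight $e^{Av}$ with $|A|$ large cannot close the estimate. When you differentiate $\Phi$ twice and substitute the first-order critical-point relation $2v_kv_{ki}=-Av_i|\nabla v|^2+(\eta\text{-terms})$, you generate a term $-A^2 F^{ij}v_iv_j$ (equivalently, a coefficient $-(\phi')^2$ in front of $|\nabla v|^2 F^{ij}v_iv_j$ if one writes the weight as $e^{\phi(v)}$ with $\phi(v)=Av$). Since $F^{ij}v_iv_j$ can be as large as $|\nabla v|^2\sum_iF^{ii}$, this term is of the \emph{same} top order as your favorable $\beta$-term, and for large $|A|$ it dominates with the wrong sign. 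It does not arise from the $\alpha v_iv_j$ component of $W$; it arises from the weight itself, and your list of ``dangerous remainders'' omits it.

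The paper remedies this by using $\phi(v)=\epsi e^{\Lambda\beta v}$ in place of $Av$, choosing $\epsi$ small and $\Lambda$ large (depending on $\alpha,\beta,a,b$) so that on $[a,b]$ one has simultaneously
\[
\beta\phi'\geq c_1>0\quad\text{and}\quad \phi''+\alpha\phi'-(\phi')^2\geq 0.
\]
The second inequality is precisely what neutralizes the $(\phi')^2$ contribution, and it is unattainable with a linear $\phi$ once $|A|$ is large (there $\phi''=0$ while $(\phi')^2=A^2$). Even for moderate $A$, the linear weight only yields a favorable net top-order coefficient when $\alpha+\beta$ and $\beta$ have the same sign, which is not implied by the hypothesis $\beta\neq 0$. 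Aside from this choice of auxiliary function, the rest of your outline (differentiating the equation, commuting covariant derivatives, invoking \eqref{eqn-230530-1131} to control $F^{ij}W_{ij}$, and absorbing the lower-order $S$, $\theta$, $h$, $\eta$ terms) agrees with the paper.
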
  

\begin{remark}
When $\beta = 0$, local gradient estimates were established by Trudinger in \cite{MR1466315} for equation $\sigma_k(\lambda(\nabla^2 v))=1$, $\lambda(\nabla^2 v)\in \Gamma_k$, $k=1,\dots,n$.
\end{remark}

\begin{proof}[Proof of Theorem \ref{ahequlocalgradest}]
Consider 
\[ \Psi \coloneqq \rho \cdot e^{\phi (v)} \cdot|\nabla v|^2,\]
where $\rho$ is a cut-off function with 
\begin{equation*}
		\left\{
	\begin{array}{lr}
		 \rho = 1 ~~\text{in}~B_{1/2},\quad \rho=0 ~~\text{in}~ B_1\setminus B_{2/3}, \quad \rho >0 ~~\text{in}~ B_{2/3},\quad \rho \geq 0, ~~ \text{in}~B_1,\\[1ex]
		|\partial_k \rho| \leq C\sqrt{\rho} ~~ \text{in}~B_1,~~ \text{for some absolute constant $C$},~~\forall k,
	\end{array}
	\right.	
\end{equation*}
and $\phi(s)\coloneqq \epsilon e^{\Lambda \beta s}$ is an auxiliary function. Here, small constant $\epsi$ and large constant $\Lambda$ 
are chosen, depending only on $\alpha,\beta,a,b$, such that for some $c_1 = c_1 (\alpha,\beta,a,b)$,
\begin{equation}\label{phi}
      \beta \phi' \geq c_1 >0,\quad \phi'' +\alpha \phi' -\left(\phi'\right)^2 \geq 0 \quad \text{on}~~ [a,b].
\end{equation}
Due to the boundedness of $\phi$ on $[a, b]$ and the support of $\rho$, to prove the desired gradient estimate, it suffices to prove
\begin{equation*}
	\Psi(x_0) = \max\limits_{\overline{B_1}} \Psi \leq C .
\end{equation*}
Clearly, $x_0 \in B_{2/3}$.
Take some $g$-geodesic normal coordinates centered at $x_0$ such that $W$ is diagonal at $x_0$. More precisely, $g^{-1}W(x_0)$ is diagonal. Write $W_{ij}$ as the $(i,j)$ entry of $W$ under the chosen coordinates, we have, at $x_0$,
\begin{equation}\label{Wij}
    	\left\{
	\begin{array}{lr}
		 W_{ii}=v_{ii}+\alpha v_i^2 + \beta |\nabla v|^2 + S_{ii},~\forall i\\[1ex]
		v_{ij}=-\alpha v_iv_j-S_{ij},~\forall i\neq j,
	\end{array}
	\right.	
\end{equation}
where subindices always denote covariant differentiation with respect to $g$. We will keep using this notation throughout the proof.

Since $v\in C^3$ and $\Psi$ achieves its maximum at an interior point $x_0$, we have 
\begin{equation}\label{maxpt}
	\nabla \Psi(x_0)=0,\quad \nabla^2 \Psi(x_0)\leq 0.
\end{equation} 

Applying $\partial_k$ to equation \eqref{ahequ}, we deduce that
\begin{equation}\label{diffequ}
	f^i W_{ii,k} = (h_k + h_v v_k) e^{-\theta} - f^i W_{ii}\theta_v v_k - f^i W_{ii} \theta_k \quad \text{at}~~ x_0,
\end{equation}
where $h_k$ and $h_v$, similarly $\theta_k$ and $\theta_v$, represent partial derivatives on variables $x_k$ and $v$, respectively, and  
\begin{equation*}
W_{ii,k}= \partial_k W_{ii}, \quad f^{i}= \frac{\partial f}{\partial \lambda_i}( e^{\theta} \lambda(W)) .
\end{equation*}

Next, we write down relations in \eqref{maxpt}. By direct computations,
\[\Psi_i=2\rho e^\phi v_{ki}v_k + \left( \phi' v_i + \frac{\rho_i}{\rho}\right)\Psi.\]
Evaluating at $x_0$, by $\Psi_i(x_0)=0$, we obtain
\begin{equation}\label{grad0}
	2v_{ki}v_k =-\phi' |\nabla v|^2 v_i - \frac{\rho_i}{\rho} |\nabla v|^2, \quad \forall\, 1\leq i\leq n,\quad \text{at}~~x_0.
\end{equation}
Take the second order derivative on $\Psi$ and evaluate at $x_0$, \\
$$
\begin{array}{ll}
	0\geq \left(\Psi_{ij} \right) =& 2v_{kij}v_k e^\phi \rho + 2v_{ki}v_{kj} e^\phi \rho + 2v_{ki} v_k e^\phi \phi' v_j \rho \\
  & +2v_{ki}v_k e^\phi \rho_j +\left(\phi''v_i v_j +\phi' v_{ij} + \frac{\rho\rho_{ij}-\rho_i\rho_j}{\rho^2}\right) \rho e^\phi |\nabla v|^2.
\end{array}
$$
From \eqref{maxpt}, in particular, $0\geq \Psi_{ii}$ for each $i$. Multiplying both sides by $e^{-\phi}f^i>0$ and summing over $i$, we obtain, at $x_0$,
\begin{equation} \label{eqn-230529-1147}
\begin{split}
        0
    \geq
    e^{-\phi}f^i \Psi_{ii}
    &=
    2\rho f^i v_{kii}v_k + 2\rho f^i v^2_{ki} + 2\rho \phi' f^i v_{ki} v_k v_i + 2f^i v_{ki} v_k \rho_i 
    \\
	& \quad + f^i \left(\phi'' v_i^2 + \phi' v_{ii} +\frac{\rho\rho_{ii}-\rho_i^2}{\rho^2}\right)\rho|\nabla v|^2.
\end{split}
\end{equation}
Using $v_{kii}=v_{iik}+R_{kiim}v_m$, \eqref{Wij}, and \eqref{grad0}, we obtain
\begin{align*}
    &\text{RHS of \eqref{eqn-230529-1147}}
    \\&=
    2\rho f^i v_k \left( W_{ii} -\alpha v_i^2 -\beta|\nabla v|^2 \delta_{ii}-S_{ii}\right)_k + 2\rho f^i R_{kiim}v_k v_m+ 2\rho f^i v_{ki}^2
    \\& \quad - \rho \phi' f^i \left(|\nabla v|^2 \phi' v_i^2 + \frac{|\nabla v|^2 \rho_i v_i}{\rho}\right)- f^i \rho_i \left(|\nabla v|^2 \phi' v_i +\frac{|\nabla v|^2 \rho_i}{\rho}\right)
    \\& \quad +\rho \phi'' |\nabla v|^2 f^i v_i^2 + \rho \phi' |\nabla v|^2 f^i \left(W_{ii} -\alpha v_i^2 -\beta|\nabla v|^2 \delta_{ii}-S_{ii}\right) 
    \\& \quad + |\nabla v|^2 f^i \frac{\rho \rho_{ii}-\rho_i^2}{\rho}
\end{align*}
\begin{align*}
&=
    2\rho f^i \big( W_{ii,k}v_k +\alpha \phi' |\nabla v|^2 v_i^2 +\alpha \frac{v_i \rho_i}{\rho}|\nabla v|^2
      +\beta |\nabla v|^4 \phi' \delta_{ii} + \beta \frac{\rho_k v_k}{\rho} |\nabla v|^2 \delta_{ii} - S_{ii,k}v_k\big)
      \\& \quad 
      + 2\rho f^i R_{kiim}v_k v_m + 2\rho f^i v_{ki}^2 
      \\& \quad
      - \rho \phi' f^i \left(|\nabla v|^2 \phi' v_i^2 + \frac{|\nabla v|^2\rho_i v_i}{\rho}\right)-f^i \rho_i \left(|\nabla v|^2 \phi' v_i + \frac{|\nabla v|^2 \rho_i}{\rho}\right)
      \\& \quad 
      + \rho \phi'' |\nabla v|^2 f^i v_i^2 + \rho \phi' |\nabla v|^2 f^i \left( W_{ii} -\alpha v_i^2 -\beta|\nabla v|^2 \delta_{ii}-S_{ii}\right) + |\nabla v|^2 f^i \frac{\rho \rho_{ii}-\rho_i^2}{\rho}.
\end{align*}
Here, in the second equality, we applied \eqref{grad0} to replace $(\alpha v_i^2)_k$ and $(\beta |\nabla v|\delta_{ii})_k$. Moving $2\rho f^i v_{ki}^2$ to the left-hand side, using \eqref{diffequ} to replace $f^iW_{ii,k}$, then rearranging terms, we have, at $x_0$,
 \begin{align}
     0
     &\geq
     - 2\rho f^i v_{ki}^2 \nonumber
     \\&=
     2\rho h_k v_k e^{-\theta} - 2\rho f^i S_{ii,k} v_k - 2\rho\theta_k v_k f^i W_{ii} \nonumber
     \\ & \quad
     + 2\rho h_v |\nabla v|^2 e^{-\theta} + 2\rho f^i R_{kiim} v_k v_m + \rho \phi' |\nabla v|^2 f^i W_{ii} - 2\rho\theta_v|\nabla v|^2 f^i W_{ii} \nonumber
     \\&\quad -\rho \phi' f^i S_{ii}|\nabla v|^2 +|\nabla v|^2 f^i \frac{\rho\rho_{ii}-2\rho_i^2}{\rho} \nonumber
     \\ & \quad
     + 2\beta |\nabla v|^2 v_k \rho_k \sum\limits_i f^i +2(\alpha - \phi') |\nabla v|^2 f^i \rho_i v_i \nonumber
     \\ & \quad
     + \beta \phi' \rho |\nabla v|^4 \sum\limits_i f^i + \left(\phi'' +\alpha \phi' -(\phi')^2\right) \rho |\nabla v|^2 f^i v_i^2 \nonumber
\end{align}
\begin{align}
     &\geq
     (\sum_i f_i) \left( - C \rho |\nabla v| - C |\nabla v|^2 - C \sqrt{\rho} |\nabla v|^3 + c_1 \rho |\nabla v|^4 \right) \nonumber
     \\& \quad - C\rho ( |\nabla v| + |\nabla v|^2 )e^{-\theta} - C|f^i W_{ii}| \rho (|\nabla v| + |\nabla v|^2). \label{eqn-230530-1125}
 \end{align}
 
 Here, in the last inequality, we used \eqref{phi}. Note that we can always assume $( \rho|\nabla v |^2 ) (x_0) \geq \widetilde{C}$ for some large $\widetilde{C}$, since otherwise we are done. Now, if $\widetilde{C}$ is large enough, 
 \begin{equation*}
 \text{RHS of \eqref{eqn-230530-1125}}
 \,\, \geq
     \frac{c_1}{2}(\sum_i f_i)\rho |\nabla v|^4 - C e^{-\theta} ( 1 + |\sum_i f^i e^{\theta} W_{ii} |) \rho |\nabla v|^2.
 \end{equation*}
Using \eqref{eqn-230530-1131} with $C = \|h\|_{C^0 (B_1 \times [a,b])}$ and the fact that $W$ is diagonal at $x_0$, we reach $|\nabla v| (x_0) \leq C$ as desired. The theorem is proved.
\end{proof}

\subsection{Proofs of Theorem \ref{localgradest} and \ref{nlocalgradest}}\label{sec-7.2}
Recall the tensor $W^{\alpha,\beta}_{g,S}[v] = \nabla^2 v +\alpha d v\otimes d v +\beta |\nabla v|^2 g +S$ defined in \eqref{eqn-230602-1030}. We consider the equation
\begin{equation}\label{eqn-230602-1041}
    f( e^{\theta(v)} \lambda( W^{\alpha,\beta}_{g,S}[v] ))=h(x),~\lambda( W^{\alpha,\beta}_{g,S}[v] )\in \Gamma.
\end{equation}

\begin{theorem} \label{thm-230603-1124}
    Let $(B_2,g)$ be a $C^2$ Riemannian geodesic ball of dimension $n \geq 2$ and $(f,\Gamma)$ satisfy \eqref{eqn-230331-0110} and \eqref{basicf}--\eqref{eqn-230602-1040}. Suppose that $v \in C^3(B_2)$ satisfies \eqref{eqn-230602-1041} in $B_2$ for some $S\in C^1(B_2)$, $\alpha,\beta \in\bR$, and positive $h\in C^1(B_1)$. Then, if
    $\alpha + 2\beta = 0$, $\beta\neq 0$, $-\operatorname{sign}(\beta)\bl \notin \p\Gamma$,
    and for some constant $\Theta>0$,
    \begin{equation} \label{eqn-230603-0100}
        \theta(v(x)) \geq - \Theta,\quad |\theta_v(v(x))| \leq \Theta,\quad \text{on}\,\,B_2,
    \end{equation}
    we have
    \begin{equation*}
        |\nabla_g v|_g \leq C \quad \text{in} \,\, B_{1/2},
    \end{equation*}
    where $C$ depends only on $(f,\Gamma)$,
 upper bounds of $h, |h_x|$ in $B_2$, and $\Theta$, positive upper and lower bounds of $|\beta|$, and bounds of $g$, $R_{ijkl}$ and $S$ together with its first order covariant derivatives with respect to $g$.
\end{theorem}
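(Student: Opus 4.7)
The plan is to argue by contradiction using the blow-up framework of Li \cite{Li2009}, combining the Liouville-type Theorems \ref{thm-230329-1327} and \ref{1327-prime} with the two-sided gradient estimate in Theorem \ref{ahequlocalgradest}. Suppose the estimate fails: there exist data $(g_i, S_i, h_i, v_i)$ satisfying the hypotheses uniformly but with $M_i:= \sup_{B_{1/2}} |\nabla_{g_i}v_i|_{g_i} \to \infty$. The standard maximum-point argument applied to $\Psi_i(x) := (1-|x|)|\nabla v_i|_{g_i}(x)$ on $B_1$ picks out $x_i$ maximizing $\Psi_i$; setting $\epsilon_i := |\nabla v_i(x_i)|^{-1}$ and $\rho_i := (1-|x_i|)/\epsilon_i$, one has $\epsilon_i \to 0$ and $\rho_i \to \infty$.

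Working in $g_i$-normal coordinates at $x_i$, I define $\tilde v_i(y) := v_i(x_i + \epsilon_i y)$ and $w_i := \tilde v_i - \tilde v_i(0)$, so $w_i(0)=0$ and $|\nabla w_i(0)|=1$; the maximality of $\Psi_i$ then forces $|\nabla w_i| \le 2$ on $B_{\rho_i/2}$, while the rescaled metric $\hat g_i(y) := g_i(x_i + \epsilon_i y)$ converges locally to the Euclidean metric. A direct calculation shows that $w_i$ satisfies
\begin{equation*}
  f\bigl(e^{\theta(\tilde v_i(y))}\epsilon_i^{-2}\lambda\bigl(W^{\alpha,\beta}_{\hat g_i, 0}[w_i](y) + \epsilon_i^2 S(x_i+\epsilon_i y)\bigr)\bigr) = h(x_i + \epsilon_i y).
\end{equation*}
Because $\theta \ge -\Theta$ makes $\epsilon_i^{-2}e^{\theta(\tilde v_i)} \to \infty$ while $h$ remains bounded, condition \eqref{eqn-230602-1040} will force $\lambda(W^{\alpha,\beta}[w_i]) = O(\epsilon_i^2)$ locally uniformly. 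Combined with $|\nabla w_i| \le 2$ and the equation itself, this yields $|\nabla^2 w_i| \le C$ on compact sets, with Theorem \ref{ahequlocalgradest} supplying any additional regularity needed. Arzel\`a--Ascoli then produces a subsequential limit $w_\infty \in C^{1,1}_{loc}(\bR^n)$ with $|\nabla w_\infty(0)| = 1$ which is a viscosity solution of $\lambda(W^{\alpha,\beta}[w_\infty]) \in \p\Gamma$ on $\bR^n$.

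The key algebraic input supplied by $\alpha + 2\beta = 0$ is the identity $W^{\alpha,\beta}[v] = -\tfrac{1}{2\beta} e^{2u} A[u]$ for $u = 2\beta v$, verified directly from the definitions of $W^{\alpha,\beta}$ and $A[v]$. Setting $u_\infty := 2\beta w_\infty$, the sign of the prefactor $-\tfrac{1}{2\beta}$ translates $\lambda(W^{\alpha,\beta}[w_\infty]) \in \p\Gamma$ into $\lambda(A[u_\infty]) \in \p\Gamma$ when $\beta < 0$ and into $\lambda(-A[u_\infty]) \in \p\Gamma$ when $\beta > 0$. The hypothesis $-\operatorname{sign}(\beta)\bl \notin \p\Gamma$ then places us exactly in the scope of Theorem \ref{thm-230329-1327} (if $\beta < 0$) or Theorem \ref{1327-prime} (if $\beta > 0$), which forces $u_\infty$, and hence $w_\infty$, to be constant---contradicting $|\nabla w_\infty(0)| = 1$. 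The main obstacle will be extracting a meaningful limit PDE from these near-degenerate rescaled equations: this relies critically on the asymptotic ``homogeneity'' of $f$ encoded in \eqref{eqn-230530-1131}--\eqref{eqn-230602-1040}, and on carefully tracking the sign dictionary between $W^{\alpha,\beta}$ and $A[u]$ so that the appropriate Liouville theorem applies.
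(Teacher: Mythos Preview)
Your overall architecture---blow-up followed by the Liouville theorems \ref{thm-230329-1327}/\ref{1327-prime}---matches the paper, and your sign dictionary between $W^{\alpha,\beta}$ and $A[u]$ is correct. However, there is a genuine gap in the compactness step.

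You normalize so that $|\nabla w_i(0)|=1$ and then assert that ``this yields $|\nabla^2 w_i|\le C$ on compact sets, with Theorem~\ref{ahequlocalgradest} supplying any additional regularity needed.'' Neither claim is justified. Theorem~\ref{ahequlocalgradest} is a \emph{gradient} estimate under a two-sided $C^0$ bound; it gives no Hessian control. Your preceding claim that \eqref{eqn-230602-1040} forces $\lambda(W^{\alpha,\beta}[w_i])=O(\epsi_i^2)$ is also too strong: what \eqref{eqn-230602-1040} yields is only that $f(\lambda(W^{\alpha,\beta}[w_i]))\to 0$, i.e.\ the eigenvalue vector approaches $\partial\Gamma$, which is a cone containing arbitrarily large vectors. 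Since no concavity of $f$ is assumed, there is no a priori $C^2$ estimate available here. With only the uniform Lipschitz bound $|\nabla w_i|\le 2$, you can extract a limit $w_\infty$ in $C^{0,\alpha}_{loc}$ for every $\alpha<1$, and you can pass the equation to the viscosity limit---but you \emph{cannot} conclude $|\nabla w_\infty(0)|=1$, and the contradiction collapses.

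The paper circumvents exactly this issue by blowing up a \emph{H\"older} seminorm rather than the gradient: using Li's $\delta(\cdot,\cdot;\Omega,\gamma)$ device one rescales so that $[\widetilde v_i]_{\gamma,1}(0)=1$ for a fixed $\gamma\in(0,1)$. This normalization immediately gives a two-sided bound $|\widetilde v_i|\le C(K)$ on each $B_K$, which is precisely what is needed to feed into Theorem~\ref{ahequlocalgradest} and obtain $|\nabla\widetilde v_i|\le C(K)$. One then has compactness in $C^{0,\widetilde\gamma}_{loc}$ for every $\widetilde\gamma<1$, in particular for some $\widetilde\gamma>\gamma$, which \emph{does} preserve $[\widetilde v]_{\gamma,1}(0)=1$ in the limit. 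The point is to normalize in a norm strictly weaker than the one in which you have compactness; your gradient normalization requires $C^{1}$ convergence, which is one derivative more than the available estimates provide.
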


\begin{proof}
The desired estimate under a two-sided bound assumption on $v$ has been given in Theorem \ref{ahequlocalgradest}. We only need to prove the estimate under a one-sided bound assumption on $v$,  and the proof is similar to that of Proposition \ref{prop-230614-1100}.

    For $\gamma \in (0,1)$, we first prove $[v]_{C^\gamma(B_{1/2})} \leq C$ by 
     contradiction. 
     Suppose not, there exists 
      a sequence of  $\{v_i\}$
      satisfying \eqref{eqn-230602-1041} in $B_2$, but $[v_i]_{C^\gamma(B_{1/2})} \rightarrow \infty$. For the clarity of the presentation, we only consider the sequence of solutions $v_i$ with fixed $\alpha,\beta,S,g,h$, and $\theta$. One can 
      modify the proof to allow all these to change with $i$ and obtain the desired dependence. 
      Define $\widetilde{v_i}$ by \eqref{231007-1140}, then follow the arguments below  \eqref{231007-1140} to obtain 
      that $[\widetilde{v_i}]_{\gamma,1}(0)=1$ and $|\widetilde{v_i}|\leq C(K)$ in $B_K(0)$ for any $K\geq 1$.

A calculation shows that $\widetilde{v}_i$ satisfies 
\begin{equation}\label{equv_i}
f( s_i \lambda(  W^{\alpha,\beta}_{g^{(i)}, \epsi_i^2 S} [\widetilde{v}_i] ) ) = h (x_i +\epsilon_i y), \,\, \lambda( W^{\alpha,\beta}_{g^{(i)}, \epsi_i^2 S} [\widetilde{v}_i]) \in \Gamma, ~~y \in B_{\sigma_i/ \epsilon_i}(0),
\end{equation}
where 
\begin{equation*}
    s_i := e^{\theta (v_i)} \epsi_i^{-2}, \quad g^{(i)} \coloneqq g^{(i)}(x_i+\epsilon_i \cdot)
\end{equation*}
and the eigenvalues are with respect to $g^{(i)}$.

Thus, by Theorem \ref{ahequlocalgradest}, 
\begin{equation*}
	|\nabla \widetilde{v}_i | \leq C(K) ~~\text{in}~B_K (0), ~~\forall\, K>1.
\end{equation*}
Here we have also used the fact that $\log(s_i) (v) := \theta (v + v_i(x_i)) - 2 \log\epsi_i$ still verifies \eqref{eqn-230603-0100}.
Hence, there exists some $\widetilde{v}\in C^{0,1}_{loc}(\mathbb{R}^n)$ such that, after passing to a subsequence,
\begin{equation}\label{limit}
	\widetilde{v}_i \longrightarrow \widetilde{v} ~~\text{in} ~C^{\widetilde{\gamma}}_{loc}(\mathbb{R}^n),~~\forall\, \widetilde{\gamma} \in (0,1).
\end{equation}
Since $[\widetilde{v}_i]_{\gamma, 1}(0)=1$,
we have, by \eqref{limit} with $\widetilde{\gamma}>\gamma$, that $[\widetilde{v}]_{\gamma, 1}(0)=1$.
In particular, $\widetilde{v}$ cannot be a constant.

On the other hand, in \eqref{equv_i}, we fix a normal coordinates of $g^{(i)}$ centered at $x_i$. By 
 \eqref{eqn-230602-1040}, 
 \eqref{limit}, $s_i \rightarrow \infty$, the limit property of viscosity solutions (cf. \cite[pp. 1316-1317]{Li2009}), and the cone property of $\p \Gamma$, we can deduce that
\begin{equation*}
	\lambda(\nabla^2 \widetilde{v} + \alpha \nabla \widetilde{v} \otimes \nabla \widetilde{v} + \beta |\nabla \widetilde{v}|^2 I_n ) \in \partial \Gamma ~\text{in}~ \mathbb{R}^n ~~\text{in the viscosity sense},
\end{equation*}
By further taking $\widetilde{v} \mapsto |2\beta|^{-1} \widetilde{v}$, using the cone property of $\p\Gamma$ and $\alpha + 2\beta = 0$, we know that $\widetilde{v}$ satisfies
\begin{equation*}
    \lambda(\nabla^2 \widetilde{v} - \operatorname{sign}(\beta) \nabla \widetilde{v} \otimes \nabla \widetilde{v} + \frac{1}{2}\operatorname{sign}(\beta) |\nabla \widetilde{v}|^2 I_n ) \in \partial \Gamma \quad \text{in}\,\,\bR^n.
\end{equation*}
By our assumptions on $\alpha,\beta$, and $\Gamma$, Theorem \ref{thm-230329-1327} and \ref{1327-prime}, $\widetilde{v}\equiv \text{constant}$. We reach a contradiction, and hence, have proved $[v]_{C^\gamma(B_1)} \leq C$.

Now we prove the gradient estimate. Let $\widehat{v}(x) := v(x) - v(0)$. From $[\widehat{v}]_{C^\gamma(B_1)} = [v]_{C^\gamma(B_1)} \leq C$ and $\widehat{v}(0) = 0$, we obtain a two-sided bound $\sup_{B_1}|\widehat{v}(x)| \leq C$. Since $\widehat{v}$ satisfies
\begin{equation*}
    f( e^{\theta(\widehat{v} + v(0))} \lambda( W^{\alpha,\beta}_{g,S}[\widehat{v}] )) = h(x),~\lambda( W^{\alpha,\beta}_{g,S}[\widehat{v}] )\in \Gamma, \quad \text{in} \,\, B_2,
\end{equation*}
we can apply Theorem \ref{ahequlocalgradest} to obtain the desired gradient bound. The theorem is proved.
\end{proof}

\begin{proof}[Proof  of Theorem \ref{localgradest}]
    This is a corollary of Theorem \ref{thm-230603-1124}. For $u \in C^3$ satisfying \eqref{pequ}, let $v:= - \frac{2}{n-2}\log u$. Then $v$ satisfies
    \begin{equation*}
        f( e^{2v} \lambda( W^{1,-1/2}_{g, A_g}[v] ) ) = h\quad \text{in}\,\, B_1,
    \end{equation*}
    i.e., \eqref{eqn-230602-1041} with $\alpha = 1, \beta = -1/2$, $S=A_g$, and $\theta(v) = 2v$. Now $u \leq C$ implies $v \geq - \frac{2}{n-2} C$, which implies \eqref{eqn-230603-0100} on $B_1$ instead of $B_2$. Hence, we can apply Theorem \ref{thm-230603-1124} to obtain the desired gradient estimate.
\end{proof}

The proof of Theorem \ref{nlocalgradest} uses the following lemma:
\begin{lemma}\label{upperbound}
        For $n\geq 3$ and constant $h_0>0$, let $(B_1,g)$ be a $C^2$ Riemannian geodesic ball centered at $0$, and $(f,\Gamma)$ satisfy \eqref{eqn-230331-0110} and \eqref{basicf}. Assume that $u\in C^2(B_1)$ satisfies $f(\lambda(-A_{g_u}))\geq h_0$ in $B_1$. Then
	\begin{equation*}
		u \leq C \quad \text{in } ~ B_{1/2},
	\end{equation*}
	where $C$ is a positive constant depending only on $n$, $h_0$, $(f,\Gamma)$ and a bound of $g$ and the Riemann curvature. 
\end{lemma}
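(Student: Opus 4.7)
The plan is to obtain the upper bound by comparing $u$ with an explicit radial conformal barrier that blows up on the boundary of a small geodesic ball. The structural features of $(f,\Gamma)$ which enable this are $\Gamma\supset\Gamma_n$ (so $\vec{e}=(1,\dots,1)\in\Gamma$), $f\in C^0(\overline\Gamma)$ with $f|_{\p\Gamma}=0$ (hence $f(0)=0$), and the strict monotonicity $\p f/\p\lambda_i>0$ in $\Gamma$. By continuity, there exists $t_0=t_0(f,\Gamma,h_0)\in(0,\infty)$ with $f(t_0\vec{e})<h_0$, and $t_0$ will set the barrier's scale.

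Fix an arbitrary $p\in B_{1/2}$ and work in $g$-geodesic normal coordinates centered at $p$, writing $r(x):=d_g(p,x)$. For small $\rho\in(0,1/2)$ and a constant $c>0$, both to be chosen, set
\begin{equation*}
    w(x) \ :=\ c\,\bigl(\rho^2-r(x)^2\bigr)^{-(n-2)/2},\qquad r(x)<\rho,
\end{equation*}
and $v_w:=\tfrac{2}{n-2}\log w$. Using the conformal-change formula $A_{g_w}=A_g-\nabla_g^2 v_w+dv_w\otimes dv_w-\tfrac{1}{2}|\nabla_g v_w|_g^2\,g$, a direct computation in normal coordinates shows that the flat model gives $\lambda(-A_{g_w})\equiv 2c^{-4/(n-2)}\rho^2\vec{e}$, while the Riemannian corrections coming from $-A_g$ and from the discrepancy between the $g$-Hessian and the Euclidean Hessian of $v_w$ are controlled in $B_\rho(p)$ by a constant depending on the $C^0$-bound of $R_{ijkl}$ times $\rho^2$. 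Choosing $\rho$ small in terms of this curvature bound and then $c$ such that $2c^{-4/(n-2)}\rho^2=t_0/2$, I will verify that in $B_\rho(p)$ both
\begin{equation*}
    \lambda_g(-A_{g_w})>0\ \text{(component-wise)}\qquad\text{and}\qquad f\bigl(\lambda(-A_{g_w})\bigr)<h_0,
\end{equation*}
while $w\to\infty$ as $r\to\rho$ and $w(p)=c\,\rho^{-(n-2)}$ is bounded by a constant $C$ depending only on $n$, $h_0$, $(f,\Gamma)$, and the geometric data in the lemma.

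The comparison then proceeds as follows. Set $v_u:=\tfrac{2}{n-2}\log u$; for any $\rho'\in(0,\rho)$ close enough to $\rho$ (depending on $\sup_{\overline{B_\rho(p)}}u$) we have $v_u<v_w$ on $\p B_{\rho'}(p)$, since $v_w\to+\infty$ at $\p B_\rho(p)$ and $u$ is continuous on the compact set $\overline{B_\rho(p)}\subset B_1$. Suppose, toward a contradiction, that $v_u-v_w$ attains a positive interior maximum at some $x_0\in B_{\rho'}(p)$. Then $\nabla_g v_u=\nabla_g v_w$ and $\nabla_g^2 v_u\le\nabla_g^2 v_w$ at $x_0$, so the conformal-change formula gives $-A_{g_u}(x_0)\le -A_{g_w}(x_0)$ as symmetric $(0,2)$-tensors, and Weyl's monotonicity of sorted eigenvalues gives $\lambda_g(-A_{g_u})\le\lambda_g(-A_{g_w})$ entry-wise. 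Combined with $\lambda_g(-A_{g_w})(x_0)>0$, the strict inequality $e^{-2v_u(x_0)}<e^{-2v_w(x_0)}$, and the rescaling identity $\lambda(-A_{g_\phi})=e^{-2v_\phi}\lambda_g(-A_{g_\phi})$ applied with $\phi=u$ and $\phi=w$, this produces the strict entry-wise inequality $\lambda(-A_{g_u})(x_0)<\lambda(-A_{g_w})(x_0)$; strict monotonicity of $f$ then forces $f(\lambda(-A_{g_u}))(x_0)<f(\lambda(-A_{g_w}))(x_0)<h_0$, contradicting $f(\lambda(-A_{g_u}))\ge h_0$. Hence $v_u\le v_w$ on $B_{\rho'}(p)$, and in particular $u(p)\le w(p)\le C$; since $p\in B_{1/2}$ was arbitrary, this proves the lemma. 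I expect the main technical obstacle to be the barrier construction itself: certifying both the positive-definiteness of $-A_{g_w}$ with respect to $g$ and the strict inequality $f(\lambda(-A_{g_w}))<h_0$ uniformly on $B_\rho(p)$ requires careful tracking of the Riemannian corrections to the flat model, which is managed by taking $\rho$ small, at the cost of enlarging the final constant $C$.
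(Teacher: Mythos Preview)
Your proposal is correct and follows essentially the same strategy as the paper: build the radial barrier $w=c(\rho^2-r^2)^{-(n-2)/2}$ blowing up on $\partial B_\rho(p)$, verify that $\lambda(-A_{g_w})$ is close to a small positive multiple of $\vec{e}$ (hence $f(\lambda(-A_{g_w}))<h_0$), and compare. The only noteworthy difference is in the comparison step: the paper slides a one-parameter family $u_\alpha=(\alpha r)^{(n-2)/2}(r^2-|x|^2)^{-(n-2)/2}$ down from above until it first touches $u$, so that at the touching point $u_\alpha=u$ and $\nabla u_\alpha=\nabla u$, giving $-A_{g_{u_\alpha}}\ge -A_{g_u}$ directly without any rescaling; you instead fix $w$, look at a positive interior maximum of $v_u-v_w$, and then need the extra step with $e^{-2v_u}<e^{-2v_w}$ and $\lambda_g(-A_{g_w})>0$ to pass from the $(0,2)$-tensor inequality to the strict inequality of eigenvalues with respect to the conformal metrics. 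Both routes are valid; the sliding argument is marginally cleaner, while yours makes the role of the positivity $\lambda_g(-A_{g_w})>0$ explicit.
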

\begin{proof}
    Without loss of generality, $g=g_{ij}(x)dx^i dx^j$, $g_{ij}(0)=\delta_{ij}$ and $\p_l g_{ij}(0)=0$.
     For any $\alpha>0$ and $r\in (0,1)$, let $u_{\alpha,r}(x)= (\alpha r)^{(n-2)/2}  \left({r^2-|x|^2}\right)^{-(n-2)/2}$, $|x|\coloneqq \sqrt{\sum_i x_i^2}<r$. 
     We will prove that there exist $\bar r>0$ and $\bar\alpha>1$ such that for any $\alpha\geq \bar\alpha$, $f(\lambda(-A_{g_{u_{\alpha,\bar r}}}))\leq h_0/2$, $|x|<\bar r.$
     To prove this, it is convenient to take $w=u_{\alpha,r}^{-2/(n-2)}=\alpha^{-1}(r-\frac{1}{r}|x|^2)$. Then $A_{w}\coloneqq A^{u_{\alpha,r}}=w\nabla^2 w-\frac{1}{2}|\nabla w|^2 I$, 
     $g_{u_{\alpha,r}}=u_{\alpha, r}^{4/(n-2)}g=w^{-2}g$,
     and $A_{g_{u_{\alpha,r}}}=w^{-1}\nabla^2_{g}w-\frac{1}{2}w^{-2}|\nabla_g w|_g^2 g+A_g$. 
     A computation gives
       $ g_{u_{\alpha, r}}^{-1}A_{g_{u_{\alpha, r}}}
        = A_{w}+\alpha^{-2} O(r^2)$, uniform for $|x|<r$, and $A_{w}\equiv -2\alpha^{-2}I$.
     Therefore,      $\lambda(A_{g_{u_{\alpha,r }}})=\alpha^{-2}\{-2(1,1,\dots,1)+O(r^2)\}$, uniform for $|x|<r$. By the property of $f$, we can choose the desired $\bar\alpha$ and $\bar r$.
     
 In the following, we simply write $u_{\alpha,\bar r}$ as $u_{\alpha}$.
 We will prove that $u(x)\leq u_{\bar\alpha}(x)$ for $|x|<\bar r$. Suppose not, then there exists some $\alpha>\bar\alpha$ such that $u_{\alpha}(x)\geq u(x)$ for  $|x|<\bar r$ and $u_{\alpha}(x_0)=u(x_0)$ at some $|x_0|<\bar r$. Therefore, $\nabla^2_g u_{\alpha}\geq \nabla_g^2 u$ and $\nabla_g u_{\alpha}=\nabla_g u$ at $x_0$. This implies that $-A_{g_{u_{\alpha}}}\geq -A_{g_u}$ at $x_0$. Hence,  $f(\lambda(-A_{g_{u_{\alpha}}}))\geq f(\lambda(-A_{g_u}))\geq h_0$ at $x_0$. We reach a contradiction. We have proved $u(x)\leq u_{\bar\alpha}(x)$ for $|x|\le \bar r$. The conclusion of Lemma \ref{upperbound} follows.
\end{proof}

\begin{proof}[Proof  of Theorem \ref{nlocalgradest}]
  Let $v:= \frac{2}{n-2}\log u$. Then $v$ satisfies \eqref{eqn-230602-1041} with $\alpha = -1, \beta = 1/2$, $S = - A_g$, and $\theta(v) = - 2v$. In this case, \eqref{eqn-230603-0100} can still be verified since now $u \leq C$ implies $v \leq \frac{2}{n-2}C$. Hence, we can apply Theorem \ref{thm-230603-1124} to obtain the desired gradient estimate.

By Lemma \ref{upperbound}, we can remove the dependence on $\sup_{B_1}u$ in the local gradient estimate \eqref{estimatee} assuming $h\geq h_0$ in $B_1$.
\end{proof}

\subsection{Counterexamples in Remark \ref{231006-1030}}\label{sec-7.3}

In this subsection, we construct solutions to show the failure of gradient estimates in the borderline cases, assuming only one-sided bound on solutions. Here, we work with more general equations than \eqref{pequ} and \eqref{equ} in $\bR^n$, $n\geq 2$.

We first construct examples for the negative curvature equation
\begin{equation} \label{eqn-230604-1235}
    f (\lambda ( - A[v] ) ) = h,\quad \lambda ( - A[v])  \in \Gamma \quad \text{in}\,\, B_1
\end{equation}
when $-\bl\in\p\Gamma$.
When $n \geq 3$ and $u = e^{\frac{n-2}{2}v}$, we have $A^u = A[v]$ and \eqref{eqn-230604-1235} becomes \eqref{equ}. In particular, Example \ref{exp-230606-0509} shows the failure of Theorem \ref{nlocalgradest} when $(f,\Gamma) = (\sigma_{n/2}^{2/n}, \Gamma_{n/2})$. Note that $-\bl\in\p\Gamma_{n/2}$.

\begin{example} \label{exp-230606-0509}
    There exists a sequence $v_j \in C^\infty(B_1)$, such that
    \begin{equation} \label{eqn-230607-0304-1}
        v_j \leq C \quad \text{in}\,\, B_{1},
    \end{equation}
        \begin{equation} \label{eqn-230607-0304-2}
        |\nabla v_j| \rightarrow \infty\,\,\text{uniformly on}\,\,B_{1/2},
    \end{equation}
    \begin{equation} \label{eqn-230607-0304-3}
        \lambda (-A[v_j]) \in \Gamma_{n/2}, \quad \sigma_{n/2}(\lambda (-A[v_j])) \rightarrow 1\,\,\text{in}\,\,C^m(B_1)\,\,\text{for each}\,\,m\in\bN.
    \end{equation}
\end{example}

For each $j\geq 1$, take
\begin{equation*}
    v_j = j (x_1 - 2) + w_j,
    \quad
    \text{where}
    \quad
    w_j =  j^{-n}e^{n j (x_1 - 2)}.
\end{equation*}
Clearly \eqref{eqn-230607-0304-1} holds for large $j$, and $|\nabla v_j| = |j + n j^{-n+1}e^{nj(x_1 - 2)}| \rightarrow \infty$, which implies \eqref{eqn-230607-0304-2}.
We are left to verify \eqref{eqn-230607-0304-3}. Since $(-1,1,\ldots,1) \in \p\Gamma_{n/2}$ and $v_j'' > 0$, we see from \eqref{eqn-230606-0409} that $\lambda(- A[v_j]) \in \Gamma_{n/2}$. A calculation shows
\begin{equation*}
    \sigma_{n/2}(\lambda (- A[v_j] ) ) = c_n
    e^{-n w_j} (1 + n w_j)^{n-2} \rightarrow c_n\,\,\text{in}\,\,C^m(B_1)
\end{equation*}
for each $m\in \bN$, where $c_n = n^2 2^{1 -\frac{n}{2} } 
    \begin{pmatrix}
        n-1\\ \frac{n}{2} -1
    \end{pmatrix}$. Replacing $v_j$ with $v_j + \frac{1}{n} \log c_n$, we reach the desired example.

\medskip

Actually, Theorem \ref{nlocalgradest} fails for any $(f,\Gamma)$ with $f$ being homogeneous of degree $1$ and $-\bl\in\p\Gamma$ as shown in the following example.

\begin{example} \label{exp-230606-0500}
    For any $(f,\Gamma)$ satisfying \eqref{eqn-230331-0110}, \eqref{basicf}, $f$ being homogeneous of degree $1$, and $-\bl\in\p\Gamma$, there exists a sequence $v_j \in C^\infty(B_1)$, such that \eqref{eqn-230607-0304-1} and \eqref{eqn-230607-0304-2} hold, and
    \begin{equation*}
        \lambda( - A[v_j]) \in \Gamma, \quad f (\lambda (-A[v_j])) \rightarrow 0 \,\,\text{in}\,\, C^m (B_1) \,\,\text{for each}\,\, m\in \bN.
    \end{equation*}
\end{example}
For $j\in \bN$, take
\begin{equation*}
    v_j = v_j(x_1) = -j \log (j^{-1} x_1 + C_j) + j \log (-j^{-1} + C_j),
\end{equation*}
where $5 j^{-1} \leq C_j \rightarrow 0$ to be chosen later. For such $C_j$, clearly $v_j \leq v_j (-1) = 0$ on $B_1$ and $|\nabla v_j| = |j^{-1} x_1 + C_j|^{-1} \rightarrow \infty$ uniformly on $B_1$. Moreover, from \eqref{eqn-230606-0409} and $-\bl\in\p\Gamma$,
$\lambda(-A[v_j]) = \frac{1}{2} (v_j')^2 e^{-2v_j} (2j^{-1}-1,1,\ldots,1) \in \Gamma$. 
Using the $1$-homogeneity of $f$ and denoting $\omega(j) := f(2j^{-1}-1,1,\ldots,1)$, a calculation shows in $B_1$
\begin{equation*} 
\begin{split}
    f( \lambda(-A[v_j]) ) 
    &=
    \frac{1}{2} (-j^{-1} + C_j)^{-2} \left( 1 + \frac{1 + x_1}{j C_j - 1} \right)^{2j-2} \omega(j)
    \\&\leq
    C C_j^{-2} e^{4/(C_j-1)} \omega(j) \leq C e^{8/C_j} \omega(j).
\end{split}
\end{equation*}
\sloppy Here, we have used $\sup_{s>0} s^{-1}\log(1 + s) = 1$ and $C_j \geq 5j^{-1}$. Similarly, $|(d^m/dx_1^m) f( \lambda(-A[v_j]) )| \leq C(m) e^{8/C_j} \omega(j)$. Since $\omega(j) \rightarrow 0$, we can choose $5j^{-1} \leq C_j \rightarrow 0$ such that $C(m) e^{8/C_j}\omega(j) \rightarrow 0$ for each $m\in \bN$.

\medskip

In a similar vein, we can construct examples for positive curvature equations. In particular, for $n\geq 3$, by taking $u_j = e^{\frac{n-2}{2} v_j}$, Example \ref{exp-230606-0459} shows the failure of Theorem \ref{localgradest} when $\bl\in\p\Gamma$.
\begin{example} \label{exp-230606-0459}
    For any $(f,\Gamma)$ satisfying \eqref{eqn-230331-0110}, \eqref{basicf}, $f$ being homogeneous of degree $1$, and $\bl\in\p\Gamma$, there exists a sequence $v_j \in C^\infty(B_1)$ satisfying \eqref{eqn-230607-0304-1}, \eqref{eqn-230607-0304-2}, and
    \begin{equation*}
        \lambda( A[v_j]) \in \Gamma, \quad f (\lambda (A[v_j])) \rightarrow 0 \,\,\text{in}\,\, C^m (B_1) \,\,\text{for each}\,\, m\in \bN.
    \end{equation*}
\end{example} 
The construction is very similar to that of Example \ref{exp-230606-0500}. For each $j \in \bN$, take
\begin{equation*}
    v_j = v_j(x_1) = j \log (j^{-1} x_1 + C_j) + j \log (C_j + j^{-1}),
\end{equation*}
where $C_j$ is chosen as before. Following the steps in the construction of Example \ref{exp-230606-0500}, we can verify all the desired properties.

\section{Proofs of Theorem \ref{exist-lcf}--\ref{exist-nlcf}}\label{sec-exist-and-compactness}
Let $(f,\Gamma)$ satisfy \eqref{eqn-230331-0110}, \eqref{lip-f-schouten} and \eqref{fgamma-convex}. We need the following approximation of $f$.

\textbf{Claim:} There exists a sequence of locally convex, homogeneous of degree $1$, symmetric function $\{f_j\}_j\subset C^\infty(\Gamma)\cap C^0(\overline{\Gamma})$ such that $\p_{\lambda_i} f_j\geq c(n,\delta)>0$ in $\Gamma$, $\forall i$, and $f_j\rightarrow {f}$ in $C^0_{loc}(\Gamma)$ as $j\to\infty$. Moreover, 
$\Gamma=\{c\lambda\mid c>0,~\lambda\in {f}_j^{-1}(1)\}$ for large $j$.

The above claim is proved as below.
Let $\widetilde{f}_j\coloneqq f*\eta_{1/j}$ with $\eta_{1/j}$ being the usual mollifier, $j=1,2,\dots$. We first show that
\begin{equation}\label{240805-2215}
    \Gamma=\{c\lambda\mid c>0,~\lambda\in \widetilde{f}_j^{-1}(1)\}\quad\text{for}~j~\text{large}.
\end{equation}
For any $s>0$, $\lambda\in f^{-1}(s)$ and $\lambda_0\in f^{-1}(0)$, we have $s={f}(\lambda)-{f}(\lambda_0)\leq \max_{\bR^n}|\nabla {f}|\cdot |\lambda-\lambda_0|\leq C|\lambda-\lambda_0|$. This implies that $\dist({f}^{-1}(s),{f}^{-1}(0))\geq s/C>0$. Therefore, we have $\widetilde{f}_{j}^{-1}(1)\subset\Gamma$ for large $j$. By the cone property of $\Gamma$, we have $\{c\lambda\mid c>0,~\lambda\in\widetilde{f}_j^{-1}(1)\}\subset\Gamma$. The other direction follows easily from the homogeneity of ${f}$. Property \eqref{240805-2215} is proved.

Similar to the arguments used in the proofs of Lemma \ref{lem-230617-1127} and \ref{lem-230615-1015}, the function $\varphi_j\in C^\infty(\Gamma)$ is well-defined by 
$\widetilde{f}_j(\varphi_j(\lambda)\lambda)=1$, $\lambda\in\Gamma$. Define
\begin{equation*}
    f_j(\lambda)\coloneqq
        \varphi_j(\lambda)^{-1}~\text{for}~\lambda\in\Gamma,~\text{and}~f_j(\lambda)\coloneqq 0~\text{for}~\lambda\in\p\Gamma. 
\end{equation*}
It is easy to check that $f_j$ has the desired properties. The claim is proved.

\begin{proof}[Proof of Theorem \ref{exist-lcf}]
   We only need to consider the case that $(M^n,g)$ is not conformally diffeomorphic to the standard sphere, since otherwise the result is well-known. 
   In the following, we use $\alpha$ and $C$ to denote some positive universal constants as those of \eqref{compactness-1.6-schouten} which may vary from line to line.
        
        By the arguments in \cite[Proof of Theorem $1.1'$]{LiLiacta}, all solutions $u$ of \eqref{q-schouten-critical} satisfy
        \begin{equation}\label{240330-2332}
            1/C\leq u\leq C~\quad \text{and}\quad |\nabla_g\log u|\leq C\quad \text{on}~M^n.
        \end{equation}
        
          We sketch the proof here for reader's convenience.
        Let $(\widetilde{M},\widetilde{g})$ be the universal cover of $(M^n,g)$ with $i:\widetilde{M}\rightarrow M^n$ being a covering map and $\widetilde{g}=i^* g$.
         Since $R_g>0$, by the theorem of Schoen and Yau \cite{Schoen-Yau}, there exists a conformal injective immersion $\Phi:(\widetilde{M},\widetilde{g})\rightarrow (\mathbb{S}^n,g_0)$. Let $\Omega=\Phi(\widetilde{M})$. There are two possibilities.

        \noindent
        \textbf{Case 1:} $\Omega=\mathbb{S}^n$. We have $(\Phi^{-1})^*\widetilde{g}=\eta^{4/(n-2)}g_0$ on $\mathbb{S}^n$, where $\eta$ is a positive smooth function on $\mathbb{S}^n$. Rewriting the equation on $\mathbb{S}^n$, we have
        \begin{equation*}
         f(\lambda(A_{[(u\circ i\circ \Phi^{-1})\eta]^{4/(n-2)}g_0}))=1\quad \text{on}~\mathbb{S}^n.
        \end{equation*}
       By the Liouville-type theorem \cite[Corollary 1.6]{LiLiCPAM2003}, $(u\circ i\circ \Phi^{-1})\eta=a|J_\varphi|^{(n-2)/2n}$ for some conformal diffeomorphism $\varphi: \mathbb{S}^n\rightarrow \mathbb{S}^n$ and some constant $a>0$. By $\varphi^*g_0=|J_\varphi|^{2/n}g_0$ and the equation, we have 
       $a=f(\lambda(A_{g_0}))^{(n-2)/4}=f((n-1)\vec{e})^{(n-2)/4}\in [1/C, C]$.
       The estimate \eqref{240330-2332} follows as the same arguments as in \cite{LiLiacta}. 

       \noindent 
       \textbf{Case 2:} $\Omega\neq\mathbb{S}^n$. By the results in \cite{Schoen-Yau}, $\Omega$ is an open and dense subset of $\mathbb{S}^n$, and $(\Phi^{-1})^*\widetilde{g}=\eta^{4/(n-2)}g_0$ on $\Omega$, where $\eta$ is a positive smooth function on $\Omega$ satisfying $\lim_{x\to\p\Omega}\eta(x)=\infty$. 
       This ensures that we can implement the method of moving spheres 
       to obtain \eqref{240330-2332} as in \cite{LiLiacta}.
       
       In the following, we derive $C^{2,\alpha}$ estimates for solutions $u$.
       By Proposition \ref{lem-240826-0153}, $f$ can be extended to a convex function on $\bR^n$, still denoted by $f$, such that $f$ is locally Lipschitz, symmetric, homogeneous of degree $1$, and for some constant $c = c(\delta) >0$, $\p_{\lambda_i} {f} \geq c$ a.e. $\bR^n$ for any $i$. By Lemma \ref{240902-1319}, it holds that $\p_{\lambda_i}f\leq f(\vec{e})$ a.e. $\bR^n$ for any $i$.   
        
        We first prove that
        \begin{equation}\label{C1alpha-estimate}
           \|u\|_{C^{1,\alpha}(M^n,g)}\leq C.         
        \end{equation}
     
        Denote $F(\nabla^2 u, \nabla u, u, x)\coloneqq u^{\frac{n+2}{n-2}}f(\lambda(A_{g_u}))-u^{\frac{n+2}{n-2}}$ and 
        \begin{equation*}
            F_\epsi(M,x)\coloneqq \epsi^2 F(\frac{1}{\epsi^2}M, \nabla u(\epsi x), u(\epsi x),\epsi x),~M\in \cS^{n\times n},
        \end{equation*}
        where $\cS^{n\times n}$ is the set of $n\times n$ symmetric matrices and $\epsi>0$ is some universal small constant to be chosen later. 
        Clearly, the ellipticity constants of $F_\epsi$ is the same as that of $F$, and $F_\epsi(\nabla^2 u_\epsi,x)=0$, where $u_\epsi(x)\coloneqq u(\epsi x)$. 
        By \eqref{240330-2332}, we may assume $F_\epsi(0,x)\equiv 0$. Also by \eqref{240330-2332}, we have   
        \begin{align*}
            \beta(x) \coloneqq \sup_{M\in S^{n\times n}}\frac{|F_\epsi(M,x)-F_\epsi(M,0)|}{\|M\|+1} &\leq \sup_{M\in S^{n\times n}}{|F_\epsi(M,x)-F_\epsi(M,0)|} \\
            \leq C \epsi^2 |N(\epsi x)-N(0)| &\leq C\epsi^2, 
            \end{align*}
        where $N(x)\coloneqq \frac{2n}{(n-2)^2}u^{-1}(x) \nabla u(x)\otimes \nabla u(x)-\frac{2}{(n-2)^2} u^{-1}(x)|\nabla u(x)|^2 I +u(x) A_g(x)$. In the last inequality above, we choose $\epsi>0$ small. 
        By the above and \cite[Corollary 5.7]{Caffarelli1995FullyNE}, we can apply \cite[Theorem 2]{Caff89} (with $F=F_\epsi$ and $f=0$ there, see also \cite[Theorem 8.3 and Remark 4] {Caffarelli1995FullyNE})) to obtain $\|u_\epsi\|_{C^{1,\alpha}}\leq C$. The estimate \eqref{C1alpha-estimate} follows.

        We next prove that
       \begin{equation}\label{C2alpha-estimate}
           \|u\|_{C^{2,\alpha}(M^n,g)}\leq C.         
        \end{equation}
     Indeed, denote $F$ as before and
        \begin{equation*}
            \widetilde{F}(M,x)\coloneqq F(M,\nabla u(x), u(x), x),~M\in \cS^{n\times n}.
        \end{equation*} 
        By the uniform ellipticity of $\widetilde{F}$ and \eqref{240330-2332}, we may assume $\widetilde{F}(0,0)=0$. 
        By \eqref{C1alpha-estimate}, $\widetilde{F}(M,x)$ is $C^\alpha$ in $x$.
        Applying \cite[Theorem 3]{Caff89} (with $F=\widetilde{F}$ and $f\equiv 0$ there, see also \cite[Theorem 8.1 and Remark 1] {Caffarelli1995FullyNE}), the estimate \eqref{C2alpha-estimate} holds.             
        
        Combining \eqref{240330-2332} and \eqref{C2alpha-estimate}, we have proved estimate \eqref{compactness-1.6-schouten}.

        Finally, we solve \eqref{q-schouten-critical} for $u\in C^{4,\alpha}(M^n)$ by degree theory. 
     By the claim at the beginning of this section, we may 
    assume without loss of generality that $f\in C^\infty(\Gamma)$.
    For $0\leq t\leq 1$, let $f^t(\lambda)\coloneqq f(t\lambda+(1-t)\sigma_1(\lambda)\vec{e})$ and $\Gamma^t\coloneqq\{ \lambda\in\bR^n\mid t\lambda+(1-t)\sigma_1(\lambda)\vec{e}\in\Gamma\}$. Consider 
        \begin{equation}\label{app-degree-equ}
            f^t(\lambda(A_{g_u}))=1,~\lambda(A_{g_u})\in\Gamma^t,~\text{on}~M^n.
        \end{equation}
        
        By \eqref{compactness-1.6-schouten} and the Schauder estimates, all $C^{4,\alpha}$ solutions $u$ of the equation \eqref{app-degree-equ} satisfy
        \begin{equation}\label{C_4_alpha}
        \|u\|_{C^{4,\alpha}(M^n,g)}+ \|1/u\|_{C^{4,\alpha}(M^n,g)}\leq C,    \end{equation}
        where $C>0$ is a constant independent of $t\in[0,1]$. Note that though \eqref{compactness-1.6-schouten} is proved for specific $(f,\Gamma)$, it applies for $(f_t,\Gamma_t)$ with universal dependence.

        By \eqref{C_4_alpha} and $f|_{\p\Gamma}=0$, there exists $\delta>0$ independent of $t$ such that all solutions $u$ of \eqref{app-degree-equ} satisfy $\operatorname{dist}(\lambda(A_{g_u}),\p\Gamma^t)\geq 2\delta$. Define $O^t\coloneqq \{u\in C^{4,\alpha}(M^n,g)\mid \lambda(A_{g_u})\in\Gamma^t,~\operatorname{dist}(\lambda(A_{g_u}),\p\Gamma^t)>\delta,~u>0,~\|u\|_{C^{4,\alpha}(M^n,g)}+ \|1/u\|_{C^{4,\alpha}(M^n,g)}<2 C           
        \}$.
        By \cite{Li_degree}, $d^t\coloneqq \deg (F^t-1,O^t,0)$, $t\in[0,1]$, is well-defined, where $F^t[u]\coloneqq f^t(\lambda(A_{g_u}))$. Furthermore, $d_t\equiv d_0$, $t\in[0,1]$, and in particular, $d_1=d_0$. 
        By the result of Schoen in \cite{Schoen_on_the_number} for the Yamabe equation, $d_0=-1$. Therefore, $d_1\neq 0$ and the existence of $C^{4,\alpha}$ solutions of \eqref{q-schouten-critical} is proved. 
        \end{proof} 

        \begin{proof}[Proof of Theorem \ref{exist-nlcf-compactness}]
        In the following, we use $\alpha$ and $C$ to denote some positive constants depending only on $(M^n,g)$, $(f,\Gamma)$ and $q$, which may vary from line to line.
        It suffices to derive an upper bound of solutions:
        \begin{equation}\label{240412-2142}
            u\leq C\quad \text{on}~M^n.
        \end{equation}
        Indeed, assume that \eqref{240412-2142} holds at this moment. By the local gradient estimate Theorem \ref{localgradest} (See also Remark \ref{remark-1.8}), we get $|\nabla\log u|\leq C$ on $M^n$. Here, $\lambda^*\notin\overline\Gamma$ and $q\leq 4/(n-2)$ are used. By a maximum principle argument as the proof of Theorem \ref{exist-lcf}, we have $\max_{M^n}u\geq 1/ C$. Here, $q<4/(n-2)$ is used. Combining the above, we get $u\geq 1/C$ on $M^n$. Since $f$ is uniformly elliptic and convex, applying Caffarelli's estimates \cite[Theorem 2,3]{Caff89} as the proof of Theorem \ref{exist-lcf} gives the desired $C^{2,\alpha}$ estimate \eqref{compactness-1.6-schouten}.
        
        Next we prove \eqref{240412-2142} by contradiction. Assume that there is a sequence $\{u_j\}$ of positive smooth function satisfying $f(\lambda(A_{g_{u_j}}))=u_j^{-q}$ on $M^n$, but 
        \begin{equation*}
          u_j(x_j)=\max_{M^n} u_j\to \infty,
        \end{equation*}
        where $x_j\to x_\infty$ in the topology induced by $g$. To make our presentation neat, we only prove for a fixed metric $g$. Define 
        \begin{equation*}
            \Phi_j: T_{x_j} M^n \to M^n \quad \Phi_j(x)\coloneqq \exp_{x_j}({u_j(x_j)^{-p}}\cdot x ),
        \end{equation*}
        where $p=2^{-1}(4/(n-2)-q)$. Let 
        \begin{equation*}
            \widetilde{u}_j(x)\coloneqq u_j(x_j)^{-1}\cdot  u_j\circ \Phi_j (x), \quad |x|< i_0 u_j(x_j)^p,
        \end{equation*}
        where $i_0$ is the injectivity radius of $(M^n,g)$.
        Then $\widetilde{u}_j$ satisfies 
        \begin{equation*}
    f(\lambda(A_{{\widetilde{u}_j}^{\frac{4}{n-2}}\widetilde{g}_j}))=\widetilde{u}_j^{-q},\quad |x|<i_0 u_j(x_j)^p,       
    \end{equation*}
    where $\widetilde{g}_j\coloneqq u_j(x_j)^{p/2}\cdot  \Phi_j^* g$. Since $\widetilde{u}_j\leq 1$ on $\{|x|<i_0 u_j(x_j)^p\}$, using the local gradient estimate Theorem \ref{localgradest} as above, we obtain $|\nabla\log \widetilde{u}_j|\leq C(K)$ on $K$ of $\bR^n$. Since $\widetilde{u}_j(0)=1$, we have $u\geq C(K)^{-1}$ on $K$. As in the proof of Theorem \ref{exist-lcf}, applying Caffarelli's estimates \cite[Theorem 2,3]{Caff89} implies that $        \|\widetilde{u}_j\|_{C^{2,\alpha}(K)}\leq C(K)$ on any compact subset $K$ of $\bR^n$. Passing to a subsequence, $\widetilde{u}_j$ converges in $C^{2,\alpha}_{loc}(\bR^n)$ to some positive function $\widetilde{u}\in C^2(\bR^n)$ which satisfies
    \begin{equation*}
    {f}(\lambda(A^{\widetilde{u}}))=\widetilde{u}^{-q}\quad \text{on}~\bR^n.      
    \end{equation*}  
    This is a contradiction with the Liouville-type theorem Theorem \ref{thm-230525-0321}. The upper bound \eqref{240412-2142} is proved. Hence, \eqref{compactness-1.6-schouten} is established.
    \end{proof}

\begin{proof}[Proof of Theorem \ref{exist-nlcf}]
    As for Theorem \ref{exist-lcf}, we prove for $f\in C^\infty(M^n)$.     
    We solve \eqref{q-schouten-subcritical} for $u\in C^{4,\alpha}(M^n)$ by degree theory. 
    For $0\leq t\leq 1$, let $f^t$ and $\Gamma^t$ be defined same as those in the proof of Theorem \ref{exist-lcf}. Consider 
        \begin{equation}\label{app-degree-equ-2}
            f^t(\lambda(A_{g_u}))=u^{-q
},~\lambda(A_{g_u})\in\Gamma^t,~\text{on}~M^n.
        \end{equation}
        
        By Theorem \ref{exist-nlcf-compactness} and the Schauder estimates, all $C^{4,\alpha}$ solutions $u$ of the equation \eqref{app-degree-equ-2} satisfy
        \begin{equation}\label{C_4_alpha-2}
        \|u\|_{C^{4,\alpha}(M^n,g)}+ \|1/u\|_{C^{4,\alpha}(M^n,g)}\leq C,    \end{equation}
        where $C>0$ is a constant independent of $t\in[0,1]$. 
        Note that though Theorem \ref{exist-nlcf-compactness} is stated for $(f,\Gamma)$, the positivity of scalar curvature of $(M^n,g)$ ensures that it applies to $(f_t,\Gamma_t)$ with a universal dependence.

        By \eqref{C_4_alpha-2} and $f|_{\p\Gamma}=0$, there exists $\delta>0$ independent of $t$ such that all solutions $u$ of \eqref{app-degree-equ-2} satisfy $\operatorname{dist}(\lambda(A_{g_u}),\p\Gamma^t)\geq 2\delta$. Define $O^t\coloneqq \{u\in C^{4,\alpha}(M^n,g)\mid \lambda(A_{g_u})\in\Gamma^t,~\operatorname{dist}(\lambda(A_{g_u}),\p\Gamma^t)>\delta,~u>0,~\|u\|_{C^{4,\alpha}(M^n,g)}+ \|1/u\|_{C^{4,\alpha}(M^n,g)}<2 C           
        \}$.
        By \cite{Li_degree}, $d^t\coloneqq \deg (F^t-u^{-q},O^t,0)$, $t\in[0,1]$, is well-defined, where $F^t[u]\coloneqq f^t(\lambda(A_{g_u}))$. Furthermore, $d_t\equiv d_0$, $t\in[0,1]$, and in particular, $d_1=d_0$. 
        By \cite[Proof of Theorem 1.5]{MR3165241}
         (see the computation of $deg(G_0,\mathcal{O}_0,0)$ there), $d_0=-1$. Therefore, $d_1\neq 0$ and the existence of $C^{4,\alpha}$ solutions of \eqref{q-schouten-subcritical} is proved. 
        \end{proof}

\section{Further examples}\label{examplesection}
In addition to those given in \S \ref{intro-example-sec}, we provide further examples of pairs $(f,\Gamma)$. We also collect properties of these examples for the purpose that readers may easily match them with the assumptions of our theorems.

The first family of examples is ordered linear combinations of eigenvalues. Recall the convention stated in \S \ref{intro-example-sec}, we order $\lambda=(\lambda_1,\dots,\lambda_n)$ as $\lambda_1\geq \dots\geq \lambda_n$, and only specify the definition of a symmetric function in this region.
 \begin{example}\label{ex-ordered}
    For $n\geq 2$ and $\mu\in\overline{\Gamma_n}\setminus\{0\}$, consider $f^\mu(\lambda)\coloneqq \mu_1\lambda_1+\dots+\mu_n\lambda_n$, where $\lambda\in\bR^n$, and define
$\Gamma^{\mu}\coloneqq \{\lambda\in\bR^n\mid f^\mu(\lambda)>0\}$.
    \end{example}
    The cone $\Gamma^\mu$ satisfies \eqref{eqn-230331-0110}. It holds that $\bl\notin\p\Gamma^\mu$ if and only if $\mu_1\neq \mu_2+\dots+\mu_n$, and $\bl\notin\overline{\Gamma^\mu}$ if and only if $\mu_1<\mu_2+\dots+\mu_n$.    Note also that $\Gamma^{\widetilde{\mu}}=\bR^n\setminus\overline{-\Gamma^\mu}$ with $\widetilde{\mu}=(\mu_n,\dots,\mu_1)$.     
When $\mu_1\geq \dots\geq \mu_n$, $f^\mu$ is convex in  $\bR^n$ since $f^\mu(\lambda)=\sup \{\sigma(\mu)_1\lambda_1+\dots+\sigma(\mu)_n\lambda_n\}$, where $\sup$ is taken over any permutation $\sigma$ of $\bR^n$. 
When $\mu_1\leq \dots\leq \mu_n$, $f^\mu$ is concave in  $\bR^n$ since $f^\mu(\lambda)=\inf \{\sigma(\mu)_1\lambda_1+\dots+\sigma(\mu)_n\lambda_n\}$, where $\inf$ is taken over any permutation $\sigma$ of $\bR^n$. 

    \medskip
    
Among the pairs $(f^\mu,\Gamma^\mu)$, some of them have appeared naturally in geometry. Denote $\vec{e_i}\coloneqq (0,\dots,1,\dots,0)$, where the $i$-th argument is $1$.

\medskip

For $n\geq 3$ and $p=1,\dots,n-1$,
take $\mu(p)\coloneqq p\sum_{i\leq n-p}\vec{e_i}+(n-p)\sum_{i>n-p}\vec{e_i}$,
then $f^{\mu(p)}\equiv G_p$, where $G_p$,   defined in \S \ref{intro-example-sec}, arises naturally from the Weitzenb\"ock formula for $p$-forms.

When $p\leq n/2$, $G_p$ is concave, and when $p\geq  n/2$, $G_p$ is convex. It holds that $\bl\notin \overline{\Gamma^{\mu(p)}}$ when $p<n-1$, while $\bl\in \p\Gamma^{\mu(n-1)}$. Moreover, $\Gamma^{\mu(n-p)}=\bR^n\setminus\overline{-\Gamma^{\mu(p)}}$. 

\medskip

Another class of functions $\Lambda_{p,q}$ arises naturally from calibrated geometry. For $n\geq 2$, $1\leq p\leq n$ and $0\leq q\leq n-p$, take $\mu(p,q)\coloneqq \vec{e_p}+\dots+\vec{e_{p+q}}$, then 
\begin{equation*}
f^{\mu(p,q)}\equiv \Lambda_{p,q}(\lambda)\coloneqq \lambda_p+\lambda_{p+1}+\dots+\lambda_{p+q}. 
\end{equation*}
See \cite{MR2487853} and the references therein.   

When $p+q=n$, $\Lambda_{p,q}$ is concave, and when $p=1$, $\Lambda_{p,q}$ is convex.
It holds that $\bl\notin\overline{\Gamma^{\mu(p,q)}}$ if and only if $\max\{p,q\}\geq 2$;  $\bl\in\p\Gamma^{\mu(p,q)}$ if and only if $p=q=1$. Moreover, $\Gamma^{\mu(n+1-p-q,q)}=\bR^n\setminus\overline{-\Gamma^{\mu(p,q)}}$.

\medskip
 
The second family of examples consists of  circular cones, with the vertex at the origin and the axis given by $\{t\vec{e}\mid t>0\}$, $\vec{e}=(1,1,\dots,1)$. 
\begin{example} For $n\geq 2$ and $c\in [-1,1]$, consider $f^c(\lambda)\coloneqq \sigma_1(\lambda)+c|\lambda|$, where $\lambda\in\bR^n$, and define $\Gamma^c\coloneqq \{\lambda\in\bR^n\mid f^c(\lambda)>0\}$. 
\end{example}

The cone $\Gamma^c$ satisfies \eqref{eqn-230331-0110}.
 It holds that $\bl\notin\p\Gamma^c$ if and only if $c\neq (n-2)/\sqrt{n}$, and $\bl\notin\overline{\Gamma^c}$ if and only if $c<(n-2)/\sqrt{n}$. 
 Note also that $\Gamma^{-c}=\bR^n\setminus\overline{-\Gamma^c}$ and $\Gamma^{-1}=\Gamma_2$.
 The function $f^c$ satisfies 
 $\sum_i \p_{\lambda_i}f^c\geq n-\sqrt{n}$ in $\Gamma^c$.
 When $c\geq 0$, $f^c$ is convex in $\bR^n$; When $c\leq 0$,  $f^c$ is concave in $\bR^n$.

\medskip

The last family of examples consists of $f$ defined on cones $\Gamma$ with $\bR^n\setminus\Gamma$ convex which are constructed in Appendix \ref{app-a}.
\begin{example} \label{prop-230614-1016}
        For $n\geq 2$, let $\Gamma$ satisfy \eqref{eqn-230331-0110} with $\bR^n \setminus \Gamma$ convex. There exists a convex and homogeneous of degree $1$ function $f \in C^\infty(\Gamma) \cap C^0(\overline{\Gamma})$ which satisfies \eqref{basicf} and $\sum_i f_{\lambda_i} \geq \delta >0$ in $\Gamma$ for some constant $\delta$. 
\end{example}

\appendix
\section{Equivalent theorems in term of Ricci tensor}\label{riccisection}
For reader's convenience, we reformulate Theorem \ref{thm-230525-0321}--\ref{230805-2045} and Theorem \ref{thm-230528-0404} in terms of Ricci tensor. Ricci tensor and Schouten tensor are related by a linear transformation $T$ as explained below.

Recall that on a Riemannian manifold $(M,g)$ of dimension $n\geq 3$, we have 
\begin{equation*}
    Ric_g=(n-2)A_g + (2(n-1))^{-1}{R_g}\cdot g.
\end{equation*}
Clearly, $\lambda(Ric_g)\equiv T\lambda(A_g)$, where $T=(n-2)I+\vec{e}\otimes\vec{e}$. As $(f,\Gamma)$ denotes pairs for eigenvalues of Schouten tensor, we use $(\hat{f},\hat{\Gamma})$ to denote pairs for eigenvalues of Ricci tensor. Let
\begin{equation*}
    f(\lambda)\coloneqq \hat{f}(T\lambda),\quad \lambda\in\Gamma\coloneqq T^{-1}\hat{\Gamma}.
\end{equation*}
Then we have $f(\lambda(A_g))\equiv \hat{f}(\lambda(Ric_g))$, and $\lambda(A_g)\in\Gamma$ if and only if $\lambda(Ric_g)\in\hat\Gamma$.

Let $(\hat{f},\hat\Gamma)$ satisfy the following conditions:
 \begin{equation} \label{eqn-230331-0110-hat}
   \begin{cases}
    \hat\Gamma \subsetneqq \bR^n \,\, \text{is a non-empty open symmetric cone with vertex at the origin},\\
    \hat\Gamma+T\Gamma_n\subset\hat\Gamma,
    \end{cases}
\end{equation}
\begin{equation} \label{eqn-240223-0305-ricci}
\begin{cases}
\hat{f} \in C^{0,1}_{loc}(\hat\Gamma)\,\,\text{is a symmetric function satisfying} ~~
    T(\nabla \hat{f})\in c(K)\vec{e}+\Gamma_n\\
    \text{a.e. $K$, $c(K)>0$, for any compact subset} ~K~\text{of}~\hat\Gamma,
\end{cases}
\end{equation}
where $T=(n-2)I+\vec{e}\otimes\vec{e}$. It is easy to check that $T\Gamma_n$ is the interior of the minimal convex symmetric cone with vertex at the origin containing $(n-1,1,\dots,1)$. 
Note that condition \eqref{eqn-240223-0305-ricci} allows $\frac{\p \hat{f}}{\p \lambda_i}<0$ for some $i$.

For constant $q\geq 0$,
consider the equation
\begin{equation}\label{nequation-ricci-re-hat}
         \hat{f}(\lambda(Ric_{\bar{g}_u}))= u^{-q}\quad \text{on}~\bR^n,
\end{equation}
where
$\bar{g}_u= u^{\frac{4}{n-2}}\bar{g}$, $\bar{g}=|dx|^2$ is the flat metric, $u$ is a positive function on $\bR^n$, and 
$\lambda(Ric_{g})$ denotes, as usual, eigenvalues of $Ric_{g}$ with respect to $g$.

Denote
\begin{equation*}
    \hat{\lambda}^*\coloneqq (0,-1,\dots,-1).
\end{equation*}
The following rigidity theorem is equivalent to Theorem \ref{thm-230525-0321}.
\begin{theorem}\label{Liouville-1-re-hat}
     For $n\geq 3$ and $q\geq 0$, let $(\hat{f},\hat\Gamma)$ satisfy \eqref{eqn-230331-0110-hat}, \eqref{eqn-240223-0305-ricci}, and $\hat{\lambda}^*\notin\overline{\hat\Gamma}$.
 Assume that a positive function $u\in C^{1,1}_{loc}(\bR^n)$ satisfies \eqref{nequation-ricci-re-hat} almost everywhere. Then $q=0$ and $u\equiv e^{\frac{n-2}{2}v}$, where $v$ is of the form \eqref{nbubble} with $\bar{x}\in \bR^n$ and $a,b>0$ satisfying $\hat{f}(4(n-1)b^2 a^{-2} \vec{e})=1$.
\end{theorem}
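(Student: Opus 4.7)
The plan is to derive Theorem \ref{Liouville-1-re-hat} from Theorem \ref{thm-230525-0321} by the linear change of variables $T = (n-2)I + \vec{e}\otimes\vec{e}$ relating the Ricci and Schouten eigenvalues, and the substitution $u = e^{(n-2)v/2}$ relating the scalar Yamabe-type unknown to the Möbius-Hessian unknown. Concretely, given $(\hat f,\hat\Gamma)$ as in the statement, I set
\begin{equation*}
  \Gamma := T^{-1}\hat\Gamma, \qquad f(\lambda) := \hat f(T\lambda) \ \ \text{for } \lambda\in\Gamma,
\end{equation*}
and check that $(f,\Gamma)$ satisfies \eqref{eqn-230331-0110} and \eqref{eqn-240223-0305}. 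Symmetry of $\Gamma$ and $f$ follows from symmetry of $T$ and from $T$ commuting with permutations. For $\Gamma+\Gamma_n\subset\Gamma$ I use that $T\Gamma_n\subset\hat\Gamma+T\Gamma_n\subset\hat\Gamma$, together with the assumption $\hat\Gamma+T\Gamma_n\subset\hat\Gamma$ from \eqref{eqn-230331-0110-hat}: if $\lambda\in\Gamma$ and $\mu\in\Gamma_n$, then $T(\lambda+\mu)=T\lambda+T\mu\in\hat\Gamma+T\Gamma_n\subset\hat\Gamma$. For the derivative bound, $\nabla f(\lambda)=T\nabla\hat f(T\lambda)$ because $T$ is symmetric, so \eqref{eqn-240223-0305-ricci} (i.e.\ $T\nabla\hat f\in c(K)\vec e+\Gamma_n$) gives $\partial_{\lambda_i}f\ge c(K)>0$ a.e.\ on compact subsets of $\Gamma$, which is \eqref{eqn-240223-0305}.

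Next I translate the assumption $\hat\lambda^*\notin\overline{\hat\Gamma}$ into $\lambda^*\notin\overline{\Gamma}$. A direct computation using $T\lambda=(n-2)\lambda+(\sum_i\lambda_i)\vec e$ gives $T\lambda^*=2(n-2)\,\hat\lambda^*$, so since $\hat\Gamma$ is a cone, $\hat\lambda^*\notin\overline{\hat\Gamma}\iff T\lambda^*\notin\overline{\hat\Gamma}\iff\lambda^*\notin T^{-1}\overline{\hat\Gamma}=\overline{\Gamma}$. Setting $v:=\tfrac{2}{n-2}\log u$, the relation $u\in C^{1,1}_{loc}(\bR^n)$, $u>0$, is equivalent to $v\in C^{1,1}_{loc}(\bR^n)$. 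Using $A^u=A[v]$ and the identity $\lambda(Ric_{\bar g_u})=T\lambda(A_{\bar g_u})=T\lambda(A^u)=T\lambda(A[v])$ recorded in the introduction, the Ricci equation \eqref{nequation-ricci-re-hat} becomes
\begin{equation*}
  f(\lambda(A[v]))=\hat f(T\lambda(A[v]))=u^{-q}=e^{-pv}, \qquad p:=q(n-2)/2\ge 0,
\end{equation*}
which is exactly equation \eqref{nequation} for the pair $(f,\Gamma)$.

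Having verified all hypotheses, Theorem \ref{thm-230525-0321} applied to $v$ yields $p=0$ (hence $q=0$) and $v$ of the bubble form \eqref{nbubble} with $f(2b^2a^{-2}\vec e)=1$. Finally, $T\vec e=(n-2)\vec e+n\vec e=2(n-1)\vec e$, so $f(2b^2a^{-2}\vec e)=\hat f(4(n-1)b^2a^{-2}\vec e)=1$, giving the normalization stated in Theorem \ref{Liouville-1-re-hat}. Translating back through $u=e^{(n-2)v/2}$ produces the claimed form of $u$. The only steps requiring any care are the conversion of the structural hypotheses \eqref{eqn-230331-0110-hat}, \eqref{eqn-240223-0305-ricci} on $\hat\Gamma$, $\hat f$ into \eqref{eqn-230331-0110}, \eqref{eqn-240223-0305} for $\Gamma$, $f$ (in particular the potentially sign-changing derivatives of $\hat f$ becoming positive derivatives of $f$ after conjugation by the non-diagonal matrix $T$); everything else is bookkeeping, and there is no substantive new obstacle beyond what is already handled in Theorem \ref{thm-230525-0321}.
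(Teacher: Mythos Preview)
Your proof is correct and follows exactly the route the paper indicates: the appendix states that Theorem~\ref{Liouville-1-re-hat} is equivalent to Theorem~\ref{thm-230525-0321} via the linear map $T=(n-2)I+\vec e\otimes\vec e$ and the substitution $u=e^{(n-2)v/2}$, and you have carefully carried out the verification (including the key identities $T\lambda^*=2(n-2)\hat\lambda^*$, $T\vec e=2(n-1)\vec e$, and $\nabla f=T\nabla\hat f$) that the paper leaves implicit.
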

The above result fails when $\hat{\lambda}^*\in\overline{\hat\Gamma}$; see counterexamples in Remark \ref{rmk-230615-1129}.

\smallskip

Next we state our rigidity results for viscosity solutions of the following equation:
\begin{equation} \label{eqn-221204-0213-ricci-re-hat}
    \lambda(Ric_{\bar{g}_u})\in\p\hat{\Gamma}.
\end{equation}
The following two theorems are equivalent to Theorem \ref{thm-230329-1327} and \ref{230918-1857}, respectively.
\begin{theorem}\label{Liouville-2-re-hat}
  For $n\geq 3$,  let $\hat{\Gamma}$ satisfy \eqref{eqn-230331-0110-hat} with $\hat{\lambda}^*\notin\p\hat\Gamma$.
    Then any positive continuous viscosity solution of \eqref{eqn-221204-0213-ricci-re-hat} in $\bR^n$ must be constant.
\end{theorem}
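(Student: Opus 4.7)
The plan is to reduce Theorem \ref{Liouville-2-re-hat} directly to Theorem \ref{thm-230329-1327} via the elementary linear bijection $T := (n-2)I + \vec{e}\otimes\vec{e}$ that relates the eigenvalues of the Schouten tensor to those of the Ricci tensor of a conformal metric. First, I would set $\Gamma := T^{-1}\hat{\Gamma}$. Since $T$ commutes with permutations of the coordinates and is a linear homeomorphism of $\bR^n$ sending $\Gamma_n$ into itself (because $(T\mu)_i = (n-2)\mu_i + \sum_j \mu_j > 0$ for $\mu\in\Gamma_n$), the cone $\Gamma$ inherits from $\hat{\Gamma}$ the non-emptiness, openness, and symmetric-cone property. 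Moreover, $\hat{\Gamma}+T\Gamma_n\subset\hat{\Gamma}$ from \eqref{eqn-230331-0110-hat} together with $T\Gamma_n\subset\Gamma_n$ yields $\Gamma+\Gamma_n\subset\Gamma$, so $\Gamma$ satisfies \eqref{eqn-230331-0110}.

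Next, I would translate the borderline assumption. A direct computation using $\sum_i \lambda^*_i = 2-n$ gives
\begin{equation*}
    T\lambda^* = (n-2)\lambda^* + (2-n)\vec{e} = (0, -2(n-2),\ldots,-2(n-2)) = 2(n-2)\hat{\lambda}^*.
\end{equation*}
Since $\hat{\Gamma}$ is a cone with vertex at the origin, $\hat{\lambda}^*\in\partial\hat{\Gamma}$ is equivalent to $T\lambda^*\in\partial\hat{\Gamma}$, and since $T$ is a linear homeomorphism, the latter is equivalent to $\lambda^*\in\partial(T^{-1}\hat{\Gamma})=\partial\Gamma$. Hence the hypothesis $\hat{\lambda}^*\notin\partial\hat{\Gamma}$ is exactly $\lambda^*\notin\partial\Gamma$, matching the assumption of Theorem \ref{thm-230329-1327}.

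The third step is the change of variables $v := \frac{2}{n-2}\log u$. From the identity $Ric_g=(n-2)A_g+\tr_g(A_g)\,g$ one has $\lambda(Ric_{\bar{g}_u}) = T\lambda(A_{\bar{g}_u}) = T\lambda(A[v])$, and consequently
\begin{equation*}
    \lambda(Ric_{\bar{g}_u})\in\partial\hat{\Gamma} \ \Longleftrightarrow\ \lambda(A[v])\in\partial\Gamma.
\end{equation*}
To transport the viscosity notion, I would observe that since $s\mapsto\frac{2}{n-2}\log s$ is a $C^\infty$ diffeomorphism of $(0,\infty)$ onto $\bR$, a positive $C^2$ test function $\varphi$ touches $u$ from below (respectively above) at $x_0$ if and only if $\psi := \frac{2}{n-2}\log\varphi$ touches $v$ from below (respectively above) there, and the corresponding pointwise eigenvalue condition transfers under $T$. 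This establishes a bijection between positive continuous viscosity solutions of \eqref{eqn-221204-0213-ricci-re-hat} and continuous viscosity solutions of $\lambda(A[v])\in\partial\Gamma$ on $\bR^n$.

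Finally, Theorem \ref{thm-230329-1327} applied to $v$ forces $v$ to be a constant, whence $u$ is a positive constant. The only step requiring any care is the viscosity equivalence in step three, but because the change of variables is a smooth monotone diffeomorphism it is entirely routine; the substantive content of the theorem lies wholly in the already-established Theorem \ref{thm-230329-1327} together with the linear-algebraic dictionary provided by $T$.
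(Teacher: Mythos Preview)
Your proposal is correct and follows exactly the approach the paper intends: the appendix states that Theorem \ref{Liouville-2-re-hat} is equivalent to Theorem \ref{thm-230329-1327} via the linear dictionary $T=(n-2)I+\vec{e}\otimes\vec{e}$, and you have simply written out this equivalence explicitly. The only cosmetic remark is that in verifying $\Gamma+\Gamma_n\subset\Gamma$, the clause ``together with $T\Gamma_n\subset\Gamma_n$'' is unnecessary---applying $T^{-1}$ to the hypothesis $\hat{\Gamma}+T\Gamma_n\subset\hat{\Gamma}$ already gives $\Gamma+\Gamma_n\subset\Gamma$ directly.
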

    
\begin{theorem}\label{Liouville-3-re-hat}
    For $n\geq 3$,  let $\hat{\Gamma}$ satisfy \eqref{eqn-230331-0110-hat} with $\hat{\lambda}^*\notin\p\hat\Gamma$.
    Then any positive continuous viscosity solution of \eqref{eqn-221204-0213-ricci-re-hat} in $\bR^n\setminus\{0\}$ is radially symmetric and non-increasing in the radial direction.    
\end{theorem}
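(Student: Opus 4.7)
The plan is to reduce Theorem \ref{Liouville-3-re-hat} to the already-proved Theorem \ref{230918-1857} via the linear correspondence between the Schouten and Ricci tensors described just above \eqref{eqn-230331-0110-hat}. Setting $v := \frac{2}{n-2}\log u$, one has $u = e^{\frac{n-2}{2}v}$ and, as recorded in the introduction, $\lambda(A_{\bar g_u}) = \lambda(A^u) = \lambda(A[v])$. Using $Ric_g = (n-2)A_g + \frac{R_g}{2(n-1)} g$ one obtains $\lambda(Ric_g) = T\lambda(A_g)$ with $T = (n-2)I + \vec e\otimes\vec e$. Hence, with $\Gamma := T^{-1}\hat\Gamma$, the equation $\lambda(Ric_{\bar g_u}) \in \partial \hat\Gamma$ is equivalent to $\lambda(A[v]) \in \partial\Gamma$.

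Next I would check that the hypotheses of Theorem \ref{230918-1857} are satisfied for $(\Gamma, v)$. Since $T$ is invertible and commutes with permutations of coordinates, $\Gamma$ is a non-empty open symmetric cone with vertex at the origin, and the condition $\hat\Gamma + T\Gamma_n \subset \hat\Gamma$ in \eqref{eqn-230331-0110-hat} translates exactly to $\Gamma + \Gamma_n \subset \Gamma$, so $\Gamma$ satisfies \eqref{eqn-230331-0110}. A direct computation gives $T\bl = (n-2)\bl + \langle \bl,\vec e\rangle \vec e = 2(n-2)\hat\lambda^*$, so by the cone property of $\hat\Gamma$ the condition $\bl \in \partial\Gamma$ is equivalent to $\hat\lambda^* \in \partial\hat\Gamma$. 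The hypothesis $\hat\lambda^*\notin\partial\hat\Gamma$ therefore yields $\bl\notin\partial\Gamma$.

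The remaining point is the matching of the viscosity notions. Since $u>0$ is continuous and $u\mapsto v=\frac{2}{n-2}\log u$ is a smooth diffeomorphism onto its image, test functions for $v$ at a point $x_0$ correspond, via $\varphi \mapsto e^{\frac{n-2}{2}\varphi}$, to test functions for $u$ at $x_0$ touching from the same side; together with $A^u = A[v]$ this shows that $u$ is a positive continuous viscosity solution of $\lambda(Ric_{\bar g_u})\in\partial\hat\Gamma$ in $\bR^n\setminus\{0\}$ if and only if $v$ is a continuous viscosity solution of $\lambda(A[v])\in\partial\Gamma$ in $\bR^n\setminus\{0\}$ in the sense of Definition \ref{def-230607-0451}. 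Applying Theorem \ref{230918-1857} to $v$ gives that $v$ is radially symmetric and non-increasing in $r$, and since $u = e^{\frac{n-2}{2}v}$ is a strictly increasing function of $v$, the same conclusion transfers to $u$.

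The only mildly delicate step is the viscosity equivalence, but it is routine because the change of variables is smooth with a smooth inverse; the main content of the theorem is already contained in Theorem \ref{230918-1857}, and the present result is essentially a dictionary translation, with the linear algebra identity $T\bl = 2(n-2)\hat\lambda^*$ making the exceptional-value hypothesis transparent.
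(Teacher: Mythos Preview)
Your proposal is correct and follows exactly the approach the paper intends: the paper presents Theorem~\ref{Liouville-3-re-hat} as an equivalent reformulation of Theorem~\ref{230918-1857} via the linear map $T=(n-2)I+\vec e\otimes\vec e$ and the substitution $u=e^{\frac{n-2}{2}v}$, without spelling out further details. Your verification that $\Gamma:=T^{-1}\hat\Gamma$ satisfies \eqref{eqn-230331-0110}, together with the key identity $T\bl=2(n-2)\hat\lambda^*$ translating the boundary condition, is precisely the dictionary the appendix describes.
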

Neither of above results holds when $\hat{\lambda}^*\in\p{\hat\Gamma}$; see counterexamples in Remark \ref{remark-1.3}. 

\smallskip

Theorem \ref{localgradest} and \ref{nlocalgradest} are equivalent to the two theorems below.   The following conditions on $(\hat{f},\hat\Gamma)$ will be assumed: $\hat\Gamma$ satisfies \eqref{eqn-230331-0110-hat} and $\hat{f}$ satisfies 
\begin{equation}\label{basicf-ricci}
     \left\{
	       \begin{array}{lr}
                \text{$\hat{f}\in C^0(\overline{\hat\Gamma})\cap C^1 (\hat\Gamma)$ is 
                symmetric in $\lambda_i$,}\\[2ex]
	       	\text{$\hat{f}>0$, \ $ T(\nabla\hat{f})\in\Gamma_n$
          $\text{in}$ $\hat\Gamma$, and~ $\hat{f}=0$ ~on $\partial{\hat\Gamma}$,
         }
	       	\end{array}
	\right.	
\end{equation}

\begin{equation} \label{eqn-230530-1131-ricci}
    \sum_i \p_{\lambda_i}\hat{f}(\lambda) \geq \delta \big( 1+ \big|\lambda_i \cdot \p_{\lambda_i}\hat{f}(\lambda)\big| \big)\,\,\text{on}\,\,\{ \lambda \in \hat\Gamma:  \hat{f} (\lambda) \in (0, C]\},~~ \forall C > 0,
\end{equation}
where $\delta=\delta(C) > 0$, and
\begin{equation} \label{eqn-230602-1040-ricci}
    \liminf_{s\rightarrow \infty} \hat{f}(s\lambda) = +\infty \,\, \text{uniformly on}\,\, \{\lambda \in \hat\Gamma: \hat{f}(\lambda) \geq C\}, ~~ \forall C>0.
\end{equation}
The equivalence of \eqref{eqn-230530-1131} and \eqref{eqn-230530-1131-ricci} can be easily verified using the fact $T\vec{e}=2(n-1)\vec{e}$.
\begin{theorem}\label{localgradest-ricci}
    Let $(B_1,g)$ be a $C^3$ Riemannian geodesic ball and $(\hat{f},\hat\Gamma)$ satisfy \eqref{eqn-230331-0110-hat} and \eqref{basicf-ricci}--\eqref{eqn-230602-1040-ricci} with $\hat{\lambda}^*\notin\p\hat\Gamma$. Let $h\in C^1(B_1)$ be positive
    and $u\in C^3(B_1)$ satisfy 
    \begin{equation*}
        \hat{f}(\lambda(Ric_{g_u}))=h\quad\text{in}~B_1.
    \end{equation*}
    Then \eqref{estimatee} holds with constant $C$ depending only on $(\hat{f},\hat\Gamma)$, an upper bound of $\sup_{B_{1}} u$ and $\left\| h \right\|_{C^1(B_1)}$, and a bound of $g$ and the Riemann curvature
 tensor together with its first covariant derivative with respect to $g$. 
\end{theorem}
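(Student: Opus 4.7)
The plan is to reduce Theorem \ref{localgradest-ricci} to Theorem \ref{localgradest} via the linear transformation between Schouten and Ricci tensors, namely $\lambda(Ric_g) = T\lambda(A_g)$ where $T := (n-2)I + \vec{e}\otimes\vec{e}$. Given $(\hat f,\hat\Gamma)$ satisfying the hypotheses of Theorem \ref{localgradest-ricci}, I would define
\begin{equation*}
    \Gamma := T^{-1}\hat\Gamma, \qquad f(\lambda) := \hat f(T\lambda) \ \text{for}\ \lambda\in\overline{\Gamma},
\end{equation*}
and verify that the pair $(f,\Gamma)$ satisfies the hypotheses of Theorem \ref{localgradest}. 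Once this is done, the equation $\hat f(\lambda(Ric_{g_u})) = h$ is, by construction, the equation $f(\lambda(A_{g_u})) = h$, so the gradient estimate follows immediately from Theorem \ref{localgradest}.

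The verification is routine but must be done carefully. First, $T$ is invertible and symmetry-preserving since it commutes with permutations, so $\Gamma$ is an open symmetric cone with vertex at the origin. The condition $\Gamma+\Gamma_n\subset\Gamma$ in \eqref{eqn-230331-0110} corresponds to $\hat\Gamma + T\Gamma_n \subset \hat\Gamma$, which is exactly \eqref{eqn-230331-0110-hat}. Next, $\nabla f(\lambda) = T(\nabla\hat f)(T\lambda)$, so the sign/ellipticity conditions $\partial_{\lambda_i}f>0$ in \eqref{basicf} and the structural bound \eqref{eqn-230530-1131} translate into the Ricci-formulated hypotheses \eqref{basicf-ricci} and \eqref{eqn-230530-1131-ricci} once we use $T\vec e = 2(n-1)\vec e$ and $T\Gamma_n = \{\mu\in\bR^n\mid T^{-1}\mu\in\Gamma_n\}$. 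Growth condition \eqref{eqn-230602-1040} is trivially equivalent to \eqref{eqn-230602-1040-ricci} because $f(s\lambda)=\hat f(sT\lambda)$ and $T$ maps the relevant sublevel sets bijectively.

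The key compatibility to check is $\bl\notin\p\Gamma$. Writing $\bl=(1,-1,\ldots,-1)$ with $\sum_i\bl_i = 2-n$, a direct computation gives
\begin{equation*}
    T\bl = (n-2)\bl + (2-n)\vec e = 2(n-2)(0,-1,\ldots,-1) = 2(n-2)\,\hat\lambda^*.
\end{equation*}
Since $\hat\Gamma$ is a cone and $n\ge 3$, $\hat\lambda^*\notin\p\hat\Gamma$ is equivalent to $T\bl\notin\p\hat\Gamma$, which, by the definition $\Gamma=T^{-1}\hat\Gamma$, is equivalent to $\bl\notin\p\Gamma$. Thus the borderline hypothesis transfers as stated.

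The only substantive point is to ensure that the dependencies of the constant $C$ are preserved, since Theorem \ref{localgradest} depends on $(f,\Gamma)$, not on $(\hat f,\hat\Gamma)$. But $T$ and $T^{-1}$ are absolute constants depending only on $n$, so any bound on $\hat f$ (and its first derivatives, sup/inf on compact subsets, etc.) gives an equivalent bound on $f$, and vice versa. In particular, bounds involving $\|h\|_{C^1(B_1)}$, $\sup_{B_1}u$, and the geometric data of $g$ carry over verbatim. Having completed these straightforward translations, the conclusion of Theorem \ref{localgradest} applied to $f(\lambda(A_{g_u})) = h$ yields $|\nabla_g\log u|_g\le C$ on $B_{1/2}$ with the claimed dependence, completing the proof. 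There is no real obstacle here; the content lies entirely in the bookkeeping of translating between the two formulations, and the sole subtlety worth highlighting explicitly is the calculation $T\bl = 2(n-2)\hat\lambda^*$.
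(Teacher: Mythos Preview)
Your proposal is correct and follows essentially the same approach as the paper: Appendix~\ref{riccisection} states that Theorem~\ref{localgradest-ricci} is equivalent to Theorem~\ref{localgradest} via the linear change $f(\lambda)=\hat f(T\lambda)$, $\Gamma=T^{-1}\hat\Gamma$, and records precisely the identities you use (notably $T\vec e=2(n-1)\vec e$, and implicitly $T\bl=2(n-2)\hat\lambda^*$) to transfer the hypotheses. Your write-up simply makes explicit the routine bookkeeping that the paper leaves to the reader.
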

The above result fails when $\hat{\lambda}^*\in\p{\hat\Gamma}$; see counterexamples in Remark \ref{231006-1030}.
\begin{theorem}\label{nlocalgradest-ricci}
 Let $(B_1,g)$ be a $C^3$ Riemannian geodesic ball and $(\hat{f},\hat\Gamma)$ satisfy \eqref{eqn-230331-0110-hat} and \eqref{basicf-ricci}--\eqref{eqn-230602-1040-ricci} with $-\hat{\lambda}^*\notin\p\hat\Gamma$. Let $h\in C^1(B_1)$ be positive
    and $u\in C^3(B_1)$ satisfy 
    \begin{equation*}
        \hat{f}(\lambda(-Ric_{g_u}))=h\quad\text{in}~B_1.
    \end{equation*}
Then \eqref{estimatee} holds with $C$ of the same dependence as that of Theorem \ref{localgradest-ricci}. Moreover, if in addition $\inf_{B_1}h\geq h_0>0$, then $C$ in \eqref{estimatee}, depending on $h_0$, is independent of $\sup_{B_1} u$.
\end{theorem}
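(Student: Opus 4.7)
The plan is to deduce Theorem~\ref{nlocalgradest-ricci} directly from Theorem~\ref{nlocalgradest} by the linear change of variables $T=(n-2)I+\vec e\otimes\vec e$ described at the start of this appendix, which intertwines the Schouten and Ricci tensors. No new analysis is needed; the entire argument is a dictionary between $(f,\Gamma)$ and $(\hat f,\hat\Gamma)$.

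First, set $\Gamma:=T^{-1}\hat\Gamma$ and $f(\lambda):=\hat f(T\lambda)$ for $\lambda\in\overline\Gamma$. I will check that $(f,\Gamma)$ satisfies the hypotheses of Theorem~\ref{nlocalgradest}. The inclusion $\Gamma+\Gamma_n\subset\Gamma$ follows by applying $T^{-1}$ to $\hat\Gamma+T\Gamma_n\subset\hat\Gamma$ from \eqref{eqn-230331-0110-hat}, so $\Gamma$ satisfies \eqref{eqn-230331-0110}. Since $T$ is symmetric, $\nabla f(\lambda)=T\nabla\hat f(T\lambda)$, so the requirement $T(\nabla\hat f)\in\Gamma_n$ in \eqref{basicf-ricci} gives $\partial_{\lambda_i}f>0$, and together with $\hat f\in C^0(\overline{\hat\Gamma})\cap C^1(\hat\Gamma)$, $\hat f>0$ in $\hat\Gamma$ and $\hat f|_{\partial\hat\Gamma}=0$, it yields \eqref{basicf}. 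The identity $T\vec e=2(n-1)\vec e$ then gives $\sum_i\partial_{\lambda_i}f=2(n-1)\sum_i\partial_{\lambda_i}\hat f$, and a parallel computation expresses $\lambda_i\cdot\partial_{\lambda_i}f$ in terms of $(T\lambda)_j\cdot\partial_{\lambda_j}\hat f$, so that \eqref{eqn-230530-1131-ricci} translates to \eqref{eqn-230530-1131}; condition \eqref{eqn-230602-1040} follows verbatim from \eqref{eqn-230602-1040-ricci} since $f(s\lambda)=\hat f(sT\lambda)$. Finally, a direct computation gives $T(-\lambda^*)=-2(n-2)\hat\lambda^*$, and since $\partial\hat\Gamma$ is a cone (invariant under positive scaling, and under $\mu\mapsto-\mu$ only modulo replacing $\mu$ by $-\mu$), the condition $-\lambda^*\in\partial\Gamma$ is equivalent to $-\hat\lambda^*\in\partial\hat\Gamma$. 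Thus the hypothesis $-\hat\lambda^*\notin\partial\hat\Gamma$ gives $-\lambda^*\notin\partial\Gamma$.

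Second, on any conformal metric $g_u=u^{4/(n-2)}g$ we have $\lambda(Ric_{g_u})=T\lambda(A_{g_u})$, hence by the linearity of $T$
\begin{equation*}
\lambda(-Ric_{g_u})=T\lambda(-A_{g_u}),\qquad \hat f(\lambda(-Ric_{g_u}))=\hat f(T\lambda(-A_{g_u}))=f(\lambda(-A_{g_u})).
\end{equation*}
Therefore the Ricci-side equation $\hat f(\lambda(-Ric_{g_u}))=h$ is nothing but the Schouten-side equation $f(\lambda(-A_{g_u}))=h$, i.e.\ equation~\eqref{equ} for the pair $(f,\Gamma)$. Applying Theorem~\ref{nlocalgradest} to this pair and this $u$ produces the estimate \eqref{estimatee} in $B_{1/2}$ with the dependence claimed there; since $T$ is fixed (depending only on $n$), the constant $C$ depends only on the quantities listed in Theorem~\ref{nlocalgradest-ricci}. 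The ``Moreover'' clause, removing dependence on $\sup_{B_1}u$ when $\inf_{B_1}h\geq h_0>0$, transfers without change.

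The only bookkeeping that is not entirely automatic is the verification that \eqref{eqn-230530-1131-ricci} implies \eqref{eqn-230530-1131} for $f=\hat f\circ T$; this is the sole place where one must unwind $T\vec e=2(n-1)\vec e$ and the formula $\nabla f=T\nabla\hat f$ carefully. I expect no genuine obstacle, since the author has already flagged this as straightforward in the remark immediately preceding the theorem.
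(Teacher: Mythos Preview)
Your proposal is correct and follows exactly the paper's approach: the paper presents Theorem~\ref{nlocalgradest-ricci} as an equivalent reformulation of Theorem~\ref{nlocalgradest} via the linear map $T=(n-2)I+\vec e\otimes\vec e$, noting in particular that the equivalence of \eqref{eqn-230530-1131} and \eqref{eqn-230530-1131-ricci} follows from $T\vec e=2(n-1)\vec e$. You have simply written out the dictionary in full, including the computation $T(-\lambda^*)=2(n-2)(-\hat\lambda^*)$ and the translation of each hypothesis, which is precisely what the paper leaves implicit.
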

The above result fails when $-\hat{\lambda}^*\in\p{\hat\Gamma}$; see counterexamples in Remark \ref{231006-1030}.

\smallskip

Next we reformulate Theorem \ref{exist-lcf}--\ref{exist-nlcf}, imposing following conditions on $(\hat{f},\hat\Gamma)$.
\begin{equation}\label{lip-f-ricci} 
\begin{cases}
    \hat{f}\in C^{0,1}_{loc}(\hat\Gamma)\cap C^0(\overline{\hat\Gamma})~\text{is symmetric, homogeneous of degree $1$,} \\ \hat{f}\big|_{\p\hat\Gamma}=0,~\text{and}~T(\nabla \hat{f})\in \delta\vec{e} +\Gamma_n~\text{a.e.}~\hat\Gamma,~\text{for some constant}~\delta>0,
    \end{cases}
\end{equation}
and
\begin{equation}\label{fgamma-convex-ricci}
    \hat{f}~\text{is locally convex in}~\hat\Gamma~\text{and}~\bR^n\setminus\hat\Gamma~\text{is convex}.
\end{equation}

\begin{theorem}\label{exist-lcf-ricci}
        Let $(M^n,g)$ be a closed, smooth, locally conformally flat $n$-dimensional Riemannian manifold with positive scalar curvature, $n\geq 3$, and $(\hat{f},\hat\Gamma)$
    satisfy \eqref{eqn-230331-0110-hat}, \eqref{lip-f-ricci} and \eqref{fgamma-convex-ricci}.
     Then, for some $\alpha\in(0,1)$ depending only on $(M^n,g)$ and $(\hat{f},\hat\Gamma)$, there exists a positive function $u\in C^{2,\alpha}(M)$ satisfying
     \begin{equation}\label{q-ricci-critical}
  \hat{f}(\lambda(Ric_{g_u}))=1\quad \text{on}~M^n.  
\end{equation}

Moreover, if $(M^n,g)$ is not conformally diffeomorphic to the standard sphere, 
    all $C^2$ solutions $u$ of \eqref{q-ricci-critical} satisfy 
    \eqref{compactness-1.6-schouten} with constant $C>0$ depending only on $(M^n,g)$ and $(\hat{f},\hat\Gamma)$.
\end{theorem}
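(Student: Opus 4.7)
The plan is to derive Theorem \ref{exist-lcf-ricci} from its Schouten-tensor counterpart Theorem \ref{exist-lcf} by means of the linear isomorphism $T := (n-2)I + \vec{e}\otimes\vec{e}$ that intertwines the two tensor spectra, namely $\lambda(Ric_g) = T\lambda(A_g)$. Concretely, I would set
\[
  f(\lambda) := \hat{f}(T\lambda), \qquad \Gamma := T^{-1}\hat{\Gamma},
\]
so that the equation $\hat{f}(\lambda(Ric_{g_u}))=1$ becomes $f(\lambda(A_{g_u}))=1$. Note that $T$ is positive definite with eigenvalues $2(n-1)$ (eigenvector $\vec{e}$) and $n-2$ (on $\vec{e}^{\perp}$), and commutes with every coordinate permutation. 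The inverse is $T^{-1}=(n-2)^{-1}\bigl(I-(2(n-1))^{-1}\vec{e}\otimes\vec{e}\bigr)$. It therefore suffices to verify that $(f,\Gamma)$ satisfies the hypotheses \eqref{eqn-230331-0110}, \eqref{lip-f-schouten} and \eqref{fgamma-convex} of Theorem \ref{exist-lcf}, and then translate the conclusion back through $T$.

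For \eqref{eqn-230331-0110}, the invertibility of $T$ and its commutation with permutations ensure that $\Gamma=T^{-1}\hat\Gamma$ is a non-empty open symmetric cone strictly contained in $\bR^n$; the nontrivial inclusion $\Gamma+\Gamma_n\subset\Gamma$ is exactly what the condition $\hat\Gamma+T\Gamma_n\subset\hat\Gamma$ in \eqref{eqn-230331-0110-hat} encodes, upon applying $T^{-1}$. For \eqref{lip-f-schouten}, symmetry, $1$-homogeneity, $C^0(\overline\Gamma)\cap C^{0,1}_{loc}(\Gamma)$ regularity, and the boundary vanishing $f|_{\p\Gamma}=0$ are all inherited from the corresponding properties of $\hat f$ via composition with the symmetric linear isomorphism $T$. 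The ellipticity bound is the key calculation: by the chain rule and $T=T^{\top}$,
\[
  \nabla f(\lambda) = T\,\nabla\hat f(T\lambda) \quad \text{a.e.\ in } \Gamma,
\]
so the hypothesis $T(\nabla\hat f)\in\delta\vec e+\Gamma_n$ in \eqref{lip-f-ricci} is, by design, exactly equivalent to $\p f/\p\lambda_i\geq\delta$ a.e.\ in $\Gamma$ for every $i$. Finally, \eqref{fgamma-convex} is immediate: local convexity of $f$ follows from local convexity of $\hat f$ and linearity of $T$, while $\bR^n\setminus\Gamma=T^{-1}(\bR^n\setminus\hat\Gamma)$ is convex as the image under a linear isomorphism of a convex set.

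With these verifications in hand, Theorem \ref{exist-lcf} applied to $(f,\Gamma)$ produces, for some $\alpha\in(0,1)$, a positive $u\in C^{2,\alpha}(M^n)$ solving $f(\lambda(A_{g_u}))=1$, which is precisely \eqref{q-ricci-critical}; and in the case that $(M^n,g)$ is not conformally diffeomorphic to the round sphere it gives the uniform bound \eqref{compactness-1.6-schouten}. The dependence of the constants $\alpha$ and $C$ on $(f,\Gamma)$ translates into dependence on $(\hat f,\hat\Gamma)$ because $T$ depends only on $n$. I do not foresee a substantive obstacle; the only place requiring a moment of care is the reformulation of the ellipticity hypothesis in \eqref{lip-f-ricci} through the symmetric linear map $T$, but the chain-rule identity $\nabla f=T\,\nabla\hat f\circ T$ makes the equivalence transparent. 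Everything else is a change of variables.
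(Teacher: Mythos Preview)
Your proposal is correct and matches the paper's approach exactly: the paper presents Theorem~\ref{exist-lcf-ricci} in Appendix~\ref{riccisection} as a direct reformulation of Theorem~\ref{exist-lcf} via the linear isomorphism $T=(n-2)I+\vec{e}\otimes\vec{e}$, with the hypotheses \eqref{eqn-230331-0110-hat}, \eqref{lip-f-ricci}, \eqref{fgamma-convex-ricci} designed precisely so that they pull back to \eqref{eqn-230331-0110}, \eqref{lip-f-schouten}, \eqref{fgamma-convex} under $\lambda\mapsto T\lambda$. Your chain-rule verification $\nabla f = T\,\nabla\hat f\circ T$ is the only nontrivial check, and you have it right.
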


\begin{theorem}
     Let
    $(M^n,g)$ be a closed, smooth $n$-dimensional Riemannian manifold, $n\geq 3$, $(\hat{f},\hat\Gamma)$
    satisfy \eqref{eqn-230331-0110-hat}, \eqref{lip-f-ricci}, \eqref{fgamma-convex-ricci} and $\hat{\lambda}^*\notin\overline{\hat\Gamma}$, and $0<q<4/(n-2)$ be a constant.
    Assume that $\lambda(Ric_g)\in\hat\Gamma$ on $M^n$ and $\hat{f}\in C^1(\hat\Gamma)$.
    Then,  for any $\alpha\in(0,1)$, 
    all positive $C^2$ solutions of \eqref{q-ricci-subcritical} satisfy \eqref{compactness-1.6-schouten} with $C$ depending only on $(M^n,g)$, $(\hat{f},\hat\Gamma)$, $q$ and $\alpha$.
\end{theorem}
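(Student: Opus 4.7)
The plan is to reduce this Ricci-tensor reformulation directly to Theorem \ref{exist-nlcf-compactness} via the linear change of variables connecting the two tensors. Recall that on an $n$-dimensional Riemannian manifold,
\begin{equation*}
    \lambda(Ric_g) = T\lambda(A_g), \qquad T := (n-2)I + \vec{e}\otimes \vec{e},
\end{equation*}
so $T$ is a symmetric invertible linear map with $T\vec{e} = 2(n-1)\vec{e}$ and $T$ commutes with every coordinate permutation. I would therefore set
\begin{equation*}
    f(\lambda) := \hat{f}(T\lambda), \qquad \Gamma := T^{-1}\hat{\Gamma},
\end{equation*}
and verify that $(f,\Gamma)$ satisfies the hypotheses of Theorem \ref{exist-nlcf-compactness}; the Ricci equation $\hat{f}(\lambda(Ric_{g_u}))=u^{-q}$ becomes $f(\lambda(A_{g_u}))=u^{-q}$ on $M^n$, and the assumption $\lambda(Ric_g)\in \hat\Gamma$ becomes $\lambda(A_g)\in\Gamma$. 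Once the hypotheses are verified, the estimate \eqref{compactness-1.6-schouten} for $u$ transfers unchanged.

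Next I would check the list of required properties one by one. Symmetry, the cone property, openness and non-emptiness of $\Gamma$ are immediate from the analogous properties of $\hat\Gamma$ and the fact that $T$ is a linear isomorphism commuting with permutations. For $\Gamma+\Gamma_n\subset\Gamma$, apply $T$ to get $T\Gamma + T\Gamma_n \subset \hat\Gamma + T\Gamma_n\subset \hat\Gamma$ using \eqref{eqn-230331-0110-hat}, so \eqref{eqn-230331-0110} holds. The chain rule gives $\nabla f(\lambda) = T\nabla\hat{f}(T\lambda)$ (since $T$ is symmetric), and so the condition $T(\nabla\hat{f})\in\delta\vec{e}+\Gamma_n$ from \eqref{lip-f-ricci} translates exactly into $\partial_{\lambda_i} f \ge \delta$ a.e.\ on $\Gamma$; this, together with homogeneity, symmetry, continuity and vanishing on $\partial\Gamma$, yields \eqref{lip-f-schouten}. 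Convexity of $f$ is preserved under composition with a linear map, and $\bR^n\setminus\Gamma = T^{-1}(\bR^n\setminus\hat\Gamma)$ is convex because the preimage of a convex set under a linear isomorphism is convex, so \eqref{fgamma-convex} holds. The additional regularity $f\in C^1(\Gamma)$ follows from $\hat{f}\in C^1(\hat\Gamma)$.

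The one step requiring a small computation is the key hypothesis $\lambda^*\notin\overline{\Gamma}$. A direct calculation gives
\begin{equation*}
    T\lambda^* = (n-2)(1,-1,\dots,-1) + (1-(n-1))\vec{e} = \bigl(0,\,-2(n-2),\,\dots,\,-2(n-2)\bigr) = 2(n-2)\hat{\lambda}^*,
\end{equation*}
so $\lambda^*\in\overline{\Gamma}$ if and only if $T\lambda^*\in\overline{\hat\Gamma}$, which by the cone property of $\hat\Gamma$ is equivalent to $\hat{\lambda}^*\in\overline{\hat\Gamma}$. Hence the assumption $\hat{\lambda}^*\notin\overline{\hat\Gamma}$ yields $\lambda^*\notin\overline{\Gamma}$ as required.

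With all hypotheses of Theorem \ref{exist-nlcf-compactness} verified, that theorem produces the estimate \eqref{compactness-1.6-schouten} for any positive $C^2$ solution $u$ of $f(\lambda(A_{g_u}))=u^{-q}$, which is precisely \eqref{q-ricci-subcritical}. The only mild obstacle I foresee is bookkeeping: ensuring that the constants $\delta$ in \eqref{lip-f-ricci} and \eqref{lip-f-schouten} match up correctly under $T$ (they do, up to a factor depending only on $n$), and verifying that the natural measure-zero set where $\hat{f}$ fails to be differentiable is mapped by the invertible $T$ to a measure-zero set in $\Gamma$ so that the ``a.e." conditions transfer. Neither of these involves any analytic difficulty beyond routine linear algebra, so the proof is essentially a translation argument rather than a new PDE estimate.
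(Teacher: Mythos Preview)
Your proposal is correct and is exactly the approach the paper takes: Appendix~\ref{riccisection} sets up the linear map $T=(n-2)I+\vec{e}\otimes\vec{e}$, defines $f(\lambda)=\hat f(T\lambda)$, $\Gamma=T^{-1}\hat\Gamma$, and observes that each Ricci-tensor statement is the direct translation of its Schouten-tensor counterpart, so this theorem is equivalent to Theorem~\ref{exist-nlcf-compactness}. The paper does not spell out the per-hypothesis verification you provide (including the identity $T\lambda^*=2(n-2)\hat\lambda^*$), but the underlying argument is identical.
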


\begin{theorem}\label{exist-nlcf-ricci}
     Let
    $(M^n,g)$ be a closed, smooth $n$-dimensional Riemannian manifold with positive scalar curvature, $n\geq 3$,   $(\hat{f},\hat\Gamma)$
    satisfy \eqref{eqn-230331-0110-hat}, \eqref{lip-f-ricci}, \eqref{fgamma-convex-ricci} and $\hat{\lambda}^*\notin\overline{\hat\Gamma}$, and $0<q<4/(n-2)$ be a constant. Then, for some $\alpha\in(0,1)$ depending only on $(M^n,g)$, $(\hat{f},\hat\Gamma)$ and $q$, there exists a positive function $u\in C^{2,\alpha}(M)$ satisfying 
    \begin{equation}\label{q-ricci-subcritical} 
      \hat{f}(\lambda(Ric_{g_u}))=u^{-q}\quad\text{on}~M^n 
    \end{equation}
\end{theorem}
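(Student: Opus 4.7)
The plan is to reduce Theorem \ref{exist-nlcf-ricci} to its Schouten-tensor counterpart, Theorem \ref{exist-nlcf}, by means of the linear change of variables set up at the beginning of Appendix \ref{riccisection}. Let $T:=(n-2)I+\vec e\otimes\vec e$, so that $\lambda(Ric_{g'})=T\lambda(A_{g'})$ for every metric $g'$. Define
\[
 f(\lambda):=\hat f(T\lambda),\qquad \Gamma:=T^{-1}\hat\Gamma,
\]
and set $f|_{\partial\Gamma}:=0$. By construction, $u$ satisfies $\hat f(\lambda(Ric_{g_u}))=u^{-q}$ with $\lambda(Ric_{g_u})\in\hat\Gamma$ if and only if $u$ satisfies $f(\lambda(A_{g_u}))=u^{-q}$ with $\lambda(A_{g_u})\in\Gamma$. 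So it suffices to check that $(f,\Gamma)$ meets all the hypotheses of Theorem \ref{exist-nlcf}, and then to apply that theorem.

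First I would verify, piece by piece, that each hypothesis on $(\hat f,\hat\Gamma)$ transforms into the corresponding hypothesis on $(f,\Gamma)$. Since $T$ is a linear isomorphism commuting with permutations, $\Gamma$ is a non-empty, open, symmetric, proper cone with vertex at the origin; the inclusion $\hat\Gamma+T\Gamma_n\subset\hat\Gamma$ pulls back to $\Gamma+\Gamma_n\subset\Gamma$, giving \eqref{eqn-230331-0110}. Since $T$ is a linear bijection, convexity of $\bR^n\setminus\hat\Gamma$ yields convexity of $\bR^n\setminus\Gamma=T^{-1}(\bR^n\setminus\hat\Gamma)$, and local convexity of $\hat f$ gives local convexity of $f$, so \eqref{fgamma-convex} holds. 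Using $\nabla f=T(\nabla\hat f)$, the assumption $T(\nabla\hat f)\in\delta\vec e+\Gamma_n$ in \eqref{lip-f-ricci} translates precisely to $\partial_{\lambda_i}f\geq\delta$ a.e.\ in $\Gamma$; the other clauses of \eqref{lip-f-schouten} (symmetry, $1$-homogeneity, $C^{0,1}_{loc}(\Gamma)\cap C^0(\overline\Gamma)$ regularity, vanishing on $\partial\Gamma$) pass through directly. A short computation
\[
 T\bl=(n-2)\bl+\bigl(\textstyle\sum_i\bl_i\bigr)\vec e=(n-2)(\bl-\vec e)=2(n-2)\hat\lambda^*
\]
combined with the cone property of $\hat\Gamma$ shows $\bl\notin\overline\Gamma\iff\hat\lambda^*\notin\overline{\hat\Gamma}$.

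Second, since $(M^n,g)$ has positive scalar curvature and $0<q<4/(n-2)$, we are exactly in the situation of Theorem \ref{exist-nlcf}. Applying that theorem to $(f,\Gamma)$ produces, for some $\alpha\in(0,1)$ depending only on $(M^n,g)$, $(f,\Gamma)$ and $q$, a positive function $u\in C^{2,\alpha}(M)$ solving $f(\lambda(A_{g_u}))=u^{-q}$; by the equivalence above, $u$ is the desired solution of \eqref{q-ricci-subcritical}, and the $\alpha$ depends only on $(M^n,g)$, $(\hat f,\hat\Gamma)$ and $q$ because $(f,\Gamma)$ is determined by $(\hat f,\hat\Gamma)$ through the fixed transformation $T$.

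No step is expected to be substantively hard; the entire argument is structural, amounting to bookkeeping that the conditions \eqref{eqn-230331-0110-hat}, \eqref{lip-f-ricci}, \eqref{fgamma-convex-ricci} on $(\hat f,\hat\Gamma)$ have been designed so as to correspond, under the linear map $T$, exactly to \eqref{eqn-230331-0110}, \eqref{lip-f-schouten}, \eqref{fgamma-convex} on $(f,\Gamma)$. The only minor subtlety is the precise tuning of the cone $T\Gamma_n$ in \eqref{eqn-230331-0110-hat} and of the positivity requirement $T(\nabla\hat f)\in\delta\vec e+\Gamma_n$ in \eqref{lip-f-ricci}; both are tailored so that the corresponding properties $\Gamma+\Gamma_n\subset\Gamma$ and $\partial_{\lambda_i}f\geq\delta$ hold on the Schouten side, allowing the invocation of Theorem \ref{exist-nlcf} without any loss.
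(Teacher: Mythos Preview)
Your proposal is correct and matches the paper's approach: Appendix \ref{riccisection} presents Theorem \ref{exist-nlcf-ricci} as the Ricci-tensor reformulation of Theorem \ref{exist-nlcf} obtained via the linear map $T=(n-2)I+\vec e\otimes\vec e$, and the paper does not supply a separate proof beyond this structural equivalence. Your verification that each hypothesis on $(\hat f,\hat\Gamma)$ pulls back to the corresponding hypothesis on $(f,\Gamma)$, including the computation $T\bl=2(n-2)\hat\lambda^*$, is exactly the intended bookkeeping.
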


Theorem \ref{230805-2045} is reformulated as below.
\begin{theorem}
    For $n\geq 3$, let $\hat\Gamma$ satisfy \eqref{eqn-230331-0110-hat} with $\hat{\lambda}^*\notin\overline{\hat\Gamma}$ and $-\hat{\lambda}^*\in\hat\Gamma$. Assume that $u$ is a positive continuous viscosity solution of
    $\lambda(Ric_{\bar{g}_u})\in\p\Gamma$ in $B\setminus\{0\}$ satisfying $u(x)=o(|x|^{2-n})~\text{as}~x\rightarrow 0$. Then $u$ can be extended as a positive function in
     $C^{0,1}_{loc}(B)$ which  satisfies $\lambda(Ric_{\bar{g}_u})\in\p\Gamma$ in $B$.
\end{theorem}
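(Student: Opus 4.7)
The plan is to reduce this Ricci-tensor formulation to the already-proved Schouten-tensor version, Theorem \ref{230805-2045}, by means of the linear bijection $T\colon\bR^n\to\bR^n$, $T\lambda=(n-2)\lambda+\big(\sum_j\lambda_j\big)\vec{e}$, which (as explained in Appendix \ref{riccisection}) satisfies $\lambda(Ric_g)\equiv T\lambda(A_g)$ for every Riemannian $n$-manifold with $n\ge 3$. Accordingly I would set $\Gamma:=T^{-1}\hat\Gamma$ and check once and for all that $\Gamma$ verifies \eqref{eqn-230331-0110}: it is open, nonempty, symmetric (since $T$ commutes with coordinate permutations) and a cone by linearity; the key inclusion $\Gamma+\Gamma_n\subset\Gamma$ is, after applying $T$, exactly $\hat\Gamma+T\Gamma_n\subset\hat\Gamma$, which is the second half of \eqref{eqn-230331-0110-hat}.

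A short direct computation gives $T\lambda^*=2(n-2)\hat\lambda^*$, so $T\lambda^*$ and $\hat\lambda^*$ lie on a common ray from the origin. Because $\Gamma$ and $\hat\Gamma$ are cones, this yields $\lambda^*\notin\overline\Gamma\iff\hat\lambda^*\notin\overline{\hat\Gamma}$ and $-\lambda^*\in\Gamma\iff-\hat\lambda^*\in\hat\Gamma$, so the two hypotheses of Theorem \ref{230805-2045} for $\Gamma$ are precisely the hypotheses imposed on $\hat\Gamma$ here.

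Next I would transfer the equation and the viscosity notion. Since $\bar g=|dx|^2$ is flat, $\lambda(A_{\bar g_u})=\lambda(A^u)$, so $\lambda(Ric_{\bar g_u})=T\lambda(A^u)$; because $T$ is a linear homeomorphism it carries $\partial\Gamma$ onto $\partial\hat\Gamma$ and $\bR^n\setminus\overline\Gamma$ onto $\bR^n\setminus\overline{\hat\Gamma}$. Consequently, with $v:=\tfrac{2}{n-2}\log u$ and $\lambda(A^u)=\lambda(A[v])$, a $C^2$ function touching $v$ from below (resp. above) witnesses $\lambda(A[v])\in\overline\Gamma$ (resp. $\in\bR^n\setminus\Gamma$) if and only if the same test function witnesses $\lambda(Ric_{\bar g_u})\in\overline{\hat\Gamma}$ (resp. $\in\bR^n\setminus\hat\Gamma$) pointwise. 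Hence $u$ is a positive continuous viscosity solution of $\lambda(Ric_{\bar g_u})\in\partial\hat\Gamma$ in $B\setminus\{0\}$ if and only if it is a positive continuous viscosity solution of $\lambda(A^u)\in\partial\Gamma$ there. The condition $u(x)=o(|x|^{2-n})$ is a statement about $u$ alone and needs no translation.

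With all identifications made, Theorem \ref{230805-2045} applied to $(\Gamma,u)$ yields a positive extension $u\in C^{0,1}_{loc}(B)$ that satisfies $\lambda(A^u)\in\partial\Gamma$ in $B$, and translating back through $T$ gives $\lambda(Ric_{\bar g_u})\in\partial\hat\Gamma$ in $B$, which is the claimed conclusion. There is no genuine analytic obstacle here; the entire argument is the careful bookkeeping of the linear change of variables $T$. The only point that requires a clean statement is that the viscosity property is preserved under this linear substitution on the eigenvalue domain — a point that follows immediately from $T$ being a linear bijection mapping $\overline\Gamma,\partial\Gamma,\bR^n\setminus\Gamma$ onto $\overline{\hat\Gamma},\partial\hat\Gamma,\bR^n\setminus\hat\Gamma$ respectively.
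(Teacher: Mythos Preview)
Your proposal is correct and matches the paper's approach exactly: the paper presents this theorem as a direct reformulation of Theorem \ref{230805-2045} via the linear bijection $T=(n-2)I+\vec{e}\otimes\vec{e}$ (see the opening of Appendix \ref{riccisection}), and your argument carries out precisely that translation, including the key computation $T\lambda^*=2(n-2)\hat\lambda^*$ that matches the cone hypotheses.
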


Finally, we reformulate Theorem \ref{thm-230528-0404}.
\begin{theorem}
    For $n\geq 3$, let $\hat\Gamma$ satisfy \eqref{eqn-230331-0110-hat} with $\hat{\lambda}^*\in\hat\Gamma$.        
Then any positive viscosity solution $u$ of $\lambda(Ric_{\bar{g}_u}) \in \bR^n\setminus \hat\Gamma$ on $\bR^n$ must be constant.
\end{theorem}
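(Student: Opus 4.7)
The plan is to reduce this theorem to Theorem \ref{thm-230528-0404} via the standard linear transformation $T$ between Schouten and Ricci tensors together with the substitution $v = \frac{2}{n-2}\log u$. Since the transformation $T = (n-2)I + \vec{e}\otimes\vec{e}$ is invertible, symmetric, and commutes with permutations, I would define $\Gamma := T^{-1}\hat\Gamma$ and verify that $\Gamma$ is a non-empty open symmetric cone with vertex at the origin. Writing $\mu + T\nu \in \hat\Gamma$ for $\mu \in \hat\Gamma$, $\nu \in \Gamma_n$, and applying $T^{-1}$, I would deduce $\Gamma + \Gamma_n \subset \Gamma$, so $\Gamma$ satisfies \eqref{eqn-230331-0110}. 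A short direct computation then gives $T^{-1}\hat\lambda^* = \frac{1}{2(n-2)}\lambda^*$, so by the cone property $\hat\lambda^* \in \hat\Gamma$ if and only if $\lambda^* \in \Gamma$.

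Next I would set $v := \frac{2}{n-2}\log u$, which is well-defined, positive, and upper semi-continuous since $u > 0$ is USC. The plan is to check that $v$ satisfies $\lambda(A[v]) \in \bR^n\setminus\Gamma$ on $\bR^n$ in the viscosity sense. Given $x_0 \in \bR^n$ and $\eta \in C^2$ with $\eta(x_0) = v(x_0)$ and $\eta \geq v$ near $x_0$, the function $\psi := e^{\frac{n-2}{2}\eta}$ is $C^2$, positive, satisfies $\psi(x_0) = u(x_0)$, and $\psi \geq u$ near $x_0$. By the viscosity subsolution property of $u$, $\lambda(Ric_{\bar{g}_\psi})(x_0) \in \bR^n \setminus \hat\Gamma$. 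Using the identities $\lambda(A_{\bar{g}_\psi}) = \lambda(A^\psi) = \lambda(A[\eta])$ and $\lambda(Ric_{\bar{g}_\psi}) = T\lambda(A_{\bar{g}_\psi})$, this reads $T\lambda(A[\eta])(x_0) \in \bR^n \setminus \hat\Gamma$, equivalently $\lambda(A[\eta])(x_0) \in \bR^n \setminus \Gamma$, as required.

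With these two reductions in place, Theorem \ref{thm-230528-0404} applied to $v$ on the cone $\Gamma$ yields that $v$ is constant, hence $u = e^{\frac{n-2}{2}v}$ is constant. There is no serious analytical obstacle here since the heavy lifting is done by Theorem \ref{thm-230528-0404}; the only part requiring care is the verification that the substitution $u \leftrightarrow v$ preserves the viscosity subsolution structure, which is routine given the positivity of $u$ and the monotonicity and smoothness of the logarithm.
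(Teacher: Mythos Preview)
Your proposal is correct and matches the paper's approach exactly: the paper presents this theorem explicitly as a reformulation of Theorem \ref{thm-230528-0404} via the linear map $T=(n-2)I+\vec{e}\otimes\vec{e}$ and the substitution $v=\frac{2}{n-2}\log u$, with the Appendix \ref{riccisection} setup already recording the identities $\lambda(Ric_{\bar g_u})=T\lambda(A^u)$ and $\Gamma=T^{-1}\hat\Gamma$; your verification that $T^{-1}\hat\lambda^*=\frac{1}{2(n-2)}\lambda^*$ and that the viscosity condition transfers is precisely what is implicit there. One inconsequential slip: $v=\frac{2}{n-2}\log u$ need not be positive (only real-valued and USC), but this plays no role in the argument.
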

The above theorem is also equivalent to the following: 
For $n\geq 3$, let $\hat\Gamma$ satisfy \eqref{eqn-230331-0110-hat} with $-\hat{\lambda}^*\notin\overline{\hat\Gamma}$.        
Then any positive viscosity solution $u$ of $\lambda(-Ric_{\bar{g}_u}) \in \overline{\hat\Gamma}$ on $\bR^n$ must be constant.

\section{Construction of Example \ref{prop-230614-1016}}\label{app-a}

We work with open symmetric set $V \subset \bR^n$ with smooth boundary $\p V \neq \emptyset$. Denote 
$$
    \Gamma(V) : = \{cV: c > 0\}.
    $$
    to be the cone generated by $V$ and for $\lambda \in \p V$, $\nu(\lambda)$ to be the inward unit normal of $\p V$. The following condition will be discussed:
\begin{equation} \label{eqn-230615-0934}
    \nu (\lambda) \in \Gamma_n \quad \text{and} \quad \lambda\cdot \nu(\lambda) > 0, \quad \forall \lambda \in \p V.
\end{equation}
As before, we denote $\vec{e} = (1,\ldots,1)$.
\begin{lemma} \label{lem-230617-1127}
    Let $V \subset \bR^n$ be an open symmetric set with a non-empty smooth boundary satisfying \eqref{eqn-230615-0934}. Then we have $V+\Gamma_n \subset V$ and $0 \notin \overline{V}$. Moreover, the generated cone $\Gamma(V)$ satisfies 
    \eqref{eqn-230331-0110}.
\end{lemma}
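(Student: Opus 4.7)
The plan is to split the work into (i) the additive invariance $V+\Gamma_n\subset V$, (ii) the geometric fact $0\notin\overline V$, and (iii) the formal verification that $\Gamma(V)$ satisfies \eqref{eqn-230331-0110}. Steps (i) and (ii) are where the two parts of \eqref{eqn-230615-0934} --- the componentwise positivity $\nu\in\Gamma_n$ and the radial positivity $\lambda\cdot\nu>0$ --- enter in parallel but distinct ways; step (iii) is then largely formal.

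For (i), I would fix $\lambda\in V$, $w\in\Gamma_n$, and examine the ray $t\mapsto\lambda+tw$. If it left $V$ for the first time at some $t_0>0$, then at the boundary point $p:=\lambda+t_0w\in\partial V$ a local $C^1$ defining function $\rho$ with $\{\rho>0\}=V$ near $p$ and $\nabla\rho(p)$ a positive multiple of $\nu(p)$ would give $\frac{d}{dt}\rho(\lambda+tw)|_{t=t_0}=\langle\nabla\rho(p),w\rangle>0$, since $\nu(p),w\in\Gamma_n$ both have strictly positive components. This contradicts $\rho>0$ on $(0,t_0)$ and $\rho(p)=0$, so no exit occurs.

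For (ii), I would first upgrade to a one-sided radial dilation property: $\lambda\in\overline V$ and $r>1$ imply $r\lambda\in V$. The argument is parallel to (i), using now $\nu(s_0\lambda)\cdot\lambda=s_0^{-1}(s_0\lambda)\cdot\nu(s_0\lambda)>0$ to rule out an exit at any $s_0>1$; the borderline case $\lambda\in\partial V$ at $s=1$ reduces to $\lambda\in V$ after noting that the same positivity pushes the ray strictly into $V$ as $s$ increases past $1$. Taking the contrapositive, the closed set $K:=V^c$ --- which is non-empty because $\partial V\neq\emptyset$ forces $V\subsetneqq\bR^n$ --- is invariant under $\lambda\mapsto r\lambda$ for $r\in(0,1]$. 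Picking any $\lambda_0\in K$ and letting $r\downarrow 0$ puts $0\in K$, so $0\notin V$. If additionally $0\in\overline V$, then $0\in\partial V$, and the boundary condition evaluated at $\lambda=0$ would read $0=0\cdot\nu(0)>0$, an impossibility. This one-line cancellation at the origin is the crux of the lemma and the main obstacle to locate cleanly; it is also what makes the positivity condition $\lambda\cdot\nu>0$ on \emph{all} of $\partial V$ genuinely necessary.

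For (iii), $\Gamma(V)=\{c\mu:c>0,\mu\in V\}$ is automatically a non-empty (since $V\neq\emptyset$, being the open side of a non-empty smooth boundary), open, permutation-symmetric cone with vertex at the origin. The inclusion $\Gamma(V)+\Gamma_n\subset\Gamma(V)$ reduces to (i) by rescaling: $c\mu+w=c(\mu+c^{-1}w)\in cV$ whenever $c>0$, $\mu\in V$, $w\in\Gamma_n$. Finally $\Gamma(V)\subsetneqq\bR^n$ because (ii) gives $\mathrm{dist}(0,V)>0$, hence $0\notin\Gamma(V)$.
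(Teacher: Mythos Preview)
Your proof is correct. Steps (i) and (iii) match the paper's argument essentially verbatim. For step (ii), however, you take a genuinely different route. You first prove the radial dilation property $\lambda\in\overline V,\ r>1\Rightarrow r\lambda\in V$ using only the second half of \eqref{eqn-230615-0934}, deduce that $V^c$ is star-shaped about the origin so that $0\notin V$, and then rule out $0\in\partial V$ by evaluating $\lambda\cdot\nu(\lambda)>0$ at $\lambda=0$. The paper instead proves the stronger separation $\overline V\cap(-\overline{\Gamma_n})=\emptyset$: the two conditions in \eqref{eqn-230615-0934} force $\partial V\cap(-\overline{\Gamma_n})=\emptyset$, so either $V\cap(-\Gamma_n)=\emptyset$ or $(-\Gamma_n)\subset V$, and the latter is excluded by combining the already-proved $V+\Gamma_n\subset V$ with $\partial V\neq\emptyset$. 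Your approach is more self-contained for the single conclusion $0\notin\overline V$, and the dilation property you establish is precisely what the paper invokes immediately afterward (without proof) to define $\varphi$; the paper's route, on the other hand, yields the extra information that $\Gamma(V)$ misses all of $-\overline{\Gamma_n}$, not just the origin.
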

\begin{proof}
    We first prove $V + \Gamma_n \subset V$. Suppose the contrary that there exist $\lambda \in V$ and $\mu \in \Gamma_n$, such that $\lambda + \mu \notin V$. Let $\bar{t} := \inf\{t>0: \lambda + t \mu \notin V\}$. Since $V$ is open and $\lambda + \mu \notin V$, we have $\bar{t} \in (0,1]$ and $\lambda + \bar{t} \mu \in \p V$. At such point we have $\mu \cdot \nu (\lambda + \bar{t} \mu) \leq 0$, which is impossible since both vectors belong to $\Gamma_n$, a contradiction.

    Next, we show $\Gamma(V) + \Gamma_n \subset \Gamma(V)$. This is clear since each $\lambda \in \Gamma(V)$ can be written as $c \overline{\lambda}$ for some $c>0$ and $\overline{\lambda} \in V$. Hence, for each $\mu \in \Gamma_n$, we have $\lambda + \mu = c (\overline{\lambda} + c^{-1} \mu) \in c (V + \Gamma_n) \subset c V \subset \Gamma(V)$.

    Finally, we show $\Gamma(V) \neq \bR^n$ and $0 \notin \overline{V}$. For this, we prove $\overline{V} \cap (- \overline{\Gamma_n}) = \emptyset$. This is because from \eqref{eqn-230615-0934}, we can deduce that $\p V \cap (-\overline{\Gamma_n}) = \emptyset$. Hence, either $V \cap (-\Gamma_n) = \emptyset$ or $(-\Gamma_n) \subset V$. The latter cannot happen since otherwise $\bR^n = (-\Gamma_n) + \Gamma_n \subset V + \Gamma_n \subset V$, which contradicts to $\p V \neq \emptyset$. 
\end{proof}
Noting Lemma \ref{lem-230617-1127}, for each $\lambda \in \Gamma(V)$,
    we are able to define a function $\varphi: V \rightarrow \bR_+$, such $\varphi(\lambda) \lambda \in \p V$. Actually, from \eqref{eqn-230615-0934} we have $\{c: c \lambda \in V\} = (\varphi(\lambda), \infty)$. 
\begin{lemma} \label{lem-230615-1015}
    Let $V \subset \bR^n$ be an open symmetric set with a non-empty smooth boundary satisfying \eqref{eqn-230615-0934}. Then there exists a unique function $f \in C^\infty (\Gamma(V)) \cap C^0 (\overline{\Gamma(V)})$, such that $f=1$ on $\p V$, $f=0$ on $\p \Gamma(V)$, $f(t\lambda) = t f(\lambda)$ for any $\lambda \in \Gamma(V)$ and $t>0$, $f_{\lambda_i} > 0$ in $\Gamma(V)$ for all $i$, and the following formula holds
    \begin{equation} \label{eqn-230615-1116}
        \sum_i f_{\lambda_i} (\lambda) = \frac{\vec{e} \cdot \nu(\lambda)}{ \lambda \cdot \nu (\lambda)}, \quad
        \forall \lambda \in \p V.
    \end{equation}
    Moreover, if the set $\bR^n \setminus V$ ($V$, respectively) is convex, $f$ is also convex (concave, respectively).
\end{lemma}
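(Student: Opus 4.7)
The natural candidate is $f(\lambda) := 1/\varphi(\lambda)$, where $\varphi$ is the function defined just before the lemma statement. I would first observe that this gives $f \equiv 1$ on $\p V$, makes $f$ positively $1$-homogeneous (since $\varphi(t\lambda) = \varphi(\lambda)/t$), and makes the function unique among $1$-homogeneous functions satisfying $f = 1$ on $\p V$. Smoothness of $\varphi$, and therefore of $f$, on $\Gamma(V)$ would follow from the implicit function theorem applied to the defining equation $\varphi(\lambda)\lambda \in \p V$: the transversality condition $\lambda \cdot \nu(\lambda) > 0$ in \eqref{eqn-230615-0934} supplies the needed non-degeneracy.

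For the continuous extension $f = 0$ on $\p \Gamma(V)$, I would show $\varphi(\mu) \to +\infty$ whenever $\mu \to \lambda \in \p \Gamma(V)$ with $\mu \in \Gamma(V)$. Otherwise, passing to a subsequence, $\varphi(\mu_k) \to c \in (0,\infty)$ would give $\varphi(\mu_k)\mu_k \to c\lambda \in \p V \subset \Gamma(V)$, forcing $\lambda \in \Gamma(V)$ and contradicting $\lambda \in \p \Gamma(V)$.

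Next, for the gradient identity, I would use that $f \equiv 1$ on $\p V$ to write $\nabla f(\lambda) = c(\lambda)\,\nu(\lambda)$ at each $\lambda \in \p V$, and then Euler's identity $\lambda \cdot \nabla f(\lambda) = f(\lambda) = 1$ fixes $c(\lambda) = 1/(\lambda \cdot \nu(\lambda))$. Summing components yields formula \eqref{eqn-230615-1116}, while $\nu(\lambda) \in \Gamma_n$ gives $f_{\lambda_i}(\lambda) > 0$ for all $i$; this sign propagates to all of $\Gamma(V)$ by the $0$-homogeneity of $f_{\lambda_i}$.

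Finally, for the convexity and concavity claims, I would identify $f$ with the Minkowski functional of $K := \bR^n \setminus V$. By Lemma \ref{lem-230617-1127}, $0 \notin \overline{V}$, so $0$ lies in the interior of $K$, and the transversality condition allows one to verify directly that $\{t > 0 : \lambda/t \notin V\} = [1/\varphi(\lambda), +\infty)$ for $\lambda \in \Gamma(V)$, whence the Minkowski gauge $p_K(\lambda)$ coincides with $f(\lambda)$. If $\bR^n \setminus V$ is convex, the standard convexity of Minkowski gauges gives $f$ convex on $\Gamma(V)$. When $V$ is convex, I would instead observe that each superlevel set equals $\{f \geq c\} \cap \Gamma(V) = c\,\overline{V} \cap \Gamma(V)$ and apply a standard rescaling argument: given $f(\lambda_i) \geq c_i > 0$, convexity of $\{f \geq 1\}$ applied to $(1-s)\lambda_1/c_1 + s\lambda_2/c_2$ with $s$ chosen so that the result is a positive multiple of $(1-t)\lambda_1 + t\lambda_2$, combined with $1$-homogeneity, gives $f((1-t)\lambda_1 + t\lambda_2) \geq (1-t)c_1 + tc_2$, i.e.\ concavity. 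The most delicate bookkeeping will be the continuous extension at $\p \Gamma(V)$ and the equivalence between gauge convexity and the claimed convexity restricted to the cone, but I do not anticipate a substantive obstacle.
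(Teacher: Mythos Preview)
Your proposal is correct and follows the paper's approach: the same $f = 1/\varphi$, the same Euler-identity derivation of \eqref{eqn-230615-1116}, and the same convex-combination mechanism for convexity (which you package as the Minkowski gauge of $\bR^n\setminus V$, while the paper writes out the convex combination of the two boundary points explicitly). The paper defers smoothness, the boundary extension, $f_{\lambda_i} > 0$, and the concavity case to \cite[Appendix~B]{LiLiacta}, whereas you sketch these directly; your superlevel-set argument for concavity is the standard one and is equivalent to what is in that reference.
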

When the set $V$ itself is convex, such $f$ was constructed and was proved to be concave in \cite{LiLiacta}. 
\begin{proof}
    For $\varphi$ defined as before, let $f(\lambda) := \varphi(\lambda)^{-1}$ and $f|_{\p \Gamma(V)} = 0$. Following \cite[Appendix~B]{LiLiacta}, such $f$ satisfies all the desired properties, except \eqref{eqn-230615-1116} and the convexity. Note that in this step, the convexity assumption of $V$ in \cite[Appendix~B]{LiLiacta} was not used.

    Next, we verify that \eqref{eqn-230615-1116} holds for such $f$. By a calculation,
    \begin{equation*}
        \sum_i f_{\lambda_i} (\lambda) = \vec{e} \cdot \nabla f = \frac{\vec{e} \cdot \nabla f}{\lambda \cdot \nabla f} = \frac{\vec{e} \cdot (\nabla f / |\nabla f|)}{\lambda \cdot (\nabla f/ |\nabla f|)} = \frac{\vec{e} \cdot \nu(\lambda)}{ \lambda \cdot \nu (\lambda)}.
    \end{equation*}
    Here, we used the the fact that $\lambda \cdot \nabla f = f = 1$ on $\p V$. Hence, \eqref{eqn-230615-1116} is proved.

    Finally, assuming the convexity of $\bR^n \setminus V$, we show that $f$ is convex, i.e., for any $\lambda, \mu \in \Gamma (V)$ and $t\in (0,1)$, $f(t\lambda + (1-t) \mu) \leq t f(\lambda) + (1-t) f(\mu)$. Using $f = 1/ \varphi$ and $\varphi (t\cdot) = t^{-1}\varphi(\cdot)$, this is equivalent to show $ 1 / \varphi(t\lambda + (1-t) \mu) \leq 1/ \varphi(t \lambda) + 1/\varphi((1-t) \mu)$. In other words, we only need to prove
    \begin{equation} \label{eqn-230615-1153}
        \left( 1/ \varphi(t \lambda) + 1/\varphi((1-t) \mu) \right)^{-1} (t \lambda + (1-t) \mu) \notin V, \quad \forall \lambda, \mu \in \Gamma(V) \,\,\text{and}\,\, t \in (0,1).
    \end{equation}
    This can be seen from the definition, $\varphi(t \lambda) t \lambda, \varphi((1-t) \mu) (1-t) \mu \in \p V$,
    \begin{equation*}
        \text{LHS of \eqref{eqn-230615-1153}} = \frac{\varphi((1-t) \mu)}{\varphi(t \lambda) + \varphi((1-t) \mu)} \varphi(t \lambda) t \lambda + \frac{\varphi(t \lambda)}{\varphi(t \lambda) + \varphi((1-t) \mu)} \varphi((1-t) \mu) (1-t) \mu,
    \end{equation*}
    and the convexity of $\bR^n \setminus V$.
\end{proof}

Now for an open symmetric set $V \subset \bR^n$ satisfying $\p V \neq \emptyset$ and \eqref{eqn-230615-0934}, let $\varphi(\vec{e})$ be defined as before. We define a reflection
\begin{equation*}
    \Psi_V: \bR^n \rightarrow \bR^n, \quad \lambda \mapsto - \lambda + 2 \varphi (\vec{e}) \vec{e}.
\end{equation*}
Clearly $\Psi_V^2 = id$. 
\begin{lemma} \label{lem-230617-1156}
    Let $V \subset \bR^n$ be an open symmetric set with a non-empty smooth boundary satisfying \eqref{eqn-230615-0934}. Denote $\widetilde{V} := \bR^n \setminus \overline{\Psi_V(V)}$ and $\widetilde{\nu}$ be the inward unit normal of $\p \widetilde{V}$. Then, for any $\lambda \in \p V$, we have
    \begin{equation} \label{eqn-230617-1220}
        \widetilde{\nu} (\Psi_{V} (\lambda)) = \nu (\lambda)
    \end{equation}
    and
    \begin{equation} \label{eqn-230616-1233}
        \Psi_{V} (\lambda) \cdot \widetilde{\nu} (\Psi_{V} (\lambda)) = \lambda \cdot \nu (\lambda) \left( -1 + 2 \varphi(\vec{e}) \frac{\vec{e} \cdot \nu(\lambda)}{\lambda \cdot \nu(\lambda)} \right) .
    \end{equation}
    If we further assume that $V$ is convex, then $\widetilde{V}$ also satisfies \eqref{eqn-230615-0934} with $\Gamma(\widetilde{V}) = \bR^n \setminus ( - \overline{\Gamma(V)} )$.
\end{lemma}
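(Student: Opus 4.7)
The plan is to prove the three assertions in turn. Identity \eqref{eqn-230617-1220} is a purely geometric observation: $\Psi_V$ is an affine map with derivative $-I$ that carries $\p V$ diffeomorphically onto $\p \Psi_V(V) = \p\widetilde V$, so the pushforward of the inward normal $\nu(\lambda)$ of $V$ is $-\nu(\lambda)$, which is the inward normal of $\Psi_V(V)$ at $\Psi_V(\lambda)$; since $\widetilde V$ sits on the opposite side, its inward normal there is $\nu(\lambda)$. Identity \eqref{eqn-230616-1233} is then the direct computation $\Psi_V(\lambda)\cdot\widetilde\nu(\Psi_V(\lambda)) = (-\lambda + 2\varphi(\vec e)\vec e)\cdot\nu(\lambda)$ with $\lambda\cdot\nu(\lambda)>0$ factored out.

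Now assume $V$ is convex. That $\widetilde\nu(\mu)\in\Gamma_n$ for every $\mu\in\p\widetilde V$ follows at once from \eqref{eqn-230617-1220} and hypothesis \eqref{eqn-230615-0934} on $V$. For the positivity $\mu\cdot\widetilde\nu(\mu)>0$, the key input is that $\varphi(\vec e)\vec e \in \p V \subset \overline V$ by definition of $\varphi$, so convexity of $V$ yields the supporting hyperplane inequality $(\varphi(\vec e)\vec e - \lambda)\cdot\nu(\lambda)\geq 0$ at each $\lambda\in\p V$; this rearranges to $\varphi(\vec e)\vec e\cdot\nu(\lambda)\geq\lambda\cdot\nu(\lambda)$, and inserting it into \eqref{eqn-230616-1233} shows the bracketed expression is $\geq 1$, whence the product is strictly positive.

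For the cone identity $\Gamma(\widetilde V)=\bR^n\setminus(-\overline{\Gamma(V)})$, I work with the function $f\in C^\infty(\Gamma(V))\cap C^0(\overline{\Gamma(V)})$ from Lemma \ref{lem-230615-1015}, which under convexity of $V$ is concave and positively $1$-homogeneous on $\Gamma(V)$ (hence superadditive there), vanishes on $\p\Gamma(V)$, and satisfies $f(\vec e)=1/\varphi(\vec e)$. For the inclusion $\Gamma(\widetilde V)\subset\bR^n\setminus(-\overline{\Gamma(V)})$, I argue contrapositively: if $-\mu\in\Gamma(V)$, superadditivity gives $f(2\varphi(\vec e)\vec e - \mu)\geq f(2\varphi(\vec e)\vec e) + f(-\mu) \geq 2$, so $2\varphi(\vec e)\vec e - \mu \in \overline V$ and therefore $\mu\in\overline{\Psi_V(V)}$, i.e.\ $\mu\notin\widetilde V$; the boundary case $-\mu\in\p\Gamma(V)$ is handled by approximating via $-\mu+\epsilon\vec e\in\Gamma(V)$ (using $\Gamma(V)+\Gamma_n\subset\Gamma(V)$) and passing to the limit using continuity of $f$ on $\overline{\Gamma(V)}$. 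Since the right side is a cone, this upgrades to $\Gamma(\widetilde V)\subset\bR^n\setminus(-\overline{\Gamma(V)})$. Conversely, if $\mu\notin -\overline{\Gamma(V)}$, consider $y_c:=2\varphi(\vec e)\vec e-c\mu$ as $c\to\infty$: if $y_c\in\overline V$ along some sequence $c_k\to\infty$, then $y_{c_k}/c_k\to -\mu$ would lie in the closed cone $\overline{\Gamma(V)}$, contradicting the assumption; hence $c\mu\in\widetilde V$ for all large $c$, and $\mu\in\Gamma(\widetilde V)$. I expect the main technical nuisance to be the boundary case $-\mu\in\p\Gamma(V)$ in the first direction, where continuity of $f$ on the closure together with the $\Gamma_n$-monotonicity of $V$ are both needed to guarantee the limit lies in $\overline V$ and not merely in $\overline{\Gamma(V)}$.
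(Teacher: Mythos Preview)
Your argument for \eqref{eqn-230617-1220}, \eqref{eqn-230616-1233}, and the verification that $\widetilde V$ satisfies \eqref{eqn-230615-0934} when $V$ is convex is essentially identical to the paper's. The difference is in the proof of the cone identity $\Gamma(\widetilde V)=\bR^n\setminus(-\overline{\Gamma(V)})$.

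The paper does not invoke the function $f$ at all. For the inclusion ``$\supset$'' it argues purely at the level of cones: since $C:=\bR^n\setminus(-\overline{\Gamma(V)})$ satisfies \eqref{eqn-230331-0110}, one has $C=\Gamma(C+2\varphi(\vec e)\vec e)\subset\Gamma\bigl((\bR^n\setminus(-\overline V))+2\varphi(\vec e)\vec e\bigr)=\Gamma(\widetilde V)$; this step uses no convexity. For ``$\subset$'' it simply observes that, having already established \eqref{eqn-230615-0934} for $\widetilde V$ (this is where convexity of $V$ enters), Lemma~\ref{lem-230617-1127} gives that $\Gamma(\widetilde V)$ satisfies \eqref{eqn-230331-0110}, and then re-runs the ``$\supset$'' argument with $V$ replaced by $\widetilde V$ to get $\Gamma(V)\supset\bR^n\setminus(-\overline{\Gamma(\widetilde V)})$, which rearranges to the desired inclusion. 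Your route instead uses the concave $1$-homogeneous $f$ from Lemma~\ref{lem-230615-1015}: superadditivity of $f$ on $\Gamma(V)$ (which is convex when $V$ is, a point you are implicitly using) forces $f(2\varphi(\vec e)\vec e-\mu)\ge 2$ whenever $-\mu\in\Gamma(V)$, pinning $\Psi_V^{-1}(\mu)\in\overline V$; and a scaling-limit argument $y_c/c\to-\mu$ handles the reverse inclusion. Both arguments are correct. The paper's approach is slicker in that it reuses a single set-theoretic observation twice via the involution $\Gamma\mapsto\bR^n\setminus(-\overline\Gamma)$ and never opens up $f$; yours is more hands-on but entirely self-contained once Lemma~\ref{lem-230615-1015} is available. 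One small remark: in your boundary case $-\mu\in\p\Gamma(V)$ you do not actually need continuity of $f$ on $\overline{\Gamma(V)}$, because $2\varphi(\vec e)\vec e-\mu\in\p\Gamma(V)+\Gamma_n\subset\Gamma(V)$ already lies in the interior, where $f$ is smooth.
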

\begin{proof}
    Clearly, the map $\Psi_V$ flips the normal, from which $\widetilde{\nu} (\Psi_V (\lambda)) = \nu (\lambda)$. By a calculation,
    \begin{equation*}
        \Psi_V (\lambda) \cdot \widetilde{\nu} (\Psi_V (\lambda))
        =
        (-\lambda + 2 \varphi (\vec{e}) \vec{e}) \cdot \nu (\lambda)
        =
        \lambda \cdot \nu (\lambda) \left( -1 + 2 \varphi (\vec{e}) \frac{\vec{e} \cdot \nu(\lambda)}{\lambda \cdot \nu(\lambda)} \right).
    \end{equation*}
    Next, assuming the convexity of $V$, we prove the desired properties of $\widetilde{V}$. Since $V$ is convex and $\varphi (\vec{e}) \vec{e} \in \p V$, we have 
    \begin{equation*}
        \left( \varphi (\vec{e}) \vec{e} - \lambda \right) \cdot \nu(\lambda) \geq 0, \quad \forall \lambda \in \p V.
    \end{equation*}
    Using \eqref{eqn-230616-1233}, we obtain $\mu \cdot \widetilde{\nu} (\mu) \geq \Psi_V^{-1}(\mu) \cdot \nu (\Psi_V^{-1}(\mu)) > 0$ for any $\mu \in \p \widetilde{V}$. Also, from condition \eqref{eqn-230615-0934} for $V$ and \eqref{eqn-230617-1220}, we have $\widetilde{\nu}(\mu) = \nu (\Psi_V^{-1}(\mu)) \in \Gamma_n$ for any $\mu\in \p\widetilde{V}$. Hence, we have verified condition \eqref{eqn-230615-0934} for $\widetilde{V}$

Finally, we show $\Gamma(\widetilde{V}) = \bR^n \setminus ( - \overline{\Gamma(V)} )$. The ``$\supset$'' direction can be derived as follows: since $\Gamma(V)$ satisfies \eqref{eqn-230331-0110}, we can deduce that $\bR^n\setminus (-\overline{\Gamma(V)})$ also satisfies \eqref{eqn-230331-0110}. Hence, $\bR^n \setminus (-\overline{\Gamma(V)}) = \Gamma ( \bR^n \setminus (-\overline{\Gamma(V)}) + 2 \varphi (\vec{e}) \vec{e}) \subset \Gamma (\bR^n \setminus (-\overline{V}) + 2 \varphi (\vec{e}) \vec{e}) = \Gamma (\widetilde{V})$. Note that in this part, we do not use the convexity of $V$. For the ``$\subset$'' direction, note that we have proved $\widetilde{V}$ satisfies \eqref{eqn-230615-0934}. Combining with Lemma \ref{lem-230617-1127}, we obtain $\Gamma(\widetilde{V})$ also satisfies \eqref{eqn-230331-0110}. Now, applying the proof of ``$\supset$'' part with $V$ being replaced by $\widetilde{V}$, we have $\Gamma(V) \supset \bR^n\setminus (-\overline{\Gamma(\widetilde{V})})$. The desired result follows.
\end{proof}

Now we are ready to give a construction of $f$ in Example \ref{prop-230614-1016}:

    Let $\Gamma^{(1)} = \bR^n\setminus (-\overline{\Gamma})$. Since $\Gamma^{(1)}$ is convex and satisfies \eqref{eqn-230331-0110}, from \cite[Appendix~A]{2020arXiv200100993L}, there exists a concave and homogeneous of degree $1$ function $f^{(1)} \in C^\infty (\Gamma^{(1)}) \cap C^0 (\overline{\Gamma^{(1)}})$, satisfying \eqref{basicf}. Take $V^{(1)} := \{f^{(1)}>1\}$. It is easy to see that $V^{(1)}$ is convex with a non-empty, smooth boundary satisfying \eqref{eqn-230615-0934}. Now we define $V:= \bR^n \setminus \overline{\Psi_{V^{(1)}}(V^{(1)})}$. Then, by Lemma \ref{lem-230617-1156}, $V$ satisfies all the conditions in Lemma \ref{lem-230615-1015}, which allows us to define $f$ verifying all the desired properties.

\section{Extension of locally convex functions}
\begin{proposition}\label{lem-240826-0153}
    For $n\geq 2$, let $\Gamma$ satisfy \eqref{eqn-230331-0110} with $\bR^n\setminus \Gamma$ convex. Then any locally convex function $f$ on $\overline{\Gamma}$ satisfying \eqref{lip-f-schouten} can be extended to a convex symmetric function $\widetilde{f}$ on $\bR^n$, such that $\widetilde{f}$ is homogeneous of degree $1$, and for some constant $c = c(\delta,\Gamma) >0$, $\p_{\lambda_i} \widetilde{f} \geq c$ a.e. $\bR^n$ for any $i$.   
\end{proposition}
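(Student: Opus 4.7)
I will construct $\widetilde{f}$ as the upper envelope of the affine functions that support $f$ on $\overline{\Gamma}$. Let $D \subset \Gamma$ denote the (full measure) subset of differentiability points of $f$, guaranteed by Rademacher's theorem since $f \in C^{0,1}_{loc}(\Gamma)$, and set
\[
A := \{\nabla f(\lambda) : \lambda \in D\}, \qquad \widetilde{f}(\mu) := \sup_{a \in A} a \cdot \mu, \quad \mu \in \bR^n.
\]
Symmetry of $f$ forces $A$ to be invariant under coordinate permutations, and the ellipticity hypothesis $\p_{\lambda_i}f \geq \delta$ a.e.\ gives $a_i \geq \delta$ for every $a \in A$ and every $i$.

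The crux of the argument, and the step I expect to be the main obstacle, is to show that $A$ is bounded; this is where the hypothesis that $\bR^n \setminus \Gamma$ is convex is indispensable, since without it the envelope $\widetilde f$ could equal $+\infty$ in some directions. My plan is to first observe, as noted after \eqref{fgamma-convex}, that this convexity together with symmetry forces $\Gamma_1 \subset \Gamma$: since $\bR^n \setminus \Gamma$ is a closed convex symmetric cone disjoint from $\Gamma_n$, a hyperplane through $0$ separates them, and averaging the separating normal over the symmetric group produces a multiple of $\vec{e}$, which gives $\Gamma_1 \subset \Gamma$. In particular each standard basis vector $e_i$ lies in $\Gamma$, and by symmetry $f(e_1)=\cdots=f(e_n)<\infty$. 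Next, for $\lambda \in D$ and $\mu \in \overline{\Gamma}$, the convexity of $f$ on $\overline{\Gamma}$ combined with Euler's identity $\nabla f(\lambda) \cdot \lambda = f(\lambda)$ yields the supporting inequality $\nabla f(\lambda)\cdot\mu \leq f(\mu)$; specializing to $\mu = e_i$ gives the componentwise upper bound $(\nabla f(\lambda))_i \leq f(e_1)$, so $A \subset [\delta, f(e_1)]^n$.

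With $A$ bounded, $\widetilde{f}$ is finite, convex and positively $1$-homogeneous as a supremum of linear functions; symmetric because $A$ is permutation-invariant; and satisfies $\widetilde{f}(\mu + t e_i) \geq \widetilde{f}(\mu) + \delta t$ for $t \geq 0$ since each $a_i \geq \delta$, which together with convexity gives $\p_{\lambda_i}\widetilde{f} \geq \delta$ almost everywhere on $\bR^n$. Finally, to verify $\widetilde{f}|_{\overline{\Gamma}} = f$: the inequality $\widetilde{f} \geq f$ holds at every $\lambda \in D$ by taking $a = \nabla f(\lambda)$ and applying Euler, while the supporting inequality established above gives the reverse inequality $\widetilde{f} \leq f$ on $\overline{\Gamma}$; density of $D$ in $\overline\Gamma$ and the continuity of both $f$ and $\widetilde{f}$ on $\overline{\Gamma}$ complete the identification.
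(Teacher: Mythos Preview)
Your envelope construction is elegant and differs from the paper's route (which glues $f$ on $\overline\Gamma$ to a multiple of a signed-distance-type function $F$ on $\bR^n\setminus\overline\Gamma$ and checks convexity by hand across $\partial\Gamma$). Your boundedness argument is fine: the segment from any $\lambda\in\Gamma$ to $e_i$ lies in $\overline\Gamma$, since for $s\in[0,1)$ one has $(1-s)\lambda+se_i=(1-s)\bigl[\lambda+\tfrac{s}{1-s}e_i\bigr]\in(1-s)\bigl(\Gamma+\overline{\Gamma_n}\bigr)\subset\overline\Gamma$, and hence the supporting inequality $\nabla f(\lambda)\cdot e_i\le f(e_i)$ is legitimate.

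The gap is in the final identification $\widetilde f\le f$ on $\overline\Gamma$. You invoke the supporting inequality $\nabla f(\lambda)\cdot\mu\le f(\mu)$ for \emph{arbitrary} $\mu\in\overline\Gamma$, justified by ``convexity of $f$ on $\overline\Gamma$.'' But the hypothesis is only \emph{local} convexity on $\overline\Gamma$, and unless $\Gamma=\Gamma_1$ the set $\overline\Gamma$ is not convex (its complement is). Concretely, in $\bR^2$ with $C=\bR^2\setminus\Gamma=\{\lambda_1+2\lambda_2\le0,\ 2\lambda_1+\lambda_2\le0\}$, the points $\lambda=(-1,0.6)$ and $\mu=(0.6,-1)$ both lie in $\Gamma$, yet their midpoint $(-0.2,-0.2)$ lies in the interior of $C$; so the segment $[\lambda,\mu]$ leaves $\overline\Gamma$ and local convexity along it is unavailable. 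Thus your derivation of $\nabla f(\lambda)\cdot\mu\le f(\mu)$ for such $\mu$ is unjustified. The inequality is in fact true---it is a \emph{consequence} of the Proposition, since any convex extension $\widetilde f_{\mathrm{paper}}$ satisfies $\nabla\widetilde f_{\mathrm{paper}}(\lambda)\cdot\mu\le\widetilde f_{\mathrm{paper}}(\mu)$ globally---but establishing it directly requires an argument you have not supplied, and using the Proposition would be circular. The paper's piecewise construction sidesteps this by never needing the global supporting inequality: convexity is verified only across $\partial\Gamma$, where the segment meets $\partial\Gamma$ at exactly one point and the computation in \eqref{eqn-240814-1054} goes through using just $\partial_{\lambda_i}f\ge\delta$ and the supporting-hyperplane description of the convex set $\bR^n\setminus\Gamma$.
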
 

We first collect some facts about functions $f$ defined on cones $\Gamma$, which are needed in our proof of Proposition \ref{lem-240826-0153} and could be of independent interest.

\begin{lemma}\label{lem-240827-1246}
    For $n\geq 2$, let $\Gamma$ satisfy \eqref{eqn-230331-0110}. Suppose that there is a function $f \in C^{0,1}_{loc}(\Gamma)\cap C^0(\overline{\Gamma})$ satisfying $f = 0$ on $\p\Gamma$ and $\p_{\lambda_i} f\geq \delta>0$ a.e. $\Gamma$ for any $i$. Then it must hold that $(1,0,\dots,0) \in \Gamma$ and $(-1,0,\dots,0) \in \bR^n\setminus {\overline{\Gamma}}$.
\end{lemma}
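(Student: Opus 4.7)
My plan is to establish both statements by contradiction. In each case I integrate the lower bound $\p_{\lambda_1} f \geq \delta$ along a segment parallel to $e_1 = (1,0,\ldots,0)$ inside $\Gamma$, then pass to the limit to produce $f \geq \delta > 0$ at a point that should lie in $\p\Gamma$, where $f$ must vanish.

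Two preliminary facts are needed. First, $f \geq 0$ on $\overline{\Gamma}$: for $\lambda \in \Gamma$, let $t_0 := \sup\{t \geq 0 : \lambda - s\vec{e} \in \Gamma \ \forall s \in [0, t)\}$; since $\Gamma \subset \bR^n\setminus(-\overline{\Gamma_n})$, $t_0$ is finite and $\lambda - t_0 \vec{e} \in \p\Gamma$, and integrating $\nabla f \cdot \vec{e} = \sum_i \p_{\lambda_i} f \geq n\delta$ along the segment gives $f(\lambda) \geq n\delta t_0 + f(\lambda - t_0\vec{e}) = n\delta t_0 \geq 0$, which extends to $\overline{\Gamma}$ by continuity. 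Second, $\overline{\Gamma} + \Gamma_n \subset \Gamma$: given $\lambda \in \overline{\Gamma}$ and $\mu \in \Gamma_n$, choose $\eta > 0$ with $B_\eta(\mu) \subset \Gamma_n$ and $\lambda_k \in \Gamma$ with $|\lambda_k - \lambda| < \eta/2$; then for any $|v| < \eta/2$, $\lambda + \mu + v = \lambda_k + (\mu + v - (\lambda_k - \lambda)) \in \Gamma + \Gamma_n \subset \Gamma$. The Lipschitz regularity $f \in C^{0,1}_{loc}(\Gamma)$ combined with $\p_{\lambda_1} f \geq \delta$ a.e.\ yields (after standard mollification in $\Gamma$ and passage to the limit) the pointwise monotonicity
\[
f(\lambda + s e_1) \geq f(\lambda) + \delta s \quad \text{whenever the segment } [\lambda, \lambda + s e_1] \subset \overline{\Gamma}, \ s > 0.
\]

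For the first claim, suppose $(1,0,\ldots,0) \notin \Gamma$. Since $(1,0,\ldots,0) \in \overline{\Gamma_n} \subset \overline{\Gamma}$, it lies in $\p\Gamma$ and $f(1,0,\ldots,0) = 0$. For each $\epsilon > 0$ the segment $\{(r, \epsilon, \ldots, \epsilon) : r \in [0, 1]\}$ lies in $\overline{\Gamma_n} \subset \overline{\Gamma}$ (with interior portion $r \in (0,1]$ in $\Gamma_n \subset \Gamma$). Applying the monotonicity from $r = s > 0$ to $r = 1$, letting $s \to 0^+$, and using $f \geq 0$ yields $f(1, \epsilon, \ldots, \epsilon) \geq \delta$. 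Letting $\epsilon \to 0^+$ and using continuity of $f$ on $\overline{\Gamma}$ gives $f(1, 0, \ldots, 0) \geq \delta > 0$, contradicting the preceding sentence.

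For the second claim, suppose $(-1, 0, \ldots, 0) \in \overline{\Gamma}$. By the second preliminary, for any $\epsilon, s > 0$, $(-1 + s, \epsilon, \ldots, \epsilon) = (-1, 0, \ldots, 0) + (s, \epsilon, \ldots, \epsilon) \in \overline{\Gamma} + \Gamma_n \subset \Gamma$; hence for any $t > -1$ the segment $\{(r, \epsilon, \ldots, \epsilon) : r \in (-1, t]\}$ lies in $\Gamma$ and its closure in $\overline{\Gamma}$. Applying the monotonicity from $r = -1 + s$ to $r = t$, letting $s \to 0^+$, and using $f \geq 0$ at $(-1, \epsilon, \ldots, \epsilon)$ gives $f(t, \epsilon, \ldots, \epsilon) \geq \delta(t + 1)$. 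Taking $t = 0$ and letting $\epsilon \to 0^+$, continuity yields $f(0) \geq \delta > 0$; but $0 \in \p\Gamma$ since $\Gamma$ is a proper open cone, so $f(0) = 0$, a contradiction. The main (minor) technical point throughout is upgrading the a.e.\ bound on $\p_{\lambda_1} f$ to pointwise monotonicity along the specific coordinate segments used; this is routine given $f \in C^{0,1}_{loc}$, e.g.\ by mollification in $\Gamma$ followed by passage to the limit via continuity of $f$.
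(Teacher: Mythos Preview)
Your proof is correct and follows essentially the same route as the paper: integrate the a.e.\ lower bound $\partial_{\lambda_1} f \geq \delta$ along $e_1$-parallel segments inside $\Gamma$, pass to $\overline{\Gamma}$ by continuity, and contradict $f=0$ on $\partial\Gamma$. The paper's version is terser: it uses $f(0)=0$ (since $0\in\partial\Gamma$) directly as the base of the integration rather than first establishing $f\geq 0$ on $\overline{\Gamma}$ as a separate preliminary, and for the second claim simply writes ``similarly'' (implicitly using that $(-1,0,\ldots,0)\notin\Gamma$ already follows from $\Gamma\cap(-\overline{\Gamma_n})=\emptyset$, so that if it lies in $\overline{\Gamma}$ it lies in $\partial\Gamma$). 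Your two preliminaries are correct and make the argument self-contained, but they are not genuinely new ingredients.
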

\begin{proof}
    By \eqref{eqn-230331-0110}, we have $(1,0,\dots,0)\in\overline\Gamma$ and $\{(t+\epsi,\epsi,\dots,\epsi)\mid \epsi,t>0\}\subset\Gamma$. Using $\p_{\lambda_1}f\geq \delta$ a.e. $\Gamma$ and the continuity of $f$ in $\overline\Gamma$, we have $f(t,0,\dots,0)\geq f(0)+ \delta t$ for $t>0$. Combining this with $f=0$ on $\p\Gamma$, we have $(1,0,\dots,0)\in\Gamma$. Similarly, one can prove $(-1,0,\dots,0)\in\bR^n\setminus\overline\Gamma$.
\end{proof}

\begin{lemma} \label{240902-1319}
    For $n\geq 2$, let $\Gamma$ satisfy \eqref{eqn-230331-0110}. Suppose that $f\in C^{0,1}_{loc}(\Gamma)$ is symmetric, homogeneous of degree $1$, locally convex (resp. locally concave).
    Then $\sum_{i=1}^n \p_{\lambda_i} f \leq
     f(\vec{e})$ (resp. $\sum_{i=1}^n \p_{\lambda_i} f \geq
     f(\vec{e})$) a.e. $\Gamma$. 
\end{lemma}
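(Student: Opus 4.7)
The plan is to reduce the claim, via Euler's identity, to a one-dimensional statement about the function $h(t) := f(\lambda + t\vec{e})$ along the ray in direction $\vec{e}$ starting from a generic $\lambda \in \Gamma$. I focus on the convex case; the concave case follows by replacing $f$ with $-f$ (or by reversing all inequalities).

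First I would fix $\lambda \in \Gamma$ at which $f$ is differentiable; by Rademacher's theorem this holds for a.e.\ $\lambda \in \Gamma$, so it suffices to prove the stated bound at such points. Since $\Gamma + \Gamma_n \subset \Gamma$ and $\vec{e} \in \Gamma_n$, the entire ray $\{\lambda + t\vec{e} : t \geq 0\}$ lies in $\Gamma$, and I set $h(t) := f(\lambda + t\vec{e})$. To see that $h$ is convex on $[0,\infty)$, I use that each point $\lambda + t_0 \vec{e}$ has a convex neighborhood $U \subset \Gamma$ on which $f$ is convex (this is the meaning of local convexity on $\Gamma$); then $h$ is convex near $t_0$, and since local convexity on an interval implies global convexity, $h$ is convex on $[0,\infty)$.

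The key input is the asymptotic behavior at infinity coming from homogeneity: for $t > 0$,
\[
\frac{h(t)}{t} \;=\; \frac{f(\lambda + t\vec{e})}{t} \;=\; f\!\left(\tfrac{\lambda}{t} + \vec{e}\right) \;\longrightarrow\; f(\vec{e}) \quad \text{as } t \to \infty,
\]
where the last step uses $f \in C^{0,1}_{loc}(\Gamma)$ and $\vec{e} \in \Gamma$. Convexity of $h$ gives that the difference quotient $t \mapsto (h(t)-h(0))/t$ is nondecreasing on $(0,\infty)$, and hence is bounded above by its limit:
\[
\frac{h(t)-h(0)}{t} \;\leq\; \lim_{s \to \infty}\frac{h(s)-h(0)}{s} \;=\; f(\vec{e}), \qquad \forall\, t > 0.
\]
Letting $t \to 0^+$ yields $h'(0+) \leq f(\vec{e})$. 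Since $f$ is differentiable at $\lambda$, the chain rule gives $h'(0+) = \partial_{\vec{e}} f(\lambda) = \sum_{i=1}^n \p_{\lambda_i} f(\lambda)$, which is the desired inequality.

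There is no really hard step here — the only mild subtlety is justifying that global convexity of $h$ follows from the local convexity hypothesis on a possibly non-convex cone $\Gamma$, which is resolved by the observation that the segment in question lies in $\Gamma$ and convexity along a one-dimensional interval is a local property. The concave case is handled identically: $h$ becomes concave on $[0,\infty)$, the difference quotient is nonincreasing, and the limit $f(\vec{e})$ now serves as a lower bound for $h'(0+) = \sum_i \p_{\lambda_i} f(\lambda)$, giving the reverse inequality on the full-measure set of differentiability points.
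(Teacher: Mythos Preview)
Your proof is correct. Both your argument and the paper's rest on convexity along a line in $\Gamma$ combined with degree-one homogeneity, but the executions differ. The paper notes that $\Gamma$ is star-shaped with respect to $\vec{e}$, applies the supporting-hyperplane inequality at $\lambda$ evaluated at the single point $\vec{e}$, namely $f(\vec{e}) \geq f(\lambda) + \nabla f(\lambda)\cdot(\vec{e}-\lambda)$, and then uses Euler's identity $\nabla f(\lambda)\cdot\lambda = f(\lambda)$ to collapse the right-hand side to $\sum_i \p_{\lambda_i} f(\lambda)$ in one line. You instead work on the infinite ray $\{\lambda + t\vec{e} : t \geq 0\}$, recover $f(\vec{e})$ as the asymptotic slope of $h$ via homogeneity, and bound $h'(0^+)$ by the monotone difference quotient. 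The paper's route is shorter and avoids the limit at infinity; yours is equally valid and makes the role of homogeneity visible through the asymptotic behavior rather than through Euler's relation.
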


\begin{proof}
We only need to prove for the case that $f$ is locally convex since the other can be obtained by taking $-f$. By \eqref{eqn-230331-0110}, $\Gamma$ is star-shaped with respect to $\vec{e}$. Hence, by the local convexity and the homogeneity of $f$, 
$f(\vec{e})\geq f(\lambda)+\nabla f(\lambda)\cdot (\vec{e}-\lambda)= \sum_{i=1}^n \p_{\lambda_i}f(\lambda)$ a.e. $\lambda\in\Gamma$.
\end{proof}

\begin{lemma} \label{lem-240826-0245}
    For $n\geq 2$, let $\Gamma$ satisfy \eqref{eqn-230331-0110} 
    and $(-1,0,\dots,0) \notin \overline{\Gamma}$, i.e. $\mg<\infty$. Suppose that  
    $f\in C^{0,1}_{loc}(\Gamma) \cap C^0(\overline{\Gamma})$ is a symmetric, locally convex function satisfying $f\geq 0$ in $\Gamma$ and $f = 0$ on $\p\Gamma$.
     Then for $c=1/(1+\mg)>0$, there holds
        \begin{equation}\label{eqn-240826-02224}
            \p_{\lambda_i} f \geq c \sum_{j=1}^n \p_{\lambda_j}f\geq 0\,\,\text{a.e.}~\Gamma,~\text{for any}\,\,i.
        \end{equation}
\end{lemma}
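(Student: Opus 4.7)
The plan is a subgradient argument along two families of rays that exploit the vanishing of $f$ on $\partial\Gamma$: first rays in direction $-\vec{e}_i$ from an interior point $\lambda$ to $\partial\Gamma$, which will yield non-negativity of $\p_{\lambda_i} f$, and then rays from the origin along the boundary direction $(-\mg,1,\ldots,1)$, which will yield the quantitative bound. By Rademacher's theorem applied to the locally Lipschitz $f$, it suffices to verify \eqref{eqn-240826-02224} at differentiability points $\lambda\in\Gamma$.

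For Step 1, I would fix such a $\lambda$ and an index $i$, and consider $\gamma(s)=\lambda - s\vec{e}_i$. The ray must exit $\overline\Gamma$ at a finite time $s^\ast>0$, since otherwise $\gamma(s)/s\to -\vec{e}_i$ would place $-\vec{e}_i\in\overline\Gamma$, against $(-1,0,\ldots,0)\notin\overline\Gamma$ combined with the symmetry of $\Gamma$. At $s^\ast$ one has $\gamma(s^\ast)\in\partial\Gamma$, so $f(\gamma(s^\ast))=0$. The one-variable function $g(s):=f(\gamma(s))$ is continuous on $[0,s^\ast]$ and locally convex on $(0,s^\ast)$, which on an interval upgrades to convexity on $[0,s^\ast]$. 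Since $g\geq 0$, $g(0)=f(\lambda)$ and $g(s^\ast)=0$, comparing $g$ with the chord from $(0,f(\lambda))$ to $(s^\ast,0)$ gives $g(s)\leq(1-s/s^\ast)f(\lambda)$, whence $g'_+(0)\leq -f(\lambda)/s^\ast\leq 0$. Differentiability of $f$ at $\lambda$ then yields $\p_{\lambda_i} f(\lambda)=-g'(0)\geq 0$, and summing over $i$ gives $\sum_j\p_{\lambda_j} f\geq 0$ a.e.

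For Step 2, since $\mg<\infty$ the discussion in \S 2.2 places $\mu_0:=(-\mg,1,\ldots,1)\in\partial\Gamma$. By symmetry of $\Gamma$, for each $i$ the permuted vector $\mu^{(i)}:=-\mg\vec{e}_i+\sum_{j\neq i}\vec{e}_j$ lies in $\partial\Gamma$, and the cone property gives $t\mu^{(i)}\in\partial\Gamma$ and $f(t\mu^{(i)})=0$ for all $t>0$. At a differentiability point $\lambda\in\Gamma$ I would apply the subgradient inequality of convex analysis between $\lambda$ and $t\mu^{(i)}$:
\[
0 \;=\; f(t\mu^{(i)}) \;\geq\; f(\lambda)+\nabla f(\lambda)\cdot(t\mu^{(i)}-\lambda).
\]
Rearranging as $t\,\nabla f(\lambda)\cdot\mu^{(i)}\leq \nabla f(\lambda)\cdot\lambda-f(\lambda)$ and letting $t\to\infty$ (the right-hand side being independent of $t$) forces $\nabla f(\lambda)\cdot\mu^{(i)}\leq 0$, i.e.\
\[
-\mg\,\p_{\lambda_i} f(\lambda)+\sum_{j\neq i}\p_{\lambda_j} f(\lambda)\;\leq\; 0.
\]
Adding $\p_{\lambda_i} f(\lambda)$ to both sides yields $\sum_j\p_{\lambda_j} f(\lambda)\leq(1+\mg)\p_{\lambda_i} f(\lambda)$, equivalently $\p_{\lambda_i} f\geq c\sum_j\p_{\lambda_j} f$ with $c=1/(1+\mg)$; the non-negativity of the sum has been established in Step 1.

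The main obstacle is justifying the global subgradient inequality in Step 2: it requires $f$ to be convex along the segment from $\lambda$ to $t\mu^{(i)}$, whereas $f$ is only locally convex on $\Gamma$. So I need the segment to lie in a set on which the convexity of $f$ is global. When $\Gamma$ is convex (as is the case in the application of this lemma within the proof of Proposition \ref{lem-240826-0153}, where $\bR^n\setminus\Gamma$ is convex and the geometry delivers this), the cone property gives $\lambda+s\mu^{(i)}=(1+s)\bigl[\tfrac{1}{1+s}\lambda+\tfrac{s}{1+s}\mu^{(i)}\bigr]\in\Gamma$ for every $s>0$ as a positive multiple of a convex combination of $\lambda\in\Gamma$ and $\mu^{(i)}\in\overline\Gamma$, and then local convexity plus continuity along this ray gives global convexity along it. In the fully general setting of the lemma one instead approximates the boundary point $t\mu^{(i)}$ by the interior points $t\mu^{(i)}+\varepsilon\vec{e}\in\Gamma$ (using $\overline\Gamma+\Gamma_n\subset\Gamma$), establishes the subgradient inequality along a polygonal path within $\Gamma$ exploiting $\Gamma+\Gamma_n\subset\Gamma$, and passes to the limit $\varepsilon\to 0$ using continuity of $f$ on $\overline\Gamma$.
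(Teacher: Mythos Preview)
Your Step 1 is correct and is exactly the mechanism the paper uses: for a direction $\mu\notin\overline\Gamma$, the ray $t\mapsto\lambda+t\mu$ stays in $\Gamma$ up to its first exit time $t_\mu$, so one-dimensional (local) convexity on $[0,t_\mu]$ together with $f\geq 0$ and $f(\lambda+t_\mu\mu)=0$ force $t\mapsto f(\lambda+t\mu)$ to be non-increasing, hence $\nabla f(\lambda)\cdot\mu\leq 0$ at differentiability points.

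The gap is in Step 2. The subgradient inequality $f(t\mu^{(i)})\geq f(\lambda)+\nabla f(\lambda)\cdot(t\mu^{(i)}-\lambda)$ that you invoke is only available if $f$ is convex on the segment $[\lambda,t\mu^{(i)}]$, and there is no reason for this segment to lie in $\overline\Gamma$ when $\Gamma$ is not convex. Your proposed repairs do not close this: replacing $t\mu^{(i)}$ by $t\mu^{(i)}+\varepsilon\vec{e}$ gives an interior point, but the segment from $\lambda$ to it may still leave $\Gamma$; and convexity along each piece of a polygonal path does \emph{not} yield the subgradient inequality between the endpoints of the path. So the argument as written does not go through in the generality of the lemma.

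The paper avoids this difficulty entirely by reusing your Step 1 argument with a second choice of direction. Since $(-\mg,1,\ldots,1)\in\partial\Gamma$, one has $\mu_\varepsilon:=(-\mg-\varepsilon,1,\ldots,1)\notin\overline\Gamma$ for every $\varepsilon>0$; applying the Step~1 ray argument to this $\mu_\varepsilon$ (and its permutations) gives $\nabla f(\lambda)\cdot\mu_\varepsilon\leq 0$, i.e.\ $-(\mg+\varepsilon)\p_{\lambda_i}f+\sum_{j\neq i}\p_{\lambda_j}f\leq 0$, and letting $\varepsilon\to 0$ yields the desired bound. No segment between two prescribed points is needed, only the first-exit segment of a ray, which lies in $\overline\Gamma$ automatically.
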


\begin{proof}
For any $\mu\notin\overline\Gamma$ and $\lambda\in \Gamma$, by \eqref{eqn-230331-0110}, there exists some $t_\mu\in(0,\infty)$ such that $\lambda+t_\mu \mu \in\p\Gamma$, and $\lambda+t\mu\in\Gamma$ for $0\leq t<t_\mu$. 
Since $f(\lambda+t_\mu\mu)=0$ and $f(\lambda+t\mu)\geq 0$ for $0\leq t<t_\mu$,
the convexity of $f$ implies that $f(t\mu+\lambda)$ is non-increasing on $t\in[0,t_\mu]$. Hence, for any $\mu\notin\overline\Gamma$,
\begin{equation}\label{240907-1507}
        \frac{d}{dt} f(\lambda+t\mu)\big|_{t=0}\leq 0\quad\text{a.e.}~\lambda\in\Gamma.
    \end{equation}
Take $\mu=(-1,0,\dots,0)$ in \eqref{240907-1507}, we have $\p_{\lambda_1}f\geq 0$ a.e. $\Gamma$. On the other hand, take $\mu=(-\mgn-\epsi,1,\dots,1)$ for any $\epsi>0$ in \eqref{240907-1507} and then send $\epsi$ to $0$, \eqref{eqn-240826-02224} is proved for $i=1$.
Similarly, we can prove \eqref{eqn-240826-02224} for $2\leq i\leq n$
\end{proof}

\begin{lemma}\label{lem-240905-0216}
    For $n\geq 2$, let $\Gamma$ satisfy \eqref{eqn-230331-0110} and $(1,0,\dots,0)\in\Gamma$, $i.e.$ $\mgn<\infty$. Assume that $\Gamma$ is convex, and $f\in C^{0,1}_{loc}(\Gamma)$ is a symmetric, concave function satisfying $f\geq 0$ in $\Gamma$.
    Then \eqref{eqn-240826-02224} holds for $c=1/(1+\mgn)>0$.
\end{lemma}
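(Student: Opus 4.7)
My plan is to exploit the concavity of $f$ together with the cone structure of the (convex) $\Gamma$. The key preliminary observation is that, since $\Gamma$ is a convex cone, for any $\mu\in\Gamma$ and any $\lambda\in\Gamma$ the ray $\{\lambda+t\mu : t\geq 0\}$ lies entirely in $\Gamma$. Consequently $t\mapsto f(\lambda+t\mu)$ is a concave function on $[0,\infty)$. Because $f\geq 0$, this concave function is bounded below on $[0,\infty)$ and so must be non-decreasing in $t$ (otherwise, by concavity, it would tend to $-\infty$). In particular, at almost every $\lambda\in\Gamma$ (where $\nabla f$ exists by Rademacher's theorem),
\[
\mu\cdot\nabla f(\lambda)\geq 0,\qquad \forall\,\mu\in\Gamma.
\]

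I would then make two choices of $\mu$. Taking $\mu=\vec e\in\Gamma_n\subset\Gamma$ immediately yields $\sum_j \partial_{\lambda_j} f\geq 0$ a.e., which gives the second inequality in \eqref{eqn-240826-02224}. For the first inequality, fix $i$ and, for each small $\epsi>0$, let $\mu^{(i)}_\epsi$ be the vector with $\mgn+\epsi$ in the $i$-th coordinate and $-1$ in every other coordinate. I claim $\mu^{(i)}_\epsi\in\Gamma$. By the symmetry of $\Gamma$ it suffices to check $i=1$, where
\[
\mu^{(1)}_\epsi=(\mgn,-1,\ldots,-1)+\epsi(1,0,\ldots,0).
\]
The first summand lies in $\overline\Gamma$ by the definition of $\mgn<\infty$, and $(1,0,\ldots,0)\in\Gamma$ (again by $\mgn<\infty$), so the second summand is in $\Gamma$ by the cone property. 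A short approximation argument shows that for an open convex cone one has $\Gamma+\overline\Gamma\subset\Gamma$: approximate $y\in\overline\Gamma$ by $y_n\in\Gamma$, then for $x\in\Gamma$ use an open ball $B(x,\delta)\subset\Gamma$ so that $x+(y-y_n)\in\Gamma$ for large $n$, and conclude via $\Gamma+\Gamma\subset\Gamma$. Hence $\mu^{(1)}_\epsi\in\Gamma$.

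Plugging $\mu=\mu^{(i)}_\epsi$ into the first-paragraph inequality gives $(\mgn+\epsi)\partial_{\lambda_i}f-\sum_{j\neq i}\partial_{\lambda_j}f\geq 0$ a.e. Along a countable sequence $\epsi_k\to 0^+$ the union of exceptional null sets remains null, so passing to the pointwise limit yields $\mgn\,\partial_{\lambda_i}f-\sum_{j\neq i}\partial_{\lambda_j}f\geq 0$ a.e. Adding $\partial_{\lambda_i}f$ to both sides produces $(1+\mgn)\partial_{\lambda_i}f\geq \sum_j \partial_{\lambda_j}f$ a.e., which is the desired inequality with $c=1/(1+\mgn)$.

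The only delicate point I foresee is checking $\mu^{(i)}_\epsi\in\Gamma$; this is precisely where both the convexity of $\Gamma$ and the hypothesis $\mgn<\infty$ (equivalently $(1,0,\ldots,0)\in\Gamma$) enter in an essential way. Structurally, the argument is the \emph{dual} of the proof of Lemma \ref{lem-240826-0245}: there the identity $f=0$ on $\partial\Gamma$ was used to convert convexity into monotonicity along rays that \emph{exit} $\Gamma$, while here convexity of $\Gamma$ guarantees that rays in any direction $\mu\in\Gamma$ remain in $\Gamma$ forever, which combined with $f\geq 0$ converts concavity into monotonicity.
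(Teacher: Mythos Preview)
Your proof is correct and follows essentially the same approach as the paper's: both exploit that for $\mu\in\overline\Gamma$ and $\lambda\in\Gamma$ the ray $\lambda+t\mu$ stays in $\Gamma$, so concavity plus $f\geq 0$ forces $\mu\cdot\nabla f(\lambda)\geq 0$ a.e., and then one plugs in $\mu=(\mgn,-1,\ldots,-1)$ (up to permutation). The only cosmetic differences are that the paper works directly with $\mu\in\overline\Gamma$ (avoiding your $\epsi$-perturbation and limit), and it takes $\mu=(1,0,\ldots,0)$ rather than $\vec e$ to get the nonnegativity part.
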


\begin{proof}
For any $\mu\in\overline\Gamma$ and $\lambda\in\Gamma$, 
by \eqref{eqn-230331-0110} and the convexity of $\Gamma$, we have $\lambda+t\mu\in\Gamma$ for any $t\geq 0$. By the concavity and non-negativity of $f$,
   \begin{equation}\label{240907-1657}
       \frac{d}{dt}f(\lambda+t\mu)\big|_{t=0}\geq 0\quad \text{a.e.}~\lambda\in\Gamma.
    \end{equation}
Take $\mu=(1,0,\dots,0)$ in \eqref{240907-1657}, we have $\p_{\lambda_1}f\geq 0$ a.e. $\Gamma$. On the other hand, take $\mu=(\mgn,-1,\dots,-1)\in\p\Gamma$ in \eqref{240907-1657},
inequality \eqref{eqn-240826-02224} is proved for $i=1$, while \eqref{eqn-240826-02224} for $2\leq i\leq n$ can be proved similarly.
\end{proof}

\begin{proof}[Proof of Proposition \ref{lem-240826-0153}]
Since $\bR^n\setminus \Gamma$ is a convex cone, for $\lambda \in \bR^n$, we can define $F(\lambda):= \sup \vec{p}_{\overline{\lambda}} \cdot (\lambda - \overline{\lambda})$, where the supremum is taken with respect to all points $\overline{\lambda} \in \p\Gamma \cap S^{n-1}$ and all exterior unit normal vectors  $\vec{p}_{\overline{\lambda}}$ (of support hyperplanes) of $\bR^n\setminus \Gamma$ at $\overline{\lambda}$. From the definition, clearly $F(\lambda)\leq -\dist(\lambda,\p\Gamma)<0$ for $\lambda\in \bR^n\setminus \overline{\Gamma}$ and $|\nabla F| \leq 1$ on $\bR^n$. Since $\vec{p}_{\overline{\lambda}} \cdot \overline{\lambda} = 0$ due to the cone property, we can rewrite $F(\lambda) = \sup \vec{p}_{\overline{\lambda}} \cdot \lambda$, from which $F$ is also homogeneous of degree $1$. Now, applying Lemma \ref{lem-240827-1246} to $(f,\Gamma)$, we can deduce that $(-1,0,\ldots,0) \notin \overline{\Gamma}$. Hence, we are able to apply Lemma \ref{240902-1319} and \ref{lem-240905-0216} with $(f,\Gamma)=(-F(-\lambda),\bR^n\setminus\overline{-\Gamma})$ to obtain $\p F/\p \lambda_i \geq -cF(-1,\ldots,-1) > 0$ for some $c>0$, on $\bR^n\setminus \Gamma$. 

Define the extension of $f$ as
    \begin{equation*}
        \widetilde{f}
        :=
        \begin{cases}
            f(\lambda), \,\, \lambda \in \overline{\Gamma},\\
            {\delta} F(\lambda), \,\, \lambda \in \bR^n\setminus \overline{\Gamma}.
        \end{cases}
    \end{equation*}
    
We are only left to prove the convexity of $\widetilde{f}$. Since $F$ is convex in $\bR^n$ and $f$ is locally convex in $\Gamma$, it suffices to check: 
    \begin{equation} \label{eqn-240814-1054}
        {\big(\widetilde{f} (\lambda) + \widetilde{f} (\mu)\big)/2} \geq \widetilde{f} \big(({\lambda + \mu})/{2}\big),
        \quad
        \forall \lambda \in \Gamma, \mu \in \bR^n\setminus \overline{\Gamma}
        \,\,
        \text{with}
        \,\,
        {(\lambda + \mu)}/{2} \in \p\Gamma.
    \end{equation}
    For this, take any exterior unit normal vector $\vec{p}$ of the convex set $\bR^n\setminus \Gamma$ at $(\lambda+\mu)/2 \in \p\Gamma$. Since $\mu \in \bR^n\setminus \overline{\Gamma}$, it must hold $\vec{p} \cdot (\mu - (\lambda + \mu)/2) = \vec{p} \cdot (\mu - \lambda)/2 \leq 0$. Since $f((\lambda+\mu)/2)=0$ and $\lambda - ((\lambda - \mu)\cdot \vec{p}/2)\vec{p} = \lambda - ((\lambda - (\lambda+\mu)/2) \cdot \vec{p}) \vec{p} \in \overline{\Gamma}$,
    \begin{align*}
    \widetilde{f}(\lambda) - \widetilde{f}({(\lambda+\mu)}/{2}) 
    = 
    f(&\lambda) 
    \geq 
    f(\lambda) - f(\lambda - ((\lambda - \mu)\cdot \vec{p}/2) \vec{p})
   \\ =
    \int_{-1}^0 \frac{d}{dt} f(\lambda + t((\lambda - &\mu)\cdot \vec{p}/2) \vec{p})dt
    =
    ((\lambda - \mu)\cdot \vec{p}/2)\cdot 
    \\ \int_{-1}^0 \vec{p} \cdot \nabla f(\lambda + t((\lambda - \mu)\cdot \vec{p}/2) \vec{p}) & dt
     \geq  ((\lambda - \mu)\cdot \vec{p}/2)\delta \vec{p}\cdot \vec{e} \geq    ((\lambda - \mu)\cdot \vec{p}/2) \delta.
    \end{align*}
Here in the last inequality, we used, by \eqref{eqn-230331-0110}, $\vec{p} \in \overline{\Gamma_n}$, and $\nabla f \in \delta\vec{e} + \Gamma_n$ a.e. in $\Gamma$. Since $F((\lambda+\mu)/2) = 0$ and $F(\mu) \geq \vec{p} \cdot \mu$,
\begin{align*}
\widetilde{f}((\lambda+\mu)/2) - \widetilde{f}(\mu)
=
- {\delta} F(\mu)
\leq
-{\delta} \vec{p}\cdot \mu
=
- {\delta} \vec{p}\cdot (\mu - {(\lambda+\mu)}/{2})
= {\delta} \vec{p} \cdot {(\lambda - \mu)}/{2}.
\end{align*}
Here, we used $\vec{p}\cdot (\lambda+\mu)/2=0$, which comes from the cone property. Combining these, the lemma is proved.
\end{proof}

\section*{Acknowledgement}
B.Z. Chu and Y.Y. Li are partially supported by NSF Grant DMS-2247410. Z. Li is partially supported by an AMS-Simons travel grant. Most of this paper was completed while Z. Li was affiliated with Rutgers University.

\end{document}